\newtheorem{thm}{Theorem}[section]
\newtheorem{prop}[thm]{Proposition}
\newtheorem{lem}[thm]{Lemma}
\newtheorem{cor}[thm]{Corollary}
\newtheorem{question}[thm]{Question}
\newtheorem{construction}[thm]{Construction}
\newtheorem{notation}[thm]{Notation}
\newtheorem{convention}[thm]{Convention}
\newtheorem{example}[thm]{Example}
\theoremstyle{remark}
\newtheorem{remark}[thm]{Remark}
\theoremstyle{definition}
\newtheorem{definition}[thm]{Definition}
\begin{document}
	
\title[Bound for distance in the pants graph]{Bounds for the number of moves between pants decompositions, and between triangulations}
\author{Marc Lackenby, Mehdi Yazdi}
\thanks{ML was partially supported by EPSRC grant EP/Y004256/1. For the purpose of open access, the authors have applied a CC BY public copyright licence to any author accepted manuscript arising from this submission.}
%
%
%
\maketitle

\begin{abstract}
	Given two pants decompositions of a compact orientable surface $S$, we give an upper bound for their distance in the pants graph that depends logarithmically on their intersection number and polynomially on the Euler characteristic of $S$. As a consequence, we find an upper bound on the volume of the convex core of a maximal cusp (which is a hyperbolic structures on $S \times \mathbb{R}$ where given pants decompositions of the conformal boundary are pinched to annular cusps). As a further application, we give an upper bound for the Weil--Petersson distance between two points in the Teichm\"uller space of $S$ in terms of their corresponding short pants decompositions. Similarly, given two one-vertex triangulations of $S$, we give an upper bound for the number of flips and twist maps needed to convert one triangulation into the other.  The proofs rely on using pre-triangulations, train tracks, and an algorithm of Agol, Hass, and Thurston. 
\end{abstract}

\section{Introduction}	

\subsection{Pants decompositions}
Let $S$ be a compact orientable surface. An \emph{essential} curve on $S$ is a homotopically non-trivial and non-boundary-parallel simple closed curve. The \emph{curve graph} $\mathcal{C}(S)$ of $S$, defined by Harvey \cite{harvey1981boundary}, is a simplicial graph whose vertices are isotopy classes of essential simple closed curves on $S$, and with two vertices joined by an edge if their simple closed curves are disjoint up to isotopy. Hempel's Lemma \cite{hempel20013} states that the distance between two simple closed curves $\alpha$ and $\beta$ in the curve graph of $S$ is at most a logarithmic function of their geometric intersection number, namely $2 \log_2 i(\alpha, \beta)+2$. 

A \emph{pants decomposition} of $S$ is a maximal collection of disjoint, pairwise non-parallel, essential simple closed curves on $S$ up to isotopy. The \emph{pants graph} $\mathcal{P}(S)$ of $S$, defined by Hatcher and Thurston \cite{hatcher1980presentation}, is a simplicial graph whose vertices are pants decompositions of $S$, with two vertices joined by an edge if their pants decompositions are related by one \emph{simple} or \emph{associativity} move as in Figure \ref{fig:pants graph moves}. Hatcher and Thurston showed that the pants graph is connected. The pants graph is intimately related to hyperbolic geometry in dimensions two and three. For example, Brock \cite{brock2003weil} showed that the Teichm\"{u}ller space of $S$ equipped with the Weil--Petersson metric is quasi-isometric to the pants graph of $S$, with the quasi-isometry map sending a marked hyperbolic metric to one of its \emph{short} pants decompositions (i.e. each curve in the pants decomposition being shorter than the \emph{Bers constant}). 

\begin{figure}
	
	\includegraphics*[width= 3 in]{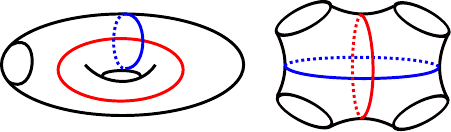}
	
	\caption{The edges of the pants graph correspond to simple moves (left) and associativity moves (right). In each case, the red curve is replaced with the blue curve, and the remaining curves are unchanged.}
	\label{fig:pants graph moves}

\end{figure}

In light of Hempel's Lemma, a natural question is whether there is an upper bound for the distance between two arbitrary pants decompositions $P$ and $P'$ in the pants graph that depends logarithmically on the geometric intersection number $i(P, P')$. The following example shows that this is too optimistic if we do not take into account the topological complexity of the underlying surface.
 
\begin{example}
	The complexity of a compact orientable surface $S$ of genus $g$ with $b$ boundary components is defined as $\xi(S) = 3g-3 +b$. It is easy to see that $\xi(S)$ is the number of curves in a pants decomposition of $S$. Given an essential multicurve $\gamma$ in $S$, define the subsurface $S_\gamma$ of $S$ to be the components of $S\setminus N(\gamma)$ that are not pairs of pants, where $N(\gamma)$ is a regular neighbourhood of $\gamma$.  Define $\mathcal{P}_\gamma(S)$ to be the full subgraph of the pants graph $\mathcal{P}(S)$ consisting of pants decompositions containing $\gamma$. Taylor and Zupan \cite{taylor2016products} showed that if $\gamma$ is an essential multicurve such that $S_\gamma$ is a disjoint union of surfaces of complexity $1$ then $\mathcal{P}_{\gamma}(S)$ is totally geodesic in $\mathcal{P}(S)$. Note that in this case $\mathcal{P}_\gamma(S)$ is a product of copies of the Farey graph. 
	
	Now suppose that $S$ is closed 	and take $\gamma$ to be a multicurve such that $S_\gamma$ is a disjoint union of $n$ surfaces of complexity $1$ where $n \geq g$. If $P$ and $P'$ are pants decompositions of $S$ both containing $\gamma$, then the distance between them is equal to the sum over distances between the restrictions of $P$ and $P'$ to the components of $S_\gamma$. For example if the restriction of $P $ and $P'$ to each component of $S_\gamma$ consists of two curves intersecting either once or twice then $i(P, P') \leq 2n $ while $d_{\mathcal{P}_S}(P, P') \geq n$. Recall that $n$ is comparable to $g$ since we assumed that $g \leq n $, and also for Euler characteristic reasons $n \leq 2g-2$. This shows that there is no general upper bound of the form 
	\[ A (\log i(P, P'))^B + C\]
	for distance in the pants graph, where $A, B, C$ are positive universal constants independent of $g$.
	\label{ex: optimality of bound}
\end{example}

Given the above example, it is natural to ask if there is an upper bound for the distance in the pants graph that depends logarithmically on the geometric intersection number of the two pants decompositions, and polynomially on the Euler characteristic of the underlying surface. A first attempt would be to try to adapt the proof of Hempel's Lemma to the context of pants decompositions. The proof that we have in mind uses a surgery trick, due to Lickorish \cite{lickorish1962representation}, to construct a curve $\gamma$ such that both $i(\gamma, \alpha)$ and $i(\gamma, \beta)$ are at most one half of $i(\alpha, \beta)$, and then it proceeds inductively. The reader familiar with the proof would soon realise that if one starts with multicurves $\alpha$ and $\beta$ instead, the produced multicurve $\gamma$ might have more or less components. In particular, even if $\alpha$ and $\beta$ are pants decompositions, there is no guarantee that $\gamma$ would be a pants decomposition too. A second attempt would be to apply Hempel's Lemma successively and $|P|$ times, with $|P|$ the number of curves in the pants decomposition $P$, to convert one curve of $P$ at a time to one curve of $P'$. The issue is that applying the proof of Hempel's Lemma to the first curve in $P$ makes the remaining curves more complicated rapidly, and its successive application does not seem to produce a good bound for the distance between $P$ and $P'$ in the pants graph. Nevertheless, using different techniques, we show that such a bound exists. Throughout the paper, the implicit constants in the big $O$ notation are universal; in particular they do not depend on the underlying surface. Note that if $P$ and $P'$ are two pants decompositions of $S$ that intersect each other exactly once, then the distance between $P$ and $P'$ in the pants graph is exactly $1$. Therefore, the hypothesis $i(P, P') \geq 2$ in the following theorem is not restrictive.

\begin{thm}
	Let $S$ be a compact connected orientable surface, and $P$ and $P'$ be pants decompositions of $S$ with $i(P, P') \geq 2$. The distance between $P$ and $P'$ in the pants graph is
	\[ O(|\chi(S)|^2) \hspace{1mm}  \log (i(P,P')). \] 
	
	Moreover, there is an algorithm that constructs a (not necessarily geodesic) path $P = P_0, P_1, \cdots, P_n = P'$ of length $ n = O(|\chi(S)|^2) \hspace{1mm} \log (i(P,P'))$ connecting $P$ and $P'$ in the pants graph, in time that is a polynomial function of $|\chi(S)|$ and $\log(i(P,P'))$. Here, for the input, $P$ is given as a union of pairs of pants with gluing instructions, and $P'$ is given in normal form with respect to $P$. For the output, each $P_i$ is given as a union of pairs of pants with gluing instructions together with the pants move from $P_{i-1}$ to $P_{i}$. 
	\label{thm:upper bound for distance in pants graph}  
\end{thm}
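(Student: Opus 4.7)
The plan is to realise the transition from $P$ to $P'$ as a logarithmic number of weight-halving rounds, each implemented by polynomially many pants moves. First I would build a \emph{pre-triangulation} of $S$ that refines $P$ by adding a universally bounded number of arcs in each pair of pants of $S \setminus P$. Since $P'$ is given in normal form with respect to $P$, this pre-triangulation puts $P'$ into normal form with nonnegative integer weights on the edges whose total is comparable to $i(P, P')$. Crucially the whole multicurve $P'$ is thereby encoded in $O(|\chi(S)| \log i(P,P'))$ bits, opening the door to an algorithm polynomial in $\log i(P, P')$ rather than in $i(P, P')$ itself.

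Next I would replace the normal-coordinate data by a train track $\tau$ in $S \setminus P$ carrying $P'$, with branch weights recorded in binary. On such a carried weighted train track one can run a splitting procedure in the spirit of Agol--Hass--Thurston: in $O(\log i(P,P'))$ rounds, each consisting of a bounded (depending on $|\chi(S)|$) amount of combinatorial bookkeeping, the weights are driven down so that in the end $\tau$ carries a multicurve that is trivial with respect to $P$. The AHT framework ensures that each round can be performed in time polynomial in the bit complexity of the weights, i.e.\ polynomial in $|\chi(S)|$ and $\log i(P,P')$, which will give the claimed running time.

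The main work is a \emph{realisation lemma} asserting that each elementary splitting of the weighted train track (or, more precisely, each round of splittings in the AHT algorithm) can be lifted to a sequence of at most $O(|\chi(S)|)$ simple and associativity moves applied to the current pants decomposition, so that the new pants decomposition is still complemented by a pre-triangulation carrying the split train track. Iterating this lifting for each of the $|P| = O(|\chi(S)|)$ curves per round accounts for the $O(|\chi(S)|^2)$ factor, and iterating over the $O(\log i(P,P'))$ rounds gives the overall bound stated in the theorem.

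The hardest step will be this realisation lemma: translating an abstract splitting of a weighted train track into an honest sequence of pants moves while simultaneously updating the pre-triangulation and the carrying data, and showing that the length of this sequence is linear in $|\chi(S)|$ and independent of $i(P,P')$. A secondary subtlety is the bookkeeping required to execute the whole procedure from the binary input in time polynomial in $\log i(P,P')$; this is precisely where the Agol--Hass--Thurston machinery, rather than a naive iterative implementation, becomes indispensable.
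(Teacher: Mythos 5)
Your overall strategy matches the paper's: put $P'$ in normal form with respect to $P$, squeeze it to an integrally weighted train track $\tau$ carrying $P'$ with $O(|\chi(S)|)$ branches and total weight $i(P,P')$, unwind $\tau$ by running the Agol--Hass--Thurston algorithm, and lift each step of the algorithm to a bounded number of pants moves. You have also correctly identified the hard step: the ``realisation lemma'' translating an abstract split of a weighted train track into pants moves.

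However, there is a quantitative mis-step in your accounting, and the realisation lemma is left completely unproven, which is where the real work lies.

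On the accounting: you picture $O(\log i(P,P'))$ ``weight-halving rounds'' each lifted to $O(|\chi(S)|^2)$ pants moves. But the AHT algorithm is not organised into rounds that uniformly halve all weights; its termination bound is controlled by the \emph{AHT-complexity} $r + \sum_i \log z_i - \tfrac12 \log \tilde z$ (one logarithm per branch), which decreases by at least one per shifted cycle. This gives $O(|\chi(S)|\log i(P,P'))$ cycles, not $O(\log i(P,P'))$ rounds. Each shifted cycle is realised by $O((k_i+1)|\chi(S)|)$ pants moves, where $k_i$ is the change in Euler characteristic of the train track, and it is only because $\sum_i k_i = O(|\chi(S)|)$ over the whole run that the final bound closes. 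Your arithmetic happens to land on the same total, but the $O(\log i)$-round decomposition is not actually achievable, and a correct proof must work cycle by cycle with the AHT-complexity potential rather than with a global halving picture.

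On the realisation lemma: the paper does not, as you suggest, keep track of ``a pre-triangulation complementing the pants decomposition and carrying the split train track.'' Instead it defines a map $\mathrm{\textbf{P}}(\mathcal{T},\mathrm{O})$ from a pre-triangulation $\mathcal{T}$ together with an \emph{ordering} $\mathrm{O}$ on its edges to a pants decomposition (fill in the edges one by one and record the boundaries of the filled subsurfaces). Two ingredients then do the lifting. First, consecutive transpositions of the ordering move $\mathrm{\textbf{P}}(\mathcal{T},\mathrm{O})$ by $O(1)$ in the pants graph, and hence arbitrary re-orderings cost $O(n^2)$ (Lemmas \ref{transposition} and \ref{changing the ordering}); superimposing two pre-triangulations yields $d\bigl(\mathrm{\textbf{P}}(\mathcal{T},\mathrm{O}), \mathrm{\textbf{P}}(\mathcal{T}',\mathrm{O}')\bigr) = O(n^2)$ with $n$ the number of edges of the superimposition (Lemma \ref{bound in terms of intersection}). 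Second, each AHT shifted cycle acts on the train track by a \emph{split} or a \emph{twirl} (a split followed by a power of a Dehn twist along a closed branch), and one shows (Lemma \ref{lem:local distance}) that after such a move there is a choice of new ordering whose associated pants decomposition is $O((k+1)E)$ away; the twirl case uses crucially that Dehn twisting a pants decomposition along one of its own curves fixes it. None of this is in your sketch, and without it one cannot control how the pants decomposition evolves under splitting. You also omit the twirl operation entirely, yet it is exactly what lets a large power of a Dehn twist be executed by $O(|\chi(S)|)$ pants moves rather than linearly many.
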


\begin{remark}
	Let $\chi = |\chi(S)|$. Example \ref{ex: optimality of bound} shows that any general upper bound for distance in the pants graph should be at least of order
	\[ \frac{\chi}{\log \chi} \log i(P, P'). \]
	Therefore our bound and an optimal bound differ by a multiplicative factor of at most $\chi \log \chi$. 
\end{remark}

We now briefly explain what we mean by normal form for $P'$ with respect to $P$ and refer the reader to Section \ref{sec: normal form for pants decompositions} and Figure \ref{fig: Dehn coordiantes} for precise definitions. The \emph{Dehn coordinates} express the isotopy class of any multicurve $\gamma$ in terms of 
\begin{enumerate}
	\item the geometric \emph{intersection numbers} of $\gamma$ with each curve in a fixed pants decomposition $P$, and 
	\item the \emph{twist numbers} about the curves in $P$. 
\end{enumerate}

Both $i(P,P')$ and the distance in $\mathcal{P}(S)$ between $P$ and $P'$ are unchanged by the action of the mapping class group of $S - P$, which is generated by Dehn twists about the curves in $P$. Dehn twisting $P'$ about a curve $\alpha$ of $P$ does not change the intersection numbers, and changes the twist number about $\alpha$ by $m_\alpha$, where $m_\alpha$ is the intersection number with $\alpha$.  Therefore, for the purpose of Theorem \ref{thm:upper bound for distance in pants graph}, it is enough to know the intersection numbers as in (1), and the twisting numbers modulo the corresponding intersection numbers. This is essentially the data that is referred to as normal form for $P'$ with respect to $P$. 

Our proof of Theorem \ref{thm:upper bound for distance in pants graph} uses pre-triangulations (Definition \ref{def:pre-triangulation}), train tracks, as well as an algorithm of Agol, Hass, and Thurston \cite{agol2006computational} for counting the number of orbits for the action of a pseudogroup generated by \emph{isometries} between subintervals of a (discrete) \emph{interval} $[1,N]: = \{ 1, \cdots, N\}$. See the Background Section. 

\begin{convention}
	To simplify the statements, throughout the paper we allow a 1-complex to have components that are simple closed curves with no vertices on them. Any such component would be considered as an edge of the 1-complex as well. 
	\label{convention: 1-complex}
\end{convention}

\begin{definition}[Pre-triangulation]
A \emph{pre-triangulation} of a surface $S$ is an embedded finite $1$-complex $\mathcal{T}$ in $S$ such that 

\begin{enumerate}
	\item every complementary region is either a disc, or an annulus, or a pair of pants; and 
	\item each component of $\partial S$ either lies inside $\mathcal{T}$ or is disjoint from $\mathcal{T}$. 
\end{enumerate}	
\label{def:pre-triangulation}
\end{definition} 
 
 With Convention \ref{convention: 1-complex}, both triangulations and pants decompositions are special cases of pre-triangulations. The connection between pants decompositions and pre-triangulations is as follows; see Section \ref{sec: pre-triangulations to pants} for the details. We describe a natural procedure that given a pre-triangulation $\mathcal{T}$ and an ordering $\mathrm{O}$ on the  set of edges of $\mathcal{T}$, constructs a pants decomposition $\mathrm{\textbf{P}}(\mathcal{T}, \mathrm{O})$ of $S$ (Construction \ref{const:triangulation-to-pants}). Conversely, given a pants decomposition $P$ and a choice of a suitable 1-complex $\gamma$ in $S$, called a \emph{nerve} (Definition \ref{def:nerve}), a connected pre-triangulation $\mathcal{T}=\mathrm{\textbf{T}}(P, \gamma)$ of $S$ can be constructed (Construction \ref{const:pants-to-triangulation}). It is shown that for any pre-triangulation $\mathcal{T}$ and orderings $\mathrm{O}$ and $\mathrm{O}'$ on the edges of $\mathcal{T}$, the distance between the pants decompositions $\mathrm{\textbf{P}}(\mathcal{T}, \mathrm{O})$ and $\mathrm{\textbf{P}}(\mathcal{T}, \mathrm{O}')$ in the pants graph is $O(|\mathcal{T}|^2)$, where $|\mathcal{T}|$ is the number of edges of $\mathcal{T}$ (Lemma \ref{changing the ordering}). Using this, we show in Lemma \ref{bound in terms of intersection} that for any two pre-triangulations $\mathcal{T}$ and $\mathcal{T}'$ and orderings $\mathrm{O}$ and $\mathrm{O}'$ on their edges, the distance between the pants decompositions $\mathrm{\textbf{P}}(\mathcal{T}, \mathrm{O})$ and $\mathrm{\textbf{P}}(\mathcal{T}', \mathrm{O}')$ is $O(|\mathcal{T} \cup \mathcal{T}'|^2)$, where $|\mathcal{T} \cup \mathcal{T}'|$ is the number of edges of the pre-triangulation $\mathcal{T} \cup \mathcal{T}'$ obtained by superimposing $\mathcal{T}$ and $\mathcal{T}'$.  In particular, this implies Corollary \ref{quadratic bound} stating that the distance between two pants decompositions $P$ and $P'$ in the pants graph is $O(i(P, P')^2)$. As far as we are aware, albeit much weaker than Theorem \ref{thm:upper bound for distance in pants graph}, even this quadratic upper bound for distance in the pants graph is new. 
 
 \subsection{One-vertex triangulations}
 \label{subsection: triangulations}
 The idea of the proof of Theorem \ref{thm:upper bound for distance in pants graph} can be used to give an upper bound for the number of \emph{flip and twist moves} between two triangulations of a surface. The \emph{flip graph} of a closed orientable surface $S$ is a simplicial graph whose vertices are one-vertex triangulations of $S$ up to isotopy, with two vertices joined by an edge if they differ by a \emph{flip} of a diagonal in a simplicial square. 
 It is well-known that the flip graph is connected; see for example \cite[page 36]{mosher1988tiling}. The mapping class group of $S$ acts by isometries on the flip graph, and the action is properly discontinuous and cocompact. By the Švarc–Milnor Lemma, the flip graph of $S$ is quasi-isometric to the mapping class group of $S$, where the metric on the mapping class group is the word metric with respect to any fixed finite generating set. Given two one-vertex triangulations $\mathcal{T}$ and $\mathcal{T}'$ of $S$ with the same vertex, one can convert $\mathcal{T}$ to $\mathcal{T}'$ using a sequence of at most $i(\mathcal{T}, \mathcal{T}')$ flips; see e.g. \cite[Lemma 2.1]{disarlo2019geometry} and below for the definition of $i(\mathcal{T}, \mathcal{T}')$. In general, the linear dependency on $i(\mathcal{T}, \mathcal{T}')$ cannot be improved, essentially because the flip graph is quasi-isometric to the mapping class group. For example, if $\mathcal{T}$ is a fixed one-vertex triangulation of $S$, $\alpha$ is a fixed essential
 simple closed curve in $S$, and $\mathcal{T}':= (T_\alpha)^n(\mathcal{T})$ where $T_\alpha$ is the Dehn twist about $\alpha$, then
 
 \begin{enumerate}
 	\item[-] $i(\mathcal{T}', \mathcal{T})$ is linear in $n$; and  
 	
 	\item[-] $\mathcal{T}'$ and $\mathcal{T}$ have distance at least a constant multiple of $n$ in the flip graph. This is because by the work of Farb, Lubotzky, and Minsky \cite{Frab2001rank}, the word length of $(T_\alpha)^n$ in the mapping class group is linear in $n$. 
 \end{enumerate}
 
 However, we show in Theorem \ref{thm: one-vertex triangulations, fixed vertices} that if we are allowed to use powers of Dehn twists (\emph{twist maps} for short) in addition to flips, then one can convert $\mathcal{T}$ to $\mathcal{T}'$ much more economically. In this case, the total number of flips and twist maps needed will be bounded above by a polynomial function of $\log(i(\mathcal{T}, \mathcal{T}'))$ and $|\chi(S)|$. 
 
 Given one-vertex (respectively ideal) triangulations $\mathcal{T}$ of a surface $S$ with the same vertex set $V$, let $i(\mathcal{T} , \mathcal{T}')$ be the minimum number of intersections between $\mathcal{T}$ and $\mathcal{T'}$ outside of $V$, up to isotopies fixing $V$. Therefore $i(\mathcal{T}, \mathcal{T}') = 0$ if and only if $\mathcal{T} = \mathcal{T}'$ up to isotopies keeping the vertex set fixed. Furthermore, if $i(\mathcal{T}, \mathcal{T}') = 1$ then $\mathcal{T}$ and $\mathcal{T'}$ are related by a flip. Therefore in the following theorem the assumption $i(\mathcal{T}, \mathcal{T}') \geq 2$ is not restrictive. 

 \begin{thm} 
 	Let $S$ be a compact orientable surface. When $S$ is closed (respectively, has non-empty boundary), let $\mathcal{T}$ and $\mathcal{T}'$ be one-vertex (respectively, ideal) triangulations of $S$. Assume that $i(\mathcal{T}, \mathcal{T}') \geq 2$. There is a sequence $\mathcal{T}= \mathcal{T}_0, \mathcal{T}_1, \cdots, \mathcal{T}_n = \mathcal{T}'$ of one-vertex (respectively, ideal) triangulations of $S$ such that: 
	
	\begin{enumerate}
 		\item Each $\mathcal{T}_{i+1}$ is obtained from $\mathcal{T}_i$ by either a flip or a power of a Dehn twist. When $\mathcal{T}_{i+1}$ is obtained from $\mathcal{T}_i$ by Dehn twisting $k$ times about a curve $\alpha$, then $\alpha$ is a normal curve intersecting each edge of $\mathcal{T}_i$ at most three times, and the absolute value of $k$ is bounded above by $i(\mathcal{T}, \mathcal{T}')$.
		
		\item $n = O(|\chi(S)|^3) \log(i(\mathcal{T}, \mathcal{T}'))$. 
		
	\end{enumerate}
	
	Moreover, there is an algorithm that constructs the sequence $\mathcal{T}_1, \cdots, \mathcal{T}_n$ in time that is a polynomial function of $\log(i(\mathcal{T}, \mathcal{T}'))$ and $|\chi(S)|$. Here we assume that $\mathcal{T}$ and $\mathcal{T}'$ have the same vertex (when $S$ is closed), $\mathcal{T}$ is given as a union of (possibly ideal) triangles with gluing instructions, and $\mathcal{T}'$ is given in terms of its normal coordinates with respect to $\mathcal{T}$. For the output, each $\mathcal{T}_i$ is given as a union of (possibly ideal) triangles with gluing instructions together with the flip or twist move from $\mathcal{T}_i$ to $\mathcal{T}_{i+1}$. 

  	\label{thm: one-vertex triangulations, fixed vertices} 
 \end{thm}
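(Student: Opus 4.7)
The plan is to adapt the pre-triangulation plus Agol--Hass--Thurston machinery used in Theorem \ref{thm:upper bound for distance in pants graph} to the flip-and-twist setting, treating flips as the triangulation analogue of pants moves, and allowing powers of Dehn twists as an additional kind of move that handles ``twist'' coordinates in a single step. I would first place $\mathcal{T}'$ in normal form with respect to $\mathcal{T}$, so that $\mathcal{T}'$ is described by normal coordinates (the number of normal arcs of each type in each triangle of $\mathcal{T}$), together with twist coordinates about the edges of $\mathcal{T}$. Equivalently, $\mathcal{T}'$ is carried by a train track dual to $\mathcal{T}$ and is determined by a weight vector of dimension $O(|\chi(S)|)$.

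The next step is to perform a twist-reduction phase edge-by-edge. For each edge $e$ of $\mathcal{T}$ I would produce a simple closed curve $\alpha_e$, carried by a train track dual to $\mathcal{T}$, that intersects every edge of $\mathcal{T}$ at most three times; a suitable power of $T_{\alpha_e}$ subtracts a multiple of $i(\alpha_e, e)$ from the twist coordinate at $e$ and can be used to place each twist coordinate in a fundamental domain of size $O(w_e)$. Here is where the Agol--Hass--Thurston algorithm enters exactly as in the proof of Theorem \ref{thm:upper bound for distance in pants graph}: one views the pattern of normal arcs on the one-skeleton of $\mathcal{T}$ as produced by a pseudogroup of isometries between subintervals of $[1,N]$, and uses AHT to identify in polynomial time the single twist power that collapses many parallel strands simultaneously. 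After the twist-reduction phase the weights have controlled ``twist part'', and one then applies a sequence of $O(|\chi(S)|^3)$ flips (mimicking the pre-triangulation argument used for Lemma \ref{bound in terms of intersection} and Lemma \ref{changing the ordering}) so that the total intersection number drops by at least a constant factor. Iterating this ``twist-then-flip'' phase $O(\log i(\mathcal{T},\mathcal{T}'))$ times, plus an additive $O(|\chi(S)|^3)$ to finish when the weight vector is of size polynomial in $|\chi(S)|$, yields the claimed bound $O(|\chi(S)|^3\log i(\mathcal{T},\mathcal{T}') + |\chi(S)|^3)$. The algorithmic running time is polynomial because every step is a polynomial-time operation on the normal-coordinate vector and AHT itself is polynomial in the bit-length of the input.

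The main obstacle, I expect, is showing that each abstract simplification suggested by the AHT orbit count can actually be implemented by a bounded number of genuine flips and Dehn-twist powers on one-vertex (or ideal) triangulations, while keeping the triangulation valid at every intermediate stage — in particular that no complementary region degenerates, that the single vertex (or ideal vertex set) is preserved, and that the twist curve $\alpha_e$ chosen at each stage genuinely meets each edge of the \emph{current} triangulation at most three times as required by the statement. A secondary difficulty is bookkeeping the exponent: each twist phase involves $O(|\chi(S)|)$ edges, each flip phase after it contributes $O(|\chi(S)|^2)$ moves through the pre-triangulation argument, and one must check that these combine to the stated $|\chi(S)|^3$ factor rather than something worse. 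Handling the closed and bounded cases uniformly, so that the same inductive scheme works for ideal triangulations of a surface with boundary, is a routine but non-trivial unification to carry out alongside the main argument.
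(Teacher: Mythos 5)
Your high-level intuition — normalise $\mathcal{T}'$ with respect to $\mathcal{T}$, carry it by a train track, run Agol--Hass--Thurston to identify the large twist powers that collapse bundles of parallel strands at once, and interleave Dehn-twist powers with flips — captures the spirit of the argument. But the proposal leaves the hardest step as an acknowledged but unresolved ``main obstacle,'' and that step is precisely where the paper's proof of Theorem \ref{thm: one-vertex triangulations, fixed vertices} does its real work.

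Concretely: the paper does not attempt the iterative ``twist-then-flip'' scheme you sketch, in which each round of $O(|\chi(S)|^3)$ flips is supposed to cut the total intersection number by a constant factor. No lemma in the paper supports such a per-round contraction, and it is not clear that one exists — flips on a one-vertex triangulation have a fairly rigid effect on normal coordinates, and there is no analogue of Lemma \ref{bound in terms of intersection} or Lemma \ref{changing the ordering} for the flip graph (those lemmas produce pants moves, not flips, and do not come with any multiplicative-decay guarantee). Instead, the paper runs AHT once, to completion, producing a sequence of based weighted train tracks related by splits and twirls (Proposition \ref{running AHT}, Lemma \ref{number of train tracks}), and then translates that single sequence into flips through a chain of intermediate combinatorial categories that you never mention: first polygonal decompositions with moves of edge contraction/expansion, diagonal insertion/deletion, and Dehn-twist powers (Theorem \ref{thm: one-vertex triangulations, variable vertices}); then spines with edge swaps, contraction/expansion, and Dehn-twist powers (Theorem \ref{thm: edge swaps on spines}), which requires the careful recursive bookkeeping of a subcomplex $X_i$ picked out of each polygonal decomposition; and finally trivalent spines, where each edge swap is decomposed into $O(|\chi(S)|)$ Whitehead moves (Lemma \ref{edge swap as whitehead moves}), and Whitehead moves dualise to flips. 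The spine/Whitehead-move machinery is exactly what guarantees that each abstract split or twirl can be realised by genuine flips while keeping a valid one-vertex (or ideal) triangulation at every stage, and it is also where the extra factor of $|\chi(S)|$ comes from (the $O(|\chi(S)|^2)$ of Theorem \ref{thm: one-vertex triangulations, variable vertices} becoming $O(|\chi(S)|^3)$). As written, your proposal identifies the right tools at the train-track end but has no mechanism for the final conversion to flips and no justification for the constant-factor intersection decay on which its bound depends, so the proof does not go through in the form proposed.
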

 
 See Definition \ref{def:normal form triangulation} for the definition of the normal form for a triangulation with respect to another triangulation. Theorem \ref{thm: one-vertex triangulations, fixed vertices} has been used by Baroni \cite{baroni2024uniformly} and Lackenby \cite{lackenby2024some} to study algorithmic problems about curves on surfaces and surface homeomorphisms, and it is likely to have further applications. Marc Bell and Richard Webb have communicated to us \cite{bell} that they, independently, were aware of an analogue of Theorem \ref{thm: one-vertex triangulations, fixed vertices}.
 
 In Theorem \ref{thm: one-vertex triangulations, variable vertices}, as a precursor to Theorem \ref{thm: one-vertex triangulations, fixed vertices}, we prove an analogous statement (Theorem \ref{thm: one-vertex triangulations, variable vertices}) for a similar set of moves between two \emph{polygonal decompositions} (Definition \ref{def: polygonal decomposition}) of a compact orientable surface. In Theorem \ref{thm: edge swaps on spines}, we also prove a similar result for a set of moves between two spines of a compact orientable surface. 
 
 \subsection{Volumes of hyperbolic 3-manifolds}
 
 A \emph{maximal cusp} is a geometrically finite hyperbolic 3-manifold such that each component of its conformal boundary is a thrice-punctured sphere \cite{canary2003approximation}. The \emph{convex core} of a non-compact hyperbolic manifold is the smallest geodesically convex subset that contains every closed geodesic. Theorem \ref{thm:upper bound for distance in pants graph} together with the work of Agol \cite{agol2003small} implies the following. 
 
 \begin{cor}
 	Let $\Sigma$ a closed orientable surface of genus $g \geq 2$, and $P$ and $P'$ be pants decompositions of $\Sigma$ with no curve in common. Assume that $M$ is a maximal cusp obtained from a quasi-Fuchsian 3-manifold homeomorphic to $\Sigma \times \mathbb{R}$ by pinching the multicurves $P$ and $P'$ to annular cusps on the two conformal boundary components of $M$. The volume of the convex core of $M$ is  
 	\[ O(g^2) \hspace{1mm} \log (i(P, P')) . \]
 	\label{upper bound for volume}
 \end{cor}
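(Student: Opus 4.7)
The plan is to obtain the bound by composing two linear estimates: an upper bound on the convex core volume in terms of pants graph distance (due to Agol), and the upper bound from Theorem \ref{thm:upper bound for distance in pants graph} on the pants graph distance in terms of $\log i(P, P')$.

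First I would recall the relevant consequence of Agol's work in \cite{agol2003small}: for a maximal cusp $M$ of the type described (obtained by pinching pants decompositions $P$ and $P'$ on the two conformal boundary components of a quasi-Fuchsian manifold homeomorphic to $\Sigma \times \mathbb{R}$), there is a universal constant $K > 0$ such that
\[ \mathrm{vol}(\mathrm{core}(M)) \;\leq\; K \cdot d_{\mathcal{P}(\Sigma)}(P, P'), \]
where $d_{\mathcal{P}(\Sigma)}$ denotes distance in the pants graph of $\Sigma$. This linear bound is the content that Agol supplies; the hypothesis that $P$ and $P'$ have no curve in common matches the setup there (it ensures genuine pinching on both sides, giving a maximal cusp rather than a degenerate limit).

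Next I would invoke Theorem \ref{thm:upper bound for distance in pants graph} directly to estimate the right-hand side:
\[ d_{\mathcal{P}(\Sigma)}(P, P') \;=\; O(|\chi(\Sigma)|^{2})\, \bigl(1 + \log i(P, P')\bigr). \]
Since $\Sigma$ is closed of genus $g \geq 2$, we have $|\chi(\Sigma)| = 2g - 2$, and hence $|\chi(\Sigma)|^{2} = O(g^{2})$. (For $g \leq 1$ the statement is vacuous or degenerate, as no pants decomposition exists.) Substituting into Agol's inequality gives
\[ \mathrm{vol}(\mathrm{core}(M)) \;\leq\; K \cdot O(g^{2})\, \bigl(1 + \log i(P,P')\bigr) \;=\; O(g^{2})\, \bigl(1 + \log i(P,P')\bigr), \]
which is the claimed bound.

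The argument is essentially a composition of two black boxes, so there is no serious obstacle: all the real work sits inside Theorem \ref{thm:upper bound for distance in pants graph} and inside Agol's volume estimate. The only small point to verify is that the hypotheses of Agol's theorem are genuinely met by the maximal cusp in the statement (in particular, that $P \cap P' = \emptyset$ as isotopy classes is what is required so that pinching really yields a maximal cusp on each side), which is assumed in the corollary.
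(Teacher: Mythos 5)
Your proposal follows the same high-level strategy as the paper: combine the logarithmic pants-graph distance bound from Theorem \ref{thm:upper bound for distance in pants graph} with a linear volume bound coming from Agol's work, and substitute $|\chi(\Sigma)| = 2g-2$. That is correct, and the composition of the two estimates does give the claimed $O(g^2)\,(1+\log i(P,P'))$.

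However, you treat the inequality $\mathrm{vol}(\mathrm{core}(M)) \leq K \cdot d_{\mathcal{P}(\Sigma)}(P,P')$ as a black box directly attributable to Agol, and that is not quite what \cite{agol2003small} states. Agol's result there concerns a hyperbolic structure on a specific cusped link complement $M_C$ built from a path $P = P_0, \ldots, P_m = P'$ in the pants graph (a drilled mapping torus), with $\mathrm{Vol}(M_C) \leq 2m$. Transporting this bound to the convex core of the maximal cusp $M$ requires two further steps that the paper carries out explicitly: Thurston's volume monotonicity under Dehn filling to get $\mathrm{Vol}(N) < \mathrm{Vol}(M_C)$ for a suitable intermediate manifold $N$, and Adams's theorem that thrice-punctured spheres in hyperbolic $3$-manifolds are totally geodesic, so that cutting $N$ along them yields $M$ with the same volume. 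These are standard facts, so your argument is not wrong in substance, but a complete proof has to supply this chain rather than cite a linear bound as if it were an off-the-shelf statement in Agol's paper. The hypothesis $P \cap P' = \emptyset$ (and the auxiliary assumption that no curve persists along the entire path) is likewise used in this construction to ensure the drilled manifold is genuinely hyperbolic, rather than merely to avoid degenerate pinching as you suggest.
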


In Proposition \ref{sharpness of volume bound} we give examples demonstrating that the bound in Corollary \ref{upper bound for volume} is sharp up to a multiplicative factor of $g \log(g)$.

\subsection{Weil--Petersson distance in the Teichm\"{u}ller space}

Let $S$ be an orientable surface of finite type, and denote by $\mathrm{Teich}(S)$ the Teichm\"uller space of $S$ (in other words, the space of marked hyperbolic metrics possibly with cusps but with no boundary). Bers showed \cite{bers1985inequality} that there is a constant $C= C(S)$ only depending on the topology of $S$ such that for every $X \in \mathrm{Teich}(S)$ there is a pants decomposition $P_X$ in $S$ such that each curve in $P_X$ has length at most $C$ with respect to the marked hyperbolic metric $X$. In other words every hyperbolic metric on $S$ admits a `short' pants decomposition. By Parlier's \cite{parlier} quantified version of Bers's theorem, which is building on and improving the work of Buser \cite{buser1980riemannsche} and of Buser and Sepp\"al\"a \cite{buser1992symmetric}, every hyperbolic metric $X$ on $S$ has a pants decomposition $P$ in which every curve has length at most $2 \pi |\chi(S)|$. 

Denote the Weil--Petersson metric on $\mathrm{Teich}(S)$ by $d_{\mathrm{WP}}$. Using Theorem \ref{thm:upper bound for distance in pants graph}, the work of Wolpert \cite{wolpert2008lengthfunctions}, Cavendish and Parlier \cite{cavendishparlier}, and Parlier \cite{parlier} we show the following.

\begin{thm}
	Let $S$ be an orientable surface of finite type, and let $X, Y \in \mathrm{Teich}(S)$. Let $P_X$ and $P_Y$ be pants decompositions for $X$ and $Y$ respectively, in which each curve has length at most $2 \pi |\chi(S)|$, which exist by a theorem of Parlier \cite{parlier}. Then
	\[ d_{\mathrm{WP}}(X,Y) \leq O(|\chi(S)|^2) \hspace{1mm} ( 1 + \log (i(P_X, P_Y)+1) ). \]
	\label{thm: Weil-Petersson distance}
\end{thm}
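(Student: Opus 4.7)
\emph{Proof proposal.} The strategy is to lift our pants graph bound from Theorem \ref{thm:upper bound for distance in pants graph} into a Weil--Petersson bound via an explicit step-by-step comparison between pants moves and WP distance.

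First I would apply Theorem \ref{thm:upper bound for distance in pants graph} to $P_X$ and $P_Y$ to extract a sequence
\[
P_X = P_0, P_1, \ldots, P_n = P_Y
\]
in the pants graph, with each $P_{i+1}$ obtained from $P_i$ by a single simple or associativity move and with $n = O(|\chi(S)|^2)(1 + \log i(P_X,P_Y))$. I would then lift this combinatorial path to a path in $\mathrm{Teich}(S)$, producing hyperbolic structures $X_0,\ldots,X_n$ with $X_0 = X$, $X_n = Y$, and such that each $P_i$ is short for $X_i$ in the sense of Parlier, with all lengths at most $2\pi|\chi(S)|$. The existence of short representatives at each intermediate step is guaranteed by Parlier's theorem applied to $P_i$ (one can pick any point of $\mathrm{Teich}(S)$ realising $P_i$ as short, then use Parlier's bound to keep the other curves short after small Fenchel--Nielsen adjustments).

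The core input is the Cavendish--Parlier comparison estimate (refining Brock's quasi-isometry using Wolpert's analytic bounds on WP gradients of length functions): if $P$ and $P'$ are adjacent in $\mathcal{P}(S)$ and $Z, Z' \in \mathrm{Teich}(S)$ realise them as short pants decompositions, then $d_{\mathrm{WP}}(Z,Z')$ is bounded above by a universal constant $C$ independent of the surface. Applied to each pair $(X_i, X_{i+1})$, this gives
\[
d_{\mathrm{WP}}(X,Y) \;\leq\; \sum_{i=0}^{n-1} d_{\mathrm{WP}}(X_i, X_{i+1}) \;\leq\; C n \;=\; O(|\chi(S)|^2)(1 + \log i(P_X,P_Y)).
\]

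The main obstacle will be controlling the endpoints: one must ensure that setting $X_0 = X$ and $X_n = Y$ does not break the Cavendish--Parlier estimates along the first and last edges, which in their original form produce some realizations of the initial and terminal pants decompositions rather than accepting prescribed ones. To remedy this, I would introduce auxiliary representatives $\tilde X_0, \tilde X_n$ produced by Cavendish--Parlier and then bound $d_{\mathrm{WP}}(X, \tilde X_0)$ and $d_{\mathrm{WP}}(\tilde X_n, Y)$ separately using Wolpert's estimate $\|\nabla \sqrt{\ell_\alpha}\|_{\mathrm{WP}} \leq 1/\sqrt{2\pi}$ integrated over Fenchel--Nielsen coordinates with respect to the shared short pants decomposition. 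Since both $X$ and $\tilde X_0$ have $P_X$ short (and similarly for $Y$ and $\tilde X_n$), this integration produces an error term of size $O(|\chi(S)|^{3/2})$ at each endpoint, which is absorbed in the main bound. Summing over the path then yields the claimed inequality.
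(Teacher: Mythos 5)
Your overall strategy---using Theorem \ref{thm:upper bound for distance in pants graph} together with a step-by-step Weil--Petersson estimate along the resulting pants graph path---is reasonable in spirit, but the key estimate you invoke is false as stated, and this is a genuine gap.

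You claim that if $P$ and $P'$ are adjacent in $\mathcal{P}(S)$ and $Z, Z'$ are \emph{any} points of $\mathrm{Teich}(S)$ realising $P$ and $P'$ respectively as short pants decompositions, then $d_{\mathrm{WP}}(Z,Z')$ is bounded by a universal constant. This cannot hold: the set of hyperbolic structures on which a given pants decomposition $P$ is short is unbounded in the Weil--Petersson metric. Concretely, if $\alpha$ is a curve in $P \cap P'$, then $T_\alpha^n(Z)$ still realises $P$ as short for every $n$, while $d_{\mathrm{WP}}(T_\alpha^n(Z), Z') \to \infty$. So the bound you want simply does not constrain a path of freely chosen short realisations $X_0,\ldots,X_n$, and summing the supposed estimates over the path does not give a finite answer.

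The fix, and what the paper does, is to replace each $X_i$ not by a short realisation but by the \emph{nodal surface} $N(P_i)$, the point of the metric completion of $\mathrm{Teich}(S)$ obtained by pinching every curve of $P_i$ to zero length. This is a canonical point: there is no Dehn-twist flexibility, and Cavendish--Parlier (Lemma 4.1 of \cite{cavendishparlier}) show that $d_{\mathrm{WP}}(N(P),N(P')) \le C\,d_{\mathcal{CP}(S)}(P,P')$ for a universal $C$, where $\mathcal{CP}(S)$ is the cubical pants graph, which contains $\mathcal{P}(S)$ as a subgraph so that $d_{\mathcal{CP}(S)} \le d_{\mathcal{P}(S)}$. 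The paper therefore goes $X \to N(P_X) \to N(P_Y) \to Y$, using Wolpert's bound $d_{\mathrm{WP}}(X, N(P_X)) \le \sqrt{2\pi\,\ell_X(P_X)}$ for the two endpoint legs. (Your endpoint error is slightly off: with $\ell_X(P_X) \le 3|\chi(S)|\cdot 2\pi|\chi(S)|$, Wolpert's estimate gives an error $O(|\chi(S)|)$, not $O(|\chi(S)|^{3/2})$; this is not the fatal issue, but worth correcting.) The nodal surface is exactly the device that makes the ``choose a short realisation'' idea well-posed, and without it your chain of inequalities does not close.
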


\subsection{Idea of the proof of Theorem \ref{thm:upper bound for distance in pants graph}} Given pants decompositions $P$ and $P'$, we first put $P'$ in normal form with respect to $P$. Then we squeeze parallel normal arcs together to produce a train track $\tau$ such that $P'$ is carried by $\tau$. In other words, there is an integral weight $\mu$ on branches of $\tau$ such that the carried 1-complex $\mathcal{CC}(\tau, \mu)$ is equal to $P'$. 

Given an integrally weighted train track $(\tau, \mu)$, and making some extra choices $\mathcal{I}$, we can define an \emph{orbit counting problem} $\mathrm{OCP}(\tau,\mu, \mathcal{I})$ (Definition \ref{orbit counting problem}). The extra data $\mathcal{I}$ is called an \emph{initial interval identification} and can be safely ignored for now. Applying the Agol--Hass--Thurston algorithm, we can count the number of orbits $\mathrm{OCP}(\tau,\mu, \mathcal{I})$. While the number of orbits of $\mathrm{OCP}(\tau,\mu, \mathcal{I})$ is not any new information (it is always equal to the number of curves in $P'$, which is $(3|\chi(S)| - |\partial S|)/2$), we are interested in the way that the AHT algorithm calculates the number of orbits. In Proposition \ref{running AHT} and Lemma \ref{number of train tracks}, we show that the AHT algorithm does this count in $n$ steps with
\begin{eqnarray*}
	n-1 \leq E (1 + \log |\mu|),
\end{eqnarray*}
where $E = O(|\chi(S)|)$ is the number of branches of $\tau$ and $|\mu| = i(P, P')$ is the total weight of all branches of $\tau$. Moreover, each step of the the AHT algorithm can be seen as a geometric change on the underlying integrally weighted train track, namely as either a \emph{splitting} or a \emph{twirling} (Definition \ref{def:twirling} and Figure \ref{fig:twirling}). The algorithm proceeds until $(\tau, \mu)$ is completely unwound to $P'$, so intuitively the algorithm does the reverse process of squeezing parallel arcs together, in a precise and efficient way.  Hence, there is a sequence $(\tau_i,\mu_i)$ of integrally weighted train tracks for $i =1, \cdots, n$ starting with $(\tau_1, \mu_1)= (\tau, \mu)$ and terminating at $P'$ such that each $(\tau_{i+1}, \mu_{i+1})$ is obtained from $(\tau_i, \mu_i)$ by splitting or twirling, and
\begin{eqnarray}
	n -1 = O(|\chi(S)|) (1 + \log i(P, P')).
	\label{eq:upper bound for the number of train tracks}
\end{eqnarray}	
The underlying 1-complex of each $\tau_i$ is a pre-triangulation. By Lemma \ref{lem:local distance}, since $(\tau_{i+1},  \mu_{i+1})$ is obtained from $(\tau_i, \mu_i)$ by a simple geometric operation (splitting or twirling), the following holds:  there are orderings $\mathrm{O}_i$ on the edges of $\tau_i$ such that for most $i$ the distance between the pants decompositions $\mathrm{\textbf{P}}(\tau_i, \mathrm{O}_i)$ and $\mathrm{\textbf{P}}(\tau_{i+1}, \mathrm{O}_{i+1})$ is $O(|\chi(S)|)$. Moreover, by Lemma \ref{bound in terms of intersection} the distance between the pants decompositions $P$ and $\mathrm{\textbf{P}}(\tau_1, \mathrm{O}_1)$ is $O(|\chi(S)|^2)$, because $i(\tau_1, P) = O(|\chi(S)|)$. Note also that $\mathrm{\textbf{P}}(\tau_n, \mathrm{O}_n) = P'$. The triangle inequality now implies that the distance between $P$ and $P'$ is at most $O(|\chi(S)^2|)+(n-1) O(|\chi(S)|)$, which combined with equation (\ref{eq:upper bound for the number of train tracks}) gives the desired upper bound.

\subsection{Previous related work}

Several authors had previously used either train tracks and the Agol--Hass--Thurston algorithm, or the \emph{tracing algorithm} of Erickson and Nayyeri \cite{erickson2013tracing}, to simplify curves on surfaces efficiently.  

\begin{itemize}
	\item Motivated by the AHT algorithm, Dunfield and D. Thurston \cite{dunfield2006random} adapted Brown's algorithm in the context of splittings of train tracks to decide efficiently whether a random tunnel number one 3-manifold fibers over the circle. 
	
	\item Dynnikov and Wiest \cite{dynnikov2007complexity} introduced a specialised version of the AHT algorithm for curves on a punctured disc, called the \emph{transmission-relaxation algorithm}, to simplify a weighted train track associated with a braid in an $n$-times punctured disc. Using this, they proved that certain algebraic and geometric complexity measures for a braid are comparable up to multiplicative constants only depending on $n$. They showed an improved running time for their algorithm compared to the general version of the AHT algorithm. In the proof of Proposition \ref{running AHT}, we use their improved bound to analyse the time complexity of the AHT algorithm applied to a weighted train track.  
	
	\item Erickson and Nayyeri \cite{erickson2013tracing} gave an algorithm to \emph{trace} a curve $\gamma$ with respect to a triangulation $\mathcal{T}$ in time that is a polynomial function of $\log (i(\mathcal{T}, \gamma))$ and $|\chi(S)|$. 
	
	\item Bell \cite{bell2021simplifying} showed how to simplify a triangulation $\mathcal{T}$ of a surface $S$ with respect to a \emph{curve} $\gamma$ using flips and twist maps, in time that is a polynomial function of $\log (i(\mathcal{T}, \gamma))$ but a super-exponential function of $|\chi(S)|$. He has communicated to us that the source code of his software program \emph{flipper} does this simplification in time that is a polynomial function of both $\log (i(\mathcal{T}, \gamma))$ and $|\chi(S)|$. See also the work of Bell and Webb \cite{bell2016applications}.
	
\end{itemize}

Another novel feature of our work is to show that the AHT algorithm can be used to establish non-algorithmic results such as Corollary \ref{upper bound for volume} and Theorem \ref{thm: Weil-Petersson distance}.

\subsection{Plan of the paper} In Section 2, we discuss the background on the algorithm of Agol, Hass, and Thurston. We also introduce various normal forms needed for the statements of the theorems. These include 
\begin{enumerate}
	\item[-] normal form for a pants decomposition with respect to another pants decomposition. This is a slight modification of Dehn coordinates, and is used in Theorem \ref{thm:upper bound for distance in pants graph}. 
	\item[-] normal form for a triangulation with respect to another triangulation with common vertices. This is a mild generalisation of the usual notion of normal curve with respect to a triangulation, and is used in Theorem \ref{thm: one-vertex triangulations, fixed vertices}. 
	\item[-] normal form for a 1-complex with respect to a polygonal decomposition. This is a more relaxed notion than the usual notion of a normal curve with respect to a triangulation, since the 1-complex can be quite general. This is used as a bookkeeping tool in Theorems \ref{thm: one-vertex triangulations, variable vertices} and \ref{thm: edge swaps on spines}.
	 
\end{enumerate}

In Section 3, we discuss pre-triangulations and their relation to pants decompositions. Section 4 discusses based weighted train tracks and orbit counting problems associated to them. The effect of the AHT algorithm applied to such orbit counting problems is seen as a geometric move on the underlying train track (Proposition \ref{running AHT}). Section 5 contains the proof of Theorem \ref{thm:upper bound for distance in pants graph}. We first prove Theorems \ref{thm: one-vertex triangulations, variable vertices} and \ref{thm: edge swaps on spines} in Section 6, and then use the latter to prove Theorem \ref{thm: one-vertex triangulations, fixed vertices} in Section 7. Section 8 proves Corollary \ref{upper bound for volume} using only the statement of Theorem \ref{thm:upper bound for distance in pants graph} as well as the work of Agol \cite{agol2003small}. Theorem \ref{thm: Weil-Petersson distance} is proved in Section 9. Some open problems are discussed in Section 10.

\section{Background}

\subsection{Agol--Hass--Thurston (AHT) algorithm}

By an \emph{interval} $[M, N]$ with $M \leq N$ integers, we mean the set $\{ M, M+1, \cdots, N\}$. An \emph{isometry} or \emph{pairing} between two intervals is a bijection of the from $x \mapsto x + c$ or $x \mapsto -x+c$ for a constant integer $c$. Let $[1, N]$ be an interval and $\{ g_i \}$, $1 \leq i \leq k$, be a collection of pairings between subintervals of $[1,N]$. Two pairings can be composed if the range of the first lies in the domain of the second. The pairings generate a pseudogroup $\mathcal{G}$ under the operations of composition where defined, inverses, and restriction to subintervals. Two points $x, y \in [1, N]$ are in the same orbit if there is a sequence $h_1, \cdots , h_s$ of (not necessarily distinct) elements of $\mathcal{G}$ such that $h_s \circ \cdots \circ h_1(x) = y$. This forms an equivalence relation on the interval $[1,N]$ and the equivalence classes are called \emph{orbits}. Given an interval $[1,N]$ and isometric pairings $g_1, \cdots, g_k$ as above, the associated \emph{orbit counting problem} is the problem of counting the number of orbits for the action of the pseudogroup $\mathcal{G}$ (generated by $\{ g_i \}$ for $1 \leq i \leq k$) on the interval $[1,N]$. Agol, Hass, and Thurston gave an algorithm for counting the number of such orbits, in time that is a polynomial function of $k$ and $\log(N)$. This logarithmic dependence on $N$ will be crucial for the arguments in this paper.  

An important example of an orbit counting problem comes from the theory of train tracks (see Definition \ref{def:based train track}). For example suppose $S$ is a surface of finite type, $\tau \subset S$ is a train track, and $\gamma$ is a multicurve carried by $\tau$. Then $\gamma$ can be described using integral weights on branches of $\tau$. A question that arises in practice is to determine the number of connected components of $\gamma$ given the weights associated to $\gamma$.  Agol, Hass, and Thurston used their algorithm to count the number of connected components of $\gamma$, in time that is bounded above by a polynomial function of $E \log(N)$, where $E$ is the number of branches of $\tau$ and $N$ is the total weight of $\gamma$ with respect to $\tau$. 

Similarly, Agol, Hass, and Thurston used their algorithm to count the number of connected components of a normal surface $S$ in a triangulated 3-manifold $S$ given by its normal coordinates with respect to the triangulation, and used it to verify an upper bound on the genus of a knot $K \subset M$ in non-deterministic polynomial time. 
 
We now explain further terminology and the AHT algorithm briefly; the reader should see \cite{agol2006computational} for more details. A pairing $g \colon [a, b] \rightarrow [c, d]$ is \emph{orientation-preserving} if $g(a) = c$ and $g(b)=d$, and otherwise \emph{orientation-reversing}. If $a \leq c$ we refer to $[a, b]$ as the \emph{domain} and $[c, d]$ as the \emph{range} of the pairing. If the pairing preserves orientation, its \emph{translation distance} $t$ is defined as $t = c-a = d-b$.  An interval is called \emph{static} if it is in neither the domain nor the range of any pairing. Given a collection of pairings acting on $[1, N ]$, a pairing is \emph{maximal} if its range contains both $N$ and the range of any other pairing containing $N$. A pairing $g \colon  [a, b] \rightarrow [c, d]$ is \emph{periodic} with period $t$ if it is orientation-preserving with translation distance $t$ and $a < c = a+t \leq  b+1$, so there is no gap between the domain and range. The combined interval $[a, d]$ is then called a \emph{periodic interval} of period t.

The orbit-counting algorithm applies a series of the following modifications, in a particular order, to a collection of pairings. 

\begin{itemize}
	\item \emph{Periodic merger}: Given two periodic pairings $g_1$ and $g_2$ with periodic intervals $R_1$ and $R_2$ and periods $t_1$ and $t_2$ such that $\text{width}(R_1 \cap R_2) \geq t_1 + t_2$, the periodic merger replaces $g_1$ and $g_2$ with a single periodic pairing of period $\mathrm{GCD}(t_1, t_2)$. Here $\mathrm{GCD}$ stands for the greatest common divisor.
	
	\begin{remark}
	Assume that a periodic merger is done. Then before the merging, there is a point which lies in at least 3 domains or ranges of pairings, i.e. 
	\[ \exists p \in [1, N] \text{ such that } \sharp\{ i : p \in \mathrm{domain}(g_i) \} + \sharp\{ i : p \in \mathrm{range}(g_i)\} \geq 3. \] 
	In our applications in this paper, every point will be in the domain or range of at most two pairings, hence periodic mergers will not occur. For this reason we will not elaborate on the properties of periodic merger further. 
	\end{remark}	

	\item \emph{Contraction}:
	Contraction can be performed on a static interval $[r, s]$. We eliminate this interval, replace $[1, N ]$ by $[1, N - (s -r + 1)]$, and change each $g_j$ by replacing any point $x$ in a domain or range which lies entirely to the right of $s$ by $x - (s - r + 1)$. We will then decrease the number of orbits by $s -r + 1$, since the eliminated points are each unique representatives of an orbit.
	
	\item \emph{Trimming}:
	The trimming operation simplifies an orientation-reversing pairing whose
	domain and range overlap. 
	
	\begin{remark}
		In our applications in this paper, the orientability of the underlying surface implies that there will be no trimming.
	\end{remark}

	\item \emph{Truncation}:
	Given an interval that lies in the domain or range of exactly one pairing, we can remove it without changing the orbit structure. This operation can be applied to remove points from the right of the interval $[1,N]$.
	Assume that there is a pairing $g \colon [a,b] \rightarrow [c,N]$ and a value $N'$ with $c \leq N'+1 \leq N$, such that all points in the interval $[N'+1,N]$ are in the range of only $g$. Truncating $g$ shortens the interval $[1,N]$ to the interval $[1,N']$, and similarly shortens the domain and range of $g$. If $g$ is orientation-reversing, truncation is applied only when $g$ has disjoint domain and range (i.e. after trimming).
	
	\item \emph{Transmission}: In transmission, a pairing $g_1$ is used to shift the domain and range of a second pairing $g_2$ to the left as much as possible. Once the pairings are shifted leftwards, we can subsequently apply truncation. Transmissions allow significant simplifications of the orbit counting problem in one step.
	
	If $g_1$ is orientation-reversing and has overlapping domain and range, then as a first step in transmission we trim $g_1$. Now consider a pairing $g_1$, and a second pairing $g_2$ whose range is contained in the range of $g_1$. Consider two cases:
	
	\begin{enumerate}
		\item[a)] If $\text{domain}(g_2) \nsubseteq \text{range}(g_1)$: then define the map $g'_2 = g_1^{-r} \circ g_2$ , where $r = 1$ if $g_1$ is orientation-reversing and otherwise $r \geq  1$ is the largest integer such that $g_1^{-r+1}([c_2 , d_2 ])$ is contained in the range of $g_1$. 
		\item[b)] If $\text{domain}(g_2) \subseteq \text{range}(g_1)$: then define the map $g'_2 = g_1^r \circ g_2 \circ g_1^s \colon g_1^{-s}([a_2 ,b_2 ]) \rightarrow 
		g_1^r([c_2 ,d_2 ])$, where $r$ is as above, $s = 1$ if $g$ is orientation reversing, and 
		otherwise $s \geq 1$ is the largest integer such that $g_1^{-s+1}([a _2, b_2 ])$ is contained in the range of $g_1$. The operation of replacing $g_2$ by $g_1^{-r} \circ g_2 \circ g_1^s$ is called a transmission of $g_2$ by $g_1$.
	\end{enumerate}

\end{itemize}

The AHT algorithm repeatedly applies the following steps (1)--(6) to the orbit counting problem.

\begin{enumerate}
	\item Delete any pairings that are restrictions of the identity.
	
	\item Make any possible contractions and, if any exist, increment the orbit counter by the sum of the number of points deleted by the contractions. If the number of pairings remaining is zero, output the number of orbits and stop.
	
	\item Trim all orientation-reversing pairings whose domain and range overlap.
	
	\item Search for pairs of periodic pairings $g_i$ and $g_j$ with periods $t_1$ and $t_2$ and with overlapping periodic intervals $R_1$ and $R_2$ such that $\text{width}(R_1 \cap R_2) \geq t_1+t_2$. If any such pair exists, then perform a merger replacing $g_i$ and $g_j$ by a single periodic pairing. Repeat until no mergers can be performed.
	
	\item Find a maximal $g_i$. For each $g_j \neq g_i$ whose range is contained in $[c_i,N']$, transmit $g_j$ by $g_i$.
	
	\item Find the smallest value of $c$ such that the interval $[c,N']$ intersects the range of exactly one pairing. Truncate the pairing whose range contains the interval $[c,N']$.
	
\end{enumerate}	

A \emph{cycle} of the AHT algorithm consists of applying steps (1)--(6) above. 

\begin{example}[Euclid's algorithm for GCD]
	Agol, Hass, and Thurston's algorithm is in a sense a generalisation of Euclid's algorithm for computing the greatest common divisor (GCD) of two natural numbers $a$ and $b$. Let $N = a+b$, and consider the interval $[1, N]=[1, a+b]$ with pairings 
	\begin{align*}
		&f(x)= x+a , \hspace{6mm} &f \colon [1, b] \rightarrow [a+1, a+b ], \\
		&g(x) = x+b, \hspace{6mm} &g \colon [1, a] \rightarrow [b+1, a+b].
	\end{align*}
	The number of orbits for the action of the pseudogroup generated by $f$ and $g$ is equal to $d: = \mathrm{GCD}(a, b)$. This will be a consequence of the upcoming discussion (or one can show it directly).
	
	
	Let us apply one cycle of AHT algorithm to this orbit counting problem. Steps (1)--(4) are not applicable for the first cycle, and so we apply Step (5). Assume that $a<b$. Then $f$ is maximal, and we transmit $g$ by $f$. Let $b = na + r$ where $n$ is an integer and $0 \leq r < a$. The transmission replaces $g$ by $g' = f^{-n} \circ g$ where
	\[ g'(x) = x+ b - na = x+r \hspace{6mm} g' \colon [1, a] \rightarrow [r+1, r+a]. \]
	Then Step (6) truncates $f$ and replaces $N$ with $N' = r+a$ and $f$ with $f'$ where 
	\[ f'(x) = x+a, \hspace{6mm} f' \colon [1, r] \rightarrow [ a+1, r+a]. \]
	Hence, after one cycle of the AHT algorithm we have replaced the pair $(a, b)$ with the pair $(r, a)$. This is essentially reflecting the equality $\mathrm{GCD}(a,b) = \mathrm{GCD}(r, a)$ in Euclid's algorithm. Repeating this procedure, we obtain the number of orbits. Note that this inductively shows that the number of orbits is equal to $\mathrm{GCD}(a, b)$, where the base of the induction corresponds to the case $a= 0 $.
	\label{ex:Euclid's algorithm}
\end{example}

In Proposition \ref{running AHT}, we will be interested in the effect of applying the AHT algorithm to certain orbit counting problems associated with train tracks. In particular, we would like to see the effect of one cycle as a geometric change in the underlying train track. In order to see a meaningful geometric change, we will shift the steps in one cycle of the AHT algorithm as follows.

\begin{definition}[Shifted cycle of the AHT algorithm]
	The first \emph{shifted cycle} of the AHT algorithm applies steps (1)--(4) of the orbit counting algorithm. Afterward, every shifted cycle applies steps (5)--(6) followed by steps (1)--(4).  Clearly the AHT algorithm is the successive application of shifted cycles as well.
	\label{def:shifted OCA}
\end{definition}

\subsection{Normal form for a simple curve with respect to a triangulation}

Let $\mathcal{T}$ be a triangulation of a compact surface. For a triangle $\Delta$ of $\mathcal{T}$, a \emph{normal arc} is a simple arc with endpoints lying on the interior of edges of $\Delta$ and joining two different sides of $\Delta$. Two normal arcs $\alpha$ and $\beta$ in $\Delta$ are of the same \emph{type} if there is an isotopy of $\Delta$, preserving the edges and vertices of $\Delta$ throughout, taking one arc to the other. Each triangle contains $3$ types of normal arcs, and so there are in total $3t$ types of normal arcs, where $t$ is the number of triangles in $\mathcal{T}$; see Figure \ref{fig: normal arcs}. Let $\mathcal{\gamma}$ be a properly embedded simple multicurve on $S$. We allow $\gamma$ to consist of both simple arcs and simple closed curves. We say $\gamma$ is a \emph{normal curve} with respect to $\mathcal{T}$ if for each triangle $\Delta$ of $\mathcal{T}$, the intersection $\gamma \cap \Delta$ is a union of normal arcs. Fix a bijection between the set of normal arc types and $\{1 , 2 , \cdots, 3t \}$. The \emph{normal vector} or the \emph{normal coordinates} $(\gamma) \in \mathbb{Z}^{3t}$ is the vector recording the number of normal arcs of each type for $\gamma$. 

\begin{figure}
	\includegraphics[width = 1.5 in]{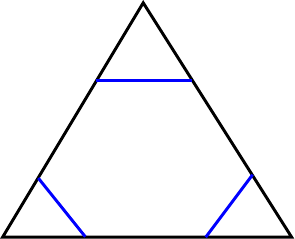}
	\caption{The three types of normal arcs in a triangle.}
	\label{fig: normal arcs}
\end{figure}

Any properly embedded simple multicurve on $S$, with no homotopically trivial simple closed curve component, can be isotoped through isotopies fixing $\partial S$ setwise to a normal curve. 

In the next few subsections, we extend the definition of normal form to other more general situations, which will be used in this paper.

\subsection{Complexity of integral vectors}

Given a vector $v = (v_1, \cdots, v_n) \in \mathbb{Z}^n$ define the \emph{$\ell^1$-norm} of $v$ as 
\[ |v|_1 := \sum_{i=1}^n |v_i|. \]
Given an integer $a$, let $\mathrm{dig}(a)$ be the number of digits of $a$ in binary. 
In the case where $a$ is negative, we view the minus sign in front as an extra digit.
The \emph{bit-sized complexity} of $v$ is defined as 
\[ |v|_{\mathrm{bit}} := \sum_{i=1}^n \mathrm{dig}(v_i).   \]

If $\gamma$ is a normal curve with respect to a triangulation and $v = (\gamma)$ is the normal vector for $\gamma$, then we can speak of the \emph{$\ell^1$-norm of $\gamma$} as $|(\gamma)|_1$, and of the \emph{bit-sized complexity of $\gamma$} as $|(\gamma)|_\mathrm{bit}$.

\label{complexity of vectors}

\subsection{Normal form for pants decompositions }

\label{sec: normal form for pants decompositions}

We define the Dehn parametrisation of isotopy classes of multicurves on a surface, following Penner and Harer \cite[page 13]{penner1992combinatorics}. We then modify Dehn coordinates slightly to define a normal form for a pants decomposition with respect to another pants decomposition. The motivation for this modification is that in Theorem \ref{thm:upper bound for distance in pants graph}, the isotopy class of $P'$ is needed only up to the action of the mapping class group of $S - P$. 

Let $S$ be a compact orientable surface of genus $g$ with $b$ boundary components and let $P$ be a pants decomposition of $S$. Denote the components of $P$ by $\alpha_r$ where $1 \leq r \leq |P|= 3g-3+b$. For each pants curve $\alpha_i$, choose a closed arc $w_i \subset \alpha_i$ called a \emph{window}. For each pair of pants $F$ in $P$, choose a collection of properly embedded arcs $\ell_{i,j}$ ($1 \leq i, j \leq 3$) in $F$ with endpoints contained in the windows as in Figure \ref{fig: Dehn coordiantes}. Let $(m_r)$ be non-negative integers and $(t_r)$ be integers for $1 \leq r \leq  |P|$ such that 

\begin{enumerate}
	\item[-] if $m_r = 0$ then $t_r \geq 0$; 
	\item[-] for each embedded pair of pants $F$, the sum $m_{i_1}+m_{i_2} +m_{i_3}$ corresponding to the three boundary components of $F$ is even; and
	\item[-] the number $m_r$ corresponding to a pants curve that bounds a torus minus a disc or a twice punctured disc is even. 
\end{enumerate} 

\begin{figure}
	
	\includegraphics*[width = 4 in]{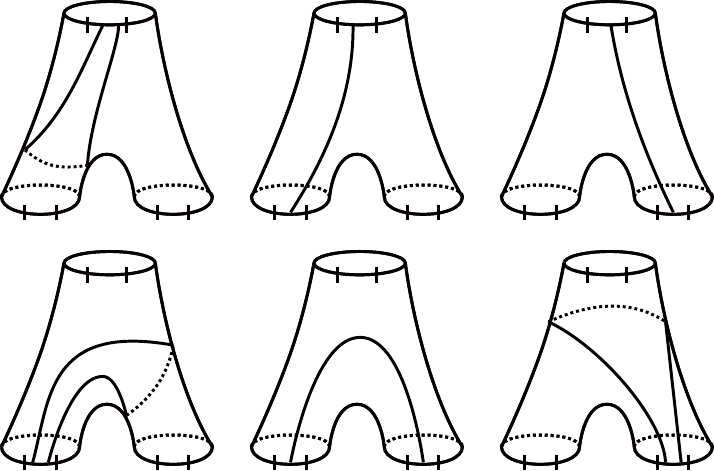}
	\caption{The windows $w_i$ and the arcs $\ell_{i,j}$. }
	\label{fig: Dehn coordiantes}
	
\end{figure}

Then we can construct a multicurve $\gamma$ as follows: take a number of parallel copies of the arcs $\ell_{i,j}$ such that the intersection number with $\alpha_r$ is exactly $m_r$. These parallel arcs can be naturally pasted together to produce a multicurve, and this corresponds to the case of $t_r = 0$ and $m_r \neq 0$. If $m_r, t_r \neq 0$, then we paste these parallel arcs on the two sides of $\alpha_r$ by a $(\frac{t_t}{m_r})$-fractional Dehn twist, where the twist is right-handed if $t_r >0$. If $t_r \neq 0$ and $m_r = 0$ then we add $t_r$ parallel copies of $\alpha_r$ to the multicurve. The numbers $m_r$ are called the \emph{intersection numbers}, and $t_r$ are called the \emph{twisting numbers}. The isotopy class of every multicurve, with all components essential, can be uniquely represented in this way. This is called the \emph{Dehn coordinates} of the multicurve. 

In our applications, we only need the twisting numbers $t_r$ modulo the integers $m_r$, since a full twist (that is a Dehn twist) about $\alpha_r$ lies in the mapping class group of $S - P$.
We say that a pants decomposition $P'$ is given in normal form with respect to a pants decomposition $P$ if the coordinates $(m_r, t_r \hspace{2mm} \mathrm{mod} \hspace{2mm} m_r)$ of $P'$ with respect to $P$ are given ($1 \leq r \leq |P|$). 

\subsection{Normal form for triangulations with common vertices }

\begin{definition}[Normal form for a triangulation]
	Let $\mathcal{T}$ and $\mathcal{T}'$ be triangulations of a compact surface $S$ with the same set of vertices. We say that $\mathcal{T}'$ is in \emph{normal form with respect to $\mathcal{T}$} if for each triangle $\Delta$ of $\mathcal{T}$, the intersection $\gamma \cap \Delta$ consist of a union of arcs of the following types 
	\begin{enumerate}
		\item[-] an arc connecting different sides of $\Delta$; or 
		\item[-] an arc connecting a common vertex of $\Delta$ and $\mathcal{T}'$ to its opposite side; or 
		\item[-] a side of $\Delta$ whose endpoints are vertices of $\mathcal{T}'$.
	\end{enumerate}
	\label{def:normal form triangulation}
\end{definition}
See Figure \ref{fig:normal form for triangulations} left for some examples of arcs that are normal, and Figure \ref{fig:normal form for triangulations} right for some non-examples. 

\begin{figure}
	\labellist
	\pinlabel $A$ at -8  0
	\pinlabel $B$ at 45 90
	\pinlabel $B'$ at 185 90
	\pinlabel $C$ at 112 0
	\pinlabel $A'$ at 135 0
	\pinlabel $C'$ at 257 0
	\pinlabel $D$ at 80 60
	\pinlabel $F$ at 95 35
	\pinlabel $E$ at 60 10
	\pinlabel $D'$ at 170 60
	\pinlabel $E'$ at 190 -8
	\pinlabel $F'$ at 220 -8
	\endlabellist
	\includegraphics[width = 3 in]{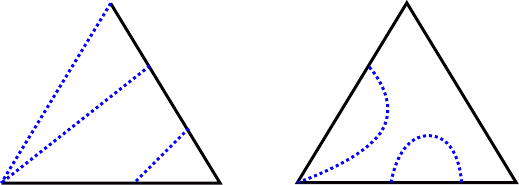}
	\caption{Here $ABC$ and $A'B'C'$ show two triangles of $\mathcal{T}$. Left: The three (blue dashed) arcs $AB$, $AD$, and $EF$ of $\mathcal{T}'$ are normal with respect to $\mathcal{T}$. Right: The two (blue dashed) arcs $A'D'$ and $E'F'$ of $\mathcal{T'}$ are not normal with respect to $\mathcal{T}$.}
	\label{fig:normal form for triangulations}
\end{figure}

\begin{prop}
	Let $\mathcal{T}$ and $\mathcal{T}'$ be triangulations of a compact surface $S$ with the same set of vertices. Assume that every edge of $\mathcal{T}'$ with the same endpoints is homotopically essential. Then $\mathcal{T}'$ can be isotoped relative to its vertices to be in normal form with respect to $\mathcal{T}$.
	\label{normal form exists}
\end{prop}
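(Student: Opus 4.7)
My plan is to argue by an innermost-disk reduction, following the standard technique for putting curves in normal form with respect to a triangulation, adapted to the setting where $\mathcal{T}$ and $\mathcal{T}'$ share a common vertex set. First I would isotope $\mathcal{T}'$ rel its vertices into general position with $\mathcal{T}$, so that each edge of $\mathcal{T}'$ meets each edge of $\mathcal{T}$ transversely in finitely many points, none of which lie at the common vertices. Let $c(\mathcal{T}')$ count the total number of such transverse crossings.

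For each triangle $\Delta$ of $\mathcal{T}$, I would classify the components of $\mathcal{T}' \cap \Delta$ by the location of their endpoints on $\partial \Delta$. The permitted (normal) types are (N1) arcs joining the interiors of two distinct sides, (N2) arcs joining a corner of $\Delta$ to the interior of the opposite side, and (N3) arcs equal to a side of $\Delta$. Any other arc is \emph{bad}, and is of one of the following types: (B1) both endpoints in the interior of a single side; (B2) one endpoint at a corner $v$ and the other in the interior of a side adjacent to $v$; (B3) both endpoints at the same corner $v$; or (B4) endpoints at two distinct corners of $\Delta$ with the arc running through the interior rather than along the shared side. For each bad arc $a$ of type (B1), (B2), or (B4), there is a subarc $\beta \subset \partial \Delta$ such that $a \cup \beta$ bounds a disk $D \subset \Delta$. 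Choosing a bad arc whose $D$ is innermost (its interior disjoint from $\mathcal{T}'$), I would isotope the edge of $\mathcal{T}'$ containing $a$ across $D$: for (B1) and (B2) this strictly reduces $c(\mathcal{T}')$, and for (B4) it replaces $a$ by the shared side (converting $a$ to type (N3)) without increasing $c(\mathcal{T}')$ while strictly decreasing the number of bad arcs in $\Delta$. Iterating, after finitely many steps no bad arc remains, which is the desired normal form.

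The main obstacle is case (B3), an arc $a$ forming a loop at a corner $v$ inside $\Delta$. Since a vertex of $\mathcal{T}'$ can appear on an edge of $\mathcal{T}'$ only as an endpoint, both endpoints of $a$ lying at $v$ forces the edge $\gamma$ of $\mathcal{T}'$ containing $a$ to be a loop at $v$ with $a = \gamma$ entirely inside $\Delta$. Then $\gamma$ bounds a disk in $\Delta$, hence is null-homotopic, contradicting the hypothesis that every edge of $\mathcal{T}'$ with the same endpoints is essential. So case (B3) never arises. A secondary technical point is verifying that the innermost-disk isotopy keeps $\mathcal{T}'$ globally embedded; this follows from the innermost choice of $D$, which ensures the interior of $D$ is disjoint from all of $\mathcal{T}'$, so pushing the edge of $\mathcal{T}'$ across $D$ creates no new self-intersections or crossings with other edges of $\mathcal{T}'$.
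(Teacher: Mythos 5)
Your proof is correct and follows the same bigon-removal strategy as the paper: put $\mathcal{T}'$ in general position rel vertices, perform innermost-disk isotopies to eliminate bigons between $\mathcal{T}$ and $\mathcal{T}'$, and invoke the essential-edges hypothesis to rule out the remaining forbidden type, a component of $\mathcal{T}'\cap\Delta$ that is a loop at a corner (the paper phrases this as a ``simple closed curve lying entirely in $\Delta$''). Your case analysis is somewhat more explicit than the paper's; in particular your case (B4), pushing a corner-to-corner edge onto the shared side, addresses a step where the paper's stated measure $|\mathcal{T}'\cap\mathcal{T}|$ does not strictly decrease, and the lexicographic refinement you use to get termination is a genuine, if minor, tightening of that argument.
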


\begin{proof}
	First, isotope $\mathcal{T}'$ relative to its vertices to be in general position with respect to $\mathcal{T}$. Isotope $\mathcal{T}'$ relative to its vertices and remove any bigons with one side in $\mathcal{T}$ and one side in $\mathcal{T}'$. This reduces the weight $|\mathcal{T}' \cap \mathcal{T}|$ until no bigons are left. At this point, for any triangle $\Delta$ of $\mathcal{T}$, any component of $\mathcal{T}' \cap \Delta $ is a normal arc or a simple closed curve lying entirely in $\Delta$. The latter possibility is ruled out by the assumption that every edge of $\mathcal{T}'$ with the same endpoints is homotopically essential.
\end{proof}

\subsection{Normal form for a 1-complex with respect to a polygonal decomposition} 


\begin{definition}[Polygonal decomposition]
	A \emph{polygonal decomposition} $\mathcal{D}$ of a compact surface $S$ is an embedded connected finite $1$-complex such that 
	\begin{enumerate}
		\item[-] each complementary region adjacent to a boundary component of $S$ is topologically either a disc or an annular neighbourhood of the boundary component; and
		\item[-] every other complementary region is a topological disc.
	\end{enumerate} 
	\label{def: polygonal decomposition}
\end{definition}

The next definition is a generalisation of the usual notion of normal form for multicurves.

\begin{definition}[Normal form for a 1-complex with respect to a polygonal decomposition]
	Let $\mathcal{T}$ be a polygonal decomposition of a compact surface $S$, and $\gamma$ be a finite 1-complex embedded in $S$. We say that $\gamma$ is in \emph{normal form with respect to $\mathcal{T}$} if 
	\begin{enumerate}
		\item $\gamma$ has no homotopically trivial simple closed curve component that lies in a 2-cell of $\mathcal{T}$; and 
		\item there are no bigons between $\mathcal{T}$ and $\gamma$ except possibly those bigons that were already present in $\gamma$; i.e. there is no embedded disc $D \subset S$ whose interior is disjoint from $\gamma \cup \mathcal{T}$ and with $\partial D = \alpha \cup \beta$ where $\alpha$ and $\beta$ are arcs with $\partial \alpha = \partial \beta$, and $\alpha$ lies in an edge of $\mathcal{T}$ but not in an edge of $\gamma$, and $\beta$ lies in an edge of $\gamma$.
	\end{enumerate}
	\label{def:normal form1-complex}
\end{definition}

\begin{figure}
	\includegraphics[width = 3 in]{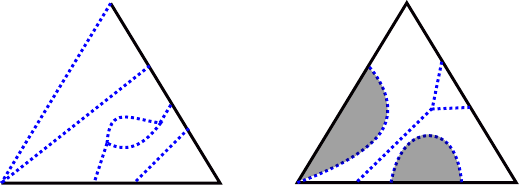}
	\caption{Here the two triangles belong to the polygonal decomposition $\mathcal{T}$, and the blue dashed 1-complex is the intersection of the 1-complex $\gamma$ with the corresponding triangles. On the left the restriction of $\gamma$ to the triangle is in normal form with respect to $\mathcal{T}$. On the right $\gamma$ in not in normal form with respect to $\mathcal{T}$ because of either of the two shaded bigons between $\mathcal{T} $ and $\gamma$.  }
	\label{fig:normal form for 1-complexes}
\end{figure}

See Figure \ref{fig:normal form for 1-complexes} left for an example of a 1-complex in normal form with respect to a polygonal decomposition, and Figure \ref{fig:normal form for 1-complexes} right for a non-example. 

\begin{prop}
	Let $\mathcal{T}$ be a polygonal decomposition of a compact surface $S$, and $\gamma$ be a finite 1-complex embedded in $S$. Then $\gamma$ can be isotoped relative to its vertices to be in normal form with respect to $\mathcal{T}$.
	\label{normal form exists}
\end{prop}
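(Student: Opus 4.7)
The plan is a bigon-reduction argument, adapted from the standard proof for a simple closed curve.

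First I would isotope $\gamma$ rel its vertex set into general position with respect to $\mathcal{T}$, so that edges of $\gamma$ meet the $1$-skeleton $\mathcal{T}^{(1)}$ transversely in their interiors and away from the vertices of $\mathcal{T}$, apart from arcs on which $\gamma$ and $\mathcal{T}$ already coincide. Attach to this configuration the lexicographic complexity $c(\gamma,\mathcal{T}) = (c_2, c_1)$, where $c_2$ counts the homotopically trivial simple closed curve components of $\gamma$ with no vertices that lie entirely in a single $2$-cell of $\mathcal{T}$, and $c_1$ counts the transverse intersection points of $\gamma$ with $\mathcal{T}^{(1)}$ that do not lie on a shared arc. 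When both coordinates vanish, $\gamma$ is in normal form, so it suffices to show that whenever either condition of Definition \ref{def:normal form1-complex} fails, $c$ can be strictly decreased by an isotopy rel the vertex set of $\gamma$.

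Whenever condition (2) fails, there is an offending bigon $D$ with $\partial D = \alpha \cup \beta$; choose $D$ innermost, so that $\mathrm{int}(D)$ is disjoint from $\gamma \cup \mathcal{T}$. Since $\alpha$ lies in an edge of $\mathcal{T}$ but in no edge of $\gamma$, it contains no vertex of $\gamma$, so pushing $\beta$ through $D$ across $\alpha$ is an admissible isotopy rel vertices that strictly lowers $c_1$ without raising $c_2$. Whenever condition (1) fails for a trivial simple closed curve component $C$ lying in a $2$-cell $F$, the component $C$ carries no vertex, so a small isotopy pushes $C$ across an edge of $\partial F$ into an adjacent region; this lowers $c_2$ by one, possibly at the cost of raising $c_1$, but strictly lowers $(c_2,c_1)$ in lexicographic order. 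Iterating, well-ordering of $\mathbb{Z}_{\geq 0}^2$ forces the process to terminate at $c = (0,0)$, giving the required normal form.

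The main obstacle I anticipate is the interaction between the two reductions: removing a bigon can in principle push a trivial simple closed curve component back into a single $2$-cell, raising $c_2$ and breaking the lexicographic decrease. I plan to handle this by always exhausting bigon reductions before performing any SCC relocation, and then arguing that the bigons created by pushing $C$ across $\partial F$ either fall within the ``already present in $\gamma$'' clause of condition (2) or can be removed by a further round of bigon reductions that leave the other components of $\gamma$, and hence $c_2$, unchanged. The remaining verifications are routine consequences of general position.
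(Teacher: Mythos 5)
Your bigon-reduction strategy matches the paper's at a high level, but there is a genuine gap in the step that eliminates an offending bigon. You assert that because $\alpha$ lies in an edge of $\mathcal{T}$ but in no edge of $\gamma$, it contains no vertex of $\gamma$; this does not follow, and it can fail. The endpoints $\partial\alpha = \partial\beta$ are points of $\gamma \cap \mathcal{T}^1$, and nothing prevents one or both from being vertices of $\gamma$ that happen to lie on $\mathcal{T}^1$. When a corner of the bigon is a vertex of $\gamma$, ``pushing $\beta$ through $D$ across $\alpha$'' requires moving that vertex and so is not an isotopy rel $\gamma^0$. The paper's proof splits the reduction in two: first it repeatedly removes bigons having at least one corner not in $\gamma^0$, strictly reducing the transverse intersection count $i(\gamma,\mathcal{T})$; once every remaining bigon has both corners at vertices of $\gamma$, such a bigon is removed not by an isotopy across $\alpha$ but by pushing $\beta$ \emph{into} $\alpha$, so that $\beta$ comes to lie inside an edge of $\mathcal{T}$. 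That move leaves $i(\gamma,\mathcal{T})$ (your $c_1$) unchanged, so your single count cannot track it; the paper instead uses the strictly increasing number of edges of $\gamma$ lying inside edges of $\mathcal{T}$ to terminate this second phase.

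A secondary problem concerns your treatment of condition (1). Pushing a trivial circle $C$ across a single edge of $\partial F$ creates a bigon both of whose corners are transverse intersection points (since $C$ carries no vertices), and the very next round of bigon reduction pushes $C$ straight back into $F$, so the lexicographic count cycles rather than descends. Your proposed escape --- that the newly created bigons fall within the ``already present in $\gamma$'' clause, or can be removed without raising $c_2$ --- is precisely what fails: these bigons are the forbidden kind, and removing them restores $c_2$ to its previous value. What does work is to isotope $C$ to encircle a vertex of $\mathcal{T}$ of valence at least three, so that the complementary pieces between $C$ and $\mathcal{T}$ are triangles rather than bigons and no reduction applies. (Note that the paper's own proof only argues condition (2) explicitly and leaves (1) implicit, so here you are attempting more than the paper, but the step as written is circular.)
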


\begin{proof}
	Denote the $i$-skeleton of $\mathcal{T}$ and $\gamma$ by respectively $\mathcal{T}^i$ and $\gamma^i$ for $i=0,1$. First, isotope $\gamma$ relative to its vertices to be in general position with respect to $\mathcal{T}$; i.e. such that $\gamma^1 - \gamma^0$ is transverse to $\mathcal{T}^1 - \mathcal{T}^0$. All isotopes we consider will be relative to the vertices of $\gamma$. By a bigon we mean a bigon between $\mathcal{T}$ and $\gamma$. Denote by $i(\gamma, \mathcal{T})$ the number of transverse intersections of $\gamma^1 - \gamma^0$ with $\mathcal{T}^1 - \mathcal{T}^0$. If $B$ is a bigon such that at least one vertex of $B$ is not in $\gamma^0$, then we can isotope $\gamma$ relative to its vertices and remove $B$ while reducing the intersection number $i(\gamma, \mathcal{T})$. Repeat until every bigon has both vertices in $\gamma^0$. For any remaining bigon $B$ with $\partial B = \alpha \cup \beta$, if one side of $\partial B$, say $\alpha$, lies in an edge of $\mathcal{T}$ but not in an edge of $\gamma$, then isotope the other side $\beta$ of $\partial B$ into $\alpha$, and increase the number of edges of $\gamma$ that lie in an edge of $\mathcal{T}$. Repeating this process puts $\gamma$ in normal form with respect to $\mathcal{T}$.
\end{proof}

\begin{definition}[Normal coordinates of a 1-complex with respect to a polygonal decomposition]
Let $\mathcal{T}$ be a polygonal decomposition of a compact surface $S$, and let $\gamma$ be a 1-complex that is in normal form (Definition \ref{def:normal form1-complex}) with respect to $\mathcal{T}$. Then $\mathcal{T}$ cuts $\gamma$ into connected components, each of which is either  
\begin{enumerate}
	\item a graph with at least one vertex that is embedded in a closed 2-cell of $\mathcal{T}$; or
	\item an arc properly embedded in a 2-cell of $\mathcal{T}$ whose endpoints lie on distinct edges of the 2-cell. 
\end{enumerate}

Let $\Delta$ be a 2-cell of $\mathcal{T}$ and $a,b$ be two distinct sides of $\Delta$. Given two disjoint simple properly embedded arcs $\gamma_1, \gamma_2 \subset \Delta$ as in (2) above and joining $a$ and $b$, there is a rectangle with two opposite sides $\gamma_1$ and $\gamma_2$ and the other two sides lying on $a$ and $b$. We say that $\gamma_1$ and $\gamma_2$ are of the same \emph{type} if any other piece of $\gamma$ in $R$ is a properly embedded arc joining $a$ and $b$ as well. We can specify $\mathcal{T}$ by specifying a weighted 1-complex $\mathcal{N}$ in each 2-cell of $\mathcal{T}$ where  
\begin{enumerate}
	\item[-] $\mathcal{N}$ contains all components as in (1) above, and
	\item[-] for each arc type appearing in (2), $\mathcal{N}$ contains one copy of the arc together with a positive integer weight that counts the number of arcs of that type. 
\end{enumerate}

The data of the weighted 1-complex $\mathcal{N}$ over all 2-cells of $\mathcal{T}$ is called the \emph{normal coordinates of $\gamma$ with respect to $\mathcal{T}$}.
\label{def: normal coordinates for 1-complex}
\end{definition}

Note that if $\gamma$ is a multicurve, then only components as in (2) above (i.e. arcs) appear, and we recover the usual notion of normal coordinates for a multicurve.

\begin{example}
	In Figure \ref{fig:normal coordinates for a 1-complex} left, the intersection of $\gamma$ with a triangle of $\mathcal{T}$ is shown, which can be seen to be in normal form. On the right the normal coordinates of $\gamma$ with respect to $\mathcal{T}$ are shown.
\end{example}

\begin{figure}
	\labellist
	\pinlabel $2$ at 175 10
	\pinlabel $2$ at 220 10
	\endlabellist
	
	\includegraphics[width= 3 in]{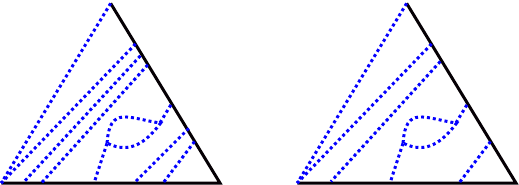}
	\caption{Left: the triangle belongs to the polygonal decomposition $\mathcal{T}$, and the dashed blue lines are the intersection of the 1-complex $\gamma$ with $\mathcal{T}$. Right: the normal coordinates of $\gamma$ with respect to $\mathcal{T}$ are shown.}
	\label{fig:normal coordinates for a 1-complex}
\end{figure}

\section{From pants decompositions to pre-triangulations, and vice versa}
\label{sec: pre-triangulations to pants}
\subsection{From pants decompositions to pre-triangulations}
\label{sec:pants decomposition-to-triangulation}

\begin{definition}[Bipartite adjacency graph of a pants decomposition]
Let $P$ be a pants decomposition of a compact connected orientable surface $S$. Define the \emph{bipartite adjacency graph} $\Gamma = \Gamma(P)$ with vertices $V = B \cup W$ partitioned into black $B$ and white $W$ colours as follows: black vertices correspond to pairs of pants in $S \setminus P$, white vertices correspond to simple closed curves in $P$, and each black vertex is connected to the (not necessarily distinct) white vertices corresponding to the essential boundary components of the pair of pants. 
\label{def:adjacency graph}
\end{definition}

\begin{definition}[Projection of a path to the adjacency graph]
	Let $S$, $P$, and $\Gamma = \Gamma(P)$ be as above. Let $\gamma \colon [0,1] \rightarrow S$ be a smooth path such that 
	\begin{enumerate}
		\item $\gamma$ is transverse to $P$, and 
		\item $\gamma$ has no backtrack with respect to the pants curves in $P$, i.e. there is no embedded bigon $D$ in $S$ with $\partial D = \alpha \cup \beta$ where $\alpha$ is a subarc of $\gamma$ and $\beta \subset P$ is an arc. 
	\end{enumerate}
	The \emph{projection of $\gamma$ to $\Gamma$} is a path defined as follows. Each component $\delta$ of $\gamma \setminus (\gamma \cap P)$ is an arc lying in a pair of pants $P_\delta$ such that one or both of its endpoints lie on $\partial P_\delta$. If both endpoints of $\delta$ lie on $\partial P_\delta$, then the projection of $\delta$ is the path of length 2 in $\Gamma$ joining the white vertices corresponding to $\partial P_\delta$. If $\delta$ is an arc with one endpoint on $\partial P_\delta$ and one endpoint in the interior of $P_\delta$ then the projection of $\delta$ is defined as the path of length 1 joining the the white vertex corresponding to $\delta \cap \partial P_\delta$ to the black vertex corresponding to $P_\delta$. Finally the projection of $\gamma$ is obtained by concatenating the projections of components $\delta$ of $\gamma \setminus (\gamma \cap P)$. Note that the projection of $\gamma$ is a path in the graph $\Gamma$, which does not depend on the parametrisation of $\gamma$. 
\end{definition}

\begin{definition}[A nerve for a pants decomposition]
Let $S$, $P$, and $\Gamma = \Gamma(P)$ be as above. Pick a base point $b \in S \setminus P$, and let $b_0$ be the black vertex of $\Gamma$ corresponding to the pair of pants containing $b$. Let $T$ be a maximal tree (also called spanning tree) for $\Gamma$. Denote the white vertices of $\Gamma$ by $w_i$ where $1 \leq i \leq m$, and let $\alpha_i \in P$ be the simple closed curve corresponding to $w_i$. For every vertex $w_i$, let $p_i$ be the unique shortest path in $T$ from $b_0$ to $w_i$. 

A \emph{nerve of $(P,b)$ compatible with $T$} is a collection $\gamma= \cup \gamma_i$ of simple arcs $\gamma_i \subset S$ such that
\begin{enumerate}
	\item $\gamma_i$ starts at $b$ and ends on $\alpha_i$; 
	\item $\gamma_i$ is transverse to $P$ and its projection to $\Gamma$ is the path $p_i$; and 
	\item  if the last white vertex of the path $p_i \cap p_j$ is $w_\ell$, then $\gamma_i \cap \gamma_j = \gamma_\ell$. In the special case that $p_i \cap p_j = \{ b_0 \}$, then $\gamma_i \cap \gamma_j = \{b \}$.
\end{enumerate}
A \emph{nerve of $(P,b)$} is a nerve of $(P,b)$ compatible with some maximal tree $T \subset \Gamma(P)$.
\label{def:nerve}
\end{definition}

\begin{example}
	Let $P$ be the pants decomposition of a surface of genus two as in Figure \ref{fig:adjacency-graph-pants-decomposition} top-left. Then the adjacency graph $\Gamma(P)$ is as in the top-right of the same figure. Let $b$ be a base point lying in the interior of the left pair of pants, and $T$ be the maximal tree as in bottom-right hand side of the figure. Then a nerve for $(P,b)$ compatible with $T$ is shown in the bottom-left side of the figure. 
\end{example}

\begin{figure}
	\labellist
	\pinlabel $b$ at 20 78
	\pinlabel $b_0$ at 305 40
	\pinlabel $\alpha_1$ at 55 140
	\pinlabel $\alpha_3$ at 200 140
	\pinlabel $\alpha_2$ at 145 145
	\pinlabel $w_1$ at 275 50
	\pinlabel $w_2$ at 325 58
	\pinlabel $w_3$ at 380 50
	\pinlabel $\gamma_1$ at 37 75
	\pinlabel $\gamma_2$ at 80 74
	\pinlabel $\gamma_3$ at 150 65
	\endlabellist
	\includegraphics[width = 4 in]{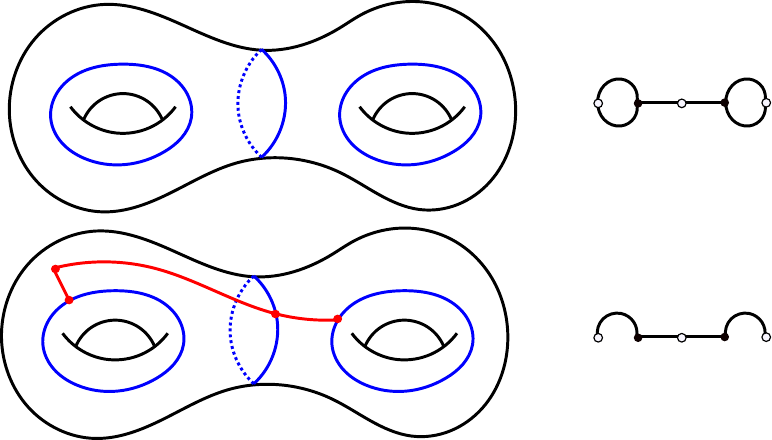}
	\caption{A pants decomposition $P$ (top-left), its adjacency graph (top-right), a maximal tree $T$ for the adjacency graph (bottom-right), a base point $b$ and a nerve for $(P,b)$ compatible with $T$ (bottom-left). }
	\label{fig:adjacency-graph-pants-decomposition}
\end{figure}

\begin{lem}
Given $S$, $P$, $T$, and $b$ as above, there is a nerve for $(P, b)$ compatible with $T$.
\end{lem}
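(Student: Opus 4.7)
The plan is to construct the arcs $\gamma_i$ inductively, one tree-level at a time, starting from $b_0$ and moving outward through $T$. Since $\Gamma$ is bipartite and $b_0$ is black, each white vertex lies at odd $T$-distance from $b_0$; I would process them in increasing order of this distance $2k-1$.

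For the base case $k=1$, let $F_0$ denote the pair of pants corresponding to $b_0$, so that $b \in F_0$. Each edge of $T$ incident to $b_0$ selects a specific boundary component of $F_0$, even when two boundary components of $F_0$ happen to represent the same white vertex (since $T$ is a tree, it uses only one of the parallel edges in such a case). Because $F_0$ is a three-holed sphere with at most three such distinguished boundary components, I would choose pairwise disjoint simple arcs inside $F_0$ starting at $b$ and ending one on each selected boundary component; these serve as $\gamma_i$ for the white neighbours $w_i$ of $b_0$ in $T$.

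For the inductive step, assume $\gamma_j$ has been defined whenever $d_T(b_0, w_j) \leq 2k - 1$. Consider a black vertex $b'$ at $T$-distance $2k$ from $b_0$, with corresponding pair of pants $F'$, and let $w_{i'}$ be the unique $T$-neighbour of $b'$ on the $T$-geodesic back to $b_0$. The curve $\alpha_{i'}$ is a boundary component of $F'$, and $\gamma_{i'}$ already terminates at some point $q \in \alpha_{i'}$, approaching from the opposite side. Inside $F'$, I would choose pairwise disjoint simple arcs starting at $q$ and ending one on each boundary component of $F'$ singled out by a $T$-edge from $b'$ to a white vertex other than $w_{i'}$; this is possible because $F'$ is a three-holed sphere. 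For each such white vertex $w_i$, define $\gamma_i$ by concatenating $\gamma_{i'}$ with the chosen arc inside $F'$, crossing $\alpha_{i'}$ transversely at $q$.

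Conditions (1) and (2) of Definition \ref{def:nerve} hold by construction. For condition (3), suppose $p_i \cap p_j$ has $w_\ell$ as its last common white vertex: then $\gamma_i$ and $\gamma_j$ were both produced by extending $\gamma_\ell$, and beyond $\gamma_\ell$ they either enter two different pairs of pants (hence are disjoint in $S$ past the endpoint $q \in \alpha_\ell$), or enter the same pair of pants along pairwise disjoint arcs emanating from the common point $q$ chosen at that step. Either way, $\gamma_i \cap \gamma_j = \gamma_\ell$. The main obstacle throughout is bookkeeping: one must keep careful track of which specific boundary component of a pair of pants is selected by each $T$-edge when $\Gamma$ has multiple edges between the same pair of vertices, or when a white vertex has children in $T$ through the same black neighbour, but the tree $T$ pins this down unambiguously, and a three-holed sphere has enough room to realise the small number of disjoint arcs required at each step.
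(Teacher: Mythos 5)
Your proposal is correct and follows essentially the same inductive strategy as the paper: build the arcs $\gamma_i$ outward from $b$ along $T$, one pair of pants at a time, attaching disjoint arcs for the new white vertices reached at each stage and handling siblings through the same black vertex simultaneously so that condition (3) holds. The paper indexes the induction by the white vertex of minimal unfilled distance (treating the sibling $w_r$ as a special case), while you index by the black vertex and process all its children at once, but these are just two bookkeepings of the same construction.
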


\begin{proof}
	Let $d_i$ be the length (number of edges) of the path $p_i$. Define the arcs $\gamma_i$ inductively based on the length $d_i$. The base case is when $d_i =1$. There are either 1, 2, or 3 paths $p_i$ with $d_i=1$, and the corresponding curves $\alpha_i$ will be boundary components of the pair of pants $P_0$ containing $b_0$. In this case we take $\gamma_i$ to be simple arcs joining $b$ to the corresponding curves in $\partial P_0$ that intersect only in $b$. Now assume that $\gamma_i$ is not defined yet, and $d_i$ is minimal with this property. Let $w_j$ be the white vertex that is distance 2 from $w_i$ along $p_i$. Let $b_k$ be the common neighbour of $w_i$ and $w_j$ in $p_i$, and $w_r$ ($r \neq i , j$) be the third neighbour of $b_k$. Then $\gamma_j$ is defined by hypothesis. Note that $\gamma_j \subset \gamma_i$ should hold. Similarly if $p_j \subset p_r$, then $\gamma_j \subset \gamma_r$ should hold. Now if $p_j \subset p_r$ then we define $\gamma_i \setminus \gamma_j$ and $\gamma_r \setminus \gamma_j$ simultaneously such that $\gamma_i \cap \gamma_r = \gamma_j$ as in Figure \ref{fig:nerve} right, and otherwise define $\gamma_i \setminus \gamma_j$ as in Figure \ref{fig:nerve} left. 
\end{proof}

\begin{figure}
	\labellist
	\pinlabel $\gamma_j$ at 89 25
	\pinlabel $\gamma_i$ at 86 120
	\pinlabel $\gamma_j$ at 320 25
	\pinlabel $\gamma_i$ at 320 120
	\pinlabel $\gamma_r$ at 353 120
	\endlabellist

	\includegraphics[width = 3 in]{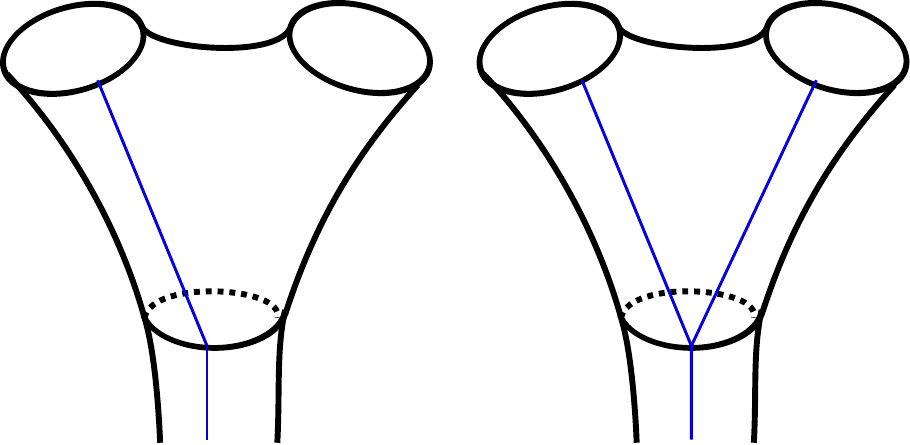}
	\caption{Constructing a nerve}
	\label{fig:nerve}
\end{figure}

\begin{remark}
	Assume that a triangulation $\mathcal{H}$ of $S$ is given such that $P$ is contained in the 1-skeleton of $\mathcal{H}$. Then we can construct a nerve $\gamma$ for $(P,b)$ compatible with $T$ such that each arc $\gamma_i$ passes through each face of $\mathcal{H}$ at most once, and it is disjoint from the vertices of the triangulation. To see this let $q_i$ be a shortest path in the dual graph of $\mathcal{H}$ connecting $\gamma_j \cap \alpha_j$ to $\alpha_i$ in the above proof. Now if $p_j \nsubseteq p_r$, then define $\gamma_i \setminus \gamma_j$ such that its projection to the dual graph of $\mathcal{H}$ is equal to $q_i$. In particular, $\gamma_i \setminus \gamma_j$ passes through each face of $\mathcal{H}$ at most once.  If $p_j \subset p_r$, define $q_r$ similar to $p_i$. Then after possibly replacing $q_r$ by another path of the same length and the same endpoints, $q_i$ and $q_r$ intersect each other in a connected interval; this follows from the shortest length hypothesis. Hence we can define $\gamma_i \setminus \gamma_j$ and $\gamma_r \setminus \gamma_j$ simultaneously such that their projections to the dual graph of $\mathcal{H}$ gives respectively $q_i$ and $q_r$, and they only intersect in a single point.
	\label{nerve intersect triangulation}  
\end{remark}

A pants decomposition is a pre-triangulation as well. The next construction shows how to produce a \emph{connected} pre-triangulation from a pants decomposition equipped with a nerve.

\begin{construction}[From a pants decomposition to a connected pre-triangulation]
Let $S$ be a compact connected orientable surface, and $P$ be a pants decomposition of $S$. Let $T$ be a maximal tree in the bipartite adjacency graph $\Gamma= \Gamma(P)$, and $b \in S \setminus P$ be a base point. Pick a nerve $\gamma$ for $(P,b)$ compatible with $T$. We construct a connected pre-triangulation $\mathcal{T} = \mathrm{\textbf{T}}(P, \gamma)$ of $S$. Assume the notations of Definitions \ref{def:adjacency graph} and \ref{def:nerve}. The only vertex of $\mathcal{T}$ is $b$. Moreover, $\mathcal{T}$ has an edge $e_i$ for each curve $\alpha_i$ in the pants decomposition, constructed inductively as follows
\begin{enumerate}
	\item First, for every white vertex $w_i$ adjacent to $b_0$, add an edge $e_i$ that runs from $b$ to $\alpha_i$ and remains close and almost parallel to $\gamma_i$, then goes around $\alpha_i$, and finally comes back to $b$ close and almost parallel to $\gamma_i$. 
	
	\item From now on at each step one or two edges are added. Pick a white vertex $w_i$ for which $\gamma_i$ is not defined yet and such that the distance between $w_i$ and $b_0$ in $T$ is minimal. Consider the shortest path $p_i$ in $T$ joining $b_0$ to $w_i$, and let $w_j $ be the white vertex along $p_i$ that is closest to $w_i$ ($j \neq i$). By hypothesis, the edge $e_j$ is already defined. Let $b_k$ be the common neighbour of $w_i$ and $w_j$ in $p_i$, and let $w_r$ ($r \neq i , j$) be the other neighbour of $b_k$ in $\Gamma(P)$. If the edge joining $b_k$ to $w_r$ appears in $p_r$, then add $e_i$ and $e_r$ as in Figure \ref{fig:pants-to-triangulation} Right; otherwise add $e_i$ as in Figure \ref{fig:pants-to-triangulation} Left. Again $e_i$ runs from $b$ to $\alpha_i$ remaining almost parallel to $\gamma_i$, then goes around $\alpha_i$, and finally comes back to $b$ almost parallel to $\gamma_i$
	
\end{enumerate}

Intuitively, $e_i$ is obtained by pulling $\alpha_i$ to $b$ along the path $\gamma_i$; the inductive process for constructing $e_i$ is to make sure that the edges $e_i$ only intersect each other at the common vertex $b$.

\label{const:pants-to-triangulation}
\end{construction}

\begin{figure}
\labellist
\pinlabel $\alpha_j$ at 50 50
\pinlabel $\alpha_j$ at 280 50 
\pinlabel $\alpha_i$ at 15 220
\pinlabel $\alpha_i$ at 250 220
\pinlabel $\alpha_r$ at 185 220
\pinlabel $\alpha_r$ at 420 220
\pinlabel $e_i$ at 80 100
\pinlabel $e_i$ at 305 100
\pinlabel $e_r$ at 360 100 
\pinlabel $e_j$ at 120 10
\pinlabel $e_j$ at 350 10

\endlabellist	
	
\includegraphics[width=3 in]{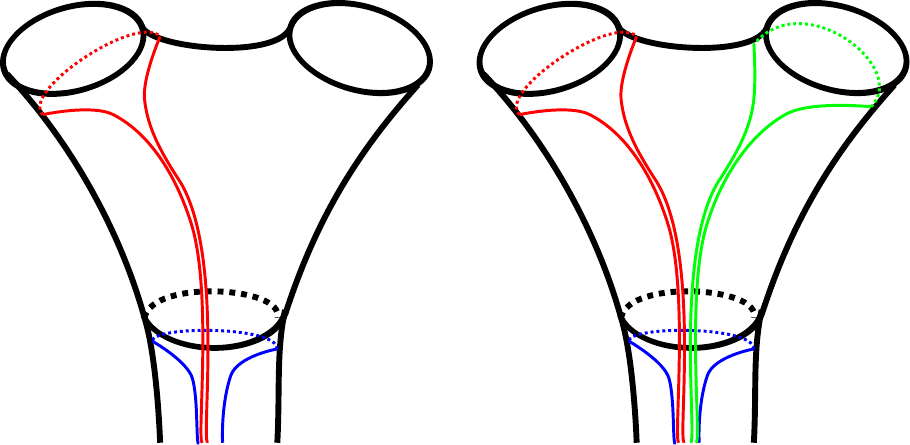}
\caption{From a pants decomposition to a connected pre-triangulation}
\label{fig:pants-to-triangulation}
\end{figure}

\begin{lem}
$\mathrm{\textbf{T}}(P, \gamma)$ is a pre-triangulation of $S$.
\end{lem}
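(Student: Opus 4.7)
The plan is to verify the two defining properties of a pre-triangulation: $\mathcal{T}=\mathrm{\textbf{T}}(P,\gamma)$ is (i) an embedded finite $1$-complex in $S$, and (ii) each component of $S\setminus\mathcal{T}$ is a disc, annulus, or pair of pants. Property (i) will follow by a direct induction on Construction \ref{const:pants-to-triangulation}. For property (ii), the strategy is to recognise $\mathcal{T}$ as a spine of a small regular neighbourhood $N$ of $P\cup\gamma$, which reduces the analysis to the easier combinatorics of $S\setminus(P\cup\gamma)$.

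For property (i), I would induct on the step of the construction. At each step the newly added loop $e_i$ (or pair of edges in the Figure \ref{fig:pants-to-triangulation} Right case) is routed as a loop at $b$ that runs parallel to the nerve arc $\gamma_i$ on a side not yet used by an existing edge, loops around $\alpha_i$ on the side facing $\gamma_i$, and returns along the opposite parallel of $\gamma_i$. The local pictures of Figure \ref{fig:pants-to-triangulation} show that this routing produces simple loops based at $b$ meeting the existing edges only at $b$; hence $\mathcal{T}$ is an embedded wedge of $m=|P|$ simple loops.

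For property (ii), fix a small regular neighbourhood $N=N(P\cup\gamma)\subset S$, so that $\mathcal{T}\subset\mathrm{int}(N)$ (since each $e_i$ lies in a neighbourhood of $\alpha_i\cup\gamma_i$). The inclusion $\mathcal{T}\hookrightarrow N$ sends each generator $e_i$ of $\pi_1(\mathcal{T},b)$ to the conjugate $\gamma_i\alpha_i\gamma_i^{-1}$ in $\pi_1(N,b)$, which is the $i$-th generator obtained from the spanning tree $\gamma$ of $P\cup\gamma$; hence the inclusion induces an isomorphism on $\pi_1$. Both $\mathcal{T}$ and $N$ being aspherical CW complexes, Whitehead's theorem together with the homotopy extension property for CW pairs exhibits $\mathcal{T}$ as a deformation retract of $N$, and a standard fact on spines of compact surfaces with boundary then identifies $N\setminus\mathcal{T}$ as an open collar of $\partial N$. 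Consequently, the components of $S\setminus\mathcal{T}$ are in natural bijection with, and homeomorphic to, the components of $S\setminus(P\cup\gamma)$.

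It remains to describe the components of $S\setminus(P\cup\gamma)$. The multicurve $P$ partitions $S$ into pairs of pants $F_k$ indexed by the black vertices $b_k\in\Gamma(P)$, and inside each $F_k$ the intersection $\gamma\cap F_k$ is a tree with boundary endpoints lying on exactly those curves $\alpha_j$ for which the edge $b_kw_j$ belongs to $T$ (together with an interior vertex at $b$ when $k=0$). Writing $d_k$ for the tree degree of $b_k$ in $T$, a direct local analysis inside $F_k$ shows that cutting $F_k$ along $\gamma\cap F_k$ yields a single region which is a disc if $d_k=3$, an annulus if $d_k=2$, and a pair of pants if $d_k=1$. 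A consistency check via Euler characteristic gives $\sum_k(d_k-2)=|E(T)|-2|B|=|W|-|B|-1=\chi(S)-\chi(\mathcal{T})$, confirming that no regions have been missed. The main obstacle is the spine identification in (ii): while the $\pi_1$-isomorphism is immediate from the construction, upgrading it to the full spine property (so that the complementary regions of $\mathcal{T}$ and of $P\cup\gamma$ match up homeomorphically) requires a small dose of standard surface topology.
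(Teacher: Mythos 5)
Your argument is correct, but it takes a noticeably different and heavier route than the paper's. The paper's proof is three sentences: since the nerve $\gamma$ is an embedded tree, collapsing it to the base point $b$ gives a quotient map $r\colon S\to S$ that is a homeomorphism away from $\gamma$; this map sends $P\cup\gamma$ onto $\mathrm{\textbf{T}}(P,\gamma)$, and hence carries the complementary regions of $P\cup\gamma$ homeomorphically onto those of $\mathrm{\textbf{T}}(P,\gamma)$. Since $P\cup\gamma$ is a pre-triangulation (asserted without proof in the paper), so is $\mathrm{\textbf{T}}(P,\gamma)$. In effect, the paper establishes directly the bijection between complementary regions that you derive as a consequence of spine theory.

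Your approach reaches the same correspondence by showing $\mathcal{T}$ is a spine of a regular neighbourhood $N=N(P\cup\gamma)$ via a $\pi_1$-isomorphism, Whitehead's theorem, and the homotopy extension property. This works, but the step you yourself flag as the ``main obstacle'' --- from ``$\mathcal{T}$ is a deformation retract of $N$'' to ``$N\setminus\mathcal{T}$ is a collar of $\partial N$'' --- is genuinely where the weight of the argument lands. This is a true statement for PL or smooth surfaces (e.g.\ because the Whitehead group of a free group vanishes and simple-homotopy-equivalent subcomplexes of a $2$-manifold with boundary are spines; or via a direct Euler characteristic and innermost-disc argument ruling out complementary components not touching $\partial N$), but it is not as off-the-shelf as ``standard fact'' suggests, and it is exactly the subtlety the paper's collapsing map bypasses. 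On the plus side, your explicit local analysis of $S\setminus(P\cup\gamma)$ inside each pair of pants $F_k$ (disc, annulus, or pair of pants according to the $T$-degree $d_k$ of $b_k$) is more detailed than what appears in the paper, which merely asserts that $P\cup\gamma$ is a pre-triangulation; that part of your write-up is a useful supplement.
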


\begin{proof}
	The nerve $\gamma$ is an embedded tree in $S$, and there is a collapsing map $r \colon S \rightarrow S$ that collapses $\gamma$ to the base point $b$, and the image of $P \cup \gamma$ under $r$ is the 1-complex $\mathrm{\textbf{T}}(P, \gamma)$. Each complementary component of $\mathrm{\textbf{T}}(P, \gamma)$ in $S$ is homeomorphic to a corresponding complementary component of $P \cup \gamma$ in $S$. Since $P \cup \gamma$ is a pre-triangulation, so is $\mathrm{\textbf{T}}(P, \gamma)$. 
	
\end{proof}

\begin{example}
	Let $P$ be the pants decomposition as in Figure \ref{fig:adjacency-graph-pants-decomposition} top-left, and $\gamma$ be the nerve as in Figure \ref{fig:adjacency-graph-pants-decomposition} bottom-left. Then the pre-triangulation $\mathrm{\textbf{T}}(P, \gamma)$ is shown in Figure \ref{fig:pants-decomposition-from-nerve}.
\end{example}

\begin{figure}
	\includegraphics[width= 2 in]{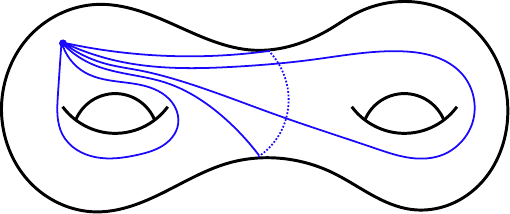}
	\caption{The pants decomposition associated with the nerve shown in Figure \ref{fig:adjacency-graph-pants-decomposition} bottom-left.}
	\label{fig:pants-decomposition-from-nerve}
\end{figure}

\subsection{From pre-triangulations to pants decompositions}
\label{sec:triangulation-to-pants decomposition}

\begin{construction}[From a pre-triangulation to a pants decomposition]
	Let $\mathcal{T}$ be a pre-triangulation of a compact connected orientable surface $S$. Pick a forest $T$ in the $1$-skeleton $\mathcal{T}^1$ of $\mathcal{T}$ such that each component of $T$ has at most one point of intersection with $ \partial S$, and choose an ordering $\mathrm{O}$ on the edges of $\mathcal{T} \setminus (T \cup \partial S)$ as $e_1, \cdots, e_\ell$. First define the subsurfaces $N_i$ inductively: $N_1$ is a regular neighbourhood of $T \cup \partial S \cup e_1$
	 in $S$, and $N_{i+1}$ is a regular neighbourhood of $N_{i} \cup e_{i+1}$ for each $1 \leq i < \ell$. Let $F_i$ be the subsurface obtained from $N_i$ by first capping off any boundary component that bounds a disc in $S \setminus N_i$ and then deleting any disc components. In particular, $F_i$ is isotopic to the subsurface that $e_1 \cup \cdots \cup e_i$ \emph{fills} minus disc components. Define the multicurve $P$ as the union of $\partial F_i$ for $1 \leq i \leq \ell$ after discarding any boundary parallel or repeated simple closed curves. We will show in Lemma \ref{lem: pre-triangulation-to-pants-decomposition} that $P =\mathrm{\textbf{P}}(\mathcal{T}, T , \mathrm{O})$ is a pants decomposition. If $T = \emptyset$, we abbreviate $\mathrm{\textbf{P}}(\mathcal{T}, T, \mathrm{O})$ to $\mathrm{\textbf{P}}(\mathcal{T},\mathrm{O})$.
	\label{const:triangulation-to-pants}
\end{construction}

\begin{example}
	Let $\mathcal{T}$ be the pre-triangulation shown in Figure \ref{fig:pants-decomposition-from-pre-triangulation}, and $\mathrm{O} = \{ e_1, e_2, e_3 \}$ be the ordering on its edges. Then the associated pants decomposition $\mathrm{\textbf{P}}(\mathcal{T} , \mathrm{O})$ is the one in Figure \ref{fig:adjacency-graph-pants-decomposition} top-left. 
\end{example}

\begin{figure}
	\labellist 
	\pinlabel $e_1$ at 50 67
	\pinlabel $e_2$ at 147 67
	\pinlabel $e_3$ at 200 67
	\endlabellist
	
	\includegraphics[width =2 in]{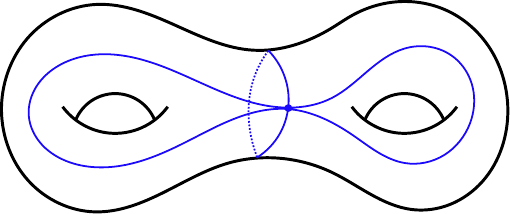}
	\caption{A pre-triangulation $\mathcal{T}$ with an ordering $\mathrm{O} = \{ e_1, e_2, e_3 \}$ on its edges. The associated pants decomposition $\mathrm{\textbf{P}}(\mathcal{T}, \mathrm{O})$ is as in Figure \ref{fig:adjacency-graph-pants-decomposition} top-left.} 
	\label{fig:pants-decomposition-from-pre-triangulation}
\end{figure}


A connected embedded subsurface $F \subset S$ is called \emph{essential} if it is $\pi_1$-injective. An embedded subsurface is \emph{essential} if each of its connected components is essential. When $F \subset S$ is an annulus or a pair of pants, then it is easy to see that $F$ is essential if and only if each curve in $\partial F$ is 
$\pi_1$-injective
in $S$.

\begin{lem}
	Each surface $F_{j}$ is obtained from $F_{j-1}$ by thickening the boundary components to the outside and then either attaching a $1$-handle, filling in a complementary annulus, or taking a disjoint union with at most one essential annulus ($2 \leq j \leq \ell$). In particular $F_j - F_{j-1}$ is a disjoint union of essential annuli and at most one essential pair of pants, and
	\[ \chi(F_j - F_{j-1}) =  \chi(F_j) - \chi(F_{j-1}) \geq -1. \]
	\label{lem:attaching a band}
\end{lem}

\begin{proof}
	The surface $F_{j-1}$ is obtained by taking a regular neighbourhood $N_{j-1}$ of $\partial S \cup T \cup e_1 \cup \cdots \cup e_{j-1}$, capping off any complementary discs, and then discarding any disc components. Consider three cases: 
	\begin{enumerate}
		\item The edge $e_j$ is a loop disjoint from $\partial S \cup T \cup e_1\cup \cdots \cup e_{j-1}$. In this case $N_j$ is the disjoint union of a regular neighbourhood of $N_{j-1}$ and an annulus. If the core curve of the annulus is essential then $F_j$ is the disjoint union of a regular neighbourhood of $F_{j-1}$ and an essential annulus, and otherwise $F_j$ is equal to a regular neighbourhood of $F_{j-1}$. Therefore $F_j - F_{j-1}$ is a disjoint union of essential annuli. 
		
		\item The edge $e_j$ has only one endpoint on $\partial S \cup T \cup e_1 \cup \cdots \cup e_{j-1}$. In this case $N_j$ is a regular neighbourhood of $N_{j-1}$, and $F_j$ is a regular neighbourhood of $F_{j-1}$. 
		
		\item The edge $e_j$ has both endpoints on $\partial S \cup T \cup e_1 \cup \cdots \cup e_{j-1}$. Then $N_j$ is obtained by taking a regular neighbourhood of $N_{j-1}$ and then attaching a band. In particular, $N_j - N_{j-1}$ is a union of annuli and one pair of pants. The pair of pants could be attached to $N_{j-1}$ along either one or two of its boundary components, and $N_j$ will have respectively either two or one new boundary components compared to $N_{j-1}$. Denote the boundary components of the attached pair of pants $R$ by $\{ \alpha, \beta , \gamma \}$. Now the lemma follows by considering various cases depending on the subset of $\{ \alpha, \beta, \gamma\}$ along which $R$ is attached to $N_{j-1}$, and the subset of $\{ \alpha, \beta,  \gamma \}$ that are homotopically trivial in $S$. For example if $R$ is attached to $N_{j-1}$ along $\alpha$ and $\beta$, and furthermore $\alpha$ and $\beta$ are homotopically essential but $\gamma$ is homotopically trivial, then the disc $D$ bounding $\gamma$ cannot contain $\alpha$ and $\beta$ in its interior, and in this case $F_j$ is obtained from $F_{j-1}$ by attaching the annulus $R \cup D$. The remaining cases are similar and we leave them to the reader. 
	\end{enumerate}
\end{proof}

\begin{lem}
$P = \mathrm{\textbf{P}}(\mathcal{T}, T, \mathrm{O})$ is a pants decomposition.


\label{lem: pre-triangulation-to-pants-decomposition}	
\end{lem}

\begin{proof}
	Assume that $\alpha$ is an essential simple closed curve in $S \setminus P$. We need to show that $\alpha$ is isotopic to a curve in $P$. Since $\mathcal{T}$ is a pre-triangulation, $F_\ell$ is the entire surface $S$ minus a (possibly empty) union of disjoint essential annuli and pairs of pants. Setting $F_0 = \emptyset$ and $F_{\ell +1} = S$, the surface $S$ can be written as the disjoint union of $F_{i+1}-F_i$ for $0 \leq i \leq \ell $. Since $\alpha$ can be isotoped to be disjoint from $P = \cup_{i=1}^{\ell} \partial F_i$, there is an index $i$ such that $\alpha \subset F_{i+1}-F_i$. However, by Lemma \ref{lem: pre-triangulation-to-pants-decomposition} every essential simple closed curve in $F_{i+1} - F_i$ is parallel to a component of $\partial F_{i+1}$ or $\partial F_i$ where $1 \leq i < \ell$. Moreover, since we assume that every component of $T$ has at most one point of intersection with $\partial S$, it follows that $F_1$ is a union of annuli and at most one pair of pants and $\partial S \subset \partial F_1$. This implies that $\alpha$ is isotopic to a curve in $P = \cup \partial F_i $. 


\end{proof}

\begin{remark}
	Let $\mathrm{O}= (e_1, \cdots, e_\ell)$ be an ordering on the edges of $\mathcal{T} \setminus T$. Let $e'_1, \cdots, e'_m$ be any ordering on the edges of $T$. Define the ordering $\mathrm{O}'$ on the edges of $\mathcal{T}$ as $\mathrm{O}' = (e'_1, \cdots, e'_m, e_1, \cdots, e_\ell)$. Then the pants decompositions $\mathrm{\textbf{P}}(\mathcal{T}, T, \mathrm{O})$ and $\mathrm{\textbf{P}}(\mathcal{T}, \emptyset, \mathrm{O})$ are equal. 
\end{remark}

\begin{lem}[Naturality]
Let $S$ be a compact connected orientable surface, and $P$ be a pants decomposition of $S$. Let $b \in S \setminus P$ be a base point, and $\gamma$ be a nerve for $(P,b)$. Let $\mathcal{T}$ be a pre-triangulation obtained by possibly adding extra edges to the pre-triangulation $\mathrm{\textbf{T}}(P, \gamma)$. There is an ordering $\mathrm{O}$ on the edges of $\mathcal{T}$ such that $\mathrm{\textbf{P}}(\mathcal{T}, \mathrm{O})= P$.
	
	
\label{naturality}	
	
\end{lem}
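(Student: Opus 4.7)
By the Remark preceding the lemma, it suffices to exhibit a forest $T \subseteq \mathcal{T}^{1}$ and an ordering $\mathrm{O}$ on the edges of $\mathcal{T}\setminus T$ with $\mathrm{\textbf{P}}(\mathcal{T},T,\mathrm{O}) = P$; prepending any ordering of the edges of $T$ then yields an ordering of all of $\mathcal{T}$ with the required property. My plan is to take $T$ to be a spanning tree of $\mathcal{T}^{1}$ chosen to contain as many of the ``extra'' edges (those in $\mathcal{T}\setminus\mathrm{\textbf{T}}(P,\gamma)$) as possible. Since $\mathrm{\textbf{T}}(P,\gamma)$ has only the single vertex $b$, every edge of $T$ is an extra edge, so $\mathcal{T}\setminus T$ consists of the $m=|P|$ loops $e_1,\ldots,e_m$ of $\mathrm{\textbf{T}}(P,\gamma)$ at $b$, together with any extra edges that close a cycle relative to $T$.

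I would then order $\mathcal{T}\setminus T$ by first listing the loops $e_1,\ldots,e_m$ in the order in which their corresponding white vertices of $\Gamma(P)$ are first visited by a depth-first traversal of the bipartite tree used to build $\gamma$, starting at $b_0$, and then appending the remaining cycle-creating extra edges in any order. The central step is a proof by induction along this DFS traversal that at each stage $i$ the surface $F_i$ of Construction \ref{const:triangulation-to-pants} has every boundary component either trivial or isotopic to a curve of $P$; more precisely, after processing all loops corresponding to white tree-neighbours of a given black vertex $b_j$ of $T_\Gamma$, $F_i$ is isotopic to the union of the pairs of pants of $P$ whose tree-neighbourhoods have been exhausted, glued along their shared boundary curves. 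The inductive step is local: by Construction \ref{const:pants-to-triangulation} (and the two cases of Figure \ref{fig:pants-to-triangulation}), the new loop $e_{i+1}$ runs almost parallel to the nerve arc $\gamma_{i+1}$, so adding its regular neighbourhood to $N_i$ either attaches the next pair of pants of $P$ to $F_i$ across a previously produced boundary, or absorbs an annular neighbourhood parallel to a new curve of $P$ while creating a trivial loop near $b$ that is capped off in $F_{i+1}$.

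For the cycle-creating extra edges, appended last, each lies inside a single complementary region of $\mathrm{\textbf{T}}(P,\gamma)$, which by the pre-triangulation hypothesis is a disc, an annulus, or a pair of pants. Any simple loop in such a region is either null-homotopic or isotopic to a boundary component (and hence to a curve of $P$), so processing these edges contributes only inessential boundary components or duplicates of curves already produced, and $\bigcup_i \partial F_i$ equals $P$ modulo non-essential and repeated curves. The main obstacle I anticipate is the local inductive step along the DFS traversal: verifying in each of the two cases of Figure \ref{fig:pants-to-triangulation} that the regular neighbourhood of $N_i\cup e_{i+1}$ produces exactly one new essential boundary isotopic to $\alpha_{i+1}$ (plus, where applicable, a trivial loop at $b$ to be capped) and no spurious essential curves. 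This reduces to an elementary but careful surface-topology check around the white vertex being processed.
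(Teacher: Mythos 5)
Your plan uses the same tools (Constructions~\ref{const:pants-to-triangulation} and~\ref{const:triangulation-to-pants}, and the unnumbered Remark preceding this lemma), and the invariant you identify --- that every component of $\partial F_i$ is either trivial or isotopic to a curve of $P$ --- is the right one to propagate. But you miss the key simplification that makes the paper's proof a few lines long: the lemma immediately after Construction~\ref{const:triangulation-to-pants} shows $\mathrm{\textbf{P}}(\mathcal{T},\mathrm{O})$ is always a pants decomposition, and a pants decomposition is by definition a \emph{maximal} system of disjoint, pairwise non-parallel, essential curves; so the single inclusion $P\subseteq\mathrm{\textbf{P}}(\mathcal{T},\mathrm{O})$ already forces equality (both sides have cardinality $3g-3+b$). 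The paper therefore orders the loops $e_1,\dots,e_m$ as they were constructed in Construction~\ref{const:pants-to-triangulation}, places all of $\mathcal{T}\setminus\mathrm{\textbf{T}}(P,\gamma)$ at the very end in arbitrary order, and argues by induction only that each $\alpha_j$ appears in some $\partial F_i$; the trailing extra edges cannot spoil this. Your entire discussion of cycle-creating extra edges can thus be dropped --- which is fortunate, because the parenthetical ``(and hence to a curve of $P$)'' is asserted without proof: that the essential boundary curves of the disc/annulus/pair-of-pants complementary regions of $\mathrm{\textbf{T}}(P,\gamma)$ are isotopic to curves of $P$ is true, but it requires essentially the same case analysis as the main induction.

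On the ordering: your DFS traversal of the maximal tree in $\Gamma(P)$ is a legitimate alternative to the paper's BFS-style construction order, since both share the property that is actually needed, namely that when $e_i$ is processed every white vertex on the tree-path from $b_0$ to $w_i$ has already been processed. Some such constraint is genuinely necessary --- on a genus-two surface with $\alpha_2$ separating and $\alpha_1,\alpha_3$ non-separating, the order $(e_1,e_3,e_2)$ makes $\partial F_2$ contain a curve in the homology class $[\alpha_1]\pm[\alpha_3]$, which is neither trivial nor in $P$. However, the ``more precise'' inductive formulation you offer (that $F_i$ becomes a union of complete pairs of pants of $S\setminus P$ once the tree-neighbours of a black vertex are exhausted) is not what is available at a generic DFS stage, where $F_i$ is instead a chain of annuli joined by bands running deep along one branch of the tree. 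The weaker form you also state, $\partial F_i\subseteq P$ up to trivial curves, is the correct inductive invariant, and once you invoke maximality it is all you need.
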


\begin{proof}
	
	A pants decomposition of $S$ is a maximal collection of disjoint non-isotopic essential simple closed curves on $S$. Therefore it is enough to show that for a suitable choice of an ordering $\mathrm{O}$ we have $P \subset \mathrm{\textbf{P}}(\mathcal{T}, \mathrm{O}) $. Choose $\mathrm{O}$ such that the edges of $\mathrm{\textbf{T}}(P, \gamma)$ appear in the same order that they were constructed in the inductive recipe of Construction \ref{const:pants-to-triangulation}; if two (or three) edges were constructed simultaneously, the two (or three) edges appear consecutively in the ordering $\mathrm{O}$. Finally, any edge of $\mathcal{T} \setminus \mathrm{\textbf{T}}(P, \gamma)$ appears after the edges of $\mathrm{\textbf{T}}(P, \gamma)$ in an arbitrary fashion. One can inductively see that $P \subset \mathrm{\textbf{P}}(\mathcal{T}, \mathrm{O})$.

\end{proof}

\subsection{Modifying a pre-triangulation and its effect on the associated pants decomposition}

In this subsection, we consider certain operations on pre-triangulations and their effect on the associated pants decompositions.

\begin{lem}[Effect of subdividing a pre-triangulation on the associated pants decomposition]
	Let $\mathcal{T}$ be a pre-triangulation of a compact orientable surface $S$, and $\mathrm{O}$ be an ordering on the edges of $\mathcal{T}$. Consider any of the following two cases:
	
	\begin{enumerate}
		\item[a)] Let $\mathcal{T}'$ be a pre-triangulation obtained by subdividing an edge $e$ of $\mathcal{T}$. Define an ordering on the edges of $\mathcal{T}'$ as follows: if $e$ appears as the $i$th element in the ordering $\mathrm{O}=(e_1, e_2, \cdots, e_n)$ and $e_i^1, \cdots, e_i^k$ are the edges obtained by subdividing $e_i$, define $\mathrm{O}'= (e_1, \cdots, e_{i-1}, e_i^1, \cdots, e_i^k,e_{i+1}, \cdots, e_n)$. 
		
		\item[b)]  Let $R$ be a complementary region to $\mathcal{T}$, and $\mathcal{T}'$ be the pre-triangulation obtained by adding a vertex $v$ in $R$ and connecting some of the vertices of $R$ to $v$ by new edges. Define an ordering on the edges of $\mathcal{T}'$ where the edges of $\mathcal{T}$ appear first and the newly added edges appear at the end in any order. 
	\end{enumerate}
	
	Then the associated pants decompositions $\mathrm{\textbf{P}}(\mathcal{T}, \mathrm{O})$ and $\mathrm{\textbf{P}}(\mathcal{T}', \mathrm{O}')$ are equal.
	
	\label{subdividing}
\end{lem}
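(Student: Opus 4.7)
The plan is to apply Construction~\ref{const:triangulation-to-pants} directly to $(\mathcal{T}', \mathrm{O}')$ and compare its sequence of filled subsurfaces $F_i'$ to the original sequence $F_i$, showing that both produce the same multicurve after discarding inessential and repeated curves. The inclusion $\mathrm{\textbf{P}}(\mathcal{T}, \mathrm{O}) \subseteq \mathrm{\textbf{P}}(\mathcal{T}', \mathrm{O}')$ is immediate because the first steps of the new construction faithfully reproduce each $F_i$ appearing in the old one (up to a shift of indices in case (a)); the substantive task is the reverse inclusion.

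For part (a), I will process the sub-edges $e_i^1, \dots, e_i^k$ of the subdivided edge in turn. For $1 \leq j < k$, the edge $e_i^j$ has at least one endpoint at a fresh subdivision vertex $p_{j-1}$ or $p_j$ that is absent from the previous neighborhood $N_{i-1+j-1}'$. Consequently, adding $e_i^j$ is a ``finger attachment'' (the attachment of a disk along a single arc of the current boundary, or a disjoint new disk), which preserves the underlying surface up to isotopy and whose only new boundary components bound disks in $S$ and are capped off when forming $F_{i-1+j}'$. Thus $F_{i-1+j}'$ is isotopic to $F_{i-1}$ for $1 \leq j < k$, while $F_{i-1+k}'$ is the regular neighborhood of $N_{i-1} \cup e_i^1 \cup \dots \cup e_i^k = N_{i-1} \cup e_i$, i.e. $N_i$, capped as before to give $F_i$. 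The subsequent edges $e_{i+1}, \dots, e_n$ then match exactly, and the intermediate steps only contribute repeats of the isotopy classes of $\partial F_{i-1}$. Hence after deduplication $\mathrm{\textbf{P}}(\mathcal{T}', \mathrm{O}') = \mathrm{\textbf{P}}(\mathcal{T}, \mathrm{O})$.

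For part (b), the first $n$ steps match exactly, giving $F_n' = F_n$. The first new edge $f_1$ has the fresh interior vertex $v$ as an endpoint and is again a finger attachment, so $F_{n+1}' = F_n$. For $j \geq 2$, the edge $f_j$ has both endpoints already in $N_{n+j-1}'$ (the vertex $v$ is present from the finger added at $f_1$, and the other endpoint lies on $\partial R \subset \mathcal{T}^1$), so it attaches a 1-handle entirely inside $R$, producing boundary components of $N_{n+j}'$ lying inside $R$. The key topological input is that any simple closed curve embedded in a disk, annulus, or pair of pants is either null-homotopic in that surface or isotopic to one of its boundary components; since $R$ is of one of these three types, this classification applies. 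Therefore every new boundary component of $N_{n+j}'$ is either discarded as inessential or isotopic to a component of $\partial R$. Such an essential component of $\partial R$ lies in $\mathcal{T}^1$ and is therefore disjoint (up to small isotopy) from $\mathrm{\textbf{P}}(\mathcal{T}, \mathrm{O})$, so by the maximality of $\mathrm{\textbf{P}}(\mathcal{T}, \mathrm{O})$ as a system of disjoint essential simple closed curves it is already isotopic to one of them. Thus no new essential isotopy class is created, and $\mathrm{\textbf{P}}(\mathcal{T}', \mathrm{O}') = \mathrm{\textbf{P}}(\mathcal{T}, \mathrm{O})$.

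The main delicate point will be the bookkeeping in part (b): tracking, across the incremental additions of $f_2, \dots, f_m$, exactly which arcs on the boundary of the growing neighborhood become the attaching arcs of each 1-handle, and hence confirming that every new boundary component produced by a split or merge remains a simple closed curve confined to $R$. Once this is established the classification of curves in $R$ closes the argument uniformly across the disk, annulus, and pair of pants cases.
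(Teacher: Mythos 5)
Your overall strategy coincides with the paper's: in part (a), the subdivision vertices force finger attachments so the intermediate $F'_j$ repeat $F_{i-1}$; in part (b), the new edges attach handles confined to $R$, and the classification of essential curves in a disc, annulus, or pair of pants as boundary-parallel closes the argument.  The one place where your argument has a genuine gap is the justification that the essential components of $\partial R$ already belong to $\mathrm{\textbf{P}}(\mathcal{T}, \mathrm{O})$.  You assert that an essential component of $\partial R$ "lies in $\mathcal{T}^1$ and is therefore disjoint (up to small isotopy) from $\mathrm{\textbf{P}}(\mathcal{T}, \mathrm{O})$", and then invoke maximality.  The implication is false as stated: a simple closed curve contained in $\mathcal{T}^1$ need not have zero geometric intersection with $\mathrm{\textbf{P}}(\mathcal{T}, \mathrm{O})$.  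For example, if $\mathcal{T}$ is a one-vertex spine of a genus-$2$ surface with loops $a, b, c, d$ forming two handles and $\mathrm{O} = (a, b, c, d)$, then $[a] \in \mathrm{\textbf{P}}(\mathcal{T}, \mathrm{O})$ while $b \subset \mathcal{T}^1$ satisfies $i(a, b) = 1$, so $b$ cannot be pushed off $\mathrm{\textbf{P}}(\mathcal{T}, \mathrm{O})$.  The curves $\partial F_i$ for $i < \ell$ come from regular neighbourhoods of proper subgraphs and in general do cross the remaining edges of $\mathcal{T}$ essentially, so "lying in $\mathcal{T}^1$" is not a sufficient reason for disjointness.

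The conclusion you want is nonetheless true, and the fix is simpler than your maximality detour.  Because $R$ is a complementary region of $\mathcal{T}$, the curves of $\partial R$ (pushed slightly into $R$) are literally components of $\partial N_\ell$, where $N_\ell$ is a regular neighbourhood of all of $\mathcal{T}^1$.  An essential such component cannot bound a disc in $S \setminus N_\ell$ and hence survives the capping to appear as a component of $\partial F_\ell$, giving $\partial R \subset \partial F_\ell \subset \mathrm{\textbf{P}}(\mathcal{T}, \mathrm{O})$ directly.  (If instead you insist on the maximality argument, you need the observation that the nested family $F_1 \subset \cdots \subset F_\ell$ admits simultaneously disjoint boundary representatives and that $\partial R$ is one of the $\partial F_\ell$-components; disjointness then follows, but again via $\partial R \subset \partial N_\ell$ rather than $\partial R \subset \mathcal{T}^1$.)  This is exactly fact (ii) in the paper's three-step argument, and once inserted your proof and the paper's are essentially the same.
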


\begin{proof}
	Part a) is clear. To see Part b), let $O' = (e_1, \cdots, e_\ell, e'_1, \cdots, e'_m)$ where $e'_j$ are the newly added edges. Let $N_i$ and $F_i$ for $1 \leq i \leq \ell +m$ be the subsurfaces associated with $(\mathcal{T}', \mathrm{O}')$ as in Construction \ref{const:triangulation-to-pants}. Part b) follows from the following three facts: 
	
	\begin{enumerate}
	\item[i)] $\mathrm{\textbf{P}}(\mathcal{T}, \mathrm{O}) $ is obtained from $\cup_{i=1}^{\ell} \partial F_i$ by discarding repeated or boundary parallel curves. In particular the inclusion $\mathrm{\textbf{P}}(\mathcal{T}, \mathrm{O})  \subset \mathrm{\textbf{P}}(\mathcal{T}', \mathrm{O}')$ holds.
	\item[ii)]	$\partial R \subset \partial N_\ell$.
	\item[iii)]	Every essential simple closed curve in $R$ is parallel to a component of $\partial R$. In particular, for $i>\ell$, the subsurface $\partial F_i$ contains no essential curve that was not already present in $\partial F_\ell$. 
	\end{enumerate}	  
\end{proof}

\subsection{Changing the ordering  and its effect on the associated pants decomposition} 

\begin{definition}[Consecutive transposition]
	Let $\mathcal{T}$ be a pre-triangulation of a compact orientable surface, and $\mathrm{O}$ and $\mathrm{O}'$ be two orderings on the edges of $\mathcal{T}$. We say that $\mathrm{O}'$ is obtained from $\mathrm{O}$ by a \emph{consecutive transposition} if there are edges $e_1$ and $e_2$ of $\mathcal{T}$ that are consecutive in the ordering $\mathrm{O}$ and such that $\mathrm{O}'$ is obtained from $\mathrm{O}$ by swapping the order of $e_1$ and $e_2$. 
\end{definition}

\begin{lem}
	Let $\mathcal{T}$ be a pre-triangulation of a compact orientable surface $S$. Let $\mathrm{O}$ be an ordering on the edges of $\mathcal{T}$, and assume that $\mathrm{O}'$ is obtained from $\mathrm{O}$ by a consecutive transposition. The distance between the pants decompositions $\mathrm{\textbf{P}}(\mathcal{T}, \mathrm{O})$ and $\mathrm{\textbf{P}}(\mathcal{T}, \mathrm{O}')$ in the pants graph is $O(1)$.
	\label{transposition}
\end{lem}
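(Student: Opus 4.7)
The plan is to localize the effect of the transposition. Write $\mathrm{O}=(e_1,\dots,e_n)$, let $e_i,e_{i+1}$ be the swapped pair, and let $F_j^{\mathrm{O}}$ and $F_j^{\mathrm{O}'}$ denote the subsurfaces of Construction~\ref{const:triangulation-to-pants} built from the two orderings. Since the unordered set $\{e_1,\dots,e_j\}$ coincides under both orderings whenever $j\neq i$, one has $F_j^{\mathrm{O}}=F_j^{\mathrm{O}'}$ for every $j\neq i$; in particular $F_{i-1}$ and $F_{i+1}$ are common to both. Hence every essential curve contributed by $\partial F_j$ with $j\neq i$ appears in both $\mathrm{\textbf{P}}(\mathcal{T},\mathrm{O})$ and $\mathrm{\textbf{P}}(\mathcal{T},\mathrm{O}')$, and any discrepancy between the two pants decompositions is supported in the subsurface $A:=F_{i+1}\setminus\operatorname{int}(F_{i-1})$.

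Next I would bound the complexity of $A$. Each of the two edges $e_i,e_{i+1}$ can decrease Euler characteristic by at most $1$ when attached, and only a bounded number of disc complements can appear in passing from $F_{i-1}$ to $F_{i+1}$, so $|\chi(A)|$ is bounded by a universal constant. Moreover, distinct edges of a pre-triangulation can meet only at vertices, so the arcs $e_i$ and $e_{i+1}$ share at most one endpoint. Consequently, up to filling in a few disc complements, $A$ is a regular neighbourhood of $e_i\cup e_{i+1}$ in $S\setminus\operatorname{int}(F_{i-1})$, and the combinatorial triple $(A,e_i,e_{i+1})$ belongs to a short finite list of cases (pair of pants, four-holed sphere, once-holed torus, two-holed torus, five-holed sphere, etc., together with a few arc-attaching patterns).

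I would then identify explicitly the multicurves $\mu:=\partial F_i^{\mathrm{O}}\cap\operatorname{int}(A)$ and $\mu':=\partial F_i^{\mathrm{O}'}\cap\operatorname{int}(A)$. Since $e_i$ and $e_{i+1}$ share at most one endpoint, the boundaries of the thickened subsurfaces $F_{i-1}\cup N(e_i)$ and $F_{i-1}\cup N(e_{i+1})$ can cross only in a small neighbourhood of that shared vertex, so the geometric intersection number $i(\mu,\mu')$ is bounded by an absolute constant. In a surface of bounded complexity, two multicurves with bounded intersection number are at bounded distance in the pants graph of that subsurface; this can be seen directly using the Farey-graph description of the pants graph in the single-curve cases $A=\Sigma_{0,4}$ or $\Sigma_{1,1}$, and extended in the obvious way to the remaining small cases. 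Combining this with the fact that $\mathrm{\textbf{P}}(\mathcal{T},\mathrm{O})$ and $\mathrm{\textbf{P}}(\mathcal{T},\mathrm{O}')$ agree outside $A$ then yields the $O(1)$ bound on pants distance.

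The main obstacle is the step that bounded intersection implies bounded pants distance for the specific subsurfaces $A$ arising here. Rather than invoking a black-box result, one has to enumerate the (finitely many) homeomorphism types of $A$, describe the multicurves $\mu$ and $\mu'$ in each configuration, and exhibit a bounded-length sequence of simple and associativity moves relating them; in the majority of cases $\mu$ and $\mu'$ are in fact isotopic in $A$ and no move is needed. Since the enumeration is finite and each case is resolved with an explicit $O(1)$-length sequence, the bounding constant is absolute, as required.
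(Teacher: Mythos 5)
Your overall strategy matches the paper's: localize the discrepancy to $A=F_{j+1}\setminus F_{j-1}$, bound $|\chi(A)|$, bound the intersection number of the two restricted pants decompositions, and conclude by a finiteness argument on surfaces of bounded complexity.

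However, there is a genuine gap in the intersection-number step. Your claim that ``distinct edges of a pre-triangulation can meet only at vertices, so the arcs $e_i$ and $e_{i+1}$ share at most one endpoint'' is false. A pre-triangulation is not required to be simplicial: two distinct edges may share both endpoints (as in a theta graph, or in any one-vertex triangulation, where every edge is a loop at the unique vertex), and by Convention~\ref{convention: 1-complex} an edge may even be a simple closed curve with no vertices at all. Moreover, the intersections between $\partial F_j^{\mathrm{O}}$ and $\partial F_j^{\mathrm{O}'}$ are not localized near a common vertex of $e_i$ and $e_{i+1}$; they can occur far from any shared vertex. The paper handles this by choosing nested regular neighbourhoods $N\subset N'$ of $e_1\cup\cdots\cup e_{j-1}$ and attaching disjoint bands $B_j$ (to $\partial N$, around $e_j$) and $B_{j+1}$ (to $\partial N'$, around $e_{j+1}$); the only intersections of $\partial(N\cup B_j)$ with $\partial(N'\cup B_{j+1})$ are the at-most-four points of $\partial N'\cap \partial B_j$, which arise near the attaching ends of the band $B_j$ on $\partial N$ --- not near a vertex shared by the two edges. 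This band argument gives the bound $i(\partial N_j,\partial N'_j)\le 4$ uniformly, with no assumption on how $e_j$ and $e_{j+1}$ are incident to each other. Your argument, as written, would not handle edges sharing two endpoints or closed-curve edges, and even where it applies, the claimed localization of intersections is not justified. You should replace that paragraph with a band-type argument that does not depend on the incidence pattern of the two swapped edges.
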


\begin{proof}
	 Let $N_1, \cdots, N_n$ (respectively $N'_1, \cdots, N'_n$) be the sequence of surfaces associated with $(\mathcal{T}, \mathrm{O})$ (respectively $(\mathcal{T}, \mathrm{O}')$) as in Construction \ref{const:triangulation-to-pants}. Define $F_1, \cdots, F_n$ and $F'_1, \cdots, F'_n$ similarly. Assume that $\mathrm{O}'$ is obtained from $\mathrm{O}$ by swapping the $j$-th and $(j+1)$-th element. Then 
	\begin{eqnarray*}
		F_k = F'_k \hspace{3mm} \text{if} \hspace{3mm} |k-j|\geq 1. 
	\end{eqnarray*}
	Hence the pants decompositions $\mathrm{\textbf{P}}(\mathcal{T}, \mathrm{O}')$ and $\mathrm{\textbf{P}}(\mathcal{T}, \mathrm{O})$ can differ only in curves that are supported on the subsurface $F_{j+1} \setminus F_{j-1}$. By Lemma \ref{lem:attaching a band}, after discarding any disc or annulus components, $F_{j+1} \setminus F_{j-1}$ is an essential subsurface that has Euler characteristic at least $-2$. Moreover, at most one component of $F_{j+1} \setminus F_{j-1}$ is not a pair of pants or annulus. Hence, after discarding all disc and annulus components, $F_{j+1} \setminus F_{j-1}$ is either an essential $4$-times punctured sphere, an essential twice punctured torus, an essential once-punctured torus, or a disjoint union of at most two essential pairs of pants. The restrictions of $\mathrm{\textbf{P}}(\mathcal{T}, \mathrm{O}')$ and $\mathrm{\textbf{P}}(\mathcal{T}, \mathrm{O})$ to the subsurface $F_{j+1} \setminus F_{j-1}$, after discarding boundary-parallel curves, form two pants decompositions $P$ and $P'$ of $F_{j+1} \setminus F_{j-1}$. Since $F_{j+1} \setminus F_{j-1}$ is an essential subsurface of $S$, and the restrictions of $\mathrm{\textbf{P}}(\mathcal{T}, \mathrm{O}')$ and $\mathrm{\textbf{P}}(\mathcal{T}, \mathrm{O})$ to the exterior of $F_{j+1} \setminus F_{j-1}$ are equal, the distance between $\mathrm{\textbf{P}}(\mathcal{T}, \mathrm{O}')$ and $\mathrm{\textbf{P}}(\mathcal{T}, \mathrm{O})$ in the pants graph of $S$ is bounded from above by the distance between $P$ and $P'$ in the pants graph of $F_{j+1} \setminus F_{j-1}$. We would like to give an upper bound for the distance between $P$ and $P'$ in the pants graph of $F_{j+1} \setminus F_{j-1}$. We claim that
	\begin{eqnarray*}
		i(\partial N_{j}, \partial N'_{j}) \leq 4.
	\end{eqnarray*}
	If at least one of $e_j$ and $e_{j+1}$ is a simple closed curve, or at least one end of $e_j$ or $e_{j+1}$ is disjoint from $\partial S \cup e_1 \cup \cdots \cup e_{j-1}$, then $\partial N_j$ and $\partial N'_j$ can  be isotoped to be disjoint. So assume that $e_j$ and $e_{j+1}$ have both ends on $\partial S \cup e_1\cup \cdots \cup e_{j-1}$. Choose regular neighbourhoods $N$ and $N'$ of $\partial S \cup e_1 \cup \cdots \cup e_{j-1}$  in $S$ such that $N \subset \text{int} N'$ and $N' \setminus N \cong \partial N \times [0,1]$. Take a band $B_j \cong [0,1] \times [0,1]$ around $e_j$ and connecting $\partial N$ to itself such that 
	\begin{enumerate}
		\item [-] $B_j \cap \partial N = [0,1] \times \{ 0 ,1 \}$; and
		\item[-] $B_j \cap (\overline{N' \setminus N})$ is identified with $[0,1] \times ([0,\frac{1}{4}]\cup[\frac{3}{4},1])$ in $B_j$ and with $(B_j \cap \partial N) \times [0,1]$ in $N' \setminus N$. 
	\end{enumerate}
	Choose a band $B_{j+1}$ around $e_{j+1}$, and disjoint from $B_j$, that connects $\partial N'$ to itself. Then $\partial N_j$ is isotopic to $\partial (N \cup B_j)$, and $\partial N_{j+1}$ is isotopic to $\partial (N' \cup B_{j+1})$. The only intersections of $\partial N_j$ and $\partial N_{j+1}$ come from the $4$ intersection points between $\partial N'$ and $\partial B_{j}$, proving the desired inequality. See Figure \ref{fig:bands}.
	\begin{figure}
		\labellist
		\pinlabel $B_j$ at 200 101 
		\pinlabel $B_{j+1}$ at 120 106
		\endlabellist
		
		\includegraphics[width = 2.5 in]{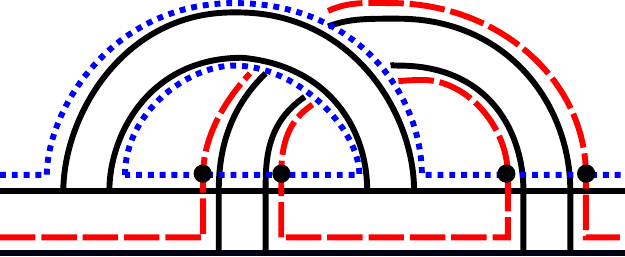}
		\caption{A schematic picture showing that the multicurves $\partial N_j$ and $\partial N_{j+1}$ (in red and blue respectively, and in different dashed lines) intersect in at most $4$ points (solid dots). Here the horizontal rectangle is $N' \setminus N$, and the area below this rectangle is $N$. There are other possibilities: the bands are allowed to have twists, and the endpoints of the bands might not interlace or they might lie on different components of $\partial N$.}
		\label{fig:bands}
	\end{figure}
	Hence 
	\begin{eqnarray*}
		i(\partial F_{j}, \partial F'_{j}) \leq i(\partial N_j , \partial N'_j) \leq 4.
	\end{eqnarray*} 
	Therefore, the pants decompositions $P$ and $P'$ have distance $O(1)$ in the pants graph of $F_{j+1} \setminus F_{j-1}$, proving the lemma. 
\end{proof}

\begin{cor}
	Let $\mathcal{T}$ be a pre-triangulation of a compact orientable surface $S$. Let $\mathrm{O}$ and $\mathrm{O}'$ be two orderings on the edges of $\mathcal{T}$. The distance between the pants decompositions $\mathrm{\textbf{P}}(\mathcal{T}, \mathrm{O})$ and $\mathrm{\textbf{P}}(\mathcal{T}, \mathrm{O}')$ in the pants graph is $O(n^2)$ where $n$ is the number of edges of $\mathcal{T}$. 
	\label{changing the ordering} 
\end{cor}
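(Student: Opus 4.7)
The plan is to deduce the corollary from Lemma \ref{transposition} by writing the permutation that takes $\mathrm{O}$ to $\mathrm{O}'$ as a bounded-length product of consecutive transpositions.

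First, recall the elementary fact from symmetric-group theory (bubble sort): any permutation of $n$ objects can be realised as a composition of at most $\binom{n}{2} = O(n^2)$ adjacent transpositions. Applied to our situation, there is a sequence of orderings $\mathrm{O} = \mathrm{O}_0, \mathrm{O}_1, \ldots, \mathrm{O}_k = \mathrm{O}'$ on the edges of $\mathcal{T}$ such that each $\mathrm{O}_{i+1}$ is obtained from $\mathrm{O}_i$ by a single consecutive transposition, and $k = O(n^2)$.

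Next, I would apply Lemma \ref{transposition} to each step in the sequence: for every $0 \leq i < k$, the distance in the pants graph $\mathcal{P}(S)$ between $\mathrm{\textbf{P}}(\mathcal{T}, \mathrm{O}_i)$ and $\mathrm{\textbf{P}}(\mathcal{T}, \mathrm{O}_{i+1})$ is bounded by a universal constant $C$. Note that the constant provided by Lemma \ref{transposition} is genuinely universal (it is independent of $\mathcal{T}$, the edge involved, and the surface $S$), since the estimate there came from the fact that the symmetric difference $F_{j+1} \setminus F_{j-1}$ has Euler characteristic at least $-2$ and from the uniform intersection bound $i(\partial N_j, \partial N'_j) \leq 4$.

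Finally, applying the triangle inequality along this path in $\mathcal{P}(S)$ gives
\[
d_{\mathcal{P}(S)}\bigl(\mathrm{\textbf{P}}(\mathcal{T}, \mathrm{O}), \mathrm{\textbf{P}}(\mathcal{T}, \mathrm{O}')\bigr) \;\leq\; \sum_{i=0}^{k-1} d_{\mathcal{P}(S)}\bigl(\mathrm{\textbf{P}}(\mathcal{T}, \mathrm{O}_i), \mathrm{\textbf{P}}(\mathcal{T}, \mathrm{O}_{i+1})\bigr) \;\leq\; k \cdot C \;=\; O(n^2),
\]
which is the desired conclusion. There is no real obstacle here; the only thing to double-check is that the implicit constant in Lemma \ref{transposition} does not depend on anything that could inflate with $n$, which follows from inspection of its proof.
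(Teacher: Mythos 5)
Your proposal is correct and follows exactly the paper's argument: write the permutation taking $\mathrm{O}$ to $\mathrm{O}'$ as a product of $O(n^2)$ consecutive transpositions, apply Lemma \ref{transposition} to each step, and conclude by the triangle inequality. The extra remark verifying that the implicit constant in Lemma \ref{transposition} is universal is a sensible sanity check and is indeed supported by the uniform bounds $\chi(F_{j+1}\setminus F_{j-1}) \geq -2$ and $i(\partial N_j,\partial N'_j)\leq 4$ appearing in its proof.
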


\begin{proof}
	If $S_n$ is the symmetric group on $n$ elements, then every permutation $\sigma \in S_n$ can be written as a product of at most 
	\begin{eqnarray*}
		1 + 2 + \cdots + (n-1) = O(n^2)
	\end{eqnarray*}
	\emph{consecutive transpositions} of the form $\sigma_j = (j,j+1)$, where $1 \leq j \leq n-1$. Here $\sigma_j$ swaps the $j$-th and $(j+1)$-th elements. This can be seen by induction on $n$: assume $\sigma(n)= i$ and note that $\sigma_{n-1} \circ \cdots \circ \sigma_{i+1} \circ \sigma_i \circ \sigma$ fixes $n$ and hence can be considered as an element of $S_{n-1}$.  
	
	We can identify the symmetric group $S_n$ with the group of permutations of the edges of $\mathcal{T}$ such that $\mathrm{O}$ corresponds to the identity. Hence there are orderings $\mathrm{O}_1, \cdots, \mathrm{O}_k$ on the edges of $\mathcal{T}$ such that
	
	\begin{enumerate}
		\item $\mathrm{O} = \mathrm{O}_1$, $\mathrm{O}'=\mathrm{O}_k$;
		\item each $\mathrm{O}_{i+1}$ is obtained from $\mathrm{O}_i$ by a single consecutive transposition; and
		\item $k = O(n^2)$. 
	\end{enumerate}
	
	By Lemma \ref{transposition}, for each $1 \leq i < k$ the pants decompositions $\mathrm{\textbf{P}}(\mathcal{T}, \mathrm{O}_{i+1})$ and $\mathrm{\textbf{P}}(\mathcal{T}, \mathrm{O}_i)$ have distance $O(1)$ in the pants graph. The result now follows from the triangle inequality.
\end{proof}

\subsection{Upper bound for distance between pants decompositions coming from pre-triangulations}

Given two pre-triangulations, and two orderings on their edges, the next lemma gives an upper bound for the distance between their associated pants decompositions.

\begin{lem}
	Let $\mathcal{T}$ and $\mathcal{T}'$ be pre-triangulations of a compact orientable surface $S$. Assume that for any edge $e$ of $\mathcal{T}$ and any edge $e'$ of $\mathcal{T}'$, the intersection $e \cap e'$ is a disjoint union of subintervals of $e$ and also of $e'$. Here we allow a subinterval to be a single point as well. Let $\mathrm{O}$ and $\mathrm{O}'$ be orderings on the edges of $\mathcal{T}$ and $\mathcal{T}'$. Then the pants decompositions $\mathrm{\textbf{P}}(\mathcal{T},\mathrm{O})$ and $\mathrm{\textbf{P}}(\mathcal{T}',\mathrm{O}')$ have distance at most $O(n^2),$ where $n$ is the number of edges of the pre-triangulation obtained by superimposing $\mathcal{T}$ and $\mathcal{T}'$. In particular, if $\mathcal{T}$ and $\mathcal{T}'$ are transverse to each other, and $|\mathcal{T}|$ and $|\mathcal{T}'|$ are the number of edges of $\mathcal{T}$ and $\mathcal{T}'$, then $n \leq 2 i(\mathcal{T},\mathcal{T}')+|\mathcal{T}|+|\mathcal{T}'|$.
	\label{bound in terms of intersection} 
\end{lem}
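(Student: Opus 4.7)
The plan is to pass to a common refinement of $\mathcal{T}$ and $\mathcal{T}'$ and then invoke Corollary~\ref{changing the ordering} to reshuffle the ordering. Let $\mathcal{T}''$ denote the pre-triangulation obtained by superimposing $\mathcal{T}$ and $\mathcal{T}'$ (so $\mathcal{T}''$ has $n$ edges, and each edge of $\mathcal{T}$ or $\mathcal{T}'$ is a union of edges of $\mathcal{T}''$). I would first produce two orderings $\mathrm{O}_1$ and $\mathrm{O}_2$ on the edges of $\mathcal{T}''$: in $\mathrm{O}_1$, list the sub-edges of $\mathcal{T}$ first (inheriting the order from $\mathrm{O}$, with the pieces of each edge of $\mathcal{T}$ consecutive as in Lemma~\ref{subdividing}(a)), and then the remaining edges (the sub-edges of $\mathcal{T}'$ not already in $\mathcal{T}$) in any order. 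The ordering $\mathrm{O}_2$ is defined symmetrically, starting with the sub-edges of $\mathcal{T}'$.

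The heart of the argument is the identification $\mathrm{\textbf{P}}(\mathcal{T}'',\mathrm{O}_1)=\mathrm{\textbf{P}}(\mathcal{T},\mathrm{O})$ and its analogue $\mathrm{\textbf{P}}(\mathcal{T}'',\mathrm{O}_2)=\mathrm{\textbf{P}}(\mathcal{T}',\mathrm{O}')$. After processing the first block of edges in $\mathrm{O}_1$, Lemma~\ref{subdividing}(a) guarantees that the pants decomposition assembled so far coincides with $\mathrm{\textbf{P}}(\mathcal{T},\mathrm{O})$, and that the surface $F_\ell$ filled by these edges is exactly the subsurface filled by $\mathcal{T}$. Each remaining edge $e$ of $\mathcal{T}''$ lies in a complementary region $R$ of $\mathcal{T}$, and $R$ is a disc, annulus, or pair of pants; we add these remaining edges one at a time. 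The argument of Lemma~\ref{subdividing}(b) generalises to this situation virtually verbatim, because the three ingredients of that proof still hold: the new $\partial F$'s contain the previously produced curves, $\partial R\subset \partial F_\ell$, and every essential simple closed curve contained in (any sub-region of) $R$ is boundary parallel in $R$, hence already isotopic to a component of $\partial R\subset\partial F_\ell$. Consequently, no new essential curves are added to $P$ when processing the tail of $\mathrm{O}_1$, so $\mathrm{\textbf{P}}(\mathcal{T}'',\mathrm{O}_1)=\mathrm{\textbf{P}}(\mathcal{T},\mathrm{O})$, and symmetrically for $\mathrm{O}_2$. Applying Corollary~\ref{changing the ordering} to $\mathrm{O}_1$ and $\mathrm{O}_2$ gives a path of length $O(n^2)$ in the pants graph between $\mathrm{\textbf{P}}(\mathcal{T},\mathrm{O})$ and $\mathrm{\textbf{P}}(\mathcal{T}',\mathrm{O}')$, which is the claimed bound.

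The main obstacle I anticipate is the step of extending Lemma~\ref{subdividing}(b) from adding a single star of edges around one new vertex to adding the entire superposition of $\mathcal{T}'$ at once. To handle this cleanly I would add the new edges in any order, one at a time, and at each stage observe that the region being subdivided is a sub-region of some complementary region $R$ of $\mathcal{T}$ (so still a planar piece of genus zero whose essential simple closed curves are all parallel to $\partial R$); thus the essential-curve analysis of Lemma~\ref{subdividing}(b) transfers directly. Finally, for the ``in particular'' statement, when $\mathcal{T}$ and $\mathcal{T}'$ are transverse, superimposing them turns each of the $i(\mathcal{T},\mathcal{T}')$ transverse intersection points into a new vertex that subdivides one edge of $\mathcal{T}$ and one edge of $\mathcal{T}'$, giving
\[
n\;\leq\;\bigl(|\mathcal{T}|+i(\mathcal{T},\mathcal{T}')\bigr)+\bigl(|\mathcal{T}'|+i(\mathcal{T},\mathcal{T}')\bigr)\;=\;|\mathcal{T}|+|\mathcal{T}'|+2\,i(\mathcal{T},\mathcal{T}'),
\]
as asserted.
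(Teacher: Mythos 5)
Your proposal is correct and follows essentially the same route as the paper: pass to the superposition $\mathcal{U}=\mathcal{T}\cup\mathcal{T}'$, produce via Lemma~\ref{subdividing} two orderings $\mathrm{O}_2,\mathrm{O}'_2$ on $\mathcal{U}$ realising $\mathrm{\textbf{P}}(\mathcal{T},\mathrm{O})$ and $\mathrm{\textbf{P}}(\mathcal{T}',\mathrm{O}')$ respectively, and then apply Corollary~\ref{changing the ordering}. Your discussion of extending Lemma~\ref{subdividing}(b) by adding the remaining edges one at a time matches the paper's ``repeated application'' of that lemma, and your direct count for the ``in particular'' clause is equivalent to the paper's degree-sum argument.
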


\begin{proof}
	Let $\mathcal{U}$ be the pre-triangulation obtained by superimposing $\mathcal{T}$ and $\mathcal{T}'$. Construct an ordering $\mathrm{O}_2$ on the edges of $\mathcal{U}$ as follows.
	
	\begin{enumerate}
		\item First subdivide the edges of $\mathcal{T}$ by introducing the new vertices that are in the intersection of $\mathcal{T}$ and $\mathcal{U}$. Let $\mathcal{T}_1$ be the new pre-triangulation, and $\mathrm{O}_1$ a new ordering on the edges of $\mathcal{T}_1$ constructed by repeated application of Lemma \ref{subdividing} a). 
		
		\item Secondly, starting with $(\mathcal{T}_1, \mathrm{O}_1)$, add the vertices of $\mathcal{U}$ that are disjoint from $\mathcal{T}_1$ together with their edges in $\mathcal{U}$ to obtain $\mathcal{U}$. Let $\mathrm{O}_2$ be an ordering on the edges of $\mathcal{U}$ constructed by repeated application of Lemma \ref{subdividing} b). 
	\end{enumerate}	
	
	By Lemma \ref{subdividing}, the pants decompositions $\mathrm{\textbf{P}}(\mathcal{T}, \mathrm{O})$ and $\mathrm{\textbf{P}}(\mathcal{U},\mathrm{O}_2)$ are equal. Similarly, there is an ordering $\mathrm{O}'_2$ on the edges of $\mathcal{U}$ such that the pants decompositions $\mathrm{\textbf{P}}(\mathcal{T}', \mathrm{O}')$ and $\mathrm{\textbf{P}}(\mathcal{U},\mathrm{O}'_2)$ are equal. Now by Lemma \ref{changing the ordering} the distance between the pants decompositions $\mathrm{\textbf{P}}(\mathcal{U}, \mathrm{O}_2)$ and $\mathrm{\textbf{P}}(\mathcal{U},\mathrm{O}'_2)$ is $O(n^2)$, proving the first part of the lemma. For the second part we have 
	\begin{align*} 
		2n &= \text{sum of the degrees of vertices in } \mathcal{U} \\
		&\leq 4 i(\mathcal{T}, \mathcal{T}') + \text{sum of degrees in } \mathcal{T} + \text{sum of degrees in } \mathcal{T}'  \\
		& = 4 i(\mathcal{T}, \mathcal{T}') + 2|\mathcal{T}|+2|\mathcal{T}'|.
	\end{align*}	
	
\end{proof}

\begin{cor}
	Let $P$ and $P'$ be pants decompositions of a compact orientable surface $S$. The distance between $P$ and $P'$ in the pants graph is $O(i(P, P')^2)$. 	
	\label{quadratic bound}
\end{cor}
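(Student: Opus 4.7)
To prove the corollary I plan to apply Lemma \ref{bound in terms of intersection} with $\mathcal{T} = P$ and $\mathcal{T}' = P'$, viewed as pre-triangulations by way of Convention \ref{convention: 1-complex} and Definition \ref{def:pre-triangulation}. First I would isotope $P$ and $P'$ into transverse minimal position, so that pairs of edges meet in finite sets of points (degenerate subintervals), verifying the hypothesis of the lemma. Inspecting Construction \ref{const:triangulation-to-pants} in the case the input pre-triangulation is itself a pants decomposition, each subsurface $F_i$ is an annular neighbourhood of the $i$-th curve whose boundary consists of isotopic copies of that curve, so $\mathrm{\textbf{P}}(P, \mathrm{O}) = P$ and $\mathrm{\textbf{P}}(P', \mathrm{O}') = P'$ for every pair of orderings. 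Lemma \ref{bound in terms of intersection} therefore bounds $d_{\mathcal{P}(S)}(P, P')$ by $O(n^2)$, where $n = |P \cap P'| + 2 i(P, P')$ is the edge count of the superposition $P \cup P'$.

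To upgrade this to the cleaner form $O(i(P, P')^2)$, I would reduce to the case $P \cap P' = \emptyset$ by induction on $|P \cap P'|$. If $P = P'$ the statement is trivial; otherwise, pick a shared curve $\alpha$, cut $S$ along it to obtain a (possibly disconnected) surface $S'$, and observe that $P \setminus \{\alpha\}$ and $P' \setminus \{\alpha\}$ are pants decompositions of $S'$ with the same pairwise intersection number as $P$ and $P'$. Any edge-path in $\mathcal{P}(S')$ between them lifts to an edge-path in $\mathcal{P}(S)$ between $P$ and $P'$ by re-inserting $\alpha$ at every vertex, giving $d_{\mathcal{P}(S)}(P, P') \leq d_{\mathcal{P}(S')}(P \setminus \alpha, P' \setminus \alpha)$. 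The pants graph of a disconnected surface splits as the $\ell^1$ product of the pants graphs of its components, intersection numbers are additive over those components, and the elementary inequality $\sum_i x_i^2 \leq (\sum_i x_i)^2$ for non-negative reals yields the desired additive closure of the $O(i^2)$ bound.

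In the remaining case $P \cap P' = \emptyset$ with $P \neq P'$, every $\alpha \in P$ must meet some curve of $P'$, for otherwise $P' \cup \{\alpha\}$ would be a strictly larger disjoint essential collection than $P'$, contradicting maximality. Each such curve of $P$ therefore contributes at least $1$ to $i(P, P')$, giving $i(P, P') \geq |P|$. Since $|P| = 3g - 3 + b \geq |\chi(S)|/2$ whenever a pants decomposition exists, one has $|\chi(S)| = O(i(P, P'))$, hence $n = 2 i(P, P') = O(i(P, P'))$, and Lemma \ref{bound in terms of intersection} produces the desired bound $d_{\mathcal{P}(S)}(P, P') = O(i(P, P')^2)$. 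The only step that requires genuine care is the cutting reduction, namely verifying that lifting paths through $\alpha$ preserves the elementary-move structure and that additivity of intersection numbers over components interacts correctly with the square in the final estimate; both are routine bookkeeping.
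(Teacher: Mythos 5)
Your argument is correct and in essence the same as the paper's: both view pants decompositions as pre-triangulations, reduce to the case $P \cap P' = \emptyset$ by cutting along shared curves, use the observation that $\mathrm{\textbf{P}}(P, \mathrm{O}) = P$ for any ordering, and then apply Lemma~\ref{bound in terms of intersection} once the edge count of the superposition is seen to be $O(i(P,P'))$. The paper's reduction is tidier: it cuts along all of $P_\cap$ simultaneously to pass to a (possibly disconnected) surface $S_1$ on which $P_1$ and $P'_1$ are disjoint, then computes $n$ exactly via the degree-sum formula for the resulting $4$-regular graph, $n = 2i(P_1,P'_1) = 2i(P,P')$. You instead induct one shared curve at a time and invoke the $\ell^1$-product structure of the pants graph of a disconnected surface plus $\sum x_i^2 \leq (\sum x_i)^2$; this works, and it has the merit of making explicit the step, implicit in the paper, that an edge-path in the pants graph of the cut surface lifts to one of the same length in $\mathcal{P}(S)$. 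Two minor points. The opening paragraph, where you apply the lemma to $P, P'$ before any reduction, is a dead end you then abandon, and it is also slightly awkward to state: if $P$ and $P'$ share a curve they cannot literally be put in transverse minimal position (the two copies of a shared curve either coincide or bound a bigon), and the edge count of the superposition there should be $2|P\cap P'| + 2i(P,P')$ rather than $|P\cap P'| + 2i(P,P')$ if the copies are kept disjoint. Also the detour through $|\chi(S)|$ at the end is unnecessary: having shown every curve of $P$ meets $P'$ and vice versa, the degree-sum formula gives $n = 2i(P,P')$ directly, or alternatively $|P|=|P'|\leq i(P,P')$ fed into the lemma's bound $n \leq 2i(\mathcal{T},\mathcal{T}') + |\mathcal{T}| + |\mathcal{T}'|$ gives $n \leq 4i(P,P')$.
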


\begin{proof}
	Let $P_{\cap}$ be the union of simple closed curves that appear in both $P$ and $P'$. Let $S_1$ be the surface obtained by cutting $S$ along $P_\cap$, and let $P_1$ and $P'_1$ be the corresponding pants decompositions of $S_1$. Hence $P_1$ is the image of $P \setminus P_\cap$ in $S_1$, and $P'_1$ is defined similarly.  The pants decompositions $P_1$ and $P'_1$ are (disconnected) pre-triangulations of $S_1$. Isotope $P'_1$ to intersect $P_1$ minimally. Note that for any ordering $\mathrm{O}$ on $P_1$ we have $\mathrm{\textbf{P}}(P_1, \mathrm{O}) = P_1$; and similarly $\mathrm{\textbf{P}}(P'_1, \mathrm{O}') = P'_1$. Hence the distance between the pants decompositions $P_1 = \mathrm{\textbf{P}}(P_1, \mathrm{O})$ and $P'_1 = \mathrm{\textbf{P}}(P'_1, \mathrm{O}')$ is $O(n^2)$, where $n$ is the number of edges of the pre-triangulation $P_1\cup P'_1$ obtained by superimposing $P_1$ and $P'_1$. In particular, using the degree sum formula for the 4-regular graph $P_1\cup P'_1$ we have
	\[ n = \text{number of edges of } P_1 \cup P'_1 = \frac{1}{2} \text{ degree sum} = 2 i(P_1, P'_1) = 2 i(P, P'). \] 
	
\end{proof}

\section{Train tracks, and Agol--Hass--Thurston algorithm}

Agol, Hass, and Thurston used their algorithm to count the number of connected components of a simple closed multicurve $\gamma$ carried by a train track $\tau$, in time that is bounded above by a polynomial function of $E \log(N)$, where $E$ is the number of branches of $\tau$ and $N$ is the total weight of $\gamma$ with respect to $\tau$. We are interested in the way that the algorithm does this count; this will be via a sequence of \emph{splitting} and \emph{twirling} (Definition \ref{def:twirling}, and Figure \ref{fig:twirling}) the weighted train track until it completely unwinds to the original curve $\gamma$. We will apply the AHT algorithm to another similar setting and show in Proposition \ref{running AHT} that if one starts with say a one-vertex triangulation $\mathcal{T}$ carried by a \emph{based train track} $\tau$ (Definition \ref{def:based train track}), and set up an appropriate orbit counting problem (Definition \ref{orbit counting problem}), then the AHT algorithm does the count via splitting and twirling the based weighted train track until it completely unwinds it to $\mathcal{T}$. Moreover the number of splitting and twirling is bounded above by a polynomial function of $E \log(N)$, with $E$ equal to the number of branches of the based train track and $N$ equal to the total weight of $\tau$.

\subsection{Based integrally weighted train tracks, and orbit counting problems}

Train tracks were introduced by Thurston to study simple closed multicurves on a surface. Here we will work with 1-complexes, and a variant of a train track, which we call a based train track, will be useful. 

\begin{definition}[Based integrally weighted train track]
	A \emph{based train track} $(\tau, V)$ in a surface $S$ is an embedded finite $1$-complex with a distinguished subset  $V$ of its vertices called \emph{base vertices} such that 
	\begin{enumerate}
		\item[-] the embedding is $C^1$ in the interior of each edge of $\tau$;
		\item[-] at every vertex $w \notin V$ there is a well-defined tangent line, and there are edges entering $w$ from both directions. See Figure \ref{fig:based train track}, top-left. 
	\end{enumerate}
	An edge of $\tau$ is called a \emph{branch}. Fix a point $p_e$ in the interior of each branch $e$. Each component of $e \setminus p_e$ is called a \emph{half-branch}. Therefore the half-branches at a vertex $w \notin V$ can be partitioned into two non-empty subsets, which we can locally think of as incoming vs outgoing; the choice of which subset is incoming and which is outgoing is arbitrary. An \emph{integral weight} $\mu$ on $\tau$ is an assignment of \emph{positive integers} to branches of $\tau$ such that at each vertex $w \notin V$ the \emph{switch condition} is satisfied; i.e. sum of the weights of incoming half-branches is equal to that of outgoing half-branches. We denote this common number by $\mu_w$. A \emph{based integrally weighted train track} $(\tau, V, \mu)$ is a based train track $(\tau, V)$ together with an integral weight $\mu$ on its branches.  When $V$ is empty, we obtain the usual notions of a \emph{train track} $\tau$ and an \emph{integrally weighted train track} $(\tau, \mu)$. 
	\label{def:based train track}
\end{definition}

\begin{figure}
	\labellist
	\pinlabel $2$ at 50 270
	\pinlabel $2$ at 40 200
	\pinlabel $2$ at 10 240
	\pinlabel $3$ at 200 270 
	\pinlabel $3$ at 200 185
	\pinlabel $3$ at 60 30
	\pinlabel $2$ at 60 95
	\pinlabel $2$ at 165 95
	\pinlabel $1$ at 165 30 
	\endlabellist
	
	\includegraphics[width = 3 in]{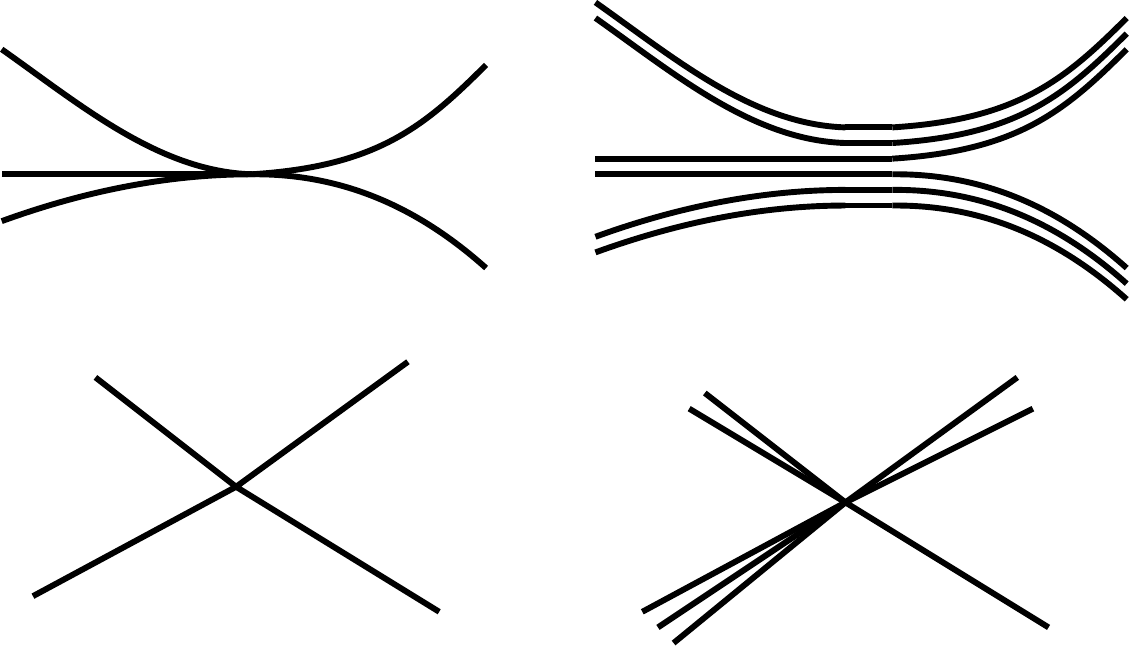}
	\caption{Left: on top, a non-base vertex of the based integrally weighted train track $(\tau, V, \mu)$ is shown; a base vertex is depicted in the bottom. The local picture for the associated $1$-complex $\mathcal{CC}(\tau, V, \mu)$ is shown on the right.}
	\label{fig:based train track}
\end{figure}

\begin{remark}
	Note that unlike some texts, we do not require the complementary regions to a train track to have negative (or non-positive) \emph{index} (a variant of the Euler characteristic). 
\end{remark}

The following is analogous to the multicurve carried by an integrally weighted train track.

\begin{definition}[The $1$-complex carried by a based integrally weighted train track]
	Given a based integrally weighted train track $(\tau, V, \mu)$ on a surface $S$, define the $1$-complex $\mathcal{CC}(\tau, V, \mu)$ embedded in $S$ as follows: given a branch $e$ of $\tau$, not adjacent to any vertex in $V$, replace $e$ by $\mu_e$ parallel segments where $\mu_e$ is the weight of $e$. Given a branch $e$ that is adjacent to a vertex $v \in V$, replace $e$ with $\mu_e$ parallel segments and identify one ends of these segments with $v$. At each vertex $w \notin V$ of $\tau$, glue the endpoints of the incoming segments adjacent to $w$ to those of the outgoing segments in an order-preserving way. See Figure \ref{fig:based train track}. We call $\mathcal{CC}(\tau, V, \mu)$ the \emph{$1$-complex carried by $(\tau, V,  \mu)$}.
	\label{def:carried 1-complex}
\end{definition}	

Given a based integrally weighted train track $(\tau, V , \mu)$ on a surface $S$, define a \emph{branched neighbourhood $N(\tau, V, \mu)$} of $(\tau, V,  \mu)$ in $S$ as follows: For each branch $e$, take a rectangle $R_e \cong e \times [0, \mu_e]$ around $e$ equipped with the horizontal foliation whose leaves consist of $e \times \text{point}$. At each vertex $w \notin V$ glue the rectangles $R_e$ adjacent to $w$ along their vertical boundary $\partial e \times [0, \mu_e]$ according to their adjacency and the length of their vertical boundary; this is possible by the switch condition. See Figure \ref{branched neighbourhood}. For any base vertex $v \in V$, let $\deg(v)$ be the number of half-branches adjacent to $v$. Take a $2 \deg(v)$-gon $P_v$ around $v$ and identify half of its edges with the vertical boundaries of the adjacent rectangles in an alternating fashion; see Figure \ref{fig: branched-neighbourhood-base-vertex}. The \emph{free sides} of $P_v$ are those that are not identified with the vertical boundary components of adjacent rectangles. Note that $N(\tau,V , \mu) \setminus (\cup_{v\in V} P_v )$ comes equipped with a \emph{horizontal foliation} obtained by gluing together the horizontal foliations of all rectangles $R_e$. 

Each rectangle $R_e$ has an $I$-bundle structure given by fibres $\text{point} \times [0, \mu_e]$. The \emph{tie interval above a point} in $\tau$ is defined as follows:
\begin{itemize}
	\item For each point in the interior of an edge $e$, the \emph{tie interval} above the point is the $I$-fibre above that, see the (blue) vertical solid lines in Figure \ref{branched neighbourhood}. 
	
	\item For a vertex $w \notin V$, the \emph{tie interval above $w$} in $N(\tau, \mu)$ is the union of the $I$-fibres over $w$ coming from the half-branches adjacent to $w$; see Figure \ref{branched neighbourhood}. Moreover, a point $c$ on the tie interval above $w \notin V$ is called a \emph{cusp} if a neighbourhood of $c$ in the leaf of the horizontal foliation through it is not homeomorphic to $\mathbb{R}$; see Figure \ref{branched neighbourhood}. 
	
	\item For a base vertex $v \in V$, the \emph{tie interval} above $v$ is the disjoint union of $I$-fibres over $v$ coming from the half-branches adjacent to $v$, see Figure \ref{fig: branched-neighbourhood-base-vertex}. 
\end{itemize}
Note that $(\tau, V, \mu)$ is determined uniquely, up to isotopy, by the branched neighbourhood $N(\tau, V, \mu)$.  

\begin{figure}
	\labellist
	\pinlabel $w$ at 155 50
	\pinlabel $6$ at 30 155
	\pinlabel $3$ at 30 115
	\pinlabel $3$ at 30 77
	\pinlabel $3$ at 295 155
	\pinlabel $6$ at 295 85
	\pinlabel $3$ at 295 25
	
	\pinlabel $c_1$ at 510 100
	\pinlabel $c_2$ at 550 70
	\pinlabel $c_3$ at 510 32
	\endlabellist
	
	\includegraphics[width = 3.5 in]{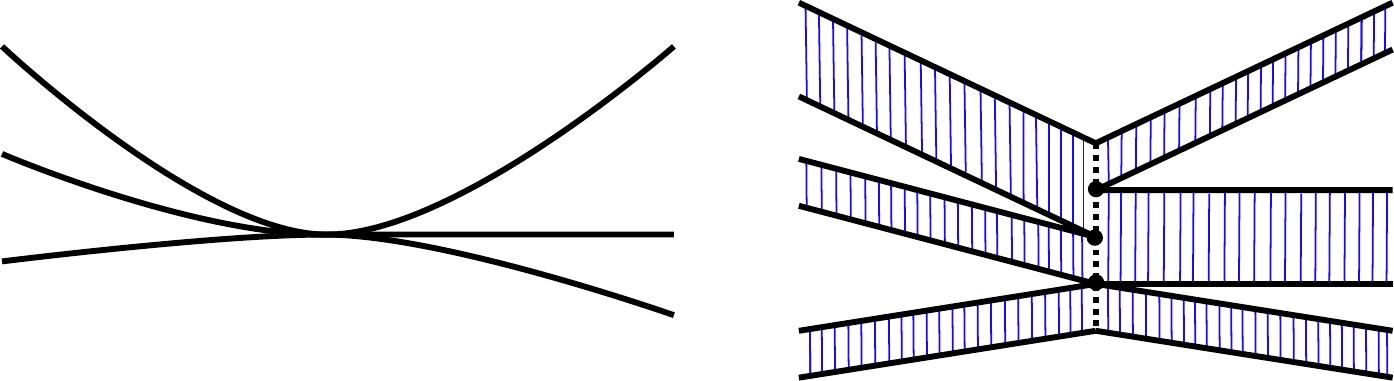}
	\caption{A non-base vertex $w$ of a based integrally weighted train track $(\tau, V , \mu)$ on the left, and the local picture for the branched neighbourhood $N(\tau, V , \mu)$ on the right. The tie interval above $w$ is shown by dashed lines, and the cusp points $c_i$ are depicted as dots on it. Other tie intervals are shown with (blue) vertical solid lines. }
	\label{branched neighbourhood}
\end{figure}

\begin{figure}
	\labellist
	\pinlabel $v$ at 50 80 
	\pinlabel $P_v$ at 295 80
	\pinlabel $3$ at 30 30 
	\pinlabel $3$ at 30 130
	\pinlabel $2$ at 110 95  
	\endlabellist
	
	\includegraphics[width=3 in]{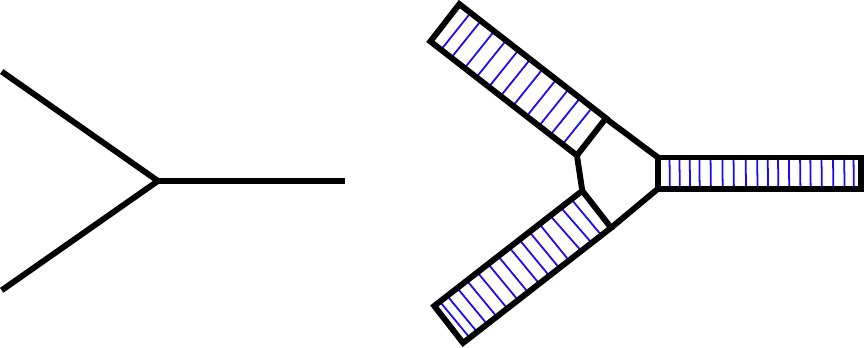}
	\caption{A base vertex  of a based integrally weighted train track $(\tau, V , \mu)$ on the left, and the local picture for the branched neighbourhood $N(\tau, V , \mu)$ on the right. The tie intervals are shown with solid arcs.}
	\label{fig: branched-neighbourhood-base-vertex}
\end{figure}

Just as in splittings of (unweighted) train tracks, it makes sense to talk about a splitting of an (unweighted) based train track $(\tau, V)$ along a branch $e$. Let $e$ be an oriented edge with the initial vertex $w \notin V$ and the terminal vertex $w'$. A splitting of $(\tau, V)$ is a combinatorial local move as in Figure \ref{fig: splitting unweighted train track}.

A splitting of $(\tau, V, \mu)$ is defined just like a splitting of a weighted train track except we require the support of the splitting to be disjoint from $V$. Namely let $N(\tau, V, \mu)$ be a branched neighbourhood of $(\tau, V, \mu)$. Consider a vertex $w \notin V$, and the tie interval $t$ above $w$. Let $p$ be a cusp point of $t$, and $l$ be the leaf of the horizontal foliation on $N(\tau, V, \mu) \setminus (\cup_{v\in V} P_v )$ passing through $p$ and oriented starting at $p$. Denote by $e$ the first branch of $\tau$ travelled by $l$, and give $e$ the orientation from $l$. If $l \neq p$, let $q$ be the next point of intersection of $l$ with the union of the tie intervals above the vertices of $\tau$, it could be that $q =p$ if $l$ is a simple closed curve. Denote the segment of $l$ between $p$ and $q$ by $l_0= [p, q)$ where $l_0$ contains $p$ but not $q$. If $l = p$, set $l_0 = p$. Then the \emph{splitting of $N(\tau, V, \mu)$ along the cusp point $p$ (or the leaf segment $l_0$)} is defined as the closed complement $N(\tau, V, \mu) \setminus \setminus l_0$ with one exception: if $q$ lies on a tie interval above a base vertex $v \in V$ then we split $N(\tau, V, \mu)$ along $l_0$ and then modify the polygon $P_v$ by adding a free side around $q$; see Figures \ref{fig: splitting unweighted train track} and \ref{fig:splitting}. Sometimes we refer to this operation as a splitting of $N(\tau, V, \mu)$ along the branch $e$, if we do not need to stress the choice of the cusp point $p$. We say that $(\tau', V, \mu')$ is obtained by \emph{splitting} $(\tau, V, \mu)$ if $N(\tau', V, \mu')$ is obtained by splitting $N(\tau, V, \mu)$; note that the set of base vertices $V$ is not changed during the splitting.

\begin{figure}
	
	\includegraphics*[width = 3 in]{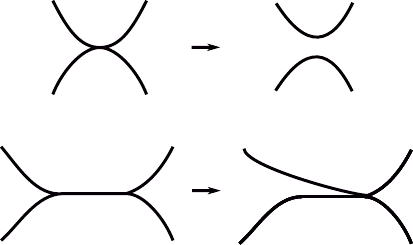}
	\centering
	\caption{Splitting an unweighted train track along an edge or a vertex. The picture on the left is replaced with the one on the right, after splitting along the middle vertex or edge.}
	\label{fig: splitting unweighted train track}
\end{figure}

\begin{figure}
	\labellist
	\pinlabel $P_v$ at 75 55 
	\pinlabel $P'_v$ at 75 10
	\endlabellist
	\includegraphics[width=2 in]{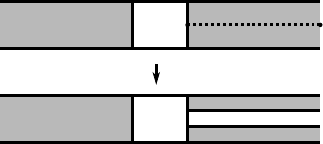}
	\caption{Splitting of a branched neighbourhood. The shaded regions are the rectangles over various edges. The dashed line is the leaf segment $l = [p,q)$, where $q$ lies on $P_v$ here. }
	\label{fig:splitting}
\end{figure}

\begin{definition}[Orbit counting problem associated with a based integrally weighted train track]
	
	Let $(\tau, V, \mu)$ be a based integrally weighted train track. For each vertex $w \notin V$, define $\mu_w$ as sum of the weights of the incoming (or outgoing) edges at $w$, and pick an interval $t_w \subset \mathbb{N}$ of length $\mu_w$. Visually we think of $t_w$ as the intersection of the tie interval above $w$ with the carried 1-complex $\mathcal{CC}(\tau, V, \mu)$. Similarly, let $e_1, \cdots, e_r$ be the half-branches adjacent to base vertices, and for each $e_j$ consider an interval $t_j \subset  \mathbb{N}$ of length $\mu_{e_j}$. Again we think of $t_j$ as the intersection of a component of the tie interval above a base vertex $v$ with the 1-complex $\mathcal{CC}(\tau, V, \mu)$. By abuse of notation, we call the intervals $t_w$ and $t_j$ the tie intervals. Pick intervals $I_{j} \subset \mathbb{N}$ where the index $j$ varies in $\mathcal{J}: = \{ 1, \cdots, r\} \cup \{w | w \notin V  \text{ is a vertex} \}$ such that
	\begin{enumerate}
		\item the interval $I_j$ has the same length as $t_j$ for $j \in \mathcal{J}$; 
		\item the intervals $I_j$ for $j \in \mathcal{J}$ are pairwise disjoint and their union is $[1,N]$ for some positive integer $N$; and 
		\item the union of $I_j$ where $j \in \{ 1 , \cdots, r\}$ is $[1, M]$ for some positive integer $M $. 
	\end{enumerate}
	
	Pick isometric identifications $i_j \colon t_j \rightarrow I_j$; there are two such choices for each index $j$. The data $\mathcal{I}: = (t_j , I_j , i_j)_{j \in \mathcal{J}}$ is called an \emph{initial interval identification}. Define an orbit counting problem $\mathrm{OCP}(\tau, V, \mu, \mathcal{I})$ as follows. Let $e$ be an edge of $\tau$ connecting vertices $w$ and $w'$, and assume that $I_{w'}$ lies to the right of $I_w$ as subsets of $\mathbb{N}$. Define an isometric pairing $g_e$ by following the segments of the $1$-complex $\mathcal{CC}(\tau, V, \mu)$ along the rectangle $R_e$ where $\text{domain}(g_e) \subset I_w$ and $ \text{range}(g_e) \subset I_{w'}$. Then the isometric pairings $\{ g_e| e \text{ is a branch} \}$ form the orbit counting problem $\mathrm{OCP}(\tau, V, \mu, \mathcal{I})$ with total interval $[1,N]$. 
	
	\label{orbit counting problem}
\end{definition}

\begin{remark}
	By construction, every point in $[1, N]$ appears in the domain and range of at most two pairings of $\mathrm{OCP}(\tau, V, \mu, \mathcal{I})$. 
\end{remark}

%
%

The following geometric operation on a based integrally weighted train track appears naturally when applying the AHT algorithm to the orbit counting problem $\mathrm{OCP}(\tau, V, \mu, \mathcal{I})$. 

\begin{definition}[Twirling]
	Let $(\tau, V, \mu)$ be a based integrally weighted train track on an orientable surface $S$, and let $N = N(\tau, V, \mu)$ be a branched neighbourhood of it. Let $w \notin V$ be a vertex, and $e$ be a branch connecting $w$ to itself. Denote the tie interval above $w$ by $t$, and fix an identification of $t$ with the interval $[0, \mu_w]$ (there are two choices for such identification). Call $0 \in [0, \mu_w] \cong t$ the \emph{top} point of $t$. Let $e \times [0, \mu_e] \cong R_e \subset N$ be the rectangle around $e$. Write $\partial e = \partial_{-} e \cup \partial_{+} e$, and orient $e$ from $\partial_{-} e$ to $\partial_{+} e$. Call $\partial_{-} e \times [0, \mu_e]$ the negative vertical boundary of $R_e$, similarly define the positive vertical boundary. Assume that $R_e$ starts and ends on different sides of $t$, and that the images of the negative and positive vertical boundary components of $R_e$ in $N$ intersect each other. (The operation of twirling is only defined under this assumption). Let the positive side of $t$ be the side of $\partial_+ e$; similarly for the negative side. Assume that $e$ is the top branch coming into $w$ on the negative side of $t$. See Figure \ref{fig:twirling}.
	
	Let $p_1, \cdots, p_k$ be the cusp points on $t$ that lie above $\partial_{+} e \times (0, \mu_e)$. Let $l_i$ be the leaf of the horizontal foliation through $p_i$, and $s_i$ be connected component of $l_i \cap R_e$ that contains $p_i$. Split $N$ along all $s_i$. More precisely, set $p_i^0 = p_i$, and if $l_i \neq \{ p_i^0\}$ then define $p_i^1$ to be the first intersection point of $l_i$ with the positive vertical boundary of $R_e$; and $p_i^1$ is not defined otherwise. Then by the orientability of $S$, the relative order of $p_i^1$ on $t$ is the same as that of $p_i$; i.e. $p_i^1$ is on top of $p_j^1$ if $i<j$ as long as both are defined. Now if all $p_i^1$ are defined and lie in the negative vertical boundary of $R_e$, we can repeat the same process starting with $p_i^1$ to obtain points $p_i^2$ and so on. Hence there is an $m$ such that at least one of the points $p_1^m, \cdots, p_k^m$ is either not defined or does not lie in the negative vertical boundary of $R_e$; say only $p_1^m, \cdots , p_n^m$ are defined and lie in the negative vertical boundary of $R_e$ for some $0 \leq n <k$. Then we split one further time along $p_1^m, \cdots, p_n^m$. The resulting based integrally weighted train track is called a \emph{twirling of $(\tau, V, \mu)$ along the oriented branch $e$}. 
	\label{def:twirling}
\end{definition}

\begin{remark}
	If $(\tau', V, \mu')$ is obtained from $(\tau, V, \mu)$ via a splitting, then the number of branches of $\tau'$ is no more than that of $\tau$. Since every twirling is a concatenation of a number of splits, the same holds if $(\tau', V, \mu') $ is obtained from $(\tau, V, \mu)$ via a twirling. 
	\label{number of branches after splitting and twirling}
\end{remark}

\begin{figure}
	\labellist
	\pinlabel $w$ at 410 90
	\pinlabel $e$ at 385 110
	
	\pinlabel $A$ at 30 145 
	\pinlabel $A$ at 290 20
	
	\pinlabel $\textbf{a}$ at 455 50
	\pinlabel $\textbf{b}$ at 460 70
	\pinlabel $\textbf{c}$ at 450 105

	\endlabellist
	
	\includegraphics[width=5 in]{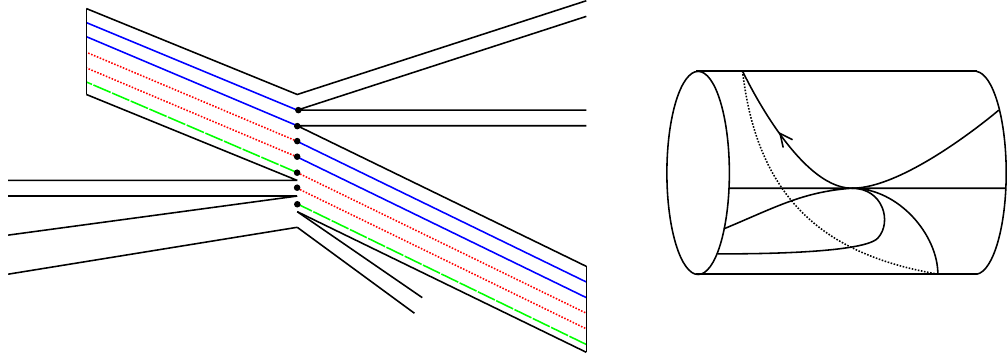}
	\caption{A twirling. A neighbourhood of the branch $e$ and the underlying train track is shown on the right. Here $k=2$, $m=2$, and $n=1$. In the left side, the two $A$ should be identified with each other to construct the rectangle $R_e$. The solid points from top to bottom are $p_1^0$, $p_2^0$, $p_1^1$, $p_2^1$, $p_1^2$, $p_2^2$ and $p_1^3$. The numbers $\textbf{a}$, $\textbf{b}$, and $\textbf{c}$ are the weights of $e$ and the two edges coming into it from right. }
	\label{fig:twirling}
\end{figure}

\begin{lem}[Effect of twirling on the underlying train track]
	Let $(\tau, V, \mu)$ be a based integrally weighted train track on an orientable surface $S$. Assume that $(\tau', V, \mu')$ is obtained from $(\tau, V, \mu)$ by twirling along the branch $e$. Let $\alpha$ be the simple closed curve that is the closure of $e$, and $m$ be as in Definition \ref{def:twirling}. Denote the Dehn twist along $\alpha$ by $T_\alpha$. Then $\tau'$ is obtained from $\tau$ via twisting by $(T_\alpha)^{m-1}$ followed by splitting the (unweighted) based train track $(T_\alpha)^{m-1}(\tau, V)$ a number of times along $e$. 
\label{effect of twirling on the underlying 1-complex}
\end{lem}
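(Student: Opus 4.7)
The plan is to decompose the twirling into $m$ stages, identify the first $m-1$ stages with Dehn twists of the underlying train track, and recognise the final stage as ordinary splittings along $e$. By Definition \ref{def:twirling}, for each $i$ the iterated return points $p_i^j$ (for $j=0,1,\ldots$) are obtained by successively following the leaf $l_i$ as it re-enters $R_e$, and the index $m$ is the first level at which some $p_i^m$ either becomes undefined or escapes the negative vertical boundary. Thus for every $j \in \{0,\ldots,m-2\}$ and every relevant $i$, the leaf segment from $p_i^j$ to $p_i^{j+1}$ traverses $R_e$, exits on the negative vertical boundary, and returns after travelling once around the loop $\alpha$ (the closure of $e$).

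The main step is to show that splitting $N(\tau,V,\mu)$ simultaneously along all the once-around segments corresponding to a single level $j \to j+1$ changes the underlying unweighted based train track $(\tau,V)$ by exactly one Dehn twist $T_\alpha$. To see this, I would pass to a local model in a regular neighbourhood of $\alpha$: the once-around leaf segments cross the rectangle $R_e$ transversely, and simultaneously cutting along them reorganises the half-branches incident to $w$ in exactly the pattern prescribed by $T_\alpha$. The orientability of $S$, together with the preservation of the relative order of the $p_i^j$ on $t$ recorded in Definition \ref{def:twirling}, ensures that all the leaves wrap in a consistent direction, so their simultaneous splits combine coherently into a single twist rather than interfering.

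Iterating this claim from $j=0$ to $j=m-2$, the underlying based train track after the first $m-1$ stages is $(T_\alpha)^{m-1}(\tau,V)$. The final stage of the twirling splits along the $n$ leaf segments emerging from $p_1^{m-1},\ldots,p_n^{m-1}$ that exit $R_e$ without completing another loop; these are precisely the ordinary splittings of $(T_\alpha)^{m-1}(\tau,V)$ along the branch $e$ in the combinatorial sense of Figure \ref{fig: splitting unweighted train track}. Combining, $\tau'$ is obtained from $\tau$ by first twisting with $(T_\alpha)^{m-1}$ and then performing $n$ ordinary splittings along $e$, as claimed.

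The hardest part will be the combinatorial verification in the second paragraph: one must keep track of exactly how the half-branches incident to $w$ are reglued by the simultaneous splits, and match this with the standard description of a train-track Dehn twist, while using only the data encoded in the unweighted based train track $(\tau,V)$. Once this model computation is in hand, the iteration and the identification of the final stage are routine bookkeeping, and the weighting $\mu$ plays no further role beyond determining how many ordinary splittings along $e$ appear at the end.
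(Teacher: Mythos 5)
Your proposal follows the same decomposition as the paper's proof: split the twirling into stages indexed by the level $r$, identify each of the levels $r=0,\ldots,m-2$ (where all $k$ cusp points complete a full circuit through $R_e$) with one Dehn twist $T_\alpha$, and recognise the remaining stages as ordinary splittings of $(T_\alpha)^{m-1}(\tau,V)$ along $e$. Both you and the paper leave the key combinatorial verification — that a simultaneous split along all $k$ once-around leaf segments at a single level reorganises the switches exactly as $T_\alpha$ does — to the local picture of Figure \ref{fig:twirling}. One small bookkeeping discrepancy: the paper's final stage consists of splits along all $k$ cusp points $p_i^{m-1}$, followed by further splits along the $n$ points $p_i^m$ that still lie in the negative vertical boundary; your writeup collapses this into ``$n$ splits from $p_1^{m-1},\ldots,p_n^{m-1}$,'' which miscounts and misattributes the level. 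Since the lemma only asserts ``a number of times along $e$,'' this does not affect the conclusion, but you should fix the accounting to match Definition \ref{def:twirling} precisely.
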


\begin{proof}
	Define the points $p_i^r$ as in Definition \ref{def:twirling}. Let $l_i^r$ be the segment of $l_i$ between $p_i^r$ and $p_i^{r+1}$, if both $p_i^r$ and $p_i^{r+1}$ are defined. Then repeatedly splitting along $\cup_{i=1}^{k} l_i^r$ for $r=0, 1, \cdots, m-2$ replaces $\tau$ with $(T_\alpha)^{m-1}(\tau)$. Then splitting along $\cup_{i=1}^{k} p_i^{m-1}$ followed by splitting further along $\cup_{i=1}^{n} p_i^{m}$ is a sequence of splits along the oriented branch $e$. See Figure \ref{fig:twirling}.
\end{proof}

\begin{lem}
	Let $(\tau, V)$ be a based train track on a compact orientable surface $S$, and that $(\tau' , V)$ is obtained from $(\tau, V)$ by a splitting. 
	Assume that the underlying 1-complex of $\tau'$ is a polygonal decomposition (respectively a pre-triangulation) of $S$. 
	Then the underlying 1-complex of $\tau$ is also a polygonal decomposition (respectively a pre-triangulation). The corresponding statement holds if based train tracks are replaced with based integrally weighted train tracks. 
	\label{underlying 1-complex}
\end{lem}

\begin{proof}
	
	The Euler characteristics of the complementary components of a train track will not increase as a result of splitting, but they might decrease if some complementary components merge together. Now each complementary component of 
	$\tau'$
	has Euler characteristic at least $1$ (respectively $-1$). It follows that the complementary components of $\tau'$ have Euler characteristic at least $1$ (respectively $-1$) as well. It remains to show that no complementary component of $\tau$ is topologically a torus minus a disc. Observe that if $\tau'$ is obtained by splitting $\tau$, and $\alpha$ is an essential (i.e. homotopically non-trivial and non-boundary-parallel) curve in a complementary component $R$ of $\tau$, then some complementary component of $\tau'$ also contains an essential curve. It follows that no complementary component $R$ of $\tau$ is a torus minus a disc; otherwise $R$ contains an essential curve which implies that some complementary component of 
	$\tau'$
	also contains an essential curve, a contradiction. 
	
	A splitting of a based integrally weighted train track is in particular a splitting of the underlying unweighted based train track. Therefore, the same proof applies to the weighted case as well. 
	
	%
\end{proof}

\begin{lem}
	Let $(\tau, V, \mu)$ be a based integrally weighted train track with $E$ edges on a compact orientable surface $S$. Assume that $(\tau', V, \mu')$ is obtained from $(\tau, V, \mu)$ by either a split or a twirl. Assume that the underlying 1-complex of $\tau$ is a pre-triangulation. Set $k = \chi(\tau') - \chi(\tau) \geq 0$, where $\chi(\tau)$ is the Euler characteristic of $\tau$ as a 1-complex. For every ordering $\mathrm{O}$ on the edges of $\tau$, there is an ordering $\mathrm{O}'$ on the edges of $\tau'$ such that the pants decompositions $\mathrm{\textbf{P}}(\tau, \mathrm{O})$ and $\mathrm{\textbf{P}}(\tau', \mathrm{O}')$ have distance $O((k+1)E)$ in the pants graph. 
	
	Moreover, given the train track $\tau$, the ordering $\mathrm{O}$, and the split or twirl move from $\tau$ to $\tau'$, there is an algorithm that constructs such an ordering $\mathrm{O}'$ together with a sequence of $O((k+1) E)$ consecutive transpositions taking $\mathrm{O}$ to $\mathrm{O}'$. The algorithm runs in time that is a polynomial function of $E$. Here the train track is given as follows: the complementary components of $\tau$ are cusped polygons, annuli, or pairs of pants; these are given together with their side identifications. Furthermore, the twirl or split move is given by specifying an oriented edge.
	
	\label{lem:local distance}
\end{lem}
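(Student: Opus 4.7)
The plan is to handle the split and twirl cases separately, exploiting the fact that both modifications are geometrically local on the underlying $1$-complex.

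For a single split, I would first observe that $\tau'$ can be isotoped to agree with $\tau$ outside a small disc $D\subset S$ whose closure meets only $O(1)$ edges of each train track. The plan is to use consecutive transpositions to move the $O(1)$ edges of $\tau$ inside $D$ to the tail of $\mathrm{O}$, producing an ordering $\mathrm{O}_*$; by Lemma \ref{transposition} this costs $O(E)$ in pants graph distance. I would then define $\mathrm{O}'$ on $\tau'$ by using the same ordering on the exterior edges, with the $O(1)$ interior edges of $\tau'$ placed at the tail. Since the filled subsurfaces $F_i$ of Construction \ref{const:triangulation-to-pants} depend only on the regular neighborhood of the edges added so far (and on which complementary components of that neighborhood are discs in $S$), the prefix of $\mathrm{O}_*$ consisting of exterior edges produces identical $F_i$'s, and hence identical pants curves, for $\tau$ and $\tau'$. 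The curves contributed by the tail edges are supported in a bounded-complexity neighborhood of $D$, so they differ in only $O(1)$ essential simple closed curves which can be resolved by $O(1)$ simple/associativity moves. Since $k$ is bounded for a single split, this gives $O((k+1)E)$.

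For a twirl along a branch $e$ with $\alpha$ the closure of $e$, Lemma \ref{effect of twirling on the underlying 1-complex} factors the operation as $(T_\alpha)^{m-1}$ followed by a sequence of splits of $(T_\alpha)^{m-1}(\tau)$ along $e$. My plan is to first move $e$ to the head of $\mathrm{O}$, at cost $O(E)$ by Lemma \ref{transposition}. With $e$ ordered first, $F_1$ is a regular neighborhood of $\alpha$, so either $\alpha\in \mathrm{\textbf{P}}(\tau,\mathrm{O}_*)$ (when $\alpha$ is essential) or $\alpha$ bounds a disc or is boundary-parallel, in which case $T_\alpha$ is trivial up to isotopy in $S$. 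In either case $(T_\alpha)^{m-1}$ preserves the associated pants decomposition up to isotopy, so the Dehn twist step contributes nothing to the pants graph distance. It then remains to apply the split case iteratively to the sequence of splits from $(T_\alpha)^{m-1}(\tau)$ to $\tau'$; since each such split changes $\chi$ by a bounded positive amount, there are $O(k+1)$ of them, each costing $O(E)$, yielding the bound $O((k+1)E)$. The algorithmic claim then follows because the reordering and the splits can be read off from the combinatorial data in polynomial time in $E$.

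The step I expect to be the main obstacle is the careful verification, in the split case, that after moving the interior edges to the tail the filled subsurfaces $F_i$ built from $\mathrm{O}_*$ and $\mathrm{O}'$ genuinely agree outside $D$ (requiring that ``filling disc complementary regions'' is compatible with the local change), and that the residual pants decompositions inside a bounded-Euler-characteristic neighborhood of $D$ really do differ by only $O(1)$ essential simple closed curves and hence by $O(1)$ pants moves. This will require a modest case analysis over the local splitting configurations, together with a check that $\tau'$ inherits the pre-triangulation property from $\tau$ (so that Construction \ref{const:triangulation-to-pants} even applies to it), which should follow in the spirit of Lemma \ref{underlying 1-complex}.
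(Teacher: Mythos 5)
Your proposal correctly reduces the twirl to a Dehn twist followed by splits, and handles the Dehn twist in essentially the same way as the paper (putting $e$ first so that $\alpha$ is the first pants curve, hence twisting along it changes nothing). The gap is in how you handle the sequence of splits. You claim that, having established a per-split cost of $O(E)$, one can iterate, and then justify the $O((k+1)E)$ total by asserting that ``each such split changes $\chi$ by a bounded positive amount, there are $O(k+1)$ of them.'' That assertion is false: a split can leave $\chi(\tau)$ unchanged, and in fact this is the generic situation. The operation you are given (coming from one shifted cycle of the AHT algorithm, Proposition \ref{running AHT}) can consist of up to $E$ splits along the same oriented branch while $k = \chi(\tau')-\chi(\tau)$ remains $0$ or $1$. (Indeed, if every split increased $\chi$, then summing over all $O(E\log|\mu|)$ shifted cycles would force $\chi(\tau_n)-\chi(\tau_1)$ to be superlinear in $E$, which is absurd.) So iterating your single-split estimate gives $O(\ell E)$ with $\ell$ potentially of order $E$, i.e.\ $O(E^2)$, not $O((k+1)E)$ — and that weaker bound would propagate into Theorem \ref{thm:upper bound for distance in pants graph} and ruin the quadratic-in-$|\chi(S)|$ dependence.

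The idea you are missing is the one the paper uses to make the bound scale with $k$ rather than with the number of splits: treat the entire split sequence at once via the carrying map $\pi\colon \tau' \to \tau$. With $e$ moved to the front of the ordering (cost $O(E)$), one compares the pre-triangulation obtained by contracting $e$ in $\tau$ to the one obtained by contracting the preimage forest $\pi^{-1}(e)$ in $\tau'$. When $k=0$ the preimage is a tree and the two contracted complexes coincide, giving \emph{equality} of pants decompositions with no further moves, regardless of how many elementary splits were performed; when $k$ increases by $1$ the preimage disconnects into an extra component, one attaches an auxiliary edge $e'$ and moving it costs $O(E)$ further transpositions. Thus the cost is charged to jumps in $\chi$, not to the number of splits. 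Your ``small disc $D$'' locality analysis for a single split is plausible (and you rightly flag the verification burden that the tail of the ordering contributes only a bounded number of pants moves), but even if carried out it only yields a per-split bound and so cannot recover the stated estimate. You would need to replace the iteration by a global comparison of $\tau$ with $\tau'$ through the carrying projection, as in the paper's proof.
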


\begin{proof}
	We first show how to reduce to the case where $\tau'$ is obtained from $\tau$ by splitting a number of times along the same oriented edge. Assume that $(\tau', V, \mu')$ is obtained from $(\tau, V, \mu)$ by a twirl along a branch $e$, and let $\alpha$ be the simple closed curve that is the closure of $e$. By Lemma \ref{effect of twirling on the underlying 1-complex}, $\tau'$ is obtained from $\tau$ by some power $(T_{\alpha})^\ell$ of the Dehn twist along $\alpha$ followed by splitting $(T_\alpha)^\ell(\tau, V)$ a number of times along $e$. Given the ordering $\mathrm{O}$ on $\tau$, we can apply at most $E$ consecutive transpositions to obtain $\mathrm{O}_1$ such that $e$ appears as the first edge in $\mathrm{O}_1$. The distance between the pants decompositions $\mathrm{\textbf{P}}(\tau, \mathrm{O})$ and $\mathrm{\textbf{P}}(\tau, \mathrm{O}_1)$ is $O(E)$ by Lemma \ref{transposition}. Since $(T_\alpha)^\ell$ is a mapping class, it induces an ordering $((T_\alpha)^\ell)_*(\mathrm{O}_1)$ on $(T_\alpha)^\ell(\tau)$.  We claim that the pants decompositions $\mathrm{\textbf{P}}(\tau, \mathrm{O}_1)$ and $\mathrm{\textbf{P}}((T_\alpha)^\ell(\tau), ((T_\alpha)^\ell)_*(\mathrm{O}_1))$ are the same. To see this, note that $e$ appears as the first edge in $\mathrm{O}_1$, and so Construction \ref{const:triangulation-to-pants} produces $\alpha$ as the first curve in $\mathrm{\textbf{P}}(\tau, \mathrm{O}_1)$. Hence 
	\[ \mathrm{\textbf{P}}((T_\alpha)^\ell(\tau), ((T_\alpha)^\ell)_*(\mathrm{O}_1)) = (T_\alpha)^\ell(\mathrm{\textbf{P}}(\tau, \mathrm{O}_1)) = \mathrm{\textbf{P}}(\tau, \mathrm{O}_1). \]
	The last equality holds because Dehn twisting a pants decomposition along one of its curves does not change the pants decomposition. This shows that at the cost of distance $O(E)$ in the pants graph, we may replace $(\tau, \mathrm{O})$ with $((T_\alpha)^\ell(\tau), ((T_\alpha)^\ell)_*(\mathrm{O}_1)) $, completing the reduction. 
	
	Now assume that $\tau'$ is obtained from $\tau$ by splitting a number of times along the same oriented edge $e$. Again at the cost of distance $E$ in the pants graph, we may assume that $e$ appears as the first edge in $\mathrm{O}$. Assume that $k \leq 1$; the general case follows from this. Since $\tau$ splits to $\tau'$, the train track $\tau'$ is carried by $\tau$. This means that there is a neighbourhood $N(\tau)$ of $\tau$ foliated by interval fibers, a projection map $\pi \colon N(\tau) \rightarrow \tau$ collapsing each fiber to a point, and a map $f \colon \tau' \rightarrow N(\tau)$ transverse to the fibers. In particular, the composition $\pi \circ f$ gives a map $ \pi \circ f \colon \tau' \rightarrow \tau$, which we abbreviate to $\pi \colon \tau' \rightarrow \tau$. Consider the following cases:
	
	\textit{Case 1)}: If $k=0$ and $e$ has distinct endpoints. Here $T: = \pi^{-1}(e)$ is a tree. Contract $e$ in $\tau$ to a point to obtain a pre-triangulation $\tau_1$. Let $\mathrm{O}_1$ be the ordering on $\tau_1$ induced from the ordering $\mathrm{O}$ on $\tau$ (by removing $e$). By Construction \ref{const:triangulation-to-pants}, since $e$ appears as the first edge in $\mathrm{O}$ and the endpoints of $e$ are distinct, we have 
	\[ \mathrm{\textbf{P}}(\tau, \mathrm{O}) = \mathrm{\textbf{P}}(\tau_1, \mathrm{O}_1). \]
	For each point in $\pi^{-1}(\partial e)$, add it as a vertex (possibly of degree two) in $\tau'$. Define an ordering $\mathrm{O}'$ on $\tau'$, where the restriction of $\mathrm{O}'$ to $\tau' - T$ agrees with the restriction of $\mathrm{O}$ to $\tau - e$ and such that the edges of $\tau'$ lying in $T$ appear first (in any order) in $\mathrm{O}'$. Contract the tree $T$ in $\tau'$ to obtain $\tau'_1$, let $\mathrm{O}'_1$ be the ordering on $\tau'_1$ obtained from the ordering $\mathrm{O}'$ on $\tau'$ by deleting the edges that lie in $T$. We have 
	\[ \mathrm{\textbf{P}}(\tau'_1, \mathrm{O}'_1) = \mathrm{\textbf{P}}(\tau', \mathrm{O}'). \]
	Note that $(\tau_1, \mathrm{O}_1)$ is equal to $(\tau'_1, \mathrm{O}'_1)$ up to isotopy. Hence 
	\[ \mathrm{\textbf{P}}(\tau, \mathrm{O}) = \mathrm{\textbf{P}}(\tau_1, \mathrm{O}_1) = \mathrm{\textbf{P}}(\tau'_1, \mathrm{O}'_1) = \mathrm{\textbf{P}}(\tau', \mathrm{O}'). \] 
	
	\textit{Case 2)}: If $k=1$ and $e$ has distinct endpoints. Here $\pi^{-1}(e)$ has two connected components, each of which is a tree. Contract $e$ in $\tau$ to obtain $(\tau_1, \mathrm{O}_1)$.  For each point in $\pi^{-1}(\partial e)$ that is not a vertex of $\tau'$, add it as a vertex of degree two to obtain  $\hat{\tau'}$. Attach an extra edge $e'$ (joining the two components of $T$) to $\hat{\tau'}$ to obtain a 1-complex $\tau'_1$ such that if we contract $e' \cup \pi^{-1}(e)$ in $\tau'_1$ the result is isotopic to $\tau_1$. See Figure \ref{fig: new ordering lemma}.
	
	\begin{figure}
		\includegraphics*[width = 2.5 in]{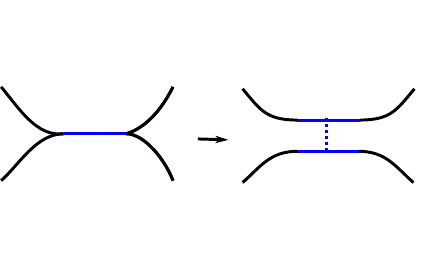}
		\caption{The train track $\tau'$ on the right is obtained by splitting $\tau$ (left). The added edge $e'$ is shown with a dotted segment. The edge $e$ and the preimage $\pi^{-1}(e)$ are shown in solid blue segments.}
		\label{fig: new ordering lemma}
	\end{figure}

	Let $\mathrm{O}'_1$ be the ordering on $\tau'_1$ such that the edges in $e' \cup \pi^{-1}(e)$ appear first, and otherwise it agrees with $\mathrm{O}_1$. We have 
	\[ \mathrm{\textbf{P}}(\tau, \mathrm{O}) = \mathrm{\textbf{P}}(\tau_1, \mathrm{O}_1) = \mathrm{\textbf{P}}(\tau'_1, \mathrm{O}'_1). \]
	Let $\mathrm{O}'_2$ be the ordering on $\tau'_1$ such that $e'$ appears last, and otherwise the ordering agrees with $\mathrm{O}'_1$. Then $\mathrm{O}'_2$ can be obtained from $\mathrm{O}'_1$ by $O(E)$ consecutive transpositions. Therefore, the distance between $\mathrm{\textbf{P}}(\tau'_1, \mathrm{O}'_2)$ and $\mathrm{\textbf{P}}(\tau'_1, \mathrm{O}'_1)$ is $O(E)$. Let $\hat{\mathrm{O}'}$ be the ordering on $\hat{\tau'}$ obtained by removing $e'$ (the last edge) from $\tau'_1$. We claim that $\mathrm{\textbf{P}}(\hat{\tau'}, \hat{\mathrm{O}'})$ is equal to $\mathrm{\textbf{P}}(\tau'_1, \mathrm{O}'_2)$. This is because, $e'$ appears last in $\mathrm{O}'_2$, and given that $\tau'$ and hence $\hat{\tau'}$ is a pre-triangulation, adding $e$ to $\hat{\tau'}$ (to obtain $\tau'_1$) does not create any new curve in Construction \ref{const:triangulation-to-pants}. Hence the distance between $\mathrm{\textbf{P}}(\tau, \mathrm{O}) = \mathrm{\textbf{P}}(\tau'_1, \mathrm{O}'_1)$ and $\mathrm{\textbf{P}}(\hat{\tau'}, \hat{\mathrm{O}'}) = \mathrm{\textbf{P}}(\tau'_1, \mathrm{O}'_2)$ is $O(E)$. On the other hand, the ordering $\hat{\mathrm{O}'}$ on $\hat{\tau'}$ induces an ordering $\mathrm{O}'$ on $\tau'$, simply by deleting the vertices $\pi^{-1}(e)$ and we have $\mathrm{\textbf{P}}(\hat{\tau'}, \hat{\mathrm{O}'}) = \mathrm{\textbf{P}}(\tau', \mathrm{O}')$. 
	
	\textit{Case 3}: If $k \leq 1$ and the endpoints of $e$ coincide. In this case we can add a vertex of degree $2$ in the middle of $e$ to obtain edges $e_1$ and $e_2$ such that $e_1 \cup e_2 = e$, each $e_i$ has distinct endpoints, and the splittings along $e$ can be obtained by splittings along $e_1$ followed by splittings along $e_2$. This reduces the problem to the previous two cases. 
	
	It is straightforward to see that the above construction of $\mathrm{O}'$ can be done algorithmically in time that is a polynomial function of $E$, $|\chi(S)|$, and $k$. Since $\tau$ is a pre-triangulation, $E$ is at least linear in $|\chi(S)|$. Similarly, $k$ is at most $E$. Therefore the algorithm runs in polynomial time in $E$.
		
\end{proof}

\subsection{Simplifying a based integrally weighted train track using the AHT algorithm}

Given a based integrally weighted train track $(\tau, V, \mu)$, one can always simplify it to its carried 1-complex using successive splittings. The number of splittings could be large, and one can instead simplify $(\tau, V, \mu)$ much faster by using both splitting and twirling moves. This section gives an upper bound for the numbers of splitting and twirling moves needed to simplify a based integrally weighted train track to its carried 1-complex, using the algorithm of Agol, Hass, and Thurston. The arguments are technical and could be skipped in a first reading, but see Remark \ref{complete unwinding}. Before giving the technical details we show the underlying principle using a simple example related to Euclid's algorithm. 

\begin{example}[Euclid's algorithm revisited]
	Let $(\tau, \mu)$ be the weighted train track on the torus shown in Figure \ref{fig:train track for Euclid's algorithm} top-left, with weights $a$, $b$, and $a+b$. This train track has two vertices $v$ and $w$ and no base vertices. Denote the standard meridian and longitude of the torus by $m$ and $\ell$ respectively. The carried 1-complex by $ (\tau , \mu) $ is a multicurve $\gamma$ whose homology class is $a m + b \ell$. Therefore the number of connected components of $\gamma$ is $\mathrm{GCD}(a, b)$. In Figure \ref{fig:train track for Euclid's algorithm} top-right, a branched neighbourhood $N$ of $(\tau , \mu)$ is shown, obtained by gluing together 3 rectangles together one for each branch of $\tau$. Identify the tie interval above the vertex $v$ (shown with a red bracket interval) with $[1, a+b]$ such that the top part of the bracket in the figure is identified with $1 \in [1, a+b]$.  Similarly identify the tie interval above $w$ with $[a+b+1, 2a+ 2b]$ such that the top part of the bracket in the figure is identified with $a+b+1 \in [a+b+1, 2a+2b]$. This is our initial interval identification $\mathcal{I}$. The orbit counting problem associated to $(\tau, \mu , \mathcal{I})$ has the following pairings: 
		\begin{align*}
		&f(x)= x+a + 2b , \hspace{6mm} & f \colon [1, a] \rightarrow [a+2b+1, 2a+2b ], \\
		&g(x) = x+b, \hspace{6mm} & g \colon [a+1, a+b] \rightarrow [a+b+1, a+2b], \\
		&h(x) = x+ a+b, \hspace{6mm} & h \colon [1, a+b] \rightarrow [a+b+1, 2a+2b].		
	\end{align*}
	Here $f, g$ and $h$ are the pairings corresponding to the edges with weights respectively $a, b$ and $a+b$. 
	\begin{figure}
		\labellist 
		\pinlabel $a$ at 65 95
		\pinlabel $b$ at 50 113
		\pinlabel $a+b$ at 90 108
		
		\pinlabel $v$ at 30 110
		\pinlabel $w$ at 69 113
		
		\pinlabel $b$ at 80 32
		\pinlabel $a$ at 65 7
		
		\endlabellist
		\includegraphics[width= 5 in]{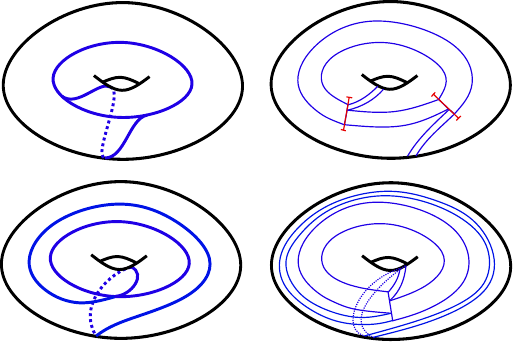}
		\caption{Top: a weighted train track (left) where the weights are $a, b, a+b$ and the vertices are $v, w$, and a branched neighbourhood of it (right). In Example \ref{ex:Euclid's algorithm revisited} we apply the AHT algorithm to unwound this train track, and see how the effect of each shifted cycle of the algorithm can be seen as a splitting or twirling on the train track. Bottom: the weighted train track obtained by splitting $\tau$ (left), and a branched neighbourhood of it (right). }
		\label{fig:train track for Euclid's algorithm}
	\end{figure}
	
	We saw in Example \ref{ex:Euclid's algorithm} that the AHT algorithm counts the number of orbits, which we know is equal to $\mathrm{GCD}(a,b)$. Let us see the effect of each shifted cycle of AHT algorithm on the weighted train track. The first shifted cycle consists of Steps (1)--(4), which are not applicable in our case, so we start with the second shifted cycle. Recall that a shifted cycle applies Steps (5) and (6), followed by Steps (1)--(4). 
	
	The pairing $h$ is maximal. Step (4) transmits $g$ by $h$ and replaces $g$ with $g' = h^{-1} \circ g$ where 
	\[ g'(x) = x-a, \hspace{6mm} g' \colon [a+1, a+b] \rightarrow [1, b].  \]
	Equivalently we can use the inverse $(g')^{-1}$ instead of $g'$ where 
	\[ (g')^{-1}(x) = x+a, \hspace{6mm} (g')^{-1} \colon [1, b] \rightarrow [a+1, a+b]. \]
	Similarly transmitting $f$ by $h$ replaces $f$ with $f' = h^{-1} \circ f$ where  
	\[ f'(x)= x+b, \hspace{6mm} f' \colon [1,a] \rightarrow [1+ b, a+b]. \]
	Step (5) then truncates $h$ to $h'$, and $N = 2a+2b$ to $N'= a+b$, where
	\[ h '(x) = x, \hspace{6mm} h' \colon [1, a+b] \rightarrow [1, a+b]. \]
	
	Step (1) then deletes the pairing $h'$, and Steps (2)--(4) are not applicable. Therefore the second shifted cycle of the AHT algorithm replaces $\{ f, g , h \}$ with $\{f' , (g')^{-1} \}$. This is the orbit counting problem associated to the train track $\tau'$ shown in Figure \ref{fig:train track for Euclid's algorithm} bottom-left. Note that $\tau'$ is obtained by splitting $\tau$. 
	
	Now we apply the shifted cycles of AHT algorithm to the weighted train track $\tau'$. Assume that $a<b$, the other case is similar. Let $b = na +r$, where $n$ is an integer and $ 0 \leq r <a$. The pairing $(g')^{-1}$ is maximal now. Step (4) transmits $f'$ by $(g')^{-1}$ and replaces $f'$ by $f'' = (g')^n \circ f'$ where 
	\[ f''(x) = x+b - na = x+r, \hspace{6mm} f'' \colon [1, a] \rightarrow [1+r, r+a]. \]
	Then Step (5) truncates $(g')^{-1}$ to $g''$, and replaces $N' = a+b$ with $N'' = a+r$, where 
	\[ g'' (x) = x+a, \hspace{6mm} g'' \colon [1, r] \rightarrow [1+a, r+a]. \]
	Assume first that $r \neq 0$. Then Steps (1)--(4) are not applicable. Hence the shifted cycle replaces $\{ f', (g')^{-1}\}$ with $\{ f'', g'' \}$. This is the orbit counting problem associated with the train track $\tau''$ obtained by twirling the edge of $\tau'$ that has weight $a$, see Figure \ref{fig:train track for Euclid's algorithm} bottom-left. If $r= 0$, then Step (1) deletes the identity pairing $f''$ and the pairing $g''$ has empty domain, so at this point the algorithm terminates and outputs $a$. This corresponds to the weighted train track getting completely unwound to the multiccurve $\gamma$ by the corresponding twirling.
\label{ex:Euclid's algorithm revisited}
\end{example}

The following is a restatement of a definition due to Dynnikov and Wiest \cite[Definition 2.8 and Remark 2.2]{dynnikov2007complexity}. It will allow us, as in the case of Dynnikov and Wiest, to obtain an improved estimate for the running time of the AHT algorithm applied to the orbit counting problem associated with an integrally weighted train track. 

\begin{definition}[AHT-complexity]
	Let $S$ be a compact orientable surface, and $(\tau, V, \mu)$ be a based integrally weighted train track on $S$. Let $\mathcal{P} = \mathrm{OCP}(\tau, V, \mu, \mathcal{I})$ be the orbit counting problem for some initial interval identification $\mathcal{I}$. Denote the pairings of $\mathcal{P}$ by $g_e$ where $e$ ranges over the branches of $\tau$. Denote by $z_1, \cdots, z_r$ the length of the intervals $\text{domain}(g_e)$; i.e. $z_i$ are the measures associated to the branches of $\tau$. Let $N$ be the maximum natural number appearing in the intervals $\text{domain}(g_e)\cup \text{range}(g_e)$. Denote by $\tilde{z}$ the length of the narrower interval attached to $N$; i.e. if $N$ is an endpoint of a tie interval $t_w$ then $\tilde{z}$ is the smaller measure for the two branches that come into $t_w$ on the side of $N$. Define the \emph{AHT-complexity} as 
	\[ c_{\text{AHT}}(\mathcal{P}):= r + \sum_{i=1}^{r} \log(z_i) - \frac{1}{2} \log(\tilde{z}). \]
	\label{def:AHT-complexity}
\end{definition}

The next proposition studies the effect of running one shifted cycle of the AHT algorithm on a based integrally weighted track track $(\tau, V, \mu)$, or more precisely on the associated orbit counting problem $\mathrm{OCP}(\tau, V, \mu, \mathcal{I})$. Roughly speaking, it states that running one shifted cycle of the AHT algorithm on $\mathrm{OCP}(\tau, V, \mu, \mathcal{I})$ gives $\mathrm{OCP}(\tau' , V', \mu', \mathcal{I}')$ for a new based integrally weighted train track $(\tau' , V', \mu')$ that is obtained from $(\tau, V, \mu)$ via splitting or twirling. In what follows, we write $(\tau, V, \mu)$ as a disjoint union of the \emph{active part} $(\tau_{a}, V_{a}, \mu_{a})$ and the \emph{stationary part} $(\tau_{s}, V_{s}, \mu_{s})$. The stationary part is completely unwound, so that $\tau_s$ has no vertices beside its base vertices $V_s$. We apply the AHT algorithm to the active part, and as a result the active and stationary parts are updated. While this notation is cumbersome, it will be useful to keep track of the stationary part. See Remark \ref{stationary part} in the proof of the proposition.

\begin{prop}[Applying the AHT algorithm to a weighted train track]
	Let $S$ be a compact orientable surface, and $(\tau, V, \mu)$ be a based integrally weighted track track on $S$ having $E$ branches. Assume that $(\tau, V, \mu)$ is a disjoint union of $(\tau_{a}, V_{a}, \mu_{a})$ and $(\tau_{s}, V_{s}, \mu_{s})$, where $\tau_{a}$ (the active part of $\tau$) has at least one vertex besides its base vertices $V_{a}$, and $\tau_{s}$ (the stationary part of $\tau$) has no vertices besides $V_{s}$. Let $\mathcal{I}_{a}$ be an initial interval identification for $(\tau_{a}, V_{a}, \mu_{a})$. There is a based integrally weighted train track $(\tau', V', \mu')$ on $S$, which is a disjoint union of active $(\tau'_a, V'_a, \mu'_a)$ and stationary $(\tau'_s, V'_s, \mu'_s)$ parts, and an initial interval identification $\mathcal{I}'_a$ on the active part of $\tau'$ with the following properties:

	\begin{enumerate}
		
		\item Applying one shifted cycle of the AHT algorithm to $\mathrm{OCP}(\tau_{a}, V_{a}, \mu_{a}, \mathcal{I}_{a})$ gives \\
		$\mathrm{OCP}(\tau'_{a}, V'_{a}, \mu'_{a}, \mathcal{I}'_{a})$.
		
		\item $(\tau', V', \mu')$ is obtained from $(\tau, V, \mu)$ by either
		
		\begin{itemize}
			 \item at most $E$ splits along the same oriented branch of $\tau$; or
			 \item a twirling.
		\end{itemize}

		\item The AHT-complexity of $\mathrm{OCP}(\tau'_{a}, V'_{a}, \mu'_{a}, \mathcal{I}'_{a})$ is at most that of $\mathrm{OCP}(\tau_{a}, V_{a}, \mu_{a}, \mathcal{I}_{a})$ minus one.
	
	\end{enumerate} 

Moreover, there is an algorithm that constructs $(\tau', V', \mu')$ together with its partition into active and stationary parts and the splits or twirl move from $(\tau, V, \mu)$ to $(\tau', V', \mu')$, in time that is a polynomial function of $E$ and $\log |\mu|$. 
	
	\label{running AHT}
\end{prop}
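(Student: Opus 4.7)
My plan is to interpret each step of a shifted cycle of the AHT algorithm as a geometric operation on the based integrally weighted train track. First I would record two simplifications that follow from the setup: because $S$ is orientable, no orientation-reversing pairing in $\mathrm{OCP}(\tau_a, V_a, \mu_a, \mathcal{I}_a)$ has overlapping domain and range, so step (3) (trimming) never applies; and because every point of the total interval $[1,N]$ lies in the domain and range of at most two pairings (see the remark following Definition \ref{orbit counting problem}), step (4) (periodic merger) never applies either. Hence a shifted cycle reduces to the transmission/truncation combination (5)--(6) followed by deletion of identity pairings (1) and contraction (2).

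The main step is to match (5)--(6) with a geometric move on $\tau_a$. Let $g_1$ be the maximal pairing selected in step (5), corresponding to a branch $e$ of $\tau_a$ with range on the tie interval above some vertex $w'$ of $\tau_a$, and let $w$ be the other endpoint of $e$. I would separate two cases according to whether $w \neq w'$ or $w = w'$. In the case $w \neq w'$, the definition of transmission with $r = 1$ (case a), applied to each pairing $g_2$ whose range lies in the range of $g_1$, precisely corresponds to splitting $N(\tau_a, V_a, \mu_a)$ along the cusp on the tie interval above $w'$ lying at the boundary of the rectangle $R_{g_2}$; the subsequent truncation in step (6) and contraction in step (2) then strip the static strip off the right end of $R_e$, yielding a new rectangle picture which is exactly the branched neighborhood of the split train track. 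Since the number of pairings to transmit in one cycle is at most the number of branches meeting the tie interval above $w'$, which is bounded by $E$, we obtain at most $E$ splits along the same oriented branch. In the case $w = w'$ (a loop), the relevant instance of transmission is case (b), where $g_1^{-r} \circ g_2 \circ g_1^{s}$ appears with $r$ and $s$ chosen maximally so that compositions remain defined on subintervals of the range of $g_1$; by Lemma \ref{effect of twirling on the underlying 1-complex} and Definition \ref{def:twirling}, this iterated pre- and post-composition is exactly twisting by $T_\alpha^{m-1}$ along the core curve $\alpha$ of $e$ followed by the terminal splits, which is the definition of a twirling.

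Afterwards I would explain how the active/stationary partition updates. Any branch that becomes a strand connecting only base vertices after the splits/twirl (equivalently, any interval that has become static in the sense of the AHT algorithm) is moved from the active part $(\tau'_a, V'_a, \mu'_a)$ to the stationary part $(\tau'_s, V'_s, \mu'_s)$; this corresponds precisely to the contraction step (2) applied to newly produced static intervals, and to deletion of any pairing that has become a restriction of the identity in step (1). The initial interval identification $\mathcal{I}'_a$ on the new active part is inherited from $\mathcal{I}_a$ by restricting to the subintervals that survive the contraction, relabelled to $[1,N']$.

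For the AHT-complexity decrease, I would cite the analysis of Dynnikov and Wiest in \cite{dynnikov2007complexity}, which shows that each of their transmission--truncation cycles (equivalently, each shifted cycle here once trimming and merging are excluded) reduces $c_{\mathrm{AHT}}$ by at least one; the small modification from their punctured-disc setting to our surface setting does not affect the estimate because the two points that caused us to exclude trimming/merging are exactly the hypotheses they use. Finally, the algorithmic bound is automatic: step (5) requires composing $O(E)$ pairings, each of which is an affine map with integer parameters of size $O(|\mu|)$, so every arithmetic operation runs in time polynomial in $\log|\mu|$, and the geometric translation to the split/twirl data is a local combinatorial update recorded in polynomial time in $E$. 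I expect the delicate point to be the bookkeeping in the twirling case: matching the maximal exponents $r,s$ from the transmission rule with the integer $m$ in Definition \ref{def:twirling} and verifying that the residual splits after the Dehn twist agree on both sides.
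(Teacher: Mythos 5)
Your overall plan matches the paper's proof: you correctly rule out trimming and periodic mergers, identify the transmission--truncation core of a shifted cycle with a geometric move on the branched neighbourhood, and defer the complexity drop to the Dynnikov--Wiest estimate. But there is a genuine mistake in your case dichotomy, and an omission, both of which matter.

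The dichotomy should not be ``$w \neq w'$ versus $w = w'$''; it must be by whether the domain and range of the maximal pairing $g_e$ are disjoint or overlap. When $e$ is a loop at $w$ but the two ends of the rectangle $R_e$ attach to \emph{disjoint} subintervals of the tie interval above $w$, the transmission step uses only $r=1$ (case (a) of the transmission definition), and the result is a sequence of ordinary splits along $e$, not a twirling. Twirling (Definition~\ref{def:twirling}) is explicitly defined under the hypothesis that the images of the two vertical boundary components of $R_e$ intersect --- i.e.\ that $\mathrm{domain}(g_e) \cap \mathrm{range}(g_e) \neq \emptyset$, which forces $w=w'$ but is strictly stronger. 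Routing every loop to the twirling case would make item~(2) false in the loop-with-disjoint-ends situation, and would also desynchronize your correspondence with the transmission formula (you would be invoking case (b) with $r,s > 1$ when in fact only $r=1$ applies). This is exactly the delicate point you flag at the end, and your dichotomy resolves it incorrectly.

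You also omit the first shifted cycle. By Definition~\ref{def:shifted OCA}, the first shifted cycle applies only steps (1)--(4), with no transmission or truncation. In that cycle the only nontrivial action is step (1) deleting any pairing that is a restriction of the identity (this happens exactly when a component of $\tau_a$ is a closed curve with no non-base vertices) followed by the corresponding contraction; the resulting closed curves move to the stationary part and $(\tau', V', \mu') = (\tau, V, \mu)$ with a repartition. Your description of the active/stationary update is in the right spirit, but your proof as written starts from a generic cycle and never addresses this initial bookkeeping case, nor the fact that condition (2) of the initial interval identification is what guarantees the rightmost endpoint $N'$ sits on a tie interval above a non-base vertex, which is what makes step (5) actually perform a transmission in later cycles. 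Finally, item~(3) really does require the explicit two-case computation (`` $p_\ell$ is/is not the minimum of $\mathrm{range}(g_e)$'') from Dynnikov--Wiest rather than a bare citation, since the paper's setting is not literally theirs; but that part is a matter of detail rather than a gap in the idea.
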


\begin{proof}
	
	We first prove items (1)--(2). Recall the various steps in the orbit counting algorithm:
	
	\begin{enumerate}
		
		\item Delete any pairings that are restrictions of the identity.
		
		\item Make any possible contractions.
		
		\item Trim all orientation-reversing pairings whose domain and range overlap. 
		
		\item Perform periodic mergers.
		
		\item Find a maximal $g_i$. For each $g_j \neq g_i$ whose range is contained in $[c_i,N']$, transmit $g_j$ by $g_i$.
		
		\item Find the smallest value of $c$ such that the interval $[c,N']$ intersects the range of exactly one pairing. Truncate the pairing whose range contains the interval $[c,N']$.
		
	\end{enumerate}	

\begin{remark}
	Before starting the proof, we note that the reason for updating the stationary part of the weighted train track is that step (1) above may delete a pairing. This results in loss of data regarding some part of $\tau$. To remedy this, whenever step (1) deletes a paring, we preserve the data by adding the corresponding part of the train track to the stationary part. For example, if the carried 1-complex of $(\tau, \mu)$ is a pants decomposition $P$, then by successively applying the AHT algorithm all components of $P$ will eventually be removed, and so we keep a record of them by defining $\tau'_{\mathrm{s}}$. 
	\label{stationary part}
\end{remark}
	
Consider the first shifted cycle of the AHT algorithm applied to $\mathrm{OCP}(\tau_{a}, V_{a}, \mu_{a}, \mathcal{I}_{a})$. This consists of applying steps (1)--(4). First consider the combined effect of steps (1)--(2). In step (2), there could be no contractions unless we had deleted some pairing in step (1) that was the restriction of the identity. This happens only when $\tau$ has a component that is a simple closed curve. If this happens, then we add the resulting weighted simple closed curve to $(\tau_s, V_s, \mu_s)$. Note that steps (3)--(4) are not applicable: there is no trimming since $S$ is orientable, and periodic mergers are not possible since every point in $[1,N']$ appears in the domain and range of at most $2$ pairings. Hence the result holds for the first shifted cycle of the AHT algorithm.
	
Now consider a shifted cycle of the AHT algorithm consisting of steps (5)--(6) followed by steps (1)--(4). As argued before, steps (3)--(4) are not applicable.

First we show that step (5) includes a transmission. The union of the tie intervals above the base vertices $V_a$ is $[1,M]$ for some integer $M$ by the definition of the initial interval identification; see Definition \ref{orbit counting problem}. Furthermore $M \neq N'$ by the assumption that $\tau_a$ has at least one vertex besides the base vertices $V_a$. Hence, there is a vertex $w' \notin V_a$ such that the tie interval $t'$ above $w'$ includes $N'$. Let the top of $t'$ be the side containing $N'$. Then at least one of the two half-branches entering $t' \subset N(\tau_a, V_a, \mu_a)$ on the top side corresponds to a maximal pairing, allowing a transmission. 

Let $g_i$ be a maximal pairing, and $g_{j_1}, \cdots, g_{j_k}$ be the pairings whose range is contained in $[c_i, N']$. After re-indexing, we may assume that $\{j_1, \cdots, j_k\} = \{ 1, \cdots, k\}$. Let $g_i = g_e$ for a branch $e$ with initial vertex $w$ and terminal vertex $w' \notin V_a$. Let $e_-$ and $e_+$ be the half-branches of $e$ adjacent to $w$ and $w'$. Let $p_1, \cdots, p_\ell$ be the cusp points on $\text{range}(g_e) \subset t'$. See e.g. Figure \ref{fig:twirling} left, where there are two such cusp points. Note that $\ell \geq k$ with equality if and only if for each $1 \leq j \leq k$ we have $\text{domain}(g_j) \nsubseteq \text{range}(g_e)$.

For $1 \leq j \leq \ell$, denote by $l_j$ the leaf of the horizontal foliation on $N(\tau_a, V_a, \mu_a)$ containing $p_j$. Consider the following possibilities: 

\textit{Case a: If the domain and range of $g_e$ are disjoint.}

Denote by $R_e$ the rectangle corresponding to $e$ in the branched neighbourhood $N(\tau_a, V_a, \mu_a)$. Let $s_j$ be the connected component of $l_j|R_e$ containing $p_j$. Let $N(\tau'_a, V'_a, \mu'_a)$ be obtained by splitting $N(\tau_a, V_a, \mu_a)$ along $\cup_{j=1}^{\ell} s_j$. Then applying one shifted cycle of the AHT algorithm to $\mathrm{OCP}(\tau_a, V_a, \mu_a, \mathcal{I}_a)$ will result in $\mathrm{OCP}(\tau'_a, V'_a, \mu'_a, \mathcal{I}'_a)$ for some $\mathcal{I}'_a$. More precisely, for each $1 \leq j \leq k$, step (5) replaces $g_j$ with $g'_j := g_e^{-1} \circ g_j \circ g_e^{s}$, where $s \in \{ 0 ,1\}$ is equal to $1$ if and only if $\text{domain}(g_j) \subset \text{range}(g_e)$. Let $c := \min_{j=1}^{\ell} p_j \in [c_0, N'] = t$. Hence, after step (5) the points in the interval $[c,N']$ lie in the range of only $g_e$, and $c$ is minimal with this property. Subsequently, step (6) will truncate the pairing $g_e$. After step (6) there are no pairings that are restrictions of the identity, and so step (1) will have no effect. Finally step (2) will contract the static interval $[c, N']$. 

In this case $(\tau'_a, V'_a, \mu'_a)$ is obtained from $(\tau_a, V_a, \mu_a)$ via $\ell$ splits along the same branch $e$. Note that $\ell$ is at most twice the number of branches of $\tau$. 

\begin{remark}
	Note that item (2) in the definition of an initial interval identification is used here. This was to ensure that $N'$ lies on a tie interval above a vertex $w \notin V_a$; otherwise step (2) could have deleted a branch of the train track adjacent to a base vertex.
\end{remark}

\textit{Case b: If the domain and range of $g_e$ have non-empty intersection.}

In this case $w= w'$. Let $s_j$ be the connected component of $l_j|R_e$ containing $p_j$. If $l_j \neq \{ p_j\}$, then $s_j$ starts at $p_j$, runs horizontally across $R_e$ until it reaches the other vertical boundary of $R_e$, then it possibly enters $R_e$ again, and continues this for a finite number of times until it hits a tie interval in a point not included in $\text{range}(g_e)$. Let $N(\tau'_a, V'_a, \mu'_a)$ be obtained by splitting $N(\tau_a, V_a, \mu_a)$ along $\cup_{j=1}^{\ell} s_j$. Then applying Steps (5)--(6) of the AHT algorithm to $\mathrm{OCP}(\tau_a, V_a, \mu_a, \mathcal{I}_a)$ will result in $\mathrm{OCP}(\tau'_a, V'_a, \mu'_a, \mathcal{I}'_a)$ for some $\mathcal{I}'_a$. Next if we apply Steps (1)--(2), the entire collection of pairings corresponding to some components of $\tau$ might be deleted; any such component is necessarily a simple closed curve, because of item (2) in the definition of an initial interval identification. We add any such components to the stationary part. 

Let $\alpha$ be the simple closed curve that is the image of $e$ in $\tau$. Then $(\tau'_a, V'_a, \mu'_a)$ is obtained from $(\tau_a, V_a, \mu_a)$ via twirling along $\alpha$, and then possibly transferring some simple closed curve components to the stationary part. See Definition \ref{def:twirling} and Figure \ref{fig:twirling}. This finishes the proof of items (1)--(2). 

Now we prove item (3) following the proof of \cite[Lemma 2.9, part a]{dynnikov2007complexity}. Let $N$ be the maximum natural number appearing in the orbit counting problem $\mathrm{OCP}(\tau_a, V_a, \mu_a, \mathcal{I}_a)$; hence $N$ corresponds to an endpoint of a tie interval above a vertex $w \notin V_a$. There are exactly two pairings whose domain or range include $N$; let $z \geq \tilde{z}$ be the length of the intervals associated with these pairings; in particular $z$ is the length of $\text{range}(g_e)$. Define $z' \geq \tilde{z}'$ similarly for $\mathrm{OCP}(\tau'_a, V'_a, \mu'_a, \mathcal{I}'_a)$. Let $z_i$ be the measure of branches of $\tau$. Define $z'_j$ similarly. Recall that in both Case a and Case b, $N(\tau'_a, V'_a, \mu'_a)$ is obtained by splitting $N(\tau_a, V_a, \mu_a)$ along $\cup_{j=1}^{\ell} s_j$, and then possibly transferring some components to the stationary part. Assume that $p_\ell$ is the lowest point between all $p_j$ on the tie interval containing it; i.e. $p_\ell$ is the furthest from the end point $N$. There are two cases to consider:

\begin{enumerate}
	\item[i)] If $p_\ell$ is the minimum of $\text{range}(g_e)$, then the number of pairings will be reduced by one as a result of splitting. Hence 
	\begin{align*}
	 c_{\text{AHT}}^{\text{old}} - c_{\text{AHT}}^{\text{new}} &= \big( r+ \sum_{i=1}^{r} \log(z_i) - \frac{1}{2} \log(\tilde{z}) \big)  - \big( r-1 + \sum_{i=1}^{r-1} \log(z'_i) - \frac{1}{2} \log(\tilde{z}') \big) \\ &= 1 + \log(z) - \frac{1}{2} \log(\tilde{z}) + \frac{1}{2} \log(\tilde{z}') \geq 1.  
	 \end{align*}
	
	\item[ii)] If $p_{\ell}$ is not the minimum of $\text{range}(g_e)$, then the number of pairings will not increase after splitting, and it decreases only if some simple closed curve component of $\tau$ is transferred to the stationary part. Moreover, $\tilde{z}'$ will be equal to the length of $\text{range}(g_e)$ minus a whole positive multiple of the length of $ [p_\ell , N]$, where $[p_\ell, N]$ is the closed segment between $p_\ell$ and $N$ along the tie interval $I_w$. Hence $z \geq \tilde{z} + \tilde{z}'$. We have
	\begin{align*}  
		c_{\text{AHT}}^{\text{old}} - c_{\text{AHT}}^{\text{new}} &\geq   \big( r+ \sum_{i=1}^{r} \log(z_i) - \frac{1}{2} \log(\tilde{z}) \big)  - \big( r + \sum_{i=1}^{r} \log(z'_i) - \frac{1}{2} \log(\tilde{z}') \big) \\  & = \big( r+ \log(z) + \sum_{z_i \neq z} \log(z_i) - \frac{1}{2} \log(\tilde{z}) \big)  - \big( r + \sum_{z'_i \neq \tilde{z}'} \log(z'_i) + \frac{1}{2} \log(\tilde{z}') \big) \\  & \geq  \log(z) - \frac{1}{2} \log(\tilde{z}) -  \frac{1}{2} \log(\tilde{z}')  =  \frac{1}{2} \log(\frac{z^2}{\tilde{z} \tilde{z}'}) \geq 1, 
	\end{align*}
where we used the inequality $(a+b)^2 \geq 4ab$.
\end{enumerate}

This completes the proof of (3). Since the AHT-complexity is reduced by at least 1 at each step, the above algorithm terminates in time that is a polynomial function of $E$ and $\log|\mu|$.  

\end{proof}

\begin{remark}
	The operations of splitting and twirling do not change the 1-complex carried by a based integrally weighted train track. Hence, if $(\tau', V', \mu')$ is obtained from $(\tau, V, \mu)$ via applying a shifted cycle of the AHT algorithm as in Proposition \ref{running AHT}, then the carried 1-complexes $\mathcal{CC}(\tau', V', \mu')$ and $\mathcal{CC}(\tau, V, \mu)$ are equal. Therefore, if $\tau'$ has no vertices besides the base vertices $V$ and no complementary component to $\mathcal{CC}(\tau, V, \mu)$ is a bigon, then $\tau' = \mathcal{CC}(\tau, V, \mu)$.
	\label{complete unwinding}
\end{remark}

The reduction of AHT-complexity in part (3) of Proposition \ref{running AHT} allows us to control the number of shifted cycles needed to completely simplify a based integrally weighted train track.

\begin{lem}[Bound on the number of moves for simplifying a weighted train track using the AHT algorithm]
	Let $(\tau, V, \mu)$ be as in Proposition \ref{running AHT}. Assume that no complementary component to the carried 1-complex $\mathcal{CC}(\tau, V, \mu)$ is a bigon. Apply shifted cycles of the AHT algorithm repeatedly to obtain a sequence of based integrally weighted train tracks $(\tau_i, V, \mu_i)$ for $i = 1, \cdots, n$ with $(\tau_1, V, \mu_1) = (\tau, V, \mu)$ and $\tau_n= \mathcal{CC}(\tau, V , \mu)$. Let $|\mu|$ be the sum of the weights of all branches of $\tau$, and $E$ be the number of branches of $\tau$. Then the number $n$ of the train tracks is at most
	\[  1+ E + E \log|\mu|. \] 
	\label{number of train tracks}
\end{lem}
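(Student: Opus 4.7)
The plan is to combine item~(3) of Proposition~\ref{running AHT} with a direct estimate of the initial AHT-complexity. The key observation is that each shifted cycle of the AHT algorithm strictly decreases the AHT-complexity of the active orbit counting problem by at least one, so the total number of cycles is bounded above by the initial complexity (minus the terminal complexity, which will be non-negative).

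The first step is to bound $c_{\text{AHT}}(\mathcal{P}_1)$, where $\mathcal{P}_1 = \mathrm{OCP}(\tau_1, V, \mu_1, \mathcal{I}_1)$ is the initial orbit counting problem associated to the active part of $(\tau,V,\mu)$ (which is all of $(\tau,V,\mu)$ at time $i=1$). By Definition~\ref{def:AHT-complexity},
\[ c_{\text{AHT}}(\mathcal{P}_1) = r + \sum_{i=1}^{r} \log(z_i) - \tfrac{1}{2}\log(\tilde{z}). \]
Here the number of pairings $r$ equals the number of branches of the active train track and so satisfies $r \leq E$; each branch measure satisfies $z_i \leq |\mu|$, so $\sum_i \log z_i \leq E \log|\mu|$; and $\tilde{z} \geq 1$ gives $-\tfrac{1}{2}\log(\tilde{z}) \leq 0$. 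Combining these estimates yields $c_{\text{AHT}}(\mathcal{P}_1) \leq E + E\log|\mu|$.

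The second step is to interpret the termination condition. The sequence $(\tau_i, V, \mu_i)$ is obtained by repeated application of shifted cycles to the current active part; by Proposition~\ref{running AHT}(3) the AHT-complexity drops by at least one at each step. The process ends precisely when the active part has no non-base vertex, and by Remark~\ref{complete unwinding}, together with the hypothesis that no complementary region of $\mathcal{CC}(\tau,V,\mu)$ is a bigon, this forces $\tau_n = \mathcal{CC}(\tau,V,\mu)$. The terminal AHT-complexity is non-negative (adopting the standard convention that an empty orbit counting problem has complexity $0$, or by noting that whenever $r\geq 1$ the remaining complexity is bounded below, so the algorithm halts after finitely many cycles at complexity $\geq 0$). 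Therefore the number of cycles executed satisfies $n - 1 \leq c_{\text{AHT}}(\mathcal{P}_1) \leq E + E\log|\mu|$, which rearranges to the desired bound $n \leq 1 + E + E\log|\mu|$. The only mildly delicate point is verifying the non-negative lower bound on the terminal complexity; this is a straightforward bookkeeping check from the AHT conventions and the bigon-free hypothesis, and so presents no real obstacle.
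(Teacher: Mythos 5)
Your proposal is correct and follows essentially the same route as the paper: both bound the number of shifted cycles by the initial AHT-complexity $c_1$ using Proposition~\ref{running AHT}(3), and both estimate $c_1 \leq E + E\log|\mu|$ via the same three observations ($r \leq E$, each $z_i \leq |\mu|$, and $\tilde z \geq 1$). The extra care you take with the terminal complexity and the bigon-free hypothesis is correct but is folded into the paper's statement ``$0 \leq c_{i+1} \leq c_i - 1$'' which it attributes to Proposition~\ref{running AHT}(3) together with Remark~\ref{complete unwinding}.
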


\begin{proof}
	The number $n-1$ is bounded above by the number of shifted cycles of the AHT algorithm applied to the orbit counting problem $\mathrm{OCP}(\tau, V, \mu, \mathcal{I})$. Let $c_i$ be the AHT-complexity of $\mathrm{OCP(\tau_i, V, \mu_i, \mathcal{I}_i)}$; see Definition \ref{def:AHT-complexity}. By part 3) of Proposition \ref{running AHT} we have $0 \leq c_{i+1} \leq c_i -1$ for each $i$, hence the number of shifted cycles is at most the value of $c_1$. We have 
	\[ c_1 = r + \sum_{i=1}^{r} \log(z_i) - \frac{1}{2} \log(\tilde{z}) \leq E + E \log|\mu|. \] 
\end{proof}

\section{Upper bound for distance in the pants graph}

\theoremstyle{theorem}
\newtheorem*{main}{Theorem \ref{thm:upper bound for distance in pants graph}}
\begin{main}
	Let $S$ be a compact connected orientable surface, and $P$ and $P'$ be pants decompositions of $S$ with $i(P, P') \geq 2$. The distance between $P$ and $P'$ in the pants graph is
	\[ O(|\chi(S)|^2) \hspace{1mm}  \log (i(P,P')). \] 
	
	Moreover, there is an algorithm that constructs a (not necessarily geodesic) path $P = P_0, P_1, \cdots, P_n = P'$ of length $ n = O(|\chi(S)|^2) \hspace{1mm} \log (i(P,P'))$ connecting $P$ and $P'$ in the pants graph, in time that is a polynomial function of $|\chi(S)|$ and $\log(i(P,P'))$. Here, for the input, $P$ is given as a union of pairs of pants with gluing instructions, and $P'$ is given in normal form with respect to $P$. For the output, each $P_i$ is given as a union of pairs of pants with gluing instructions together with the pants move from $P_{i-1}$ to $P_{i}$. 
	
\end{main}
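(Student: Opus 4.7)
The plan is to follow the outline given in Section 1.6 and implement it using the machinery assembled in Sections 3 and 4. First, given $P$ in the form of pants glued together and $P'$ in normal form with respect to $P$ (i.e.\ Dehn coordinates $(m_r, t_r \bmod m_r)$), I construct an integrally weighted (based) train track $(\tau,\mu)$ carrying $P'$ as follows. Inside each pair of pants of $P$, the arcs $\ell_{i,j}$ from the Dehn parametrisation that appear with positive weight, together with the window arcs on each curve of $P$ implementing the twist $t_r \bmod m_r$, produce a train-track-like $1$-complex whose carried multicurve is $P'$. After squeezing parallel arcs into branches, the underlying $1$-complex of $\tau$ is a pre-triangulation of $S$, it has $E = O(|\chi(S)|)$ branches (the combinatorial pattern inside each pair of pants has bounded complexity), and the total weight $|\mu|$ equals the geometric intersection number $i(P,P')$ (up to a bounded multiplicative factor). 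By construction we also have $i(\tau, P) = O(|\chi(S)|)$, because $\tau$ meets $P$ only in the windows.

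Next, choose an initial interval identification $\mathcal{I}$ and form the orbit counting problem $\mathrm{OCP}(\tau,\mu,\mathcal{I})$ of Definition \ref{orbit counting problem}. Apply shifted cycles of the AHT algorithm (Definition \ref{def:shifted OCA}) to this problem. By Proposition \ref{running AHT}, each shifted cycle replaces the current based weighted train track $(\tau_i, V, \mu_i)$ by a new one $(\tau_{i+1}, V, \mu_{i+1})$ obtained either by a twirling or by at most $E$ splittings along a single oriented branch, and the AHT-complexity decreases by at least $1$ per cycle. Since no complementary region of $P'$ is a bigon, Remark \ref{complete unwinding} says that after finitely many cycles the process terminates with $\tau_n = \mathcal{CC}(\tau,\mu) = P'$, and Lemma \ref{number of train tracks} bounds the number of cycles by
\[
n - 1 \;\leq\; E + E\log|\mu| \;=\; O(|\chi(S)|)\bigl(1 + \log i(P,P')\bigr).
\]

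Now I convert this sequence of train tracks into a path in the pants graph. Choose any ordering $\mathrm{O}_1$ on the edges of $\tau_1$. Applying Lemma \ref{lem:local distance} to each transition $(\tau_i,\mu_i)\to(\tau_{i+1},\mu_{i+1})$ (which is either a split or a twirl, where $k := \chi(\tau_{i+1})-\chi(\tau_i)$ is $0$ in the split case and bounded by the number of new bigon/annulus regions created in the twirl case, but in either situation $k = O(1)$ because only one branch is involved) produces an ordering $\mathrm{O}_{i+1}$ on $\tau_{i+1}$ such that
\[
d_{\mathcal{P}(S)}\!\left(\mathrm{\textbf{P}}(\tau_i,\mathrm{O}_i),\mathrm{\textbf{P}}(\tau_{i+1},\mathrm{O}_{i+1})\right) \;=\; O(E) \;=\; O(|\chi(S)|).
\]
Finally, since $\tau_1$ and $P$ are transverse with $i(\tau_1, P) = O(|\chi(S)|)$ and $|\tau_1| + |P| = O(|\chi(S)|)$, Lemma \ref{bound in terms of intersection} gives
\[
d_{\mathcal{P}(S)}\bigl(P,\mathrm{\textbf{P}}(\tau_1,\mathrm{O}_1)\bigr) \;=\; O(|\chi(S)|^2).
\]
Because $\mathrm{\textbf{P}}(\tau_n,\mathrm{O}_n) = P'$, the triangle inequality yields
\[
d_{\mathcal{P}(S)}(P,P') \;\leq\; O(|\chi(S)|^2) + (n-1)\cdot O(|\chi(S)|) \;=\; O(|\chi(S)|^2)\bigl(1 + \log i(P,P')\bigr),
\]
which is the desired bound.

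For the algorithmic statement, each constituent step is polynomial-time: building $(\tau,\mu)$ from the Dehn coordinates is polynomial in $|\chi(S)|$ and $\log i(P,P')$ (the weights are recorded in binary); by Proposition \ref{running AHT}, each shifted cycle of AHT runs in time polynomial in $E$ and $\log|\mu|$; Lemma \ref{lem:local distance} produces the new ordering and the $O(|\chi(S)|)$ consecutive transpositions in polynomial time; and the initial path from $P$ to $\mathrm{\textbf{P}}(\tau_1,\mathrm{O}_1)$ via Lemma \ref{bound in terms of intersection} is also constructed in polynomial time. The main obstacle I anticipate is bookkeeping: one must verify carefully that the geometric interpretation of one AHT cycle (Proposition \ref{running AHT}) is compatible with Lemma \ref{lem:local distance}, that the Euler characteristic change $k$ in each transition is genuinely $O(1)$ (so that the local distance bound really is $O(|\chi(S)|)$ and not $O(|\chi(S)|^2)$), and that the orderings $\mathrm{O}_i$ can be updated consistently across twirls, which involve a Dehn twist along a simple closed curve of $\tau_i$. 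Once this compatibility is in place, the bound follows directly from the triangle inequality.
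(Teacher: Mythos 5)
Your overall strategy is the same as the paper's: build a weighted train track $(\tau,\mu)$ carrying $P'$ from its Dehn coordinates with respect to $P$, run shifted cycles of the AHT algorithm to produce a sequence $(\tau_i,\mu_i)$, convert each step into a path in the pants graph via Lemma~\ref{lem:local distance}, and finish with Lemma~\ref{bound in terms of intersection} and the triangle inequality. However, there is a genuine gap at precisely the point you flagged as the ``main obstacle.''

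You claim that each $k_i := \chi(\tau_{i+1}) - \chi(\tau_i)$ is $O(1)$ ``because only one branch is involved.'' This is not justified and is false in general. A single shifted AHT cycle does not perform one split: by Proposition~\ref{running AHT} it performs up to $E = O(|\chi(S)|)$ splits along the same oriented branch, or a twirling (which is itself a concatenation of many splits). Each of these constituent splits can merge complementary regions and raise $\chi(\tau)$, so $k_i$ can be as large as $O(|\chi(S)|)$ for an individual step. If you plug $k_i = O(|\chi(S)|)$ into the bound $O((k_i+1)|\chi(S)|)$ from Lemma~\ref{lem:local distance} naively, you get $O(|\chi(S)|^2)$ per step and hence $O(|\chi(S)|^3)(1+\log i(P,P'))$ overall — a worse exponent than claimed. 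The paper avoids this not by bounding each $k_i$, but by telescoping:
\[
\sum_{i=1}^{n-1} k_i \;=\; \chi(\tau_n) - \chi(\tau_1) \;=\; O(|\chi(S)|),
\]
since $\tau_1$ and $\tau_n = P'$ are both $1$-complexes with $O(|\chi(S)|)$ edges. Summing the per-step bounds then gives
\[
\sum_{i=1}^{n-1} O\!\bigl((k_i+1)|\chi(S)|\bigr) \;=\; O(|\chi(S)|)\Bigl((n-1) + \sum_i k_i\Bigr) \;=\; (n-1)\,O(|\chi(S)|) + O(|\chi(S)|^2),
\]
which combined with $n-1 = O(|\chi(S)|)(1+\log i(P,P'))$ and the initial $O(|\chi(S)|^2)$ term yields the stated bound. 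Replace your per-step $O(1)$ claim with this telescoping argument and the proof goes through. (Two smaller points: the paper uses an ordinary weighted train track here, not a based one — base vertices are only needed for the triangulation results — and $|\mu|$ equals $i(P,P')$ exactly, not just up to a constant, though the weaker statement would still suffice.)
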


\begin{proof}

\textbf{Step 1:} Assume that $P'$ is in normal form with respect to $P$. By squeezing parallel (in $S - P$) arcs of $P'$ together, we obtain an integrally weighted train track $(\tau, \mu)$ with vertices lying on $P$ and with the carried $1$-complex $\mathcal{CC}(\tau,\mu)$ isotopic to $P'$. Each component of $S - P$ contains at most $3$ branches of $\tau$, and so the number of branches of $\tau$ is at most $3|\chi(S)|$. Moreover, the total weight $|\mu|$ of all branches of $\tau$ is equal to $i(P, P').$


\textbf{Step 2}: Pick any initial interval identification $\mathcal{I}$ for $(\tau, \mu)$, and repeatedly apply shifted cycles of the AHT algorithm to the orbit counting problem $\mathrm{OCP}(\tau, \mu, \mathcal{I})$. Proposition \ref{running AHT} describes the geometric effect of each shifted cycle of the AHT algorithm on an integrally weighted train track: this is either at most $O(|\chi(S)|)$ splittings along the same oriented branch or a twirling. We apply shifted cycles of the AHT algorithm until we obtain the carried complex $\mathcal{CC}(\tau, \mu)$, see Remark \ref{complete unwinding} to see why the output of the algorithm is the carried 1-complex. This is possible since the AHT-complexity is reduced by at least 1 after applying each shifted cycle; see Proposition \ref{running AHT}, item (3). The algorithm returns the carried 1-complex $\mathcal{CC}(\tau, \mu)$ which we know is equal to $P'$, but we are interested in the way that the algorithm computes the output rather than the output itself. By Proposition \ref{running AHT}, there is a sequence of integrally weighted train tracks $(\tau_1, \mu_1), \cdots, (\tau_n, \mu_n)$ with the following properties.
\begin{enumerate}
	\item $(\tau_1, \mu_1)=(\tau, \mu)$, and $\tau_n  = P'$. 
	
	\item The underlying 1-complex of each $\tau_i$ is a pre-triangulation. This can be seen as follows. By Proposition \ref{running AHT}, each $(\tau_{i+1}, \mu_{i+1})$ is obtained from $(\tau_i, \mu_i)$ by either a number of splittings and/or a twirling. By Lemma \ref{underlying 1-complex}, the property of being a pre-triangulation pulls back under splitting. By Lemma \ref{effect of twirling on the underlying 1-complex}, the effect of twirling on the underlying train track is applying a power of a Dehn twist, which keeps the property of being a pre-triangulation invariant, and a number of splittings. Therefore, we have shown that if $\tau_{i+1}$ is a pre-triangulation, then so is $\tau_i$. Since $\tau_n = P'$ is a pre-triangulation, it follows inductively that each $\tau_i$ is a pre-triangulation. 	
	
	Set $k_i = \chi(\tau_{i+1}) - \chi(\tau_i) \geq 0$. By Proposition \ref{lem:local distance}, there are orderings $\mathrm{O}_i$ on the edges of $\tau_i$ such that for each $i$ the pants decompositions $\mathrm{\textbf{P}}(\tau_i, \mathrm{O}_i)$ and $\mathrm{\textbf{P}}(\tau_{i+1}, \mathrm{O}_{i+1})$ have distance $O((k_i+1) |\chi(S)|)$ in the pants graph. Note that 
	\[ \sum_{i=1}^{n-1} k_i = \chi(\tau_n) - \chi(\tau_1) = O(|\chi(S)|), \]
 	and so if $d(\cdot , \cdot)$ denotes the distance in the pants graph, then
 	\[  \sum_{i=1}^{n-1} d(\mathrm{\textbf{P}}(\tau_i, \mathrm{O}_i), \mathrm{\textbf{P}}(\tau_{i+1}, \mathrm{O}_{i+1})) = (n-1) O(|\chi(S)|) + O(|\chi(S)|^2). \]
	
 	\item By Lemma \ref{number of train tracks}, the number $n$ of the train tracks is 
 	\[ 1+ O(|\chi(S)|) \log (i(P, P')). \]
 	
\end{enumerate}

\textbf{Step 3}: 
Recall that $\tau_n$ has no vertices and so $\mathrm{\textbf{P}}(\tau_n, \mathrm{O}_n) = \mathcal{CC}(\tau_n, \mu_n) = P'$. Applying the triangle inequality we have the following
\begin{align*}
	& d(P, P') \leq d(P, \mathrm{\textbf{P}}(\tau_1, \mathrm{O}_1)) + \sum_{i=1}^{n-1} d(\mathrm{\textbf{P}}(\tau_i, \mathrm{O}_i), \mathrm{\textbf{P}}(\tau_{i+1}, \mathrm{O}_{i+1})) \\
	& \leq d(P, \mathrm{\textbf{P}}(\tau_1, \mathrm{O}_1)) + (n-1)O(|\chi(S)|)+O(|\chi(S)|^2).
\end{align*}
Note that by Lemma \ref{bound in terms of intersection}, the pants decompositions $\mathrm{\textbf{P}}(\tau_1, \mathrm{O}_1)$ and $P$ have distance $O(|\chi(S)|^2)$, since $i(P, \tau_1) = O(|\chi(S)|)$. This gives an upper bound for $d(P,P')$ of the form 
\[ O(|\chi(S)|^2) + O(|\chi(S)|^2) \hspace{1mm} \log (i(P,P')).\] 
Since we assumed that $i(P,P') \geq 2$, we have 
\[ |\chi(S)|^2 \hspace{1mm} \log (i(P,P')) \geq \log2 \cdot |\chi(S)|^2. \]
Therefore, the upper bound for $d(P, P')$ can equivalently be written as 
\[ O(|\chi(S)|^2) \hspace{1mm}  \log (i(P,P')). \]

We now discuss the algorithmic and computational part of the statement. By Proposition \ref{running AHT}, the sequence $(\tau_i, \mu_i)$ for $1 \leq i \leq n$ can be constructed in time that is a polynomial function of $|\chi(S)|$ and $\log(i(P, P'))$. By Proposition \ref{lem:local distance}, given $\mathrm{O}_{i}$ one can construct $\mathrm{O}_{i+1}$ together with a sequence of at most $O((k_{i}+1)|\chi(S)|)$ consecutive transpositions taking $\mathrm{O}_{i}$ to $\mathrm{O}_{i+1}$, in time that is a polynomial function of $|\chi(S)|$. For each pair $(\tau_i, \mathrm{O}_{i})$, the pants decomposition $\mathrm{\textbf{P}}(\tau_i, \mathrm{O}_i)$ can be constructed in time that is polynomial in $|\chi(S)|$. Moreover, given two pairs $(\tau_{i}, \mathrm{O}_i)$ and $(\tau_{i+1}, \mathrm{O}_{i+1})$, one can construct a sequence of pants moves of length $O((k_i+1)|\chi(S)|)$ taking $\mathrm{\textbf{P}}(\tau_i, \mathrm{O}_i)$ to $\mathrm{\textbf{P}}(\tau_{i+1}, \mathrm{O}_{i+1})$, in time that is polynomial in $|\chi(S)|$. This completes the proof of the algorithmic part of the statement.

\end{proof}

\section{Polygonal decompositions}

In this section, we prove Theorem \ref{thm: one-vertex triangulations, variable vertices}, which will be used to relate two polygonal decompositions of a surface.

\begin{definition}
\label{Def:DiagonalInsertion}
Let $S$ be a compact surface, and let $\mathcal{T}$ be a polygonal decomposition that is disjoint from $\partial S$. Let $e$ be an arc embedded in the interior of $S$ with both endpoints being vertices of $\mathcal{T}$ and $e \cap \mathcal{T} = \partial e$. Then the polygonal decomposition $\mathcal{T} \cup e$ is obtained from $\mathcal{T}$ by \emph{adding the diagonal} $e$. We say that $\mathcal{T}$ is obtained from $\mathcal{T} \cup e$ by \emph{deleting the diagonal $e$}.
\end{definition}

\begin{thm}
	Let $S$ be a compact orientable surface, and $\mathcal{T}$ and $\mathcal{T}'$ be polygonal decompositions of $S$ disjoint from $\partial S$ with at most $E$ edges. There is a sequence $\mathcal{T}= \mathcal{T}_0, \mathcal{T}_1, \cdots, \mathcal{T}_n = \mathcal{T}'$ of polygonal decompositions of $S$ such that:
	
	\begin{enumerate}
		\item Each $\mathcal{T}_{i+1}$ is obtained from $\mathcal{T}_i$ by contracting or expanding an embedded edge, or adding or deleting a diagonal, or a power of a Dehn twist. When $\mathcal{T}_{i+1}$ is obtained from $\mathcal{T}_i$ by Dehn twisting $k$ times about a curve $\alpha$, then $\alpha$ is the closure of an edge (necessarily with the same endpoints) of $\mathcal{T}_i$, and the absolute value of $k$ is bounded above by $i(\mathcal{T}, \mathcal{T}')$.  
		
		\item Each $\mathcal{T}_i$ is disjoint from $\partial S$.
		
		\item The number of vertices of each $\mathcal{T}_i$ is $O(E)$.
		
		\item The number $n$ of steps in this sequence is $O(E^2\log(i(\mathcal{T}, \mathcal{T}')) + E^2)$. 
	\end{enumerate}

	Moreover, there is an algorithm that constructs the sequence $\mathcal{T}_1, \cdots, \mathcal{T}_n$ in time that is a polynomial function of $\log(i(\mathcal{T}, \mathcal{T}'))$ and $E$. For the input, $\mathcal{T}$ is given as a union of polygonal discs and annuli with gluing instructions, and $\mathcal{T}'$ is given by its normal coordinates with respect to $\mathcal{T}$. For the output, each $\mathcal{T}_i$ is given as a union of polygonal discs and annuli with gluing instructions together with the move from $\mathcal{T}_i$ to $\mathcal{T}_{i+1}$.
	 
	 \label{thm: one-vertex triangulations, variable vertices}
\end{thm}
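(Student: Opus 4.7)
The plan is to mirror the proof of Theorem \ref{thm:upper bound for distance in pants graph}, with pants decompositions replaced by polygonal decompositions and pants moves replaced by the four operation types listed in item (1). First, isotope $\mathcal{T}'$ into normal form with respect to $\mathcal{T}$ via Proposition \ref{normal form exists}. In each 2-cell of $\mathcal{T}$, squeeze together bundles of parallel normal arcs of $\mathcal{T}'$ to obtain a based integrally weighted train track $(\tau, V, \mu)$ carrying $\mathcal{T}'$: the base vertices $V$ arise from vertices of $\mathcal{T}'$ that lie in the interiors of 2-cells of $\mathcal{T}$, while vertices of $\mathcal{T}'$ on edges of $\mathcal{T}$ produce switch vertices of $\tau$. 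The train track has $O(E)$ branches and total weight $|\mu| = i(\mathcal{T}, \mathcal{T}')$, and since $\mathcal{CC}(\tau, V, \mu) = \mathcal{T}'$ is a polygonal decomposition, Lemma \ref{underlying 1-complex} guarantees that the underlying 1-complex of $\tau$ is itself a polygonal decomposition of $S$ disjoint from $\partial S$.

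Next, apply shifted cycles of the AHT algorithm via Proposition \ref{running AHT} until $\tau$ is fully unwound to $\mathcal{T}'$. This produces a sequence $(\tau_1, V, \mu_1), \ldots, (\tau_n, V, \mu_n)$ with $(\tau_1, V, \mu_1) = (\tau, V, \mu)$ and $\tau_n = \mathcal{T}'$, where $n - 1 = O(E \log i(\mathcal{T}, \mathcal{T}') + E)$ by Lemma \ref{number of train tracks}, and each step consists of either a single twirl or up to $E$ splits along the same oriented branch of the current train track. By Lemma \ref{underlying 1-complex} every $\tau_i$ is a polygonal decomposition of $S$, and since the number of branches never increases under splits or twirls, each $\tau_i$ has $O(E)$ vertices and $O(E)$ edges.

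The main technical step will be to prove the analog of Lemma \ref{lem:local distance} for the move set of item (1): if $\tau_{i+1}$ is obtained from $\tau_i$ by a single twirl or a batch of splits along one oriented edge $e$, then $\tau_i$ and $\tau_{i+1}$ can be connected by $O(E)$ of the four allowed moves. For a single split along an edge $e$ with distinct endpoints and no Euler characteristic jump, one realizes the change by deleting (or contracting) $e$ to reach a common coarsening, then performing the reverse move, with $O(1)$ additional diagonal add/delete and edge contract/expand operations to adjust the cell structure; the cases of coinciding endpoints or increased Euler characteristic reduce to the previous one after inserting an auxiliary vertex via an edge expansion, exactly as in Cases 2 and 3 of the proof of Lemma \ref{lem:local distance}. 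For a twirl along $e$ with closure $\alpha$, Lemma \ref{effect of twirling on the underlying 1-complex} decomposes $\tau_{i+1}$ as a finite sequence of splits applied to $(T_\alpha)^{m-1}(\tau_i)$; the Dehn-twist move is available because $\alpha$ is by construction the closure of the edge $e$ of $\tau_i$, and the bound $|m - 1| \leq |\mu| \leq i(\mathcal{T}, \mathcal{T}')$ holds since every wrap of the relevant leaf segments around $\alpha$ contributes at least one unit of weight to a tie interval. The residual splits are then handled as above.

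Finally, bound the initial discrepancy between $\mathcal{T}$ and $\tau_1$ by the superposition technique of Lemma \ref{bound in terms of intersection}: since $\mathcal{T}$ and $\tau_1$ each have $O(E)$ edges and $\tau_1$ meets $\mathcal{T}$ in $O(E)$ transverse points by construction, the superposition $\mathcal{T} \cup \tau_1$ is a polygonal decomposition with $O(E)$ edges reachable from either side by $O(E^2)$ diagonal additions/deletions and edge contractions/expansions, with all intermediate vertex counts still $O(E)$. The triangle inequality yields $n = O(E^2) + O(E)\cdot O(E \log i(\mathcal{T}, \mathcal{T}') + E) = O(E^2 \log i(\mathcal{T}, \mathcal{T}') + E^2)$, as required. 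The algorithmic claim follows because every sub-procedure (the AHT cycles, the local-move construction, and the superposition passage) runs in time polynomial in $E$ and $\log i(\mathcal{T}, \mathcal{T}')$. The principal obstacle will be the local distance lemma: implementing each split and twirl by the permitted moves while simultaneously preserving the polygonal decomposition property, the $O(E)$ vertex bound, the disjointness from $\partial S$, and the $|k| \leq i(\mathcal{T}, \mathcal{T}')$ constraint for every Dehn twist used.
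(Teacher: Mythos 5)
Your proposal follows the same high-level strategy as the paper: squeeze $\mathcal{T}'$ into a based weighted train track $(\tau,V,\mu)$, unwind it via shifted cycles of the AHT algorithm to get a sequence $(\tau_i,V,\mu_i)$, control the number of steps via Lemma~\ref{number of train tracks}, and combine with a superposition bound for the initial discrepancy between $\mathcal{T}$ and $\tau_1$. The place where you diverge from the paper is exactly what you flag as ``the principal obstacle'': how to realise each single split or twirl as $O(E)$ allowed moves.

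You try to prove an analogue of Lemma~\ref{lem:local distance} by case analysis and a ``common coarsening'' device (delete/contract $e$ from both $\tau_i$ and $\tau_{i+1}$, reach the same 1-complex, and reverse). Lemma~\ref{lem:local distance} is a statement about the orderings feeding Construction~\ref{const:triangulation-to-pants} and outputs consecutive transpositions of orderings, not edge contractions and diagonal moves; its case analysis does not transfer verbatim to the move set here, and your sketch leaves the verification of the polygonal-decomposition property, the $O(E)$ vertex bound, and the ``$\alpha$ is the closure of an edge'' constraint open, as you acknowledge. The paper sidesteps this entirely: it observes that after isotopy, when $\tau_{i+1}$ is obtained from $\tau_i$ by splits along one branch, the superposition $\tau_i\cup\tau_{i+1}$ has only $O(E)$ edges, so Lemma~\ref{distance between polygonal decompositions in terms of intersection number} immediately gives $O(E)$ contractions/expansions and diagonal additions/deletions from $\tau_i$ to $\tau_{i+1}$; a twirl first applies the single power of a Dehn twist (the curve being the closure of the branch being twirled along, by Lemma~\ref{effect of twirling on the underlying 1-complex}) and then is again just splits handled the same way. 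This reuse of the superposition lemma for the local step, not just the initial step, is the simplification your outline misses, and it removes the obstacle you identify.

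Two smaller points. First, the paper takes $V$ to be the full vertex set of $\mathcal{T}'$; your partition of vertices of $\mathcal{T}'$ into base vertices and switch vertices depending on their position relative to $\mathcal{T}$ is not what is done and would complicate the construction of the orbit counting problem. Second, your inequality $|\mu|\le i(\mathcal{T},\mathcal{T}')$ is not quite right: the paper only has $|\mu|\le i(\mathcal{T},\mathcal{T}')+|E|$, since branches incident to base vertices carry weight that does not correspond to transverse intersections of $\mathcal{T}$ and $\mathcal{T}'$. The final count in the theorem is unaffected (the additive $|E|$ is absorbed), but your bound on the exponent $k$ of a Dehn twist should be derived more carefully, along the lines of $|k|$ being controlled by the number of times leaves traverse $R_e$, which is at most the relevant measure and hence at most $i(\mathcal{T},\mathcal{T}')$ once you account for the fact that the twirl branch is not incident to a base vertex.
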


\begin{remark}
	Note that contracting or expanding edges and Dehn twists are not enough to go between two arbitrary polygonal decompositions of $S$, since they preserve the number of complementary regions. Similarly, adding or deleting diagonals and Dehn twists preserve the number of vertices. 
\end{remark}

\begin{lem}
	Let $\mathcal{T}$ and $\mathcal{T}'$ be polygonal decompositions of a compact surface $S$ that are disjoint from $\partial S$. Let $|\mathcal{T}\cup \mathcal{T}'|$ be the number of edges of the polygonal decomposition $\mathcal{T} \cup \mathcal{T}'$ obtained by superimposing $\mathcal{T}$ and $\mathcal{T}'$. Then $\mathcal{T}$ can be transformed into $\mathcal{T}'$ using $O(|\mathcal{T}\cup \mathcal{T}'|)$ contracting or expanding embedded edges, and $O(|\mathcal{T}\cup \mathcal{T}'|)$ adding or deleting diagonals. 
	
	Moreover, there is an algorithm that produces such sequence of moves in time that is polynomial in $|\mathcal{T} \cup \mathcal{T}'|$. Here $\mathcal{T}$ is given as a union of polygonal discs and annuli with gluing instructions, and $\mathcal{T}'$ is given in normal form with respect to $\mathcal{T}$. The output gives a sequence of polygonal decompositions (described as a union of polygonal discs and annuli with gluing instructions) interpolating between $\mathcal{T}$ and $\mathcal{T}'$ together with the moves between consecutive polygonal decompositions. 

	\label{distance between polygonal decompositions in terms of intersection number}
\end{lem}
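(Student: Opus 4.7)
The plan is to interpolate between $\mathcal{T}$ and $\mathcal{T}'$ via the common refinement $\mathcal{U} := \mathcal{T} \cup \mathcal{T}'$. Since $\mathcal{T}'$ is in normal form with respect to $\mathcal{T}$, the intersection $\mathcal{T}\cap \mathcal{T}'$ consists of finitely many transverse double points together with (possibly) shared subarcs, and so $\mathcal{U}$ is itself a polygonal decomposition of $S$ disjoint from $\partial S$, with $|\mathcal{T}\cup \mathcal{T}'|$ edges. I will produce $O(|\mathcal{T}\cup \mathcal{T}'|)$ moves from $\mathcal{T}$ to $\mathcal{U}$; a symmetric argument, reversed in time, gives the same bound for moves from $\mathcal{U}$ to $\mathcal{T}'$, and concatenation proves the lemma.

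Relative to $\mathcal{T}$, the polygonal decomposition $\mathcal{U}$ has two kinds of new vertices: (i) transverse intersection points of $\mathcal{T}$ with $\mathcal{T}'$, which lie in the interiors of edges of $\mathcal{T}$, and (ii) vertices of $\mathcal{T}'$ that lie in the interior of a $2$-cell of $\mathcal{T}$. I introduce each such vertex by a single expansion move. For a point of type (i) on an edge $e$ of $\mathcal{T}$, expanding the relevant endpoint of $e$ along $e$ places a new bivalent vertex at that point and subdivides $e$ into two edges. For a vertex $v$ of type (ii) lying inside a face $F$ of $\mathcal{T}$, expanding a vertex of $\partial F$ along a short auxiliary arc into $F$ ending at $v$ creates both $v$ and a pendant edge $e_v \subset F$. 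Each such expansion only modifies one disc face of the current complex, so the polygonal-decomposition property is preserved, and after $O(|\mathcal{T}\cup \mathcal{T}'|)$ such moves we obtain a polygonal decomposition $\mathcal{T}_1$ with $V(\mathcal{T}_1) = V(\mathcal{U})$.

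The edges of $\mathcal{U}$ that are not yet present in $\mathcal{T}_1$ are arcs of $\mathcal{T}'$ whose endpoints now lie in $V(\mathcal{T}_1)$, so each may be inserted as a diagonal. Adding these arcs face-by-face of $\mathcal{T}$ ensures that each insertion subdivides only a single disc, hence keeps the complex a polygonal decomposition, and uses $O(|\mathcal{T}\cup \mathcal{T}'|)$ diagonal additions in total. A final round of diagonal deletions removes the auxiliary edges $e_v$ introduced for type-(ii) vertices, giving $\mathcal{U}$; again $O(|\mathcal{T}\cup \mathcal{T}'|)$ moves suffice.

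The main technical obstacle is verifying that each intermediate $1$-complex remains a polygonal decomposition disjoint from $\partial S$: at every step, no complementary region may become a non-disc surface, and no annular collar of $\partial S$ may be touched. Both requirements are handled by the face-by-face ordering above, combined with the hypothesis that $\mathcal{T}$ and $\mathcal{T}'$ are disjoint from $\partial S$. The algorithmic statement follows because, given $\mathcal{T}$ as polygons and annuli with gluing instructions and $\mathcal{T}'$ in normal form with respect to $\mathcal{T}$, one can enumerate the intersection points and interior vertices of $\mathcal{T}'$ inside each face of $\mathcal{T}$, together with the corresponding expansion and diagonal moves, in time polynomial in $|\mathcal{T}\cup \mathcal{T}'|$.
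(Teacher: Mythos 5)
Your high-level strategy — interpolate via the superimposition $\mathcal{U} = \mathcal{T} \cup \mathcal{T}'$, working face by face of $\mathcal{T}$ — is the same as the paper's, just run in the opposite direction: you build $\mathcal{U}$ up from $\mathcal{T}$, whereas the paper collapses $\mathcal{U}$ down to $\mathcal{T}$ (and to $\mathcal{T}'$) by contractions and diagonal deletions, and concatenates the reverse of one half with the other. However, your step for creating the type-(ii) interior vertices has a genuine gap. You reach each $v \in V(\mathcal{T}') \cap \mathrm{int}(F)$ by expanding a vertex of $\partial F$ along a ``short auxiliary arc'' $e_v$, and then later insert the arcs of $\mathcal{T}' \cap F$ as diagonals disjoint from the $e_v$'s. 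But an arc $e_v$ from $\partial F$ to $v$ disjoint from $\mathcal{T}' \cap F$ need not exist: $v$ can be separated from $\partial F$ by an embedded cycle of $\mathcal{T}'$ contained in $F$. For instance, take $\mathcal{T}' \cap F$ to contain two concentric circles joined by a radial arc, with another radial arc running from the outer circle to $\partial F$; a vertex on the inner circle is then trapped. The normal-form hypothesis only rules out trivial closed-curve \emph{components} in a 2-cell, not embedded cycles of a connected $\mathcal{T}'$, and the requirement that $\mathcal{U}$ be a polygonal decomposition does not rescue you either (the example above is consistent with both). Once $e_v$ is placed — and it must cross the enclosing cycle — the later diagonal insertions of that cycle's edges cannot be performed as embedded arcs, and postponing the deletion of $e_v$ does not help since $v$ would then be disconnected.

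The fix is to abandon the auxiliary arcs and use genuine edges of $\mathcal{U}$ as the expansion arcs: choose, for each face $F$ of $\mathcal{T}$, a spanning tree of $\mathcal{U}$ restricted to $\bar F$ rooted at $V(\mathcal{T}) \cap \bar F$, and perform expansions along its edges in breadth-first order. Every interior vertex is reachable because $\mathcal{U}$ is connected and contains $\mathcal{T}$; each expansion arc is an edge of $\mathcal{U}$ and hence automatically disjoint from everything added afterwards; and the final cleanup round of deletions disappears. This is precisely the time-reversal of the paper's proof, which removes the edges of $\mathcal{U}$ interior to each face $R$ of $\mathcal{T}$ by contracting embedded edges and deleting diagonals, and so never has to choose extraneous arcs. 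The remaining parts of your argument — the $O(|\mathcal{T} \cup \mathcal{T}'|)$ count, the reduction $\mathcal{T} \to \mathcal{U} \to \mathcal{T}'$, and the algorithmic claim — are fine.
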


\begin{proof}
	It is enough to show that $\mathcal{T}\cup \mathcal{T}'$ can be transformed into each of $\mathcal{T}$ and $\mathcal{T}'$ using $O(|\mathcal{T}\cup \mathcal{T}'|)$ such moves. For any complementary region $R$ of $\mathcal{T}$, remove all edges of $\mathcal{T} \cup \mathcal{T}'$ that lie in the interior of $R$ by contracting embedded edges and deleting diagonals. Then remove any remaining extra vertices of $\mathcal{T}\cup \mathcal{T}'$ that lie on $\mathcal{T}$ by contracting embedded edges. It is easy to see that this can be done in polynomial time as a function of $|\mathcal{T}\cup \mathcal{T}'|$.
\end{proof}

\begin{proof}[Proof of Theorem \ref{thm: one-vertex triangulations, variable vertices}] 
	Let $V$ be the set of vertices of $\mathcal{T}'$. First we construct a based integrally weighted train track $(\tau, V, \mu)$ such that the carried 1-complex $\mathcal{CC}(\tau, V, \mu)$ is equal to $\mathcal{T}'$. To do this, for each edge of $\mathcal{T}$ squeeze together all points of intersection with $\mathcal{T}'$ to a single switch of $\tau$, and then for each set of parallel normal arcs of $\mathcal{T}'$ squeeze them together to a single edge of $\tau$; this gives the based integrally weighted train track $(\tau, V, \mu)$. See Figure \ref{fig:squeezing parallel arcs} for an example, where hollow dots show the vertices of $\mathcal{T}'$ which in turn will correspond to base vertices of the associated train track. 
	
	\begin{figure}
		\labellist
		\pinlabel $2$ at 265 85
		\pinlabel $2$ at 313 50
		\pinlabel $2$ at 330 25
		\endlabellist
		\includegraphics[width = 3 in]{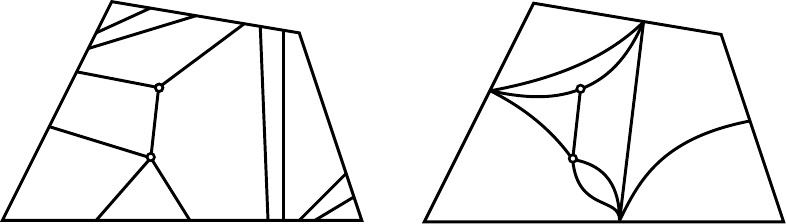}
		\caption{Constructing a based integrally weighted train track by squeezing parallel normal arcs together. On the left the intersection of the polygonal decomposition $\mathcal{T}'$ with a face of the polygonal decomposition $\mathcal{T}$ is shown. On the right, the associated based integrally weighted track track is shown, any edge with no weight written on it has weight 1. The midpoints of the polygon will then correspond to non-base vertices of the train track. }
		\label{fig:squeezing parallel arcs}
	\end{figure}
	
	Apply shifted cycles of the AHT algorithm to unwind $(\tau, V, \mu)$ to $\mathcal{T}'$. By Proposition \ref{running AHT} we obtain a sequence of based integrally weighted train tracks $(\tau_i, V, \mu_i)$ for $1 \leq i \leq n$ such that 
	
	\begin{enumerate}
		\item $(\tau_1, V, \mu_1) = (\tau, V, \mu)$, and $(\tau_n, V, \mu_n)= (\mathcal{T}', V, \textbf{1})$ where $\textbf{1}$ denotes the weight assigning $1$ to every branch.
		
		\item Each $(\tau_{i+1}, V, \mu_{i+1})$ is obtained from $(\tau_i, V, \mu_i)$ by either splits along the same oriented branch, or by a twirling. 
		
		\item The number $n$ is $O(E \log(i(\mathcal{T}, \mathcal{T}')) + E)$. This follows from Lemma \ref{number of train tracks} and the following two observations:
		\begin{enumerate}
			\item[-] $|\mu|$ is at most $i(\mathcal{T}, \mathcal{T}') + |E|$. 
			\item[-] The number of edges of $\mathcal{T}$ is at most $E$, and so the number of branches of the based train track $(\tau, V)$ is $O(E)$ as well.  
		\end{enumerate}
		Lemma \ref{number of train tracks} then implies that $n$ is at most $O(E) + O(E) \log (i(\mathcal{T}, \mathcal{T}') + |E|)$. When $i(\mathcal{T}, \mathcal{T}') \geq |E|$, then $i(\mathcal{T}, \mathcal{T}') + |E| \leq 2 i(\mathcal{T}, \mathcal{T}')$, and so the upper bound on $n$ becomes $O(E \log(i(\mathcal{T}, \mathcal{T}')) + E)$, as required. On the other hand, when $i(\mathcal{T}, \mathcal{T}') < |E|$, Lemma \ref{distance between polygonal decompositions in terms of intersection number} implies that we can find a sequence of polygonal decompositions with length $O(E)$, which is at most the required bound.
	
		\item The sequence $(\tau_i, V, \mu_i)$ for $1 \leq i \leq n$ can be constructed in time that is a polynomial function of $\log (i(\mathcal{T}, \mathcal{T}'))$ and $E$.


\end{enumerate}

By Lemma \ref{distance between polygonal decompositions in terms of intersection number}, there is a sequence of $O(E)$ addition or deletion of diagonals that takes $\tau_1$ to $\mathcal{T}$ and this sequence can be constructed in time that is polynomial in $E$. This is because $i(\mathcal{T}, \tau_1)$ is $O(E)$ and so superimposing $\tau_1$ and $\mathcal{T}$ will produce a 1-complex with $O(E)$ edges.

Additionally, each $\tau_{i+1}$ is obtained from $\tau_i$ by $O(E)$ contractions or expansions of embedded edges, $O(E)$ addition or deletion of diagonals, and at most one twist map. This again follows from Lemma \ref{distance between polygonal decompositions in terms of intersection number} since if $\tau_{i+1}$ is obtained from $\tau_i$ by splits along the same oriented branch, then after a suitable isotopy the 1-complex $\tau_i \cup \tau_{i+1}$ obtained by superimposing them has $O(E)$ edges. In this case, by Lemma \ref{distance between polygonal decompositions in terms of intersection number} such a sequence of moves from $\tau_i$ to $\tau_{i+1}$ can be constructed in time that is polynomial in $E$. When $\tau_{i+1}$ is obtained from $\tau_i$ by a twist map, then the curve that we twist about and the power of the Dehn twist are already given to us by Proposition \ref{running AHT}.

Combining the sequence of moves 

\[  \mathcal{T} \rightarrow \tau_1 \rightarrow \tau_2 \rightarrow \cdots \rightarrow \tau_n= \mathcal{T}'  \]
gives the desired sequence of polygonal decompositions from $\mathcal{T}$ to $\mathcal{T}'$ of length 
\[n = O(E^2 \log(i(\mathcal{T}, \mathcal{T}')) + E^2).\]	 
\end{proof}

\begin{notation}
	Let $S$ be a surface and let $\Gamma$ be an embedded finite 1-complex in $S$. Then $S \setminus \setminus \Gamma$ denotes the surface obtained by cutting $S$ along $\Gamma$. In other words, equipping $S$ with a Riemannian metric, then $S \setminus \setminus \Gamma$ is the metric completion of $S - \Gamma$ with respect to the induced metric on $S - \Gamma$. 
\end{notation}

We now consider an alternative version of Theorem \ref{thm: one-vertex triangulations, variable vertices} for the following important type of polygonal decompositions.

\begin{definition}
A \emph{spine} for a closed surface $S$ is an embedded 1-complex $\Gamma \subset S$ such that the surface $S \setminus \setminus \Gamma$ obtained by cutting $S$ along $\Gamma$ is a disc.  
A \emph{spine} for a compact surface $S$ with non-empty boundary is a 1-complex $\Gamma$ embedded in the interior of $S$ such that the surface $S \setminus \setminus \Gamma$ is a regular neighbourhood of $\partial S$. See Figure \ref{fig:spine}.
\end{definition}

\begin{figure}
	\includegraphics[width = 4 in]{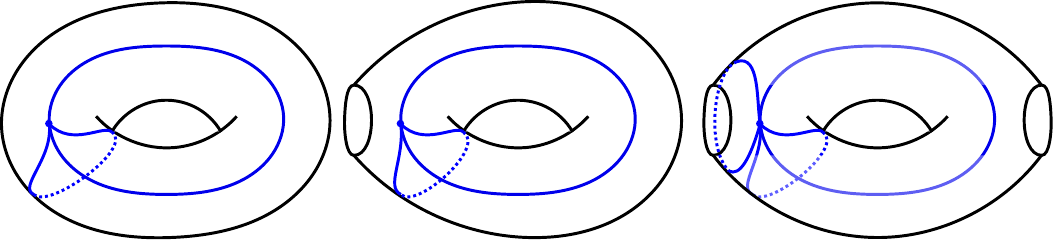}
	\caption{An example of a spine for a torus (left), a surface of genus one with one boundary component (middle), and a surface of genus one with two boundary components.}
	\label{fig:spine}
\end{figure}

The following is one natural way of modifying spines.

\begin{definition}[Edge swap \cite{lackenby2019triangulation}]
	Let $\Gamma$ be a spine for a compact surface $S$. Let $e'$ be an arc properly embedded in $S \setminus \setminus \Gamma$ with endpoints on $\Gamma$. Let $e$ be an edge of the graph $\Gamma \cup e'$ that has distinct components of $S \setminus \setminus (\Gamma \cup e')$ on either side of it, at least one of which is a disc. Then the result of removing $e$ from $\Gamma$ and adding $e'$ is a new spine $\Gamma'$ for $S$. We say that $\Gamma$ and $\Gamma'$ are related by an \emph{edge swap}. See Figure \ref{fig: edge swap}.
\end{definition}

\begin{figure}
	\labellist
	\pinlabel $e'$ at 58 50 
	\pinlabel $e$ at 3 38 
	\pinlabel $e$ at 108 50 
	\endlabellist

	\includegraphics[width = 1.2 in]{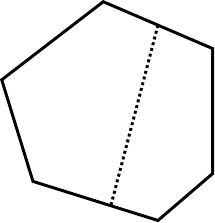}
	\caption{An edge swap on a spine $\Gamma$ of $S$. Here the polygon represents $S \setminus \setminus \Gamma$.}
	\label{fig: edge swap}
\end{figure}

\begin{example}
	Let $\Gamma$ be a spine for the torus shown in Figure \ref{fig:edge swap example} left. Here $\Gamma$ (shown in blue) is the union of all edges except for $e'$ (shown in red), and the vertices of $\Gamma$ are shown with solid dots. The complement of $\Gamma$ is a 6-gon, and so $\Gamma$ is a spine. Swapping the edge $e$ with $e'$ results in the spine shown on the right hand side of the same Figure, whose complement is an 8-gon in this case. Note that in an edge swap, the endpoints of $e'$ need not be vertices of $\Gamma$, as seen in this example. 
	
	\begin{figure}
		\labellist 
		\pinlabel $e$ at 80 10
		\pinlabel $e'$ at 53 70
		\endlabellist
		\includegraphics[width = 3 in]{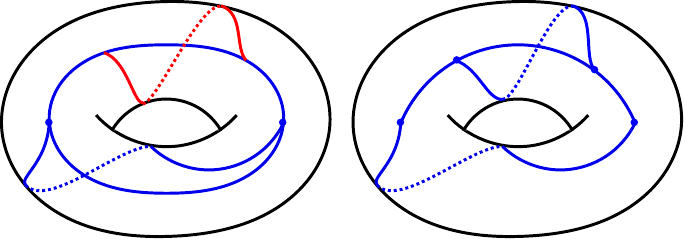}
		\caption{Edge swap: the edge $e$ of the spine (in blue) is replaced with the edge $e'$ (in red). The new spine is shown on the right. }
		\label{fig:edge swap example}
	\end{figure}
\end{example}

\begin{thm}
	Let $S$ be a compact orientable surface, and let $\Gamma$ and $\Gamma'$ be spines of $S$. There is a sequence $\Gamma = \Gamma_0, \Gamma_1, \cdots, \Gamma_n = \Gamma'$ of spines for $S$ such that:
	
	\begin{enumerate}
		\item Each $\Gamma_{i+1}$ is obtained from $\Gamma_i$ by an edge swap, an expansion or contraction of an embedded edge, or a power of a Dehn twist. When $\Gamma_{i+1}$ is obtained from $\Gamma_i$ by Dehn twisting $k$ times about a curve $\alpha$, then $\alpha \cap \Gamma_i$ is a vertex of $\Gamma_i$, and the absolute value of $k$ is bounded above by $i(\Gamma, \Gamma')$.
				
		\item The number $n$ of steps in this sequence is $O(\chi(S)^2\log(i(\Gamma, \Gamma')) + \chi(S)^2)$. 
	\end{enumerate}

	Moreover, there is an algorithm that constructs the sequence $\Gamma_0, \cdots, \Gamma_n$ in time that is a polynomial function of $\log (i(\Gamma, \Gamma'))$ and $\chi(S)$. For the input, $\Gamma$ is given as a polygon or a collection of annuli with gluing instructions, and $\Gamma'$ is given by its normal coordinates with respect to $\Gamma$. For the output, each $\Gamma_i$ is given as a polygon or annuli with gluing instructions together with the move from $\Gamma_i$ to $\Gamma_{i+1}$.
	 
	 \label{thm: edge swaps on spines}
\end{thm}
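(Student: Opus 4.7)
The plan is to adapt the proof of Theorem \ref{thm: one-vertex triangulations, variable vertices} to the spine setting. First, put $\Gamma'$ in normal form with respect to $\Gamma$ using Proposition \ref{normal form exists}. Then construct a based integrally weighted train track $(\tau, V, \mu)$ by squeezing together parallel normal arcs of $\Gamma'$ in the complementary regions of $\Gamma$, where $V$ is the vertex set of $\Gamma'$, so that the carried 1-complex $\mathcal{CC}(\tau, V, \mu)$ equals $\Gamma'$. As in the proof of Theorem \ref{thm: one-vertex triangulations, variable vertices}, the number of branches of $\tau$ is $O(|\chi(S)|)$ and $|\mu| \leq i(\Gamma,\Gamma') + O(|\chi(S)|)$.

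Next, apply shifted cycles of the AHT algorithm (Proposition \ref{running AHT}) to obtain a sequence of based integrally weighted train tracks $(\tau_i, V, \mu_i)$ for $1 \leq i \leq n$ with $(\tau_1, V, \mu_1) = (\tau, V, \mu)$ and $\tau_n = \Gamma'$, where consecutive train tracks differ by splits along one oriented branch or by a twirling. Lemma \ref{number of train tracks} yields $n = O(|\chi(S)|\log i(\Gamma,\Gamma') + |\chi(S)|)$, and each $\tau_i$ is a polygonal decomposition of $S$ by Lemma \ref{underlying 1-complex}.

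The main obstacle is that $\tau_i$ is generally a polygonal decomposition but not a spine, and an edge swap is strictly more restrictive than freely adding or deleting a diagonal. The key technical step, proved as a separate lemma, will be a spine analog of Lemma \ref{distance between polygonal decompositions in terms of intersection number}: if $\Gamma_a$ and $\Gamma_b$ are spines of $S$ and $\mathcal{U}$ is a polygonal decomposition containing both, then $\Gamma_a$ can be transformed into $\Gamma_b$ by $O(|\mathcal{U}|)$ edge swaps and contractions or expansions of embedded edges, in polynomial time. The argument mimics Lemma \ref{distance between polygonal decompositions in terms of intersection number} but pairs each diagonal addition with a diagonal deletion to form an edge swap; this pairing is possible because both endpoints are spines, so the counts of additions and deletions must balance. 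The spine property is maintained throughout because each complementary region of a spine admits an arc between vertices of the $1$-complex, providing the ``add'' side of an edge swap whenever a ``delete'' is required.

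With this lemma in hand, associate to each $\tau_i$ a nearby spine $\Gamma_i^\ast$ (for instance, the one obtained from $\tau_i$ by deleting a maximal collection of edges that separate two disc complements), and apply the lemma to connect $\Gamma$ to $\Gamma_1^\ast$, each $\Gamma_i^\ast$ to $\Gamma_{i+1}^\ast$, and $\Gamma_n^\ast$ to $\Gamma' = \tau_n$. A twirling is realized by a power of a Dehn twist together with splits, fitting within the allowed moves and preserving the spine structure. Each application of the analog lemma contributes $O(|\chi(S)|)$ moves since the relevant superpositions have $O(|\chi(S)|)$ edges; summing over the $n$ transitions gives the stated bound $O(|\chi(S)|^2 \log i(\Gamma,\Gamma') + |\chi(S)|^2)$. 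The polynomial-time algorithm follows from polynomial-time implementations at each step, exactly as in the proof of Theorem \ref{thm: one-vertex triangulations, variable vertices}.
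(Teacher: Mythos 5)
The core difficulty with your proposal is the ``spine analog'' of Lemma \ref{distance between polygonal decompositions in terms of intersection number}, which you state but do not actually prove, and the justification you offer is insufficient. The polygonal decomposition lemma works by passing through the intermediate 1-complex $\mathcal{U}$: go from $\Gamma_a$ up to $\mathcal{U}$ by additions, then down to $\Gamma_b$ by deletions. That route is unavailable here, since adding a single diagonal to a spine immediately destroys the spine property (it splits the disc complement into two discs), so the intermediate stages of the naive argument are not spines. You correctly observe that the fix is to interleave additions and deletions into edge swaps, but ``the counts of additions and deletions must balance'' does not yield such an interleaving. An edge swap is not an arbitrary remove-then-add pair: the removed edge must have distinct complementary regions on its two sides, at least one a disc, and the replacement arc must be chosen so the result is again a spine. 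Whether the required arc exists, and whether the needed moves can actually be ordered consistently, is exactly the nontrivial content.

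The paper's proof handles precisely this. It does not pass through a standalone lemma; instead it takes the sequence of polygonal decompositions $\mathcal{P}_0, \ldots, \mathcal{P}_m$ from Theorem \ref{thm: one-vertex triangulations, variable vertices} and recursively maintains a set of edges $X_i \subset \mathcal{P}_i$ so that $\Gamma_i := \mathcal{P}_i \setminus X_i$ is a spine, with $X_0 = \emptyset$ and $X_m$ chosen so that $\Gamma_m = \Gamma'$. The case analysis shows, for each type of polygonal move, how to update $X_i$ so that $\Gamma_i \rightarrow \Gamma_{i+1}$ is either the identity, an edge swap, a contraction/expansion, or a power of a Dehn twist. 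The hard case (deleting a diagonal $e$ that currently lies in $\Gamma_i$) is resolved by locating an arc of edges inside $X_i$ that separates the two sides of $e$ in the disc region of $S \setminus \setminus \Gamma_i$; this arc is what the edge swap adds. The existence and the disc condition are verified directly, and there is a parallel analysis when $\partial S \neq \emptyset$. Your sketch names the right idea (``each complementary region admits an arc \ldots providing the add side'') but that is where the proof has to be carried out, not assumed. Because the update of $X_i$ depends on $X_{i-1}$ and on the type of polygonal move, choosing the spine $\Gamma_i^\ast$ independently at each stage (as you propose) also discards exactly the structure the recursion exploits.

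A secondary issue: you re-derive the train track sequence from scratch rather than citing Theorem \ref{thm: one-vertex triangulations, variable vertices}; this is harmless but redundant. The essential gap is the unproven key lemma.
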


\begin{proof}
By Theorem \ref{thm: one-vertex triangulations, variable vertices}, there is a sequence of polygonal decompositions 
	\[ \Gamma = \mathcal{P}_0, \mathcal{P}_1, \cdots, \mathcal{P}_m = \Gamma' \] 
	such that:
	
	\begin{enumerate}
		\item Each $\mathcal{P}_{i+1}$ is obtained from $\mathcal{P}_i$ by contracting or expanding an embedded edge, adding or deleting a diagonal, or a power of a Dehn twist. When $\mathcal{P}_{i+1}$ is obtained from $\mathcal{P}_i$ by Dehn twisting $k$ times about a curve $\alpha$, then $\alpha$ is the closure of an edge of $\mathcal{P}_i$, and the absolute value of $k$ is bounded above by $i(\Gamma, \Gamma')$.
		
		\item Each $\mathcal{P}_i$ is disjoint from $\partial S$.
		
		\item The number of vertices of each $\mathcal{P}_i$ is $O(|\chi(S)|)$.
		
		\item \[ m=O(\chi(S)^2 \log(i(\Gamma, \Gamma')) + \chi(S)^2). \]
		 
		\item There is an algorithm that constructs the sequence $\mathcal{P}_1, \cdots, \mathcal{P}_m$ together with the moves from $\mathcal{P}_i$ to $\mathcal{P}_{i+1}$, in time that is a polynomial function of $\log(i(\Gamma, \Gamma'))$ and $\chi(S)$.
	\end{enumerate}

For each polygonal decomposition $\mathcal{P}_i$, we will pick a collection of edges $X_i$ in $\mathcal{P}_i$ so that $\mathcal{P}_i \setminus X_i$ is a spine $\Gamma_i$. 	
Initially we will assume that $S$ is closed.

We will construct $X_i$ recursively. Initially, $X_0 = \emptyset$. To define $X_{i+1}$ from $X_i$, we consider various cases: 

	\begin{enumerate}
		\item[(i)] If $\mathcal{P}_i \rightarrow \mathcal{P}_{i+1}$ is the insertion of a diagonal $e$, we define $X_{i+1}$ to be $X_i \cup \{ e \}$. So $\Gamma_{i+1} = \Gamma_i$.
		
		\item[(ii)] Suppose that $\mathcal{P}_i \rightarrow \mathcal{P}_{i+1}$ is the removal of a diagonal $e$. If $e$ is in $X_i$, then set $X_{i+1}$ to be $X_i \setminus \{ e \}$. Then $\Gamma_{i} = \mathcal{P}_i \setminus X_i = \mathcal{P}_{i+1} \setminus X_{i+1} = \Gamma_{i+1}$. So we consider when $e$ does not lie $X_i$. Since we are assuming that $S$ is closed, $S \setminus \Gamma_i$ is a single disc, and so there is an arc in this disc region joining the two sides of $e$. However, in $S \setminus \mathcal{P}_i$, the two sides of $e$ lie in distinct disc regions. So there is an arc $\alpha$ in $X_i$, consisting of a union of edges, so that $\alpha$ lies in the boundaries of both these disc regions. See Figure \ref{fig:from-polygonal-decomposition-to-spine}. Let $X_{i+1}$ equal $X_i$ minus the edges in $\alpha$. Then $\Gamma_{i+1}$ is obtained from $\Gamma_i$ by removing $e$ and adding this arc $\alpha$. In other words, $\Gamma_{i+1}$ is obtained from $\Gamma_i$ by performing an edge swap.
		
		\begin{figure}
			\labellist 
			\pinlabel $e$ at 100 100
			\pinlabel $\alpha$ at 55 10 
			\pinlabel $\alpha$ at 180 215
			
			\endlabellist
			\includegraphics[width= 1.5 in]{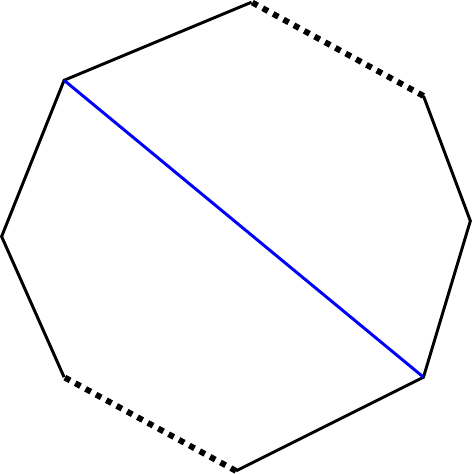}
			\caption{Proof of Theorem \ref{thm: edge swaps on spines}, case (ii): The edge $e$ is a diagonal of the disc $S \setminus \Gamma_i$, and $\alpha \subset X_i$ is a union of edges.}
			\label{fig:from-polygonal-decomposition-to-spine}
		\end{figure}
		
		\item[(iii)] If $\mathcal{P}_i \rightarrow \mathcal{P}_{i+1}$ is the expansion of an edge $e$, we set $X_{i+1}$ to be $X_i$, unless this creates a 1-valent vertex of $\Gamma_{i+1}$, in which case we set $X_{i+1} = X_i \cup \{ e \}$. So, $\Gamma_{i+1}$ is obtained from $\Gamma_i$ by expanding the edge $e$, or is a copy of $\Gamma_i$.
		
		\item[(iv)] Suppose that $\mathcal{P}_i \rightarrow \mathcal{P}_{i+1}$ is the contraction of an embedded edge $e$. If $e$ is not in $X_i$, then we set $X_{i+1} = X_i$ and we note $\Gamma_{i+1}$ is obtained from $\Gamma_i$ by contracting $e$. So suppose that $e$ is in $X_i$. If we were to add $e$ to $\Gamma_i$, the complementary disc $S \setminus \Gamma_i$ would be divided into two discs, $A$ and $B$ say. Moreover, the complementary regions incident to one endpoint of $e$ are not all in $A$ or $B$. Hence, there is some edge $e'$ of $\Gamma_i$ emanating from this endpoint of $e$ that has $A$ on one side and $B$ on the other. Let $X'_i = X_i \cup \{e' \} \setminus \{ e \}$. Let $\Gamma'_i = \mathcal{P}_i \setminus X'_i$. Then $\Gamma_i'$ is obtained from $\Gamma_i$ by an edge swap. Moreover, we may set $X_{i+1}$ to be $X'_i$. Then $\Gamma_{i+1}$ is obtained from $\Gamma_i'$ by contracting $e$.
		
		\item[(v)] Suppose $\mathcal{P}_i \rightarrow \mathcal{P}_{i+1}$ is obtained by Dehn twisting about a curve $\alpha$ that is the closure of an embedded edge. Then we may perturb $\alpha$ so that it intersects $\mathcal{P}_i$ in the vertex at the endpoints of $\alpha$. Hence, $\alpha$ intersects $\Gamma_i$ in exactly this vertex. We let $X_{i+1}$ be the image of $X_i$ under these Dehn twists.
	\end{enumerate}

Suppose now that $\partial S$ is non-empty. Cases (i), (iii) and (v) are identical to the situation where $S$ is closed.

We now explain how (ii) is modified in the case where $\partial S$ is non-empty. Again suppose that $\mathcal{P}_i \rightarrow \mathcal{P}_{i+1}$ is the removal of a diagonal $e$. Again, the difficult case is where $e$ does not lie in $X_i$, and hence lies in $\Gamma_i$. If the two sides of $e$ lie in the same annular region of $S \setminus \setminus \Gamma_i$, then there must be an arc $\alpha$ consisting of a union of edges of $X_i$ in this annulus separating these two copies of $e$ in the boundary of the annulus. In that case, we set $X_{i+1}$ to be equal to $X_i$ minus the edges of $\alpha$, and then $\Gamma_{i+1}$ is obtained from $\Gamma_i$ by an edge swap. So suppose that the two sides of $e$ lie in distinct annular regions of $S \setminus \setminus \Gamma_i$. One side of $e$ lies in a disc component $D$ of $S \setminus \setminus \mathcal{P}_i$. Say that this lies in a component $A$ of $S \setminus \setminus \Gamma_i$. Then the edges in $X_i$ separate $D$ from $\partial S \cap A$. Hence, there is an arc $\alpha$ properly embedded in $A$ consisting of a union of edges of $X_i$ that separates $D$ from $\partial S \cap A$. Again set $X_{i+1}$ to be equal to $X_i$ minus the edges of $\alpha$, and again $\Gamma_{i+1}$ is obtained from $\Gamma_i$ by an edge swap.

The argument in case (iv) is very similar to the case when $S$ is closed. Suppose that $\mathcal{P}_i \rightarrow \mathcal{P}_{i+1}$ is the contraction of an embedded edge $e$. The difficult situation is where $e$ is in $X_i$. Then $e$ is an arc properly embedded in an annulus component of $S \setminus \setminus \Gamma_i$. It is disjoint from $\partial S$, and hence it separates the annulus into an annulus and a disc $D$. Emanating from the vertex at one endpoint of $e$, there is an edge $e'$ that has $D$ on one side and an annulus of $S \setminus \setminus (\Gamma_i \cup e)$ on the other. Let $X'_{i} = X_i \cup \{ e' \} \setminus \{ e \}$ and let $\Gamma'_i = \mathcal{P}_i \setminus X'_i$. Then $\Gamma'_{i}$ is obtained from $\Gamma_i$ by an edge swap. Setting $X_{i+1}$ to be $X_i'$, $\Gamma_{i+1}$ is obtained from $\Gamma_i$ by the contraction of $e$.

\end{proof}

\section{One-vertex and ideal triangulations}

In this section, we improve Theorem \ref{thm: one-vertex triangulations, variable vertices} by showing that one can stay within the class of one-vertex triangulations or ideal triangulations.

\theoremstyle{theorem}
\newtheorem*{triangulations}{Theorem \ref{thm: one-vertex triangulations, fixed vertices}}
\begin{triangulations} 
	Let $S$ be a compact orientable surface. When $S$ is closed (respectively, has non-empty boundary), let $\mathcal{T}$ and $\mathcal{T}'$ be one-vertex (respectively, ideal) triangulations of $S$. Assume that $i(\mathcal{T}, \mathcal{T}') \geq 2$. There is a sequence $\mathcal{T}= \mathcal{T}_0, \mathcal{T}_1, \cdots, \mathcal{T}_n = \mathcal{T}'$ of one-vertex (respectively, ideal) triangulations of $S$ such that: 
	
	\begin{enumerate}
		\item Each $\mathcal{T}_{i+1}$ is obtained from $\mathcal{T}_i$ by either a flip or a power of a Dehn twist. When $\mathcal{T}_{i+1}$ is obtained from $\mathcal{T}_i$ by Dehn twisting $k$ times about a curve $\alpha$, then $\alpha$ is a normal curve intersecting each edge of $\mathcal{T}_i$ at most three times, and the absolute value of $k$ is bounded above by $i(\mathcal{T}, \mathcal{T}')$.
		
		\item $n = O(|\chi(S)|^3) \log(i(\mathcal{T}, \mathcal{T}'))$. 
		
	\end{enumerate}
	
	Moreover, there is an algorithm that constructs the sequence $\mathcal{T}_1, \cdots, \mathcal{T}_n$ in time that is a polynomial function of $\log(i(\mathcal{T}, \mathcal{T}'))$ and $|\chi(S)|$. Here we assume that $\mathcal{T}$ and $\mathcal{T}'$ have the same vertex (when $S$ is closed), $\mathcal{T}$ is given as a union of (possibly ideal) triangles with gluing instructions, and $\mathcal{T}'$ is given in terms of its normal coordinates with respect to $\mathcal{T}$. For the output, each $\mathcal{T}_i$ is given as a union of (possibly ideal) triangles with gluing instructions together with the flip or twist move from $\mathcal{T}_i$ to $\mathcal{T}_{i+1}$. 
\end{triangulations}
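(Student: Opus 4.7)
The plan is to deduce this theorem from Theorem \ref{thm: edge swaps on spines} via the duality between one-vertex (or ideal) triangulations and trivalent spines. To a one-vertex triangulation $\mathcal{T}$ of a closed surface $S$, I would associate its dual $1$-complex $\Gamma(\mathcal{T})$: place a vertex in the interior of each triangle and connect two such vertices by an edge crossing their common edge. Then $\Gamma(\mathcal{T})$ is a trivalent spine of $S$, and a flip on $\mathcal{T}$ corresponds precisely to an edge swap on $\Gamma(\mathcal{T})$. The analogous construction works in the ideal case when $\partial S \neq \emptyset$, producing a trivalent spine with $S \setminus \setminus \Gamma(\mathcal{T})$ a regular neighbourhood of $\partial S$. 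Since $\mathcal{T}$ has $O(|\chi(S)|)$ edges, so does $\Gamma(\mathcal{T})$.

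Applying Theorem \ref{thm: edge swaps on spines} to $\Gamma := \Gamma(\mathcal{T})$ and $\Gamma' := \Gamma(\mathcal{T}')$ yields a sequence $\Gamma = \Gamma_0, \Gamma_1, \ldots, \Gamma_m = \Gamma'$ of length $m = O(|\chi(S)|^2 \log i(\mathcal{T},\mathcal{T}') + |\chi(S)|^2)$, with consecutive spines related by an edge swap, a contraction or expansion of an embedded edge, or a power of a Dehn twist. The extreme spines are trivalent, but the intermediate $\Gamma_i$ typically are not, so they do not directly dualize to one-vertex triangulations.

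The key step is to replace each $\Gamma_i$ by a trivalent spine $\widetilde{\Gamma}_i$, obtained by refining each non-trivalent vertex of $\Gamma_i$ into a small tree of trivalent vertices (first inserting bivalent vertices on any loop edges). Dually, this amounts to triangulating each polygonal face of the corresponding dual polygonal decomposition of $\mathcal{T}_i$ by adding diagonals emanating from the unique vertex, yielding a one-vertex (respectively ideal) triangulation $\widetilde{\mathcal{T}}_i$. I would then show that consecutive triangulations $\widetilde{\mathcal{T}}_i$ and $\widetilde{\mathcal{T}}_{i+1}$ differ by $O(|\chi(S)|)$ flips, or by a single Dehn twist of the required form. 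For edge swaps, only the two faces adjacent to the swapped edge change, and each has at most $O(|\chi(S)|)$ sides, so re-triangulating them costs $O(|\chi(S)|)$ flips. For contractions and expansions, two faces of the polygonal decomposition merge or split; again re-triangulating the resulting polygon by successive flips costs $O(|\chi(S)|)$ moves.

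The main obstacle will be verifying the normality condition for the Dehn twist moves: in Theorem \ref{thm: edge swaps on spines} the twist curve $\alpha$ satisfies $\alpha \cap \Gamma_i = \{v\}$ for a single vertex $v$, so $\alpha \setminus \{v\}$ lies in the disc (or annulus) complement of $\Gamma_i$. In the dual picture, $\alpha$ passes through the single polygonal face dual to $v$, entering and exiting through two of its sides, so $\alpha$ crosses each edge of $\mathcal{T}_i$ at most twice. I would choose the triangulating diagonals of that face to form a fan based at the vertex of the side where $\alpha$ exits, so that after refinement each new diagonal is crossed by $\alpha$ at most once more, keeping the total at most three. Combining these local bounds, each of the $m$ spine moves contributes $O(|\chi(S)|)$ flips or twists, producing a sequence of length
\[ n = O(|\chi(S)|^3 \log i(\mathcal{T},\mathcal{T}') + |\chi(S)|^3). \]
The algorithmic statement follows from the algorithmic part of Theorem \ref{thm: edge swaps on spines} together with the observation that each re-triangulation step is local to a polygon with $O(|\chi(S)|)$ sides and is computable in polynomial time.
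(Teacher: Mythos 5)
Your proposal follows essentially the same route as the paper: dualize to trivalent spines, apply Theorem \ref{thm: edge swaps on spines}, refine the (non-trivalent) intermediate spines $\Gamma_i$ into trivalent spines by inserting small trees at vertices, show that each spine move costs $O(|\chi(S)|)$ Whitehead moves (flips dually) or one twist, and dualize back. The paper carries out this plan with the same ingredients, including Lemma \ref{edge swap as whitehead moves} for edge swaps and Lemma \ref{Lem:FlipsInDisc} for re-triangulating polygons.

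Two points where your sketch is looser than it needs to be. First, you describe refining \emph{each} $\Gamma_i$ into $\widetilde{\Gamma}_i$ as if this were an independent choice, but the claim ``only the two faces adjacent to the swapped edge change'' presupposes that the small trees at untouched vertices are carried forward unchanged from $\widetilde{\Gamma}_i$ to $\widetilde{\Gamma}_{i+1}$. This consistency has to be built in: the paper defines the forests $F_i$ of trees recursively, with a case split according to the move $\Gamma_i \rightarrow \Gamma_{i+1}$ (edge swap, contraction, expansion, twist), so that only the one or two affected trees are modified; a fresh refinement at each step would forfeit the $O(|\chi(S)|)$ bound per step. (As a side effect, the paper handles contractions with zero extra moves by simply amalgamating two trees, whereas you pay $O(|\chi(S)|)$ flips --- harmless for the final bound, but worth noting.) Second, your normality analysis for the twist curve $\alpha$ is incomplete: you only track $\alpha$ in the polygonal face dual to $v$ and propose choosing a ``fan'' triangulation there, but the recursive construction already pins down that tree, so you cannot freely choose it, and you do not analyze the other arc of $\alpha$, which lies in the complementary disc or annulus containing the (possibly ideal) vertex of the triangulation. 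The paper instead arranges $\alpha$ to meet the refined spine in a connected union of edges or a single vertex, dualizes this to a concatenation of normal arcs plus a segment through the vertex, and then \emph{pushes the curve off the vertex}; the bound of three intersections per edge comes from the fact that skirting around the vertex adds at most one normal arc of each type to each triangle, so a triangle contains at most four normal arcs of $\alpha$, and these hit each edge at most three times. Your conclusion is right, but this push-off step is where the factor of three actually comes from and should be made explicit.
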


We will prove this by dualising the spines of Theorem \ref{thm: edge swaps on spines}. Recall that the \emph{dual} of a spine $\Gamma$ is a polygonal decomposition that has a complementary disc for each vertex of $\Gamma$ and an edge dual to each edge of $\Gamma$. When the surface is closed, the dual of $\Gamma$ has a single vertex in the disc region $S \setminus \setminus \Gamma$. When the surface has non-empty boundary, the vertices of the dual of $\Gamma$ are all 1-valent and lie on $\partial S$.

	The dual 1-complex to a one-vertex triangulation of a closed surface $S$ or an ideal triangulation of a surface with boundary is a \emph{trivalent spine}; i.e. a spine in which every vertex has degree 3. A flip on a triangulation or ideal triangulation corresponds to a \emph{Whitehead move} on its dual spine. Therefore, Theorem \ref{thm: one-vertex triangulations, fixed vertices} can be rephrased in terms of two given trivalent spines and the existence of a `short' sequence of Whitehead moves and twist maps taking one to the other. 

Each edge swap between trivalent spines can be written as a composition of a controlled number of Whitehead moves. This is the content of the next lemma which is an analogue of Lemma 8.3 of \cite{lackenby2019triangulation} for \emph{trivalent} spines.

\begin{lem}
	Let $\Gamma$ and $\Gamma'$ be trivalent spines for a compact orientable surface $S$ such that $\Gamma'$ is obtained from $\Gamma$ by an edge swap. Then $\Gamma'$ can be obtained from $\Gamma$ by $O(|\chi(S)|)$ Whitehead moves. 
	
	Moreover, there is an algorithm that constructs such a sequence of Whitehead moves in time that is a polynomial function of $|\chi(S)|$. 
	\label{edge swap as whitehead moves}
\end{lem}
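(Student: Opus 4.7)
\noindent\emph{Proof plan.} The strategy is to dualise the trivalent spines to triangulations, so that Whitehead moves become flips, and then to exploit the fact that the edge-swap hypothesis translates into the dual triangulations differing in only one edge. A trivalent spine $\Gamma$ has a dual triangulation $\mathcal{T}$ of $S$ — a one-vertex triangulation when $S$ is closed and an ideal triangulation when $\partial S \neq \emptyset$ — and each Whitehead move on $\Gamma$ corresponds exactly to a flip on $\mathcal{T}$. Hence it suffices to produce a sequence of $O(|\chi(S)|)$ flips from $\mathcal{T}$ to the dual triangulation $\mathcal{T}'$ of $\Gamma'$, constructible in polynomial time.

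Next, using the hypothesis $\Gamma' = (\Gamma \cup e') \setminus e$, I would isotope so that $\mathcal{T}$ and $\mathcal{T}'$ share every edge except for a single pair: an edge $e^* \in \mathcal{T}$ is replaced by $e'^* \in \mathcal{T}'$. Let $\mathcal{P}$ denote the common subcomplex $\mathcal{T} \setminus \{e^*\} = \mathcal{T}' \setminus \{e'^*\}$. Every complementary region of $\mathcal{P}$ is a triangle except for one distinguished region $R$, in which both $e^*$ and $e'^*$ sit as triangulating diagonals. The key reduction is then to bound the flip-distance between the two triangulations of $R$ by $O(|\chi(S)|)$.

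In the generic case, where the two triangles of $\mathcal{T}$ adjacent to $e^*$ are distinct, $R$ is a (possibly immersed) quadrilateral and a single flip of $e^*$ to $e'^*$ realises the edge swap. The main obstacle is the non-generic case when $e$ is a loop at a single vertex of $\Gamma$, so that both sides of $e^*$ lie on a single triangle of $\mathcal{T}$; here $R$ is topologically a triangle with two of its sides identified and $e^*$ cannot be flipped directly. The plan is to first apply a bounded number of preliminary flips on edges of $\mathcal{T}$ adjacent to $e^*$ in order to separate the two sides of $e^*$ onto distinct triangles, after which $e^*$ admits a standard flip. These preliminary flips may themselves be obstructed by further loop configurations, and unwinding such nested obstructions yields the $O(|\chi(S)|)$ bound, since $R$ has at most $O(|\chi(S)|)$ boundary edges.

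The algorithmic statement then follows by making all of the above effective: computing the dual triangulations, locating $e^*$ and $e'^*$, diagnosing the generic/non-generic case, and performing the flips can each be done in time polynomial in $|\chi(S)|$. Pulling the flip sequence back through duality yields the required Whitehead-move sequence on the spines.
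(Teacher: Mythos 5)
Your proposal has a genuine gap at the core reduction step: the claim that the dual triangulations $\mathcal{T}$ and $\mathcal{T}'$ can be isotoped to share every edge except a single pair $e^*, e'^*$. This is false in general, and without it the whole plan to work within a single small region $R$ collapses.

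The issue is that an edge swap on trivalent spines is a genuinely non-local operation. When we pass from $\Gamma$ to $\Gamma' = (\Gamma\cup e')\setminus e$, the endpoints of $e$ become bivalent and get absorbed into longer edges, while the endpoints of $e'$ (typically lying in the interiors of edges of $\Gamma$) become new trivalent vertices. So the common $1$-complex $\Gamma\setminus\{e\} = \Gamma'\setminus\{e'\}$ is not a trivalent spine, the vertex sets of $\Gamma$ and $\Gamma'$ do not coincide, and the dual one-vertex (or ideal) triangulations do not share a common codimension-one subcomplex in the way you assume. Concretely, the paper's proof makes clear that the isotopy from $e$ to $e'$ in the annulus $A = S\setminus\setminus(\Gamma\setminus\{e\})$ can sweep an endpoint of $e$ past $O(|\chi(S)|)$ vertices of $\Gamma$ along $\partial A$, and \emph{each} such crossing is a distinct Whitehead move that changes a different edge. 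Dually, $\mathcal{T}$ and $\mathcal{T}'$ can differ in $O(|\chi(S)|)$ edges, not one. This also contradicts your generic-case analysis, where you conclude that a generic edge swap is a single flip: if that were so, the lemma would give an $O(1)$ bound, which the paper's authors carefully avoid. Finally, there is an internal inconsistency: if $\mathcal{P} = \mathcal{T}\setminus\{e^*\}$ and $R$ is its non-triangular complementary region, then $R$ is a quadrilateral (possibly degenerate), not a polygon with $O(|\chi(S)|)$ sides; so your own setup does not support the bound you cite at the end.

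The paper's argument is genuinely different and stays on the spine side. One cuts $S$ along $\Gamma\setminus\{e\}$ to obtain an annulus $A$ (or a pair of pants/once-punctured annulus when $\partial S\neq\emptyset$), in which $e$ and $e'$ are isotopic essential arcs. One then drags each endpoint of $e$ along $\partial A$ to the corresponding endpoint of $e'$; every time the endpoint crosses a vertex of $\Gamma$ on $\partial A$, this is precisely a Whitehead move. Since $\partial A$ carries $O(|\chi(S)|)$ vertex copies, the count follows. If you want to dualise, you must recast this sweeping argument in the dual picture rather than trying to localise the difference to a small region $R$; the dual of the annulus $A$ is not a small region, and the flip sequence genuinely touches $O(|\chi(S)|)$ distinct edges.
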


\begin{proof}
	We follow the proof of Lemma 8.3 in \cite{lackenby2019triangulation}. Assume that $\Gamma'$ is obtained from $\Gamma$ by deleting the edge $e$ and adding $e' \not= e$. Note that by definition of an edge swap, $e$ and $e'$ can only intersect at endpoints if at all. Let $A$ be the surface obtained by cutting $S$ along $\Gamma \setminus \{e\} $. This is an annulus (when $S$ is closed) or a three-times punctures sphere or once-punctured annulus (when $\partial S \not= \emptyset$). Then $e$ and $e'$ are two essential properly embedded arcs in $A$, and hence they are isotopic in $A$. Indeed, there is a disc component of $A \backslash\backslash (e \cup e')$ with boundary equal to the concatenation of an arc in $\partial A$, the edge $e$, another arc in $\partial A$ and the other edge $e'$. First we move one endpoint of $e$ across this disc to one endpoint of $e'$. Then we move the other endpoint of $e$ across this disc to the other endpoint of $e'$. Finally we isotope $e$ to $e'$ keeping its endpoints fixed; this third step only requires an isotopy and no Whitehead moves. It is enough to show that the first step can be done with $O(|\chi(S)|)$ Whitehead moves, as the second step is similar. There are $O(|\chi(S)|)$ vertices of $\Gamma$ between the first endpoints of $e$ and $e'$ along $\partial A$. Passing the endpoint of $e$ across any one of these vertices can be seen as a Whitehead move. Therefore, the total number of Whitehead moves needed is $O(|\chi(S)|)$.
\end{proof}

\begin{lem}
Let $D$ be a polygon with $n$ sides. Let $\mathcal{T}$ and $\mathcal{T}'$ be triangulations of $D$ where each side of $D$ is an edge and with no vertices in the interior of $D$. Then $\mathcal{T}$ and $\mathcal{T}'$ differ by a sequence of at most $2n-6$ flips. Moreover, there is an algorithm that constructs such a sequence of flips in time that is polynomial in $n$. 
\label{Lem:FlipsInDisc}
\end{lem}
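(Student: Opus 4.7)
The plan is the classical ``reduction to a fan'' argument. Fix a vertex $v$ of $D$ and let $\mathcal{F}_v$ denote the \emph{fan triangulation at $v$}, whose $n-3$ diagonals are precisely the arcs from $v$ to every non-adjacent vertex of $D$. I will show that any triangulation of $D$ of the type in the statement can be transformed to $\mathcal{F}_v$ using at most $n-3$ flips. Applying this separately to $\mathcal{T}$ and $\mathcal{T}'$ and then concatenating the first sequence with the reverse of the second gives a sequence of at most $2(n-3) = 2n-6$ flips from $\mathcal{T}$ to $\mathcal{T}'$.

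For the reduction to $\mathcal{F}_v$, let $m(\mathcal{T})$ denote the number of diagonals of $\mathcal{T}$ incident to $v$. Every triangulation of this form has exactly $n-3$ diagonals (by an Euler characteristic count, using that there are no interior vertices), so $\mathcal{T} = \mathcal{F}_v$ if and only if $m(\mathcal{T}) = n-3$. The key step is: whenever $m(\mathcal{T}) < n-3$, there exists a flip that strictly increases $m$. To produce such a flip, observe that the triangles of $\mathcal{T}$ containing $v$ form a non-empty, proper subset of all triangles of $\mathcal{T}$ (non-empty because the two boundary sides of $D$ at $v$ each lie on a triangle containing $v$, and proper because $\mathcal{T} \neq \mathcal{F}_v$). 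Since the dual graph of $\mathcal{T}$ is connected, there is an edge $e$ of $\mathcal{T}$ whose two adjacent triangles $\Delta'$ and $\Delta$ satisfy: $\Delta'$ contains $v$ and $\Delta$ does not. In particular $e$ itself does not contain $v$. The quadrilateral $\Delta \cup \Delta'$ has $v$ as one of its four vertices; its two diagonals share no endpoint, so the diagonal other than $e$ must contain $v$. Flipping $e$ therefore replaces it with an edge incident to $v$, strictly increasing $m$ by one. Iterating at most $n-3$ times reaches $\mathcal{F}_v$.

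For the algorithmic claim, represent each triangulation by its combinatorial structure (triangles plus neighbour pointers across each edge), which requires $O(n)$ space. At each step, to find a suitable edge $e$, start at a triangle containing $v$ and do a breadth-first search across dual edges until reaching a boundary of the $v$-containing region; this takes $O(n)$ time. Each flip updates only constantly many pointers. Since we perform at most $2n-6$ flips, the total running time is $O(n^2)$, which is polynomial in $n$.

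There is essentially no obstacle beyond verifying the adjacency claim above, and that verification rests only on the observation that the $v$-containing triangles form a non-empty proper subset whose boundary in the dual graph is therefore non-empty.
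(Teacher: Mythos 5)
Your proof is correct and follows essentially the same approach as the paper, which cites the Sleator--Tarjan--Thurston argument: reduce both triangulations to the fan at a fixed vertex $v$ by flips that each increase the number of diagonals at $v$, then concatenate. Your dual-graph-connectivity argument for locating the flippable edge is a clean way to make the existence of the degree-increasing flip explicit, but it is the same underlying idea and yields the identical $2(n-3)$ bound.
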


We term triangulations $\mathcal{T}$ and $\mathcal{T}'$ as above \emph{diagonal subdivisions} of $D$.

\begin{proof}
The proof is as in \cite[Lemma 2]{sleator1988rotation} and we repeat it to make the algorithmic part of the statement clear.  Given a diagonal subdivision of $D$ and a vertex $x$, if the degree $\deg(x)$ is not equal to $n-3$, then we can increase $\deg(x)$ by performing a flip. Hence after $n-3 - \deg(x)$ flips, we can covert the subdivision into a new subdivision where all diagonals have one endpoint at $x$. Hence, $\mathcal{T}$ can be converted to $\mathcal{T}'$ using at most $2n-6$ flips.	
\end{proof}

The dual of a triangulation $\mathcal{T}$ as in the above lemma is a tree embedded within the disc $D$ that has 1-valent vertices on $\partial D$ and trivalent vertices in the interior of $D$. We can view the lemma as providing a sequence of Whitehead moves between any two such trees.

\begin{proof}[Proof of Theorem \ref{thm: one-vertex triangulations, fixed vertices}]
We are given 1-vertex or ideal triangulations $\mathcal{T}$ and $\mathcal{T}'$. Let $\Gamma$ and $\Gamma'$ be the spines dual to $\mathcal{T}$. By Theorem \ref{thm: edge swaps on spines},
there is a sequence $\Gamma = \Gamma_0, \Gamma_1, \cdots, \Gamma_n = \Gamma'$ of spines for $S$ such that: 
	
	\begin{enumerate}
		\item Each $\Gamma_{i+1}$ is obtained from $\Gamma_i$ by an edge swap, an expansion or contraction of an embedded edge, or a power of a Dehn twist. When $\Gamma_{i+1}$ is obtained from $\Gamma_i$ by Dehn twisting $k$ times about a curve $\alpha$, then $\alpha \cap \Gamma_i$ is a vertex of $\Gamma_i$, and the absolute value of $k$ is bounded above by $i(\Gamma, \Gamma')$.
				
		\item The number $n$ of steps in this sequence is $O(\chi(S)^2\log(i(\mathcal{T}, \mathcal{T}')) + \chi(S)^2)$. 
	\end{enumerate}

We will remove a small regular neighbourhood of each vertex of $\Gamma_i$ and replace it by a tree. Each vertex of this tree has either degree 1 or 3. The 1-valent vertices lie where the tree meets the remnants of the edges of $ \Gamma_i$, and the remaining vertices of the tree are trivalent, see Figure \ref{fig:inserting trees}. Let $F_i$ be the union of these trees. Set $\mathcal{Q}_i$ to be the resulting trivalent spine.

\begin{figure}
	\includegraphics[width = 3 in]{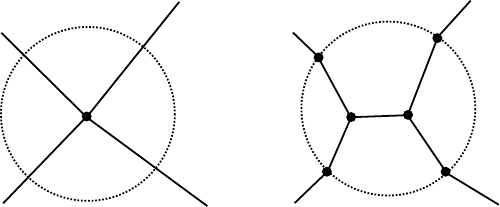}
	\caption{Proof of Theorem \ref{thm: one-vertex triangulations, fixed vertices}: a small regular neighbourhood of each vertex of $\Gamma_i$ is replaced with a tree.}
	\label{fig:inserting trees}
\end{figure}

The spine $\Gamma_0$ is dual to a one-vertex or ideal triangulation, so any small neighbourhood of one of its vertices is of the right form. The procedure can be initiated by taking $F_0$ to be the union of a small neighbourhood of each vertex and then $\mathcal{Q}_0 = \Gamma_0$. Initially, each component of $F_0$ has a single trivalent vertex and three 1-valent vertices. To define $F_{i+1}$, we consider various cases:

\begin{enumerate}
	\item Suppose that $\Gamma_i \rightarrow \Gamma_{i+1}$ is an edge swap, removing an edge $e$ and inserting an edge $e'$. Then $e$ corresponds to an edge (also called $e$) of $\mathcal{Q}_i$. The edge $e'$ may have one or both of its endpoints on a vertex of $\Gamma_i$, in which case when we remove a regular neighbourhood of these vertices, we also remove the end segments of $e'$, but we can then extend the remnant of $e'$ to an edge $e''$ with one or both endpoints on the interior of an edge of $F_i$. See Figure \ref{fig:edge swap on dual spines}. If we remove $e$ from $\mathcal{Q}_i$ and attach on $e''$, the result is a trivalent spine $\mathcal{Q}_{i+1}$. The forest $F_{i+1}$ is defined to be the intersection between $\mathcal{Q}_{i+1}$ and the regular neighbourhood of the vertices of $\Gamma_{i+1}$. By construction, $\mathcal{Q}_{i+1}$ is obtained from $\mathcal{Q}_i$ by an edge swap. Hence by Lemma \ref{edge swap as whitehead moves}, $\mathcal{Q}_i$ and $\mathcal{Q}_{i+1}$ differ by a sequence of $O(|\chi(S)|)$ Whitehead moves.

	\begin{figure}
		\labellist
		\pinlabel $e'$ at 120 55
		\pinlabel $e''$ at 320 57
		\endlabellist
		\includegraphics[width = 4 in]{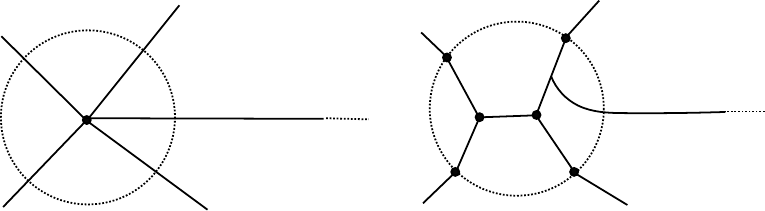}
		\caption{Proof of Theorem \ref{thm: one-vertex triangulations, fixed vertices}, case (1): the edge $e'$ has an endpoint on a vertex of $\Gamma_i$, see the left side of the figure. We then construct $e''$ as in right such that its corresponding endpoint lies in the interior of an edge of $\mathcal{Q}_i$. }
		\label{fig:edge swap on dual spines}
	\end{figure}

	\item Suppose $\Gamma_i \rightarrow \Gamma_{i+1}$ is the contraction of an edge $e$. In $\mathcal{Q}_i$, there is a copy of $e$ and at its endpoints there are two components of $F_i$. We amalgamate them into a single tree by attaching the edge $e$, and we declare that this is a component of $F_{i+1}$. The remaining components of $F_i$ become components of $F_{i+1}$. In this way, $\mathcal{Q}_{i+1}$ is isotopic to $\mathcal{Q}_i$. See Figire \ref{fig: joining trees}.
	
	\begin{figure}
		\labellist
		\pinlabel $e$ at 125 170
		\pinlabel $\tiny{contract}$ at 310 160
		\endlabellist
		\includegraphics[width = 4 in]{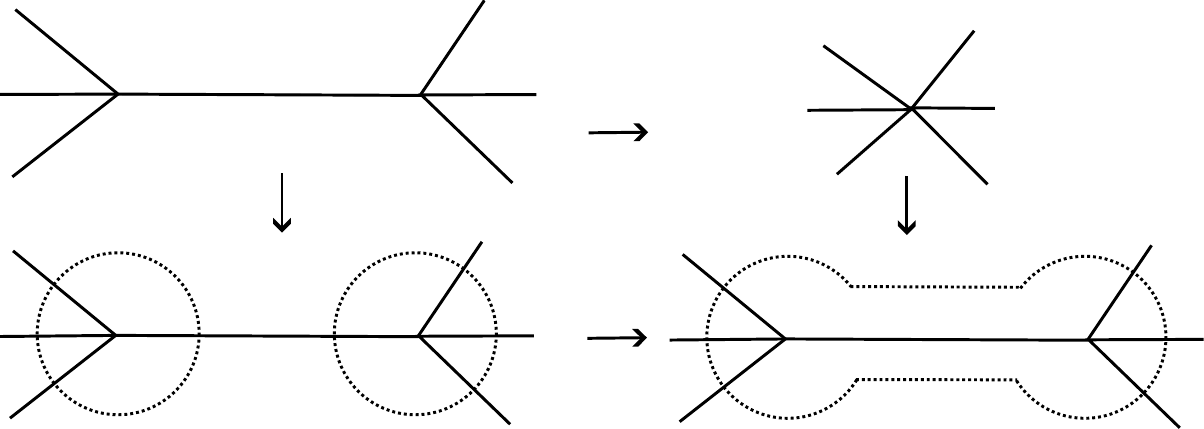}
		\caption{Proof of Theorem \ref{thm: one-vertex triangulations, fixed vertices}, case (2): The new spine is obtained by contracting an edge $e$, see the top. The corresponding trivalent spine is isotopic to the previous one, see the bottom. }
		\label{fig: joining trees}
	\end{figure}
	
	\item Now consider the case where $\Gamma_i \rightarrow \Gamma_{i+1}$ is the expansion of an edge $e$ from a vertex $v$. Let $T$ be the component of $F_i$ in a regular neighbourhood of $v$. Let $v_1$ and $v_2$ be the vertices at the endpoint of $e$. Pick trees of the required form $T_1$ and $T_2$ for these regular neighbourhoods to be components of $F_{i+1}$. We can view $T_1 \cup e \cup T_2$ to be a tree lying in a regular neighbourhood of $v$. Using Lemma \ref{Lem:FlipsInDisc}, $T$ can be transformed into $T_1 \cup e \cup T_2$ using $O(|\chi(S)|)$ Whitehead moves. Hence, $\mathcal{Q}_i$ and $\mathcal{Q}_{i+1}$ differ by a sequence of $O(|\chi(S)|)$ Whitehead moves.
	
	\item Finally suppose that $\Gamma_i \rightarrow \Gamma_{i+1}$ is a power of a Dehn twist along a curve $\alpha$ that intersects $\Gamma_i$ in a vertex. In $\mathcal{Q}_i$, this vertex is replaced by a tree, and $\alpha$ can be arranged to intersect this tree in a connected union of edges or a single vertex. We set $F_{i+1}$ to be the image of $F_i$ under this power of a Dehn twist.
	
\end{enumerate}

We now dualise this sequence of trivalent spines to form a sequence of 1-vertex or ideal triangulations of $S$. The dual of each Whitehead move is a flip. Hence, we need at most $O(|\chi(S)|^3\log(i(\mathcal{T}, \mathcal{T}')) + |\chi(S)|^3)$ flips and powers of Dehn twists. Since we assumed that $i(\mathcal{T}, \mathcal{T'}) \geq 2$, this can be equivalently written as $O(|\chi(S)|^3)\log(i(\mathcal{T}, \mathcal{T}'))$. When we Dehn twist, the curves that we twist along intersects the spine in a connected union of edges or a single vertex. When we dualise, this curve is a concatenation of normal arcs, together with a part that runs into a (possibly ideal) vertex of the triangulation. Push the curve off the (possibly ideal) vertex, and we obtain a normal curve that intersects each edge of the triangulation at most three times. To see this note that when we push the curve off the vertex, it skirts around the vertex, and in doing so it picks up at most one new normal arc of each type in each triangle. Since such a triangle may already have had a normal arc in it, we get at most four normal arcs, and these may intersect each edge at most three times. See Figure \ref{fig:pushing the curve off the vertex} for an example. 

\begin{figure}
	\includegraphics[width= 3.5 in]{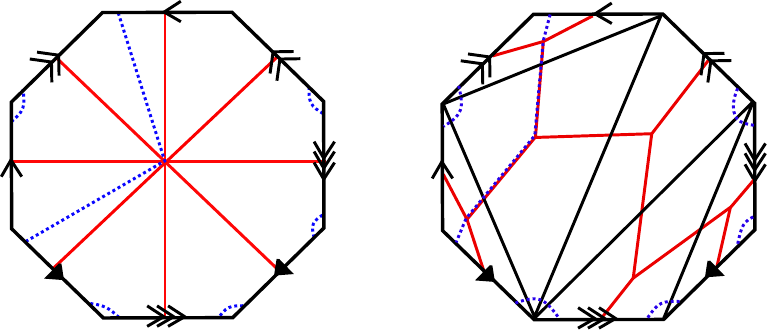}
	\caption{Left: A surface of genus two is shown as a polygon with side identifications. The spine $\Gamma$ (in red solid lines) is the union of solid curves passing through the center of the polygon. The normal curve $\alpha$ (in dashed blue lines) is the union of the two dashed lines passing through the center of the polygon and five other normal arcs going close to the vertex of the polygon. Right: The spine $\Gamma$ is perturbed to a trivalent spine $\mathcal{Q}$ (in red solid lines), which is dual to the shown triangulation $\mathcal{T}$ of the surface. The curve $\alpha$ is isotoped to be a concatenation of normal arcs (with respect to $\mathcal{T}$) together with a connected union of two edges of  $\mathcal{Q}$. It can be seen in this example that the isotoped $\alpha$ could have 3 normal arcs in some triangles, including 2 normal arcs of the same type.}
	\label{fig:pushing the curve off the vertex}
\end{figure}

It is clear that this sequence of 1-vertex and ideal triangulations is constructible in polynomial time as a function of $|\chi(S)|$ and $\log(i(\mathcal{T}, \mathcal{T}'))$.

\end{proof}

\begin{lem}
\label{Lem:TriangulationWithFewIntersections}
Let $S$ be a compact connected orientable surface with non-empty boundary, and let $A$ be a collection of disjoint arcs properly embedded in $S$. Let $V$ be a finite collection of points on $\partial S$ disjoint from $A$, with at least one point of $V$ on each component of $\partial S$. In the case where $S$ is a disc, suppose also $|V| \geq 3$. Then $V$ is the vertex set of a triangulation $\mathcal{T}$ of $S$ with the property that each edge of $\mathcal{T}$ intersects each component of $A$ at most twice. Moreover, given a triangulated surface $S$, a normal multi-arc $A$, and a set of points $V$ as above, there is an algorithm that constructs the triangulation $\mathcal{T}$. The algorithm runs in time that is a polynomial function of the number of triangles in some input triangulation of $S$, the cardinality of $V$, and the $\ell^1$-norm of the normal coordinates of $A$. 
\end{lem}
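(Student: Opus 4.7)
The strategy is to first construct an ideal triangulation $\mathcal{T}^*$ of $S$ that contains $A$ in its $1$-skeleton, using the enlarged vertex set $V^* := V \cup \partial A$, and then to eliminate the extra vertices in $\partial A \setminus V$ by pushing each one along $\partial S$ to a nearby vertex of $V$. To build $\mathcal{T}^*$: after a small perturbation to guarantee $V \cap \partial A = \emptyset$, the $1$-complex $A \cup \partial S$ is embedded in $S$ with vertex set $V^*$. I then add arcs in its complement, disjoint from $A$ and with endpoints in $V^*$, to cut $S$ into polygonal discs, and triangulate each disc by diagonals (as in Lemma \ref{Lem:FlipsInDisc}). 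Because $A$ lies in the $1$-skeleton of $\mathcal{T}^*$, no edge of $\mathcal{T}^*$ crosses any arc of $A$ transversely.

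Next, I eliminate the vertices in $V^* \setminus V = \partial A \setminus V$ by iteratively contracting boundary edges: for each such $w$, slide $w$ along $\partial S$ to the next vertex of $V^*$ in a chosen direction (for instance the nearer one along $\partial S$), until only vertices of $V$ remain. The outcome is a triangulation $\mathcal{T}$ of $S$ with vertex set exactly $V$. Each edge $e$ of $\mathcal{T}$ traces back to an edge $e^*$ of $\mathcal{T}^*$ whose endpoints $w_1^*, w_2^* \in V^*$ were slid to $v_1, v_2 \in V$ along boundary arcs $\sigma_1, \sigma_2 \subset \partial S$. Realizing $e$ as the concatenation of a perturbation of $\sigma_1^{-1}$, the interior arc $e^*$, and a perturbation of $\sigma_2$, all crossings of $e$ with an arc $a \in A$ come from the boundary perturbations: each endpoint of $a$ lying in $\sigma_1 \cup \sigma_2$ contributes one crossing, and since $a$ has only two endpoints, $e$ crosses $a$ at most twice.

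The main obstacle is verifying the bound of two in the edge case where $\sigma_1$ and $\sigma_2$ overlap (both endpoints of $e^*$ being slid through the same region of $\partial S$), which could naively produce a double-count whenever both endpoints of $a$ land in the overlap. The fix is to make the slide direction of each vertex of $\partial A \setminus V$ depend on the local geometry of $V$ along $\partial S$ (for instance, always to the nearer $V$-vertex), and to realize the perturbations so that the two passes over any overlap do not both cross $a$ at the same endpoint, which can be arranged by routing one perturbation slightly deeper into $S$ than the other. For the algorithmic version, each step is combinatorial: constructing $\mathcal{T}^*$ from the $1$-complex $A \cup \partial S$ reduces to triangulating polygons whose total vertex count is polynomial in $|V|$, $|\partial A|$ and the number of input triangles, and the vertex eliminations are each of constant cost. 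Because $A$ is given in normal coordinates, its combinatorial structure (number of arcs, incidence with the input triangulation) can be read off in time polynomial in the $\ell^1$-norm of those coordinates, so the total running time is polynomial in the stated parameters.
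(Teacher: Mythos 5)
Your strategy is essentially the same as the paper's: build an auxiliary (ideal) triangulation whose $1$-skeleton contains $A$ (or a normalized version of it) on an enlarged vertex set, and then slide the spare boundary vertices along $\partial S$ into $V$, arguing that each slide introduces at most one crossing per endpoint of each component of $A$, whence the bound of two. The main structural difference is that the paper first reduces to a single $V$-vertex per boundary component, then passes from $A$ to an ideal triangulation $A'$ (deleting inessential arcs, collapsing parallel families, completing by arcs disjoint from $A$) and appends one carefully routed arc per boundary component before sliding; you instead keep $A$ itself in the $1$-skeleton and work with the enlarged vertex set $V \cup \partial A$.

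You correctly single out the delicate point: if both endpoints $w_1, w_2$ of an edge $e^*$ lie on the same boundary component and their slide paths $\sigma_1, \sigma_2$ overlap in an arc of $\partial S$ that contains both endpoints $p_1, p_2$ of some $a \in A$, the slid edge appears to pick up four, not two, intersections with $a$. However, the fix you propose is not convincing. Choosing the ``nearer $V$-vertex'' as the slide direction does not eliminate the nested case (for instance when $|V \cap C| = 1$ both $w_1$ and $w_2$ may still slide the same way, with $\sigma_2 \subset \sigma_1$), and ``routing one perturbation slightly deeper into $S$'' does not help: the arc $a$ is properly embedded, so its radial segment near an endpoint $p$ on $\partial S$ runs transversally across the entire collar, and any collar-level push of a boundary arc that passes over $p$ necessarily crosses that radial segment, regardless of depth. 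So the two passes over the overlap each contribute a genuine crossing at each of $p_1$ and $p_2$.

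To repair this you either need to reproduce the paper's normalization of $A$ (to $A'$, then $A''$), whose structure is what keeps the slide paths under control, or give an honest argument in your framework that the overlapping-slide configuration cannot occur for your chosen triangulation and slide directions, or that the extra crossings can be isotoped away within the triangulation. As stated, the crossing bound is not established. The algorithmic remarks at the end are fine once the construction itself is sound.
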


\begin{proof}
Consider first the case where $S$ is a disc. Note that each component of $\partial S \setminus V$ intersects each component of $A$ at most twice, in a subset of the endpoints of that component. These components of $\partial S \setminus V$ will be edges of $\mathcal{T}$, and so we have verified the required condition for these edges. If $|V| = 3$, then we set $\mathcal{T}$ to be a single triangle. So suppose $|V| > 3$. Pick three vertices in $V$ that are consecutive around $\partial S$. Join the outermost two by an edge that runs parallel to $\partial S$. This will be an edge of $\mathcal{T}$. It intersects each component of $A$ at most twice. These three vertices now span a triangle. Removing this triangle from $S$ gives a disc with one fewer vertices in its boundary. Hence, by induction, $S$ has the required triangulation.

Now suppose that $S$ is not a disc. Suppose also that some component of $\partial S$ contains more than one vertex in $V$. Pick three consecutive vertices on this component of $\partial S$ (where the outermost two may be equal) and join the outermost two by an edge and then remove a triangle, as above. In this way, we may suppose that each component of $\partial S$ contains a single vertex. We now modify $A$ to a new set of arcs $A'$ as follows:
\begin{enumerate}
\item remove any inessential arcs;
\item replace parallel essential arcs by a single arc;
\item if any complementary region is not a disc or is a disc with more than three arcs in its boundary, then add in an essential arc not parallel to a previous one, and avoiding $A$. Repeat this as much as possible. This step turns the arc system into a hexagonal decomposition of the surface where for each hexagon the edges alternately lie in $\partial S$ and in $A'$.
\end{enumerate}

The resulting arcs form the 1-skeleton of an ideal triangulation of $S$. By construction, each either is equal to a component of $A$ or is disjoint from $A$. We now add further arcs to $A'$, one for each component of $\partial S$. Consider any component $C$ of $\partial S$ and the vertex $v$ in $V$ that it contains. Pick some orientation on $C$. Let $p_1$ and $p_2$ be the endpoints of $A'$ that are adjacent to $v$, where the orientation on $C$ runs from $p_1$ to $v$. Say that $p_i$ lies in the arc $a_i$ in $A'$. We add the following arc to $A'$: it starts at the end of $a_1$ that is not $p_1$, it runs along $a_1$ and then along the sub-arc of $\partial S$ containing $v$ up to $p_2$. This sub-arc of $\partial S$ may contain endpoints of inessential arcs of $A$, in which case we modify this new arc so that it avoids these inessential arcs of $A$. We repeat this for each component of $\partial S$. Let $A''$ be the union of $A'$ and these new arcs, perturbed a little so that they are disjoint from each other. By construction, they are disjoint from $A$. 

We now slide the endpoints of $A''$ along $\partial S$, using the chosen orientations on the components of $\partial S$. We stop when all the endpoints of $A''$ lie in $V$. The result is the 1-skeleton of the required triangulation $\mathcal{T}$. Note that this sliding operation may introduce points of intersection between the edges of $\mathcal{T}$ and $A$, but each edge of $\mathcal{T}$ intersects each component of $A$ at most twice, near the endpoints of that component of $A$.
\end{proof}

\begin{thm}
	Let $S$ be a closed orientable surface, and $\mathcal{T}$ be a one-vertex triangulation of $S$. Let $\gamma$ be an essential simple closed normal curve given by its normal vector $(\gamma)$ with respect to $\mathcal{T}$, and denote the bit-sized complexity of $(\gamma)$ by $|(\gamma)|_\mathrm{bit}$ and its $\ell^1$-norm by $|(\gamma)|_1$. There is an algorithm that constructs a sequence of one-vertex triangulations $\mathcal{T} = \mathcal{T}_0, \mathcal{T}_1, \cdots, \mathcal{T}_n$ of $S$ and a sequence of curves $\gamma = \gamma_0, \gamma_1, \cdots, \gamma_n$ such that 
	
	\begin{enumerate}
		\item $\gamma_i$ is isotopic to $\gamma$ for every $i$.
		\item $\gamma_i$ is in normal form with respect to $\mathcal{T}_i$ for every $i<n$.
		\item $\gamma_n$ lies in the 1-skeleton of $\mathcal{T}_n$. 
		\item Each $\mathcal{T}_{i+1}$ is obtained from $\mathcal{T}_i$ by either a flip or a power of a Dehn twist. When $\mathcal{T}_{i+1}$ is obtained from $\mathcal{T}_i$ by Dehn twisting $k$ times about a curve $\alpha$, then $\alpha$ intersects each edge of $\mathcal{T}_i$ at most three times, and the absolute value of $k$ is bounded above by a polynomial function of $|(\gamma)|_1$ and $|\chi(S)|$.
		\item The algorithm runs in time that is a polynomial function of $|(\gamma)|_\mathrm{bit}$ and $|\chi(S)|$.
	\end{enumerate}

For the output, each $\mathcal{T}_i$ is given as a union of triangles with gluing instructions together with the flip or twist move from $\mathcal{T}_i$ to $\mathcal{T}_{i+1}$, and $\gamma_i$ is given by its normal coordinates with respect to $\mathcal{T}_i$.
\label{thm: normal curve}
\end{thm}

\begin{proof}
	The idea is to extend $\gamma$ to a one-vertex triangulation $\mathcal{T}'$ of $S$, and then repeat the proof of Theorem \ref{thm: one-vertex triangulations, fixed vertices}. More precisely, we show that there is an algorithm that extends $\gamma$ to a one-vertex triangulation $\mathcal{T}'$ of $S$ such that:
	
	\begin{enumerate}
		\item $\mathcal{T}'$ is in normal form with respect to $\mathcal{T}$.
		\item The algorithm runs in time that is a polynomial function of $|(\gamma)|_\mathrm{bit}$ and $\chi(S)$. In particular the bit-sized complexity of the normal coordinates of $\mathcal{T}'$ with respect to $\mathcal{T}$ are bounded from above by such a polynomial function. 
	\end{enumerate}

\textit{Step 1:} Construction of $\mathcal{T}'$. 

The normal arcs of $\gamma$ decompose triangles of $\mathcal{T}$ into several $0$-handles (or $2$-cells). A $0$-handle of $S \setminus \setminus (\mathcal{T} \cup \gamma)$ is called a \emph{parallelity handle} if it is a 4-gon with two of its opposite sides being parallel normal arcs of $\gamma$ and the other two sides lying in the edges of $\mathcal{T}$. Each parallelity 0-handle comes with the structure of an $I$-bundle over an interval, where the interval base is parallel to a normal arc of $\gamma$. See Figure \ref{fig: parallelity handle}. The $I$-bundle structures on parallelity $0$-handles glue together to form an $I$-bundle $\mathcal{B}$, called the \emph{parallelity bundle}, whose base $B$ is a possibly disconnected compact 1-manifold. Moreover, the base $B$ can be considered as a normal (possibly not closed) multicurve. Since we assumed that $\gamma$ is connected, the base $B$ is a finite union of closed intervals. The \emph{vertical boundary $\partial_v \mathcal{B} $ of $\mathcal{B}$} is defined as the restriction of the $I$-bundle $\mathcal{B}$ to $\partial B$. The \emph{horizontal boundary $\partial_h \mathcal{B}$ of $\mathcal{B}$} is the $(\partial I)$-bundle over $B$, obtained by the restriction of the $I$-bundle. Therefore, the boundary of $\mathcal{B}$ is the union of its vertical boundary and horizontal boundary. Similarly, for each component of $\mathcal{B}$, we can speak of its vertical and horizontal boundary.

\begin{figure}
	\includegraphics[width = 3 in]{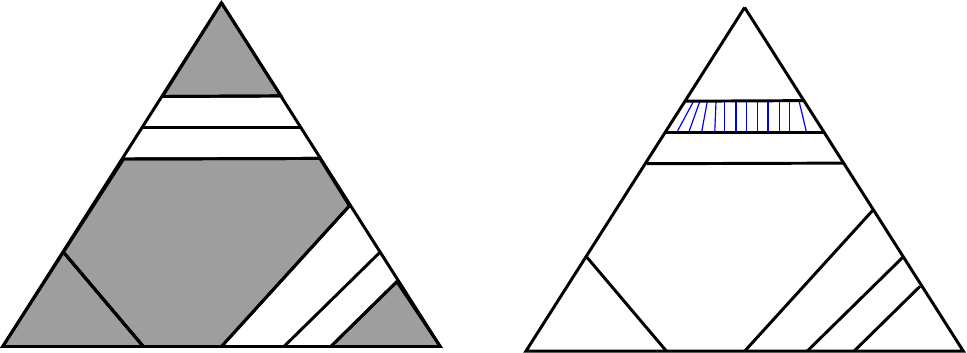}
	\caption{Normal arcs decompose a triangle into a union of $0$-handles (or $2$-cells). On the left the $0$-handles that are not parallelity handles are shaded. Note that in general there are at most four $0$-handles that are not parellelity handles. The fibers of the $I$-bundle structure of a parallelity handle (well-defined up to isotopy) are shown on the right hand side.}
	\label{fig: parallelity handle}
\end{figure}

In each triangle of $\mathcal{T}$, there are at most four 0-handles that are not parallelity handles, see Figure \ref{fig: parallelity handle}. The union of the $0$-handles that are not parallelity handles forms a 2-complex called the \emph{gut region}. Therefore, the number of 0-handles of the gut region is at most $4t$, where $t$ is the number of triangles in $\mathcal{T}$. 

Denote the vertex of $\mathcal{T}$ by $v$. Let $\Delta$ be a triangle $0$-handle of the gut region, and $x$ be the side of $\Delta$ opposite the vertex $v$. Place a vertex $w$ on $x$, and isotope $\gamma$ by dragging the vertex $w$ to $v$ along a straight line in $\Delta$. After this isotopy and normalisation, $\gamma$ is a simple closed normal curve with one vertex on it that coincides with the vertex of $\mathcal{T}$. See Figure \ref{fig: isotoping vertex}. We will apply the Agol--Hass--Thurston algorithm to compute the following data about the parallelity bundle: Denote the components of $\mathcal{B}$ by $\mathcal{B}_1, \cdots, \mathcal{B}_k$. For each $\mathcal{B}_i$, we compute the normal coordinates for its base, together with the attachment of the vertical boundary of $\mathcal{B}_i$ to the gut region, and the relative $I$-direction for the two components of the vertical boundary of $\mathcal{B}$.

\begin{figure}
	\labellist
	\pinlabel $v$ at 87 94
	\pinlabel $w$ at 143 84
	\pinlabel $\Delta$ at 175 80
	\endlabellist
	\includegraphics[width = 2 in]{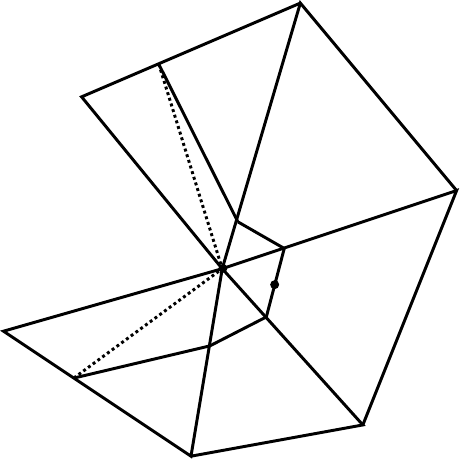}
	\caption{Proof of Theorem \ref{thm: normal curve}, isotoping $\gamma$ such that it becomes a normal curve passing through the vertex $v$ of the triangulation: Some of the triangles adjacent to $v$ are shown. We first drag the point $w$ of $\gamma$ to the vertex $v$ inside $\Delta$, and then normalise the resulting curve. The normalised curve passing through the vertex $v$ is shown in dashed lines. }
	\label{fig: isotoping vertex}
\end{figure}

Therefore, we have a handle decomposition $\mathcal{H}$ of $X = S \setminus \setminus \gamma$ into 0-handles, where each 0-handle is either a 0-handle of the gut region, or it is a 4-gon that is equal to a component of $\mathcal{B}$. This handle decomposition $\mathcal{H}$ of $X$ has $O(t)$ 0-handles. There is a natural immersion $i \colon X \rightarrow S$ whose restriction to the interior of $X$ is an embedding, and such that $\gamma$ lies in the image of the boundary of $X$  under the map $i$. Note that $X$ is a compact orientable surface with two or one connected components according to whether $\gamma$ is separating in $S$ or not. Let $V$ be the copies of the vertex $v$ in $X$. By Lemma \ref{Lem:TriangulationWithFewIntersections}, $X$ admits a triangulation $\mathcal{T}'_X$ with vertex set $V$ and where each edge intersects each component of $\partial_v \mathcal{B}$ at most twice.


Define the one-vertex triangulation $\mathcal{T}'$ of $S$ as the image of the triangulation $\mathcal{T}'_X$ of $S  \setminus \setminus \gamma$ under the map $i$. We can now read off the normal coordinate with respect to $\mathcal{T}$ of each edge $e$ of $\mathcal{T'}$. To see this, consider two cases:

\begin{enumerate}
	\item[i)] For each part of $e$ in the gut region, we can read its normal coordinate with respect to $\mathcal{T}$.
	\item[ii)] Let $H$ be a 0-handle of  $\mathcal{H}$ that forms a component  of the parallelity bundle $\mathcal{B}$. We previously computed the base of $H$ as a normal arc with respect to $\mathcal{T}$, using the Agol--Hass--Thurston algorithm. Each time $e$ runs through $H$, it enters and exists $H$ via $\partial_vH$ and so each component of $e \cap H$ is normally parallel to the base of $H$. So, we can read off the normal coordinate of $e \cap H$ with respect to $\mathcal{T}$.
	\end{enumerate}

Summing these coordinates over each 0-handle of $\mathcal{H}$ gives the normal vector of $e$ with respect to $\mathcal{T}$.
Finally, the normal coordinate of $\mathcal{T}'$ with respect to $\mathcal{T}$  can be obtained by summing up the normal coordinates of its edges. Note that by construction we have 
\begin{eqnarray}
	i(\mathcal{T}, \mathcal{T}') \leq O(|\chi(S)|) \cdot |(\gamma)|_1 + O(\chi(S)^2). 
	\label{eqn: size of triangulation}
\end{eqnarray}  
To see this note that the base of the parallelity bundle is a normal multi-arc of $\ell^1$-norm at most $|(\gamma)|_1$. Moreover, $\mathcal{T}' $ has $O(|\chi(S)|)$ edges and  each edge of $\mathcal{T}'$ passes through each component of $\mathcal{B}$ at most twice. Therefore the intersection of $\mathcal{T}'$ with $\mathcal{T} \cap \mathcal{B}$ contributes at most $O(|\chi(S)| ) \cdot |(\gamma)|_1$ intersection points. Additionally, each edge of $\mathcal{T}'$ intersects the gut region of $\mathcal{H}$ at most $O(|\chi(S)|)$ times, and so $\mathcal{T}'$ intersects the restriction of $\mathcal{T}$ to the gut region at most $O(\chi(S)^2)$ times.  \\

\textit{Step 2:} Construction of $\mathcal{T}_i$ and $\gamma_i$. \\

Let
\[ \mathcal{T} = \mathcal{T}_0, \mathcal{T}_1, \cdots, \mathcal{T}_n = \mathcal{T}' \]
be the sequence of one-vertex triangulations given by Theorem \ref{thm: one-vertex triangulations, fixed vertices}. Set $\gamma_0 := \gamma$. Each $\mathcal{T}_{i+1}$ is obtained from $\mathcal{T}_i$ by either a flip or a twist map. Given the normal curve $\gamma_i$ with respect to $\mathcal{T}_i$, we put $\gamma_{i}$ in normal form with respect to $\mathcal{T}_{i+1}$ and define $\gamma_{i+1}$ as this normal representative. Consider two cases: 

\begin{enumerate}
	\item[a)] If $\mathcal{T}_{i+1}$ is obtained from $\mathcal{T}_i$ by a flip, then it is easy to put $\gamma_i$ in normal form with respect to $\mathcal{T}_{i+1}$ and find its normal coordinates. 
	\item[b)] If $\mathcal{T}_{i+1}$ is obtained from $\mathcal{T}_i$ by a twist map $(T_\alpha)^k$, then the normal coordinates of $\gamma_i$ with respect to $\mathcal{T}_{i+1}$ are equal to the normal coordinates of $(T_\alpha)^{-k}(\gamma_i)$ with respect to $\mathcal{T}_i$. The normal coordinates of $(T_\alpha)^{-k}(\gamma_i)$ with respect to $\mathcal{T}_i$ can be read off from the normal coordinates of $\gamma_i$ with respect to $\mathcal{T}_i$, the normal coordinates of $\alpha$ with respect to $\mathcal{T}_i$, and the value of $k$, and all this information is given to us by Theorem \ref{thm: one-vertex triangulations, fixed vertices}.
\end{enumerate}

By construction, $\gamma_i$ satisfy conditions (1)--(3) of the statement of the theorem.  By Theorem \ref{thm: one-vertex triangulations, fixed vertices} and equation (\ref{eqn: size of triangulation}), conditions (4)--(5) of the statement are satisfied as well. 
	
\end{proof}

We can similarly prove a version of Theorem \ref{thm: normal curve} for ideal triangulations of surfaces with boundary. 

\begin{thm}
	Let $S$ be a compact orientable surface with non-empty boundary, and $\mathcal{T}$ be an ideal triangulation of $S$. Let $\gamma$ be an essential simple closed normal curve or an essential simple normal arc given by its normal vector $(\gamma)$ with respect to $\mathcal{T}$, and denote the bit-sized complexity of $(\gamma)$ by $|(\gamma)|_\mathrm{bit}$ and its $\ell^1$-norm by $|(\gamma)|_1$. There is an algorithm that constructs a sequence of ideal triangulations $\mathcal{T} = \mathcal{T}_0, \mathcal{T}_1, \cdots, \mathcal{T}_n$ of $S$ and a sequence of curves $\gamma = \gamma_0, \gamma_1, \cdots, \gamma_n$ such that 
	
	\begin{enumerate}
		\item $\gamma_i$ is isotopic to $\gamma$ for every $i$.
		\item $\gamma_i$ is in normal form with respect to $\mathcal{T}_i$ for every $i<n$.
		\item $\gamma_n$ intersects each edge of $\mathcal{T}_n$ at most twice. 
		\item Each $\mathcal{T}_{i+1}$ is obtained from $\mathcal{T}_i$ by either a flip or a power of a Dehn twist. When $\mathcal{T}_{i+1}$ is obtained from $\mathcal{T}_i$ by Dehn twisting $k$ times about a curve $\alpha$, then $\alpha$ intersects each edge of $\mathcal{T}_i$ at most three times, and the absolute value of $k$ is bounded above by a polynomial function of $|(\gamma)|_1$ and $|\chi(S)|$.
		\item The algorithm runs in time that is a polynomial function of $|(\gamma)|_\mathrm{bit}$ and $|\chi(S)|$.
	\end{enumerate}
	
	For the output, each $\mathcal{T}_i$ is given as a union of ideal triangles with gluing instructions together with the flip or twist move from $\mathcal{T}_i$ to $\mathcal{T}_{i+1}$, and $\gamma_i$ is given by its normal coordinates with respect to $\mathcal{T}_i$.
	
\end{thm}

\begin{proof}
	The proof is similar to the proof of Theorem \ref{thm: normal curve}. 
	When $\gamma$ is a closed curve, we first isotope it to create an arc that 
	passes through an ideal vertex $v$ of $\mathcal{T}$. We then repeat 
	the argument as in the proof of Theorem \ref{thm: normal curve} 
	to construct a sequence of ideal triangulations 
	$\mathcal{T}= \mathcal{T}_0, \cdots, \mathcal{T}_n$ and 
	arcs $\gamma_0, \cdots, \gamma_n$ such that 
	$\gamma_n$ is an edge of $\mathcal{T}_n$. 
	Finally when $\gamma$ is a closed curve, we perturb $\gamma_n$ off the ideal vertex 
	$v$ to create a curve that intersects each edge of $\mathcal{T}'$ at most twice. 
	When $\gamma$ is an arc, we perturb $\gamma_n$ so that it is normal
	and disjoint from the edges of $\mathcal{T}'$.
	
	Note that if $\gamma$ is a separating curve and $\mathcal{T}$ has exactly one ideal vertex, the geometric intersection number between $\gamma_n$ and an edge of $\mathcal{T}_n$ is even. Therefore, assuming further that $\gamma$ is essential, there is an edge $e$ of $\mathcal{T}_n$ such that $\gamma$ intersects $e$ at least twice. 
\end{proof}

\section{Application to volumes of hyperbolic 3-manifolds}	

Theorem \ref{thm:upper bound for distance in pants graph} together with Agol's explicit construction of hyperbolic structures in \cite{agol2003small} has the following corollary. 

\theoremstyle{theorem}
\newtheorem*{volume}{Corollary \ref{upper bound for volume}}
\begin{volume}
	Let $\Sigma$ a closed orientable surface of genus $g \geq 2$, and $P$ and $P'$ be pants decompositions of $\Sigma$ with no curve in common. Assume that $M$ is a maximal cusp obtained from a quasi-Fuchsian 3-manifold homeomorphic to $\Sigma \times \mathbb{R}$ by pinching the multicurves $P$ and $P'$ to annular cusps on the two conformal boundary components of $M$. The volume of the convex core of $M$ is  
	\[ O(g^2) \hspace{1mm} \log (i(P, P')) . \]
\end{volume}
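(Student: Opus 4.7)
The plan is to combine Theorem \ref{thm:upper bound for distance in pants graph} with Agol's explicit construction of hyperbolic structures on $\Sigma \times \mathbb{R}$ from paths in the pants graph \cite{agol2003small}. Agol's construction associates to each edge of the pants graph (a simple or associativity move) an elementary hyperbolic piece (for instance, built from a uniformly bounded number of ideal tetrahedra or ideal octahedra) whose volume is bounded above by a \emph{universal} constant $C_0$, independent of the surface $\Sigma$. Gluing these pieces along a pants path joining $P$ to $P'$ produces an explicit hyperbolic manifold homeomorphic to $\Sigma \times \mathbb{R}$ in which $P$ and $P'$ are pinched to annular cusps, and whose convex core is contained in the union of these pieces. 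Consequently, for any maximal cusp $M$ obtained by pinching $P$ and $P'$,
\[
\mathrm{vol}(\mathrm{core}(M)) \;\leq\; C_0 \cdot d_{\mathcal{P}(\Sigma)}(P, P'),
\]
where $d_{\mathcal{P}(\Sigma)}$ denotes the distance in the pants graph of $\Sigma$.

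Given this linear bound, the corollary follows immediately from Theorem \ref{thm:upper bound for distance in pants graph}. Indeed, for a closed orientable surface of genus $g$ we have $|\chi(\Sigma)| = 2g-2$, so that theorem gives
\[
d_{\mathcal{P}(\Sigma)}(P,P') \;=\; O(|\chi(\Sigma)|^2)\,(1+\log i(P,P')) \;=\; O(g^2)\,(1+\log i(P,P')).
\]
Combining the two inequalities yields
\[
\mathrm{vol}(\mathrm{core}(M)) \;\leq\; C_0 \cdot O(g^2)\,(1+\log i(P,P')) \;=\; O(g^2)\,(1+\log i(P,P')),
\]
which is the desired bound.

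The only substantive input is Agol's linear bound with a universal constant per pants move; this is precisely the content cited from \cite{agol2003small}. The hypothesis that $P$ and $P'$ have no curve in common ensures that $M$ is a genuine maximal cusp (each conformal boundary component of the original quasi-Fuchsian manifold is pinched along a pants decomposition to a union of thrice-punctured spheres) so that Agol's construction applies directly. The main subtlety, conceptually, is that the per-edge volume constant in Agol's construction does not depend on the genus of $\Sigma$; this is what allows the combinatorial bound on pants distance from Theorem \ref{thm:upper bound for distance in pants graph} to be transferred into a geometric volume bound with the same $g^2 \log i(P,P')$ shape.
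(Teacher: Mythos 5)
Your high-level plan matches the paper exactly: obtain a linear bound of the form $\mathrm{Vol}(\mathrm{core}(M)) \leq C_0 \cdot d_{\mathcal{P}(\Sigma)}(P,P')$ from Agol's construction in \cite{agol2003small}, then plug in Theorem \ref{thm:upper bound for distance in pants graph}. The combination step is fine. However, your account of \emph{how} Agol's construction yields that linear bound is not accurate, and this is where the real work lies. Agol's pieces are not glued directly into a hyperbolic structure on $\Sigma \times \mathbb{R}$ with only $P$ and $P'$ pinched. The pants path $P = P_0, \ldots, P_m = P'$ produces an auxiliary \emph{link complement} $M_C$ (inside a mapping torus $T_\phi$ of a homeomorphism with $\phi(P) = P'$), in which one drills out the intermediate curves $\beta_i \times \{i/m\}$ associated to the path. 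Agol's Lemma 2.3 endows $M_C$ with a hyperbolic structure of finite volume satisfying $\mathrm{Vol}(M_C) \leq 2m$. This $M_C$ has many more annular cusps than the maximal cusp $M$, so the claim that ``the convex core of $M$ is contained in the union of these pieces'' does not hold as stated; to pass from $M_C$ to $M$ one Dehn-fills all but the relevant cusps and then cuts along thrice-punctured spheres. Controlling volume through those steps requires two nontrivial ingredients you omit: Thurston's theorem that volume strictly decreases under hyperbolic Dehn filling, and Adams's theorem that thrice-punctured spheres in cusped hyperbolic $3$-manifolds are totally geodesic (so that cutting along them preserves volume and produces the maximal cusp $M$). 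With those two inputs, one gets $\mathrm{Vol}(M) = \mathrm{Vol}(N) < \mathrm{Vol}(M_C) \leq 2m$, which is the linear bound you asserted. In short: right strategy, right final inequality, but the bridge from Agol's combinatorial building blocks to the convex-core volume is not a containment of pieces; it is a Dehn-filling and totally-geodesic-cutting argument that you should spell out rather than treat as a black box.
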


\begin{proof}
Since $P$ and $P'$ have no curve in common, each curve in $P$ intersects $P'$ at least once. Therefore $i(P, P') \geq |P| \geq 2$, where $|P| = 3g -3$ is the number of curves in $P$. By Theorem \ref{thm:upper bound for distance in pants graph}, there is a path $C$ consisting of $P_0=P, P_1, \cdots, P_m=P'$ in the pants graph of $\Sigma$ with length $m = O(g^2) \hspace{1mm} \log(i(P, P'))$. 

We recall Agol's work from \cite{agol2003small}; Agol's proof is written for mapping tori but works with minor changes for product manifolds too as we will see below. Define the sequence of circles $\beta_1, \cdots, \beta_m$ in $\Sigma$ where $\beta_{i+1}$ is the circle in $P_{i+1}$ replacing a circle in $P_i$. Also set $\beta_0 = P_0$. Define the subset $\mathcal{A}$ of $\{ \beta_1 , \cdots, \beta_m \}$ as follows: $\beta_i$ belongs to $\mathcal{A}$ whenever $\beta_i$ is isotopic to a curve in $P'$ and for no $j>i$, $\beta_j$ is isotopic to $\beta_i$. For $0 \leq i \leq m$, define the multicurves $B_i$ in $\Sigma \times [ 0 , 1 ]$ as follows: $B_i = \beta_i \times \{ \frac{i}{m} \}$ if $\beta_i \notin \mathcal{A}$, and $B_i = \beta_i \times \{ 1\}$ if $\beta_i \in \mathcal{A}$.  Consider the complement $M_C := \Sigma \times [0,1] \setminus N(\cup B_i)$ where $N(\cup B_i)$ is a regular neighbourhood of $\cup B_i$. In other words, we drill out $\beta_i$ at successive heights from $\Sigma \times [0, 1]$, except that the curves in $P'$ are all drilled out from the level $\Sigma \times \{ 1\}$. Likewise, all the curves in $P_0$ are drilled from the level $\Sigma \times \{ 0\}$, since $\beta_0 \notin \mathcal{A}$. Let $M$ be the maximal cusp obtained from $\Sigma \times \mathbb{R}$ by pinching the multicurves $P$ and $P'$, and $N$ be the convex core of $M$. Then $N$ is obtained from $M_C$ by Dehn filling boundary components corresponding to those curves $B_i$ that lie in the interior of $\Sigma \times [0,1]$. Agol \cite[Lemma 2.3]{agol2003small} constructed a complete hyperbolic structure with totally geodesic boundary and with rank-1 and rank-2 cusps on $M_C$ by gluing together `model pieces'. The rank-1 cusps correspond to the curves drilled out from $\Sigma \times \{ 0 ,1\}$, which are $P \times \{ 0\}$ and $P' \times \{ 1\}$. The rank-2 cusps correspond to the curves drilled out from the interior of $\Sigma \times [0,1]$. Finally the totally geodesic boundary corresponds to the union of pair of pants in $\Sigma \times \{ 1\} \setminus P' \times \{ 1\}$ and $\Sigma \times \{ 0\} \setminus P \times \{ 0\}$.  The explicit construction of the hyperbolic structure shows that
\[ \textrm{Vol}(M_C) = (2 A +S) \mathrm{V}_\mathrm{oct} \leq 2 \mathrm{V}_\mathrm{oct} m, \]
where $\textrm{Vol}$ denotes the volume of the hyperbolic structure, $A$ and $S$ indicate the number of \emph{associativity/simple} moves in the path $C$, and $\mathrm{V}_\mathrm{oct}$ is the volume of the regular ideal octahedron; see the proof of \cite[Corollary 2.4]{agol2003small}. Let $N$ be the manifold obtained by filling in the boundary components of $M_C$ corresponding to those $B_i$ that lie in the interior of $\Sigma \times [0,1]$. Then $N$ is a hyperbolic 3-manifold with totally geodesic boundary and rank-1 cusps and 
\[ \textrm{Vol}(N) < \textrm{Vol}(M_C).  \]

To see this note that by Thurston's hyperbolisation theorem for Haken manifolds, $N$ is hyperbolic since it is Haken, atoroidal, and anannular. By Thurston \cite{thurston1979geometry}, the volume decreases under Dehn filling and so
\[ \textrm{Vol}(N) < \textrm{Vol}(M_C).  \]

Alternatively we can first double each of $M_C$ and $N$ along the subset $(\Sigma \times \{ 0\} \setminus P \times \{ 0\}) \cup (\Sigma \times \{ 1\} \setminus P' \times \{ 1\})$ of boundary, and work with the doubled manifold to avoid rank-1 cusps. Hence we have 

\[ \textrm{Vol}(N)<\textrm{Vol}(M_C) \leq 2 \mathrm{V}_\mathrm{oct} m = O(g^2) \hspace{1mm}  \log(i(P, P')). \]
	
\end{proof}

The next result shows that our bound in Corollary \ref{upper bound for volume} is sharp up to a multiplicative factor of $g \log(g)$. 

\begin{prop}
	There is a universal positive constant $C$ such that for any $g \geq 3$, there are maximal cusps $M$ obtained from $\Sigma_g \times \mathbb{R} $ by pinching the multicurves $P$ and $P'$ to annular cusps such that the volume of the convex core of $M$ is greater than
	\[ C \medspace \frac{g}{\log(g)} \log(i(P,P')). \] 
	
	\label{sharpness of volume bound}
\end{prop}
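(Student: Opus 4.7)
The strategy is to exhibit, for each large $g$, pants decompositions $P$ and $P'$ on $\Sigma_g$ whose intersection number grows only moderately but whose associated maximal cusp has volume contributions from $\Omega(g)$ disjoint subsurfaces, each growing linearly in an iteration parameter.

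First, I would fix once and for all a four-holed sphere $Y$ and a pseudo-Anosov $\phi_0 \in \mathrm{MCG}(Y)$ with dilatation $\lambda_0 > 1$; both $Y$ and $\phi_0$ are independent of $g$. For each genus $g$, I embed $k = \lfloor g/2 \rfloor$ pairwise disjoint copies $Y_1, \ldots, Y_k$ of $Y$ in $\Sigma_g$, and let $\phi \in \mathrm{MCG}(\Sigma_g)$ be the product of the corresponding pseudo-Anosovs $\phi_i$ supported on the $Y_i$, extended by the identity outside $\bigsqcup_i Y_i$. I then choose a pants decomposition $P$ of $\Sigma_g$ containing the boundary curves $\partial Y_i$ together with one extra curve inside each $Y_i$, so that $P$ restricts to a pants decomposition of $Y_i$ on which $\phi_i$ acts non-trivially.

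Second, set $P'_n := \phi^n(P)$. Since each $\phi_i$ is a fixed pseudo-Anosov with dilatation $\lambda_0$ acting on a fixed surface $Y$, standard estimates give $i(P\cap Y_i,\, P'_n\cap Y_i) \leq C \lambda_0^n$ for a constant $C$ independent of $g$ and $n$. Summing over the $k$ disjoint subsurfaces,
\[ \log i(P, P'_n) \leq n \log \lambda_0 + \log k + O(1). \]

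Third, and this is the main obstacle, I would establish a lower bound $\mathrm{Vol}(\mathrm{core}(M_n)) \geq c\, g\, n$, where $M_n$ is the maximal cusp pinching $P$ and $P'_n$. Because $\phi_0$ acts on the Farey graph $\mathcal{C}(Y)$ with positive stable translation length, for each $i$ the subsurface projection distance satisfies $d_{\mathcal{C}(Y_i)}(P, P'_n) \geq c_0 n$. By the Minsky model and the Brock--Canary--Minsky bilipschitz model theorem for doubly degenerate or geometrically finite hyperbolic structures on $\Sigma_g \times \mathbb{R}$, each $Y_i$ with subsurface projection distance above a universal cutoff contributes a submanifold of $M_n$ of volume at least $c_1\, d_{\mathcal{C}(Y_i)}(P,P'_n) \geq c_1 c_0 n$, and since the $Y_i$ are disjoint these submanifolds may be taken to be disjoint in $M_n$. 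Summing over $i$ yields $\mathrm{Vol}(\mathrm{core}(M_n)) \geq c g n$. An alternative route is to use Brock's bilipschitz comparison between convex core volume and pants graph distance, together with a summation of subsurface projection inequalities of Masur--Minsky to conclude $d_{\mathcal{P}}(P,P'_n) \geq c g n$.

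Finally, I would take $n := \lceil (\log g + 1)/\log \lambda_0 \rceil$, so that $n = \Theta(\log g)$. Then $1 + \log i(P, P'_n) = O(\log g)$ while $\mathrm{Vol}(\mathrm{core}(M_n)) \geq c g \log g$, so
\[ \frac{\mathrm{Vol}(\mathrm{core}(M_n))}{1 + \log i(P, P'_n)} \geq c_2\, g, \]
which exceeds $C g^\alpha$ for any fixed $0 < \alpha < 1$ and $C > 0$ as soon as $g$ is large enough. The hard part of the proof is the volume lower bound in the third step; the other steps are essentially bookkeeping around a standard pseudo-Anosov-on-subsurfaces construction.
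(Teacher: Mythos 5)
Your proposal takes a genuinely different route, and it has two serious gaps.

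\textbf{Gap 1: $P$ and $P'_n$ share curves.} In your construction, $P$ contains all the boundary curves $\partial Y_i$ and all pants curves outside $\bigsqcup_i Y_i$; since $\phi$ is supported on $\bigsqcup_i Y_i$, these are all fixed by $\phi$, so they reappear in $P'_n = \phi^n(P)$. Thus $P$ and $P'_n$ share $\Omega(g)$ curves. But the maximal-cusp construction in Corollary \ref{upper bound for volume} requires $P$ and $P'$ to have \emph{no curve in common}: if a curve $\gamma$ appears in both $P$ and $P'$, there is an essential properly embedded annulus between the corresponding rank-one cusps and the convex core decomposes, so the object you write $M_n$ does not exist in the form you need. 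You could try to repair this by composing $\phi$ with a further mapping class that moves the $\partial Y_i$ off themselves, but then the clean disjoint-subsurface structure, and with it both your intersection-number estimate and your volume lower bound, would need to be redone from scratch.

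\textbf{Gap 2: the volume lower bound is not established with genus-independent constants.} You acknowledge the lower bound $\mathrm{Vol}(\mathrm{core}(M_n)) \geq c\,g\,n$ is ``the hard part,'' and indeed it is. The Brock--Canary--Minsky bilipschitz model theorem and Brock's comparison between convex-core volume and pants-graph distance come with constants that a priori degenerate as $|\chi(\Sigma_g)| \to \infty$; nothing in your sketch pins down that the constant $c_1$ in ``each $Y_i$ contributes volume at least $c_1\, d_{\mathcal{C}(Y_i)}(P,P'_n)$'' is independent of $g$, and the standard statements of these theorems do not provide that. Since the whole point of the proposition is to beat a bound that is polynomial in $g$, you cannot afford constants with uncontrolled dependence on $g$. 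This is not a bookkeeping issue; it is the mathematical content that is missing.

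\textbf{What the paper does instead.} The paper's argument sidesteps both problems by taking covers of a fixed example. Fix a genus-$2$ surface $\Sigma_2$ with pants decompositions $P_1$ and $P_1' = f(P_1)$ sharing no curve (arranged by replacing the pseudo-Anosov $f$ with a power), where $f$ acts trivially on homology. Choose a surjection $\pi_1(\Sigma_2) \to \mathbb{Z}/k\mathbb{Z}$ that kills the curves of $P_1$ (hence, by the homology condition, also those of $P_1'$), and pass to the degree-$k$ cover $\Sigma_g$, $k = g-1$. The lifts $P = \widetilde{P_1}$ and $P' = \widetilde{P_1'}$ are genuine pants decompositions with no curve in common, and the maximal cusp $M$ pinching them is the $k$-fold cover of the fixed maximal cusp $M_1$ pinching $P_1$ and $P_1'$. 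Convex-core volume scales exactly by the covering degree (an elementary consequence of the limit sets coinciding), so $\mathrm{Vol}(\mathrm{CC}(M)) = (g-1)\,\mathrm{Vol}(\mathrm{CC}(M_1))$, while $i(P,P') = (g-1)\,i(P_1,P_1')$. The ratio is then $\Theta(g/\log g)$, beating $C g^\alpha$ for large $g$, with no model theorem and no genus-dependent constants.

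The covering trick is the key idea you were missing: rather than manufacturing a family of more complicated manifolds and trying to estimate their volumes from below, keep the model manifold fixed and let the covering degree do all the work.
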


\begin{proof}
	Let $P_1$ be a pants decomposition of a surface $\Sigma_2$ of genus $2$, and denote the simple closed curves in $P_1$ by $\{ \alpha_1, \alpha_2 , \alpha_3 \}$. Pick a base point $b$ on $\Sigma_2$, and let $\pi_1(\Sigma_2, b)$ be the fundamental group. Let $f \colon \Sigma_2 \rightarrow \Sigma_2$ be a pseudo-Anosov mapping class that acts trivially on the first homology group $H_1(\Sigma_2; \mathbb{Z})$. After an isotopy, we can assume that $f$ fixes the base point $b$, and so it acts on $\pi_1(\Sigma_2, b)$. Define the pants decomposition $P'_1 := f(P_1)$. Since $f$ is pseudo-Anosov, it does not fix the isotopy class of any essential simple closed curve on $\Sigma_2$. So, after possibly replacing $f$ by a power of itself, we can assume that $f(\alpha_i)$ is not isotopic to $\alpha_j$ for any $1 \leq i , j \leq 3$.
		
	Picking a tree connecting $\alpha_i$ to the base point $b$ in $\Sigma_2$, we can identify $\alpha_i$ with elements of $\pi_1(\Sigma_2, b)$. Let $\phi \colon \pi_1(\Sigma_2,b) \rightarrow \mathbb{Z}$ be a surjective homomorphism such that $\phi(\alpha_i) = 0$ for $1 \leq i \leq 3$, for example $\phi$ can be taken to be the algebraic intersection pairing with some $\alpha_j$ that is non-separating. Denote by $\phi_k$ the composition of $\phi$ with the reduction map $ \mathbb{Z} \rightarrow \mathbb{Z}/k\mathbb{Z}$ modulo $k$ for $k \geq 2$, and define $G$ as the kernel of $\phi_k$. Therefore, $G$ is an index $k$ subgroup of $\pi_1(\Sigma,b)$ that contains all $\alpha_i$ for $1 \leq i \leq 3$. The image of $\phi_k$ is abelian and so it factors through the abelianisation map $\pi_1(\Sigma_2,b) \rightarrow H_1(\Sigma_2 ; \mathbb{Z})$. Since $f$ acts trivially on homology, we conclude that the curves in $P'_1 = f(P_1)$ also lie in $G = \ker (\phi_k)$. 
	
	Let $M_1$ be the maximal cusp obtained from $\Sigma_2 \times \mathbb{R}$ by pinching the pants decompositions $P_1$ and $P'_1$ to annular cusps, here we use the fact that $P_1$ and $P'_1$ have no curve in common.  Let $M$ be the $k$-sheeted cover of $M_1$ corresponding to the subgroup $G < \pi_1(\Sigma_2 , b) \cong \pi_1(M_1)$. The hyperbolic structure on $M_1$ lifts to a hyperbolic structure on $M$, and so there exists $g$ such that $M$ is obtained from $\Sigma_g \times \mathbb{R}$ by pinching the lifts $P := \tilde{P_1}$ and $P': = \tilde{P'_1}$ of respectively $P_1$ and $P'_1$ to annular cusps. Since the curves in $P_1$ and $P'_1$ lie in $G = \ker (\phi_k)$ by construction, we deduce that $P := \tilde{P_1}$ and $P': = \tilde{P'_1}$ are pants decompositions (that is, they cut $\Sigma$ into pairs of pants). By comparing Euler characteristics we have
	\[ \chi(\Sigma_g) = k \cdot \chi(\Sigma_2) \implies g-1 = k. \]
	Since $k \geq 2$, we have that $g \geq 3$. Denote the convex core of a hyperbolic manifold $N$ by $\mathrm{CC}(N)$, and its hyperbolic volume by $\mathrm{Vol}(N)$. 
	Since $M$ is a $k$-sheeted cover of $M_1$ 
	\[ \mathrm{Vol}(\mathrm{CC}(M)) = k \cdot \mathrm{Vol}(\mathrm{CC}(M_1)) = (g-1) \mathrm{Vol}(\mathrm{CC}(M_1)). \]
	To see this, let $\Gamma_1$ be a discrete group of isometries of hyperbolic three-space $\mathbb{H}^3$ such that $M_1 = \mathbb{H}^3/\Gamma_1$, and $\Gamma < \Gamma_1$ be a subgroup of index $k$ with $M = \mathbb{H}^3/\Gamma$. Denote the limit sets of $\Gamma_1$ and $\Gamma$ by respectively $\Lambda(\Gamma_1)$ and $\Lambda(\Gamma)$. It is easy to see that $\Lambda(\Gamma_1) = \Lambda(\Gamma)$, since $\Gamma < \Gamma_1$ is of finite index. Denote the convex hull of $\Lambda(\Gamma_1)$ by $\mathrm{CH}(\Lambda(\Gamma_1))$, and define $\mathrm{CH}(\Lambda(\Gamma))$ similarly. Then the convex core of the hyperbolic manifold $\mathbb{H}^3/\Gamma_1$ is the image of $\mathrm{CH}(\Lambda(\Gamma_1))$ under the projection $p_1 \colon \mathbb{H}^3 \rightarrow \mathbb{H}^3/\Gamma_1$, see \cite[Chapter 8.3]{thurston1979geometry}. Similarly, the convex core of $\mathbb{H}^3/\Gamma$ is the image of $\mathrm{CH}(\Lambda(\Gamma))$ under the projection $p \colon \mathbb{H}^3 \rightarrow \mathbb{H}^3/\Gamma$. It follows from $\Lambda(\Gamma_1) = \Lambda(\Gamma)$ that $\mathrm{Vol}(\mathrm{CH}(\Lambda(\Gamma_1))/\Gamma_1) = k \cdot \mathrm{Vol}(\mathrm{CH}(\Lambda(\Gamma))/\Gamma) $, proving the claim.
		
	We also have  
	\[ i(P, P') = k \cdot i(P_1 , P'_1) = (g-1) i(P_1 , P'_1). \]
	Fix the pants decompositions $P_1, P'_1$ on $\Sigma_2$ and the homomorphism $\phi$, and allow $k$ to vary. To simplify the notation set $A = \mathrm{Vol}(\mathrm{CC}(M_1))$ and $B = i(P_1, P_1')$, and note that $A, B>0$ are universal constants. Therefore, for $C>0$ sufficiently small we have 
	\begin{align*}
		\mathrm{Vol}(\mathrm{CC}(M)) &=   (g-1) \mathrm{Vol}(\mathrm{CC}(M_1))= A (g-1) \geq \\ 
		& C \frac{g}{\log(g)} \log(B (g-1)) = C \frac{g}{\log(g)} \log(i(P, P')).
	\end{align*}
	For example if $C = \frac{A}{3 \log(B)}$, then the above inequality is satisfied because
	\begin{align*}
		C \frac{g}{\log(g)} \log(B (g-1)) &= C  \frac{g}{\log(g)} (\log B + \log(g-1)) < \\
		&C  \frac{g}{\log(g)} (\log B + \log g) = C \log(B) \frac{g}{\log(g)} + C g < \\
		& \frac{A}{3}  \frac{g}{\log(g)} + \frac{A}{3 \log(B)} g  < \\
		& \frac{A}{3}  g + \frac{A}{3} g = \frac{2A}{3} g \leq  A(g-1). 		
	\end{align*}

In the penultimate line above we used the fact that $B = i(P_1, P_1') \geq 3$ since every curve in $P_1$ intersects $P_1'$ at least once, and so $\log(B)>1$. In the last line we used that $g \geq 3$ and hence $2g \leq 3(g-1)$. 

\end{proof}

\section{Application to Teichm\"uller geometry}

Our next application is to Teichm\"uller space, endowed with the Weil-Petersson metric. Denote by $\mathrm{Teich}(S)$ the Teichm\"uller space of an orientable surface $S$ of finite type (in other words, the space of marked hyperbolic metrics possibly with cusps but with no boundary). Denote its Weil--Petersson metric by $d_{\mathrm{WP}}$.

\theoremstyle{theorem}
\newtheorem*{Weil-Petersson distance}{Theorem \ref{thm: Weil-Petersson distance}}
\begin{Weil-Petersson distance}
Let $S$ be an orientable surface of finite type, and let $X, Y \in \mathrm{Teich}(S)$. Let $P_X$ and $P_Y$ be pants decompositions for $X$ and $Y$ respectively, in which each curve has length at most $2 \pi |\chi(S)|$, which exist by a theorem of Parlier \cite{parlier}. Then
\[ d_{\mathrm{WP}}(X,Y) \leq O(|\chi(S)|^2) \hspace{1mm} ( 1 + \log (i(P_X, P_Y)+1) ). \]
\end{Weil-Petersson distance}

\begin{proof}
Set $L = 2 \pi |\chi(S)|$. By Parlier's quantified version of Bers's theorem \cite{parlier}, which is building on and improving the work of Buser and Sepp\"al\"a \cite{buser1992symmetric}, every hyperbolic metric $X$ on $S$ has a pants decomposition $P$ in which every curve has length at most $L$. Let $\ell_X(P)$ denote the sum of the lengths of the curves in $P$. Hence, $\ell_X(P) \leq 3|\chi(S)| L$. Let $N(P)$ be the nodal surface where each curve in $P$ has length pinched to zero. We may view $N(P)$ as a point in the metric completion of $\mathrm{Teich}(S)$ with the Weil-Petersson distance. Wolpert showed (Corollary 4.10 in \cite{wolpert2008lengthfunctions}) that the Weil-Petersson distance from $X$ to $N(P)$ is at most 
\[ \sqrt{2 \pi \ell_X(P)} \leq \sqrt{ 12 } \pi |\chi(S)|.\] 
Therefore, $d_{\mathrm{WP}}(X,Y) \leq d_{\mathrm{WP}}(N(P_X),N(P_Y)) + O(|\chi(S)|)$. 

Cavendish and Parlier \cite{cavendishparlier} introduced a metric graph which they term the \emph{cubical pants graph} $\mathcal{CP}(S)$. This is obtained from the pants graph $\mathcal{P}(S)$ by adding edges (which may have length greater than one). Hence, for the two pants decompositions $P_X$ and $P_Y$, their distance in $\mathcal{CP}(S)$ is at most their distance in $\mathcal{P}(S)$. In Lemma 4.1 of \cite{cavendishparlier}, it is shown that there is an absolute constant $C$ such that 
\[d_{\mathrm{WP}}(N(P_X),N(P_Y)) \leq C \, d_{\mathcal{CP}(S)}(P_X, P_Y).\]
So, 
\begin{align*}
d_{\mathrm{WP}}(X,Y) &\leq d_{\mathrm{WP}}(N(P_X),N(P_Y)) + O(|\chi(S)|) \\
& \leq C \, d_{\mathcal{CP}(S)}(P_X, P_Y) + O(|\chi(S)|) \\
& \leq C \, d_{\mathcal{P}(S)}(P_X, P_Y) + O(|\chi(S)|) \\
& \leq O(|\chi(S)|^2) \hspace{1mm} ( 1 + \log (i(P_X, P_Y)) ),
\end{align*}
where the latter inequality is from Theorem \ref{thm:upper bound for distance in pants graph}.
\end{proof}

\section{Questions}

\begin{question}
	Is there an algorithm that takes as input a compact connected orientable surface $S$ and two pants decompositions $P$ and $P'$, and computes the distance $d_{\mathcal{P}}(P,P')$ in the pants graph, in time that is a polynomial function of $|\chi(S)|$ and $\log(1+i(P, P'))$?
\end{question}

It follows from the work of Irmer \cite{Irmer2015stable} that there is an algorithm that computes the distance in the pants graph. 

A simple argument shows that the distance in the flip graph of one-vertex triangulations is bounded from above by the intersection number; see \cite[Lemma 2.1]{disarlo2019geometry}. This prompts the following question.

\begin{question}
	Is there a universal constant $C>0$ such that for every compact orientable surface $S$ and pants decompositions $P$ and $P'$ of $S$, the distance between $P$ and $P'$ in the pants graph is at most $C i(P,P')$?
\end{question}

\bibliographystyle{abbrv}
\bibliography{references-pants-graph.bib}

\begin{thebibliography}{10}

\bibitem{agol2003small}
I.~Agol.
\newblock Small 3-manifolds of large genus.
\newblock {\em Geometriae Dedicata}, 102(1):53--64, 2003.

\bibitem{agol2006computational}
I.~Agol, J.~Hass, and W.~Thurston.
\newblock The computational complexity of knot genus and spanning area.
\newblock {\em Transactions of the American Mathematical Society},
  358(9):3821--3850, 2006.

\bibitem{baroni2024uniformly}
F.~Baroni.
\newblock Uniformly polynomial-time classification of surface homeomorphisms.
\newblock {\em arXiv preprint arXiv:2402.00231}, 2024.

\bibitem{bell2021simplifying}
M.~C. Bell.
\newblock Simplifying triangulations.
\newblock {\em Discrete \& Computational Geometry}, 66(1):1--11, 2021.

\bibitem{bell}
M.~C. Bell and R.~Webb.
\newblock Personal communication.

\bibitem{bell2016applications}
M.~C. Bell and R.~C. Webb.
\newblock Applications of fast triangulation simplification.
\newblock {\em arXiv preprint arXiv:1605.03514}, 2016.

\bibitem{bers1985inequality}
L.~Bers.
\newblock An inequality for riemann surfaces.
\newblock {\em Differential Geometry and Complex Analysis: A Volume Dedicated
  to the Memory of Harry Ernest Rauch}, pages 87--93, 1985.

\bibitem{brock2003weil}
J.~Brock.
\newblock The {W}eil-{P}etersson metric and volumes of 3-dimensional hyperbolic
  convex cores.
\newblock {\em Journal of the American Mathematical Society}, 16(3):495--535,
  2003.

\bibitem{buser1980riemannsche}
P.~Buser.
\newblock {\em Riemannsche Fl{\"a}chen und L{\"a}ngenspektrum vom
  trigonometrischen Standpunkt aus}.
\newblock PhD thesis, Mathematisches Institut der Univ., 1980.

\bibitem{buser1992symmetric}
P.~Buser and M.~Sepp{\"a}l{\"a}.
\newblock Symmetric pants decompositions of {R}iemann surfaces.
\newblock 1992.

\bibitem{canary2003approximation}
R.~D. Canary, M.~Culler, S.~Hersonsky, and P.~B. Shalen.
\newblock Approximation by maximal cusps in boundaries of deformation spaces of
  {K}leinian groups.
\newblock {\em Journal of Differential Geometry}, 64(1):57--109, 2003.

\bibitem{cavendishparlier}
W.~Cavendish and H.~Parlier.
\newblock Growth of the {W}eil-{P}etersson diameter of moduli space.
\newblock {\em Duke Math. J.}, 161(1):139--171, 2012.

\bibitem{disarlo2019geometry}
V.~Disarlo and H.~Parlier.
\newblock The geometry of flip graphs and mapping class groups.
\newblock {\em Transactions of the American Mathematical Society},
  372(6):3809--3844, 2019.

\bibitem{dunfield2006random}
N.~M. Dunfield and D.~P. Thurston.
\newblock A random tunnel number one 3--manifold does not fiber over the
  circle.
\newblock {\em Geometry \& Topology}, 10(4):2431--2499, 2006.

\bibitem{dynnikov2007complexity}
I.~Dynnikov and B.~Wiest.
\newblock On the complexity of braids.
\newblock {\em Journal of the European Mathematical Society}, 9(4):801--840,
  2007.

\bibitem{erickson2013tracing}
J.~Erickson and A.~Nayyeri.
\newblock Tracing compressed curves in triangulated surfaces.
\newblock {\em Discrete Comput Geom}, 49:823–863, 2013.

\bibitem{Frab2001rank}
B.~Farb, A.~Lubotzky, and Y.~Minsky.
\newblock {Rank-1 phenomena for mapping class groups}.
\newblock {\em Duke Mathematical Journal}, 106(3):581 -- 597, 2001.

\bibitem{harvey1981boundary}
W.~J. Harvey.
\newblock Boundary structure of the modular group.
\newblock In {\em Riemann surfaces and related topics: Proceedings of the 1978
  Stony Brook Conference (State Univ. New York, Stony Brook, NY, 1978)},
  volume~97, pages 245--251, 1981.

\bibitem{hatcher1980presentation}
A.~Hatcher and W.~Thurston.
\newblock A presentation for the mapping class group of a closed orientable
  surface.
\newblock {\em Topology}, 19(3):221--237, 1980.

\bibitem{hempel20013}
J.~Hempel.
\newblock 3-manifolds as viewed from the curve complex.
\newblock {\em Topology}, 40(3):631--657, 2001.

\bibitem{Irmer2015stable}
I.~Irmer.
\newblock Stable lengths on the pants graph are rational.
\newblock {\em New York J. Math.}, 21:1153--1168, 2015.

\bibitem{lackenby2024some}
M.~Lackenby.
\newblock Some fast algorithms for curves in surfaces.
\newblock {\em arXiv preprint arXiv:2401.16056}, 2024.

\bibitem{lackenby2019triangulation}
M.~Lackenby and J.~S. Purcell.
\newblock The triangulation complexity of fibred 3-manifolds.
\newblock {\em arXiv preprint arXiv:1910.10914}, 2019.

\bibitem{lickorish1962representation}
W.~R. Lickorish.
\newblock A representation of orientable combinatorial 3-manifolds.
\newblock {\em Annals of Mathematics}, pages 531--540, 1962.

\bibitem{mosher1988tiling}
L.~Mosher.
\newblock Tiling the projective foliation space of a punctured surface.
\newblock {\em Transactions of the American Mathematical Society}, pages 1--70,
  1988.

\bibitem{parlier}
H.~Parlier.
\newblock A shorter note on shorter pants.
\newblock {\em arXiv preprint arXiv:2304.06973}, 2023.

\bibitem{penner1992combinatorics}
R.~C. Penner and J.~L. Harer.
\newblock {\em Combinatorics of train tracks}.
\newblock Number 125. Princeton University Press, 1992.

\bibitem{sleator1988rotation}
D.~D. Sleator, R.~E. Tarjan, and W.~P. Thurston.
\newblock Rotation distance, triangulations, and hyperbolic geometry.
\newblock {\em Journal of the American Mathematical Society}, 1(3):647--681,
  1988.

\bibitem{taylor2016products}
S.~J. Taylor and A.~Zupan.
\newblock Products of farey graphs are totally geodesic in the pants graph.
\newblock {\em Journal of Topology and Analysis}, 8(02):287--311, 2016.

\bibitem{thurston1979geometry}
W.~P. Thurston.
\newblock {\em The geometry and topology of three-manifolds}.
\newblock Princeton University Princeton, NJ, 1979.

\bibitem{wolpert2008lengthfunctions}
S.~A. Wolpert.
\newblock Behavior of geodesic-length functions on {T}eichm\"{u}ller space.
\newblock {\em J. Differential Geom.}, 79(2):277--334, 2008.

\end{thebibliography}

\end{document}